\documentclass{article}

\usepackage{blindtext}
\usepackage{subfiles} 

\usepackage{packages/commonpackage}
\usepackage{packages/package_for_list_of_content}
\usepackage{packages/package_to_delete}
\usepackage{packages/macro}

\newcommand{\bG}{\mathbf{G}}

\numberwithin{equation}{section}
\theoremstyle{plain}
\newtheorem{theorem}{Theorem}[section]

\newtheorem{proposition}[theorem]{Proposition}
\newtheorem{corollary}[theorem]{Corollary}

\newtheorem{lemma}[theorem]{Lemma}
\newtheorem{definition}[theorem]{Definition}
\newtheorem{remark}[theorem]{Remark}
\newtheorem{example}[theorem]{Example}
\newtheorem{example*}{Example}
\newtheorem{claim}[theorem]{Claim}

\newtheorem{informaltheorem}[theorem]{Informal Theorem}
\newtheorem{motivatingquestion}[theorem]{Motivating Question}

\title{Curved Kakeya problems and \\the projective geometry of paths}
\date{}
\author{Shaoming Guo, Larry Guth, Arian Nadjimzadah,\\
Minxing Shen and Ruixiang Zhang\\[1em]
\normalsize With an appendix written by Terence Tao}

\begin{document}

\maketitle

\begin{abstract}
    We introduce a general framework for curved Kakeya problems in $\R^n$, encompassing those arising from H\"ormander-type oscillatory integrals. Every family of curves determines a spray geometry, which allows us to use the projective geometry of paths in the study of curved Kakeya problems. We focus on the two extremes of the ``best'' and ``worst'' possible behaviors of curved Kakeya sets. 

    We characterize when the incidence structure underlying Wolff's hairbrush argument persists. In particular,  
    we prove that the existence of many totally geodesic surfaces, as required by Wolff's hairbrush argument, is equivalent to projective flatness of the associated spray.  Within this projectively flat class, Bourgain’s condition provides a clean dichotomy: when it holds, the family is direction-equivalent to a Bochner–Riesz type family of lines and satisfies the Katz–Wolff condition, and thus the Wang--Zahl result is applicable; when it fails, every totally geodesic surface supports a two-dimensional Kakeya set.

    We also show that under an extra semi-algebraic assumption, a family of curves in $\R^3$ admits a curved Kakeya set of Hausdorff dimension $2$ if and only if it admits a curved Kakeya set contained in a surface. Equivalently if this compression is absent, every associated curved Kakeya set has dimension strictly greater than $2$.

\end{abstract}

\tableofcontents


\section{Introduction}

\subsection{Informal introduction}

We begin with an informal discussion of the main results, postponing precise definitions until later. A curved Kakeya problem starts with a $2(n-1)$-parameter family of curve segments $\ell_{\xi,w}$ in $\R^n$, where $\xi\in \R^{n-1}$ records the ``direction'' and $w \in \R^{n-1}$ records the ``position''. These curves satisfy a non-degeneracy condition so that locally each point and each tangent direction determine a unique curve. The corresponding curved Kakeya set contains a curve $\ell_{\xi,w(\xi)}$ for each direction $\xi$. The key model case is when the curves $\ell_{\xi,w}$ are (segments of) the lines $(w,0) + \R(\xi,1)$, and the corresponding curved Kakeya set is a classical Kakeya set. 

\medskip

\noindent \textbf{Overview of curved Kakeya sets:}

In $\R^2$, curved Kakeya sets are known to have full Hausdorff dimension 2. It was originally discovered by Bourgain \cite{Bou91}, in the more special case of curves arising from H\"ormander's oscillatory integral problem, that the situation in $\R^3$ is significantly more subtle, and finer geometric properties of the family $\{\ell_{\xi,w}\}$ plays a major role. There can even be curved Kakeya sets of Hausdorff dimension 2, and the known examples are contained in a surface. 

Curved Kakeya maximal function estimates tend to fail generically too, though usually not as severely as having dimension 2. The still--mysterious \emph{Bourgain's condition} describes the class that is not covered by the generic counterexample construction. It is conjectured that this condition is necessary and sufficient for optimal curved Kakeya maximal function estimates \cite{nadjimzadah2026bourgainsconditionstickykakeya, GWZ24}.

\medskip

\noindent \textbf{Two motivating questions:}

This paper arose out of the following two fundamental question about Kakeya sets of curves, roughly concerning their best possible and worst possible behaviors. 
\begin{motivatingquestion}[Best-case behavior]\label{question: best case}
    Can the techniques that led to the resolution of the Kakeya set conjecture in $\R^3$ be applied to Kakeya sets of curves? As a first step, what about Wolff's hairbrush argument in $\R^3$? For which families of curves can such arguments go through? 
\end{motivatingquestion}

\begin{motivatingquestion}[Worst-case behavior]\label{question: worst case}
    When can a family of curves in $\R^3$ admit a two-dimensional curved Kakeya set? Is compression of a full two-dimensional family of directions into a surface the only mechanism by which this can occur?
\end{motivatingquestion}

\noindent \textbf{The geometric input to Wolff's hairbrush argument:}

Let us explore these questions starting with Question \ref{question: best case}. Wolff's hairbrush argument shows that Kakeya sets in $\R^3$ have Hausdorff dimension at least $5/2$ \cite{Wol95}. In the argument, one essentially reduces to studying a hairbrush: a set of lines passing through a fixed stem. There are two important ingredients needed to make the argument work. 

First, the lines must organize into essentially disjoint planes around the stem---this does not require one to distinguish the direction parameter $\xi$. Another way to say this is that for each pair of intersecting curves, the set of curves intersecting the first two are contained in a common surface. For generic curves one would expect a volume instead of a surface. We call this the \emph{totally geodesic} condition and it is formalized in Definitions \ref{250510defi1_4}, \ref{250801definition1_16}. 

Second, there can't be too many lines inside each plane (this is known as the Wolff axioms)---the separation of the direction parameters $\xi$ guarantees that this holds. Nets Katz had another way of running the hairbrush argument which appeared in Wisewell's work \cite{Wis05}: for each pair of intersecting curves, the set of direction parameters corresponding to the curves intersecting the first two is a curve. We call this the \emph{Katz-Wolff} condition and it is formalized in Definitions \ref{250510defi1_3}, \ref{250510defi1_1}. For a generic family, one would expect these direction parameters to fill a two-dimensional region rather than a curve. 

We should also note here that the resolution of the Kakeya conjecture of Wang--Zahl makes crucial use of the fact that tubes organize into rectangular prisms, which is closely connected to the totally geodesic condition. 
Bourgain's condition unfortunately does not guarantee the Katz-Wolff condition or even the totally geodesic condition, in light of an example from \cite{nadjimzadah2026bourgainsconditionstickykakeya}. 

\medskip 

\noindent \textbf{Applying methods from projective spray geometry:}

To understand how the conditions above interact, what they reduce to, and to provide a foundation for a more systematic study of curved Kakeya problems, we establish a correspondence between curve families $\{\ell_{\xi,w}\}$ and projective spray spaces. More precisely, after forgetting the distinguished direction labels, a family $\{\ell_{\xi,v}\}$ forms a local system of paths and hence arises as the family of unparameterized geodesics of an associated spray; this is our coarsest level of geometry. The curved Kakeya problem contains additional data recording which curves carry each direction parameter $\xi$; this distinction will be essential below. Proposition \ref{prop: Kakeya to spray space} makes this correspondence precise. It allows us to use projective invariants of sprays, including the Douglas and Weyl curvatures, to detect when the underlying path geometry is locally projectively flat. Our first main theorem uses this connection to relate the conditions above, and suggests that without a serious new idea, the answer to the last question in Question \ref{question: best case} is only classical Kakeya lines. 

\begin{informaltheorem}[Formally Theorem \ref{thm: main}]
Let $\{\ell_{\xi,w}\}$ be a non-degenerate family of curves in $\R^3$.
Then
\[
\begin{aligned}
\text{\rm totally geodesic}
&\quad\Longleftrightarrow\quad
\substack{
\text{\rm after a local diffeomorphism of $\R^3$, }\\
\text{\rm the curves become straight lines};
}
\\[1.5em]
\text{\rm totally geodesic}
\;+\;
\text{\rm Bourgain's condition}
&\quad\Longleftrightarrow\quad
\text{\rm Katz--Wolff}
\\[-0.1em]
&\quad\Longleftrightarrow\quad
\substack{
\text{\rm direction-equivalent to}\\
\text{\rm a Bochner--Riesz-type family}\\[-0.1em]
\ell_{\xi,w}(t)
   =(\xi,\phi(\xi))-t(w,1);
}
\\[1.5em]
\text{\rm totally geodesic}
\;+\;
\neg\,\text{\rm Bourgain's condition}
&\quad\Longleftrightarrow\quad
\underbrace{
\text{\rm extra-worst compression}
}_{\text{\rm as in Bourgain's example \eqref{250801e1_8}}}
\\[-0.1em]
&\quad\overset{\rm def}{\Longleftrightarrow}\quad
\substack{
\text{\rm every totally geodesic surface supports}\\
\text{\rm a two-dimensional curved Kakeya set.}
}
\end{aligned}
\]
In particular, within the totally geodesic---equivalently, projectively
flat---class, Bourgain's condition distinguishes two opposite possibilities:
the Katz--Wolff/Bochner--Riesz regime and the extra-worst-compression regime. 
\end{informaltheorem}

We now turn to Question \ref{question: worst case}, which we resolve in the semi-algebraic case as our second main theorem. The proof uses tools from semi-algebraic geometry. 
\begin{informaltheorem}[Formally Theorem \ref{260615theorem1_30}]
    Let $\{\ell_{\xi,w}\}$ be a non-degenerate family of curves in $\R^3$, and furthermore the curves have a parameterization $(\xi,w,t) \mapsto \ell_{\xi,w}(t)$ which is semi-algebraic. Then:
    \[
\begin{array}{c}
\text{The family admits a curved Kakeya set of Hausdorff dimension $2$.}
\\[0.5em]
\Updownarrow
\\[0.5em]
\text{The family admits a curved Kakeya set contained in a two-dimensional surface.}
\end{array}
\]
    Moreover, if these conditions fail, then there exists $\kappa>0$, depending only on the family, such that every associated curved Kakeya set
$K$ satisfies
\[
    \dim_{\mathrm H} K\geq 2+\kappa.
\]
\end{informaltheorem}

In this paper, we also include an appendix by Terence Tao which makes progress on understanding Bourgain's condition. The goal is to provide a coordinate-invariant definition that uses natural objects in differential geometry, much in the spirit of our introduction of spray geometry. This definition gives a convenient way to generate all phase functions satisfying Bourgain's condition, including some interesting examples failing the Katz-Wolff condition.

\bigskip 

\noindent {\bf Notation.} We list some notation that will be used throughout the paper. 

\begin{enumerate}
\item For $\epsilon>0$ and $\mathbf{x} \in \mathbb{R}^n$, we let $\mathbb{B}_\epsilon^n(\mathbf{x})$ denote the ball of radius $\epsilon$ in $\mathbb{R}^n$ centered at $\mathbf{x}$. If $\epsilon=1$, we often abbreviate $\mathbb{B}_1^n(\mathbf{x})$ to $\mathbb{B}^n(\mathbf{x})$; if $\mathbf{x}=0$, then we often abbreviate $\mathbb{B}_\epsilon^n(\mathbf{x})$ to $\mathbb{B}_\epsilon^n$.

\item For a set $E$, we will use $\mathbbm{1}_E$ to denote its indicator function. 

\item For a set $E \subset \mathbb{R}^n$ and $\delta>0$, we use $\mathcal{N}_\delta(E)$ to denote the $\delta$-neighborhood of $E$. 

\item For a set $E\subset \R^n$, we use $\mathcal{L}^n(E)$ to denote its Lebesgue measure. 

\item We use $f\equiv 0$ to mean that the function $f$ vanishes everywhere. 

\item Throughout the paper, we always assume that the phase functions $\phi$ and the  maps $\Phi$ defining curved Kakeya sets are all analytic, unless otherwise stated. 
\end{enumerate}

\subsection{H\"ormander's problems and related Kakeya problems: A brief review}\label{250708subsection1_1}

In \cite{Hor73}, H\"ormander suggested to study the following  oscillatory integral operators
\begin{equation}\label{230717e1_1}
T^{(\phi)}_N f(x, t):=\int_{\R^{n-1}} e^{i N \phi(x, t; y)} a(x, t; y) f(y) \mathrm{d} y,
\end{equation}
which are often referred to as \underline{H\"ormander-type} oscillatory integral operators, or \underline{H\"ormander's operators} for simplicity. 
Here 
\begin{equation}
x\in \R^{n-1}, t\in \R, y\in \R^{n-1}
\end{equation}
 and $N\in \R$ is a large real number. To simplify notation, we often write $\bfx=(x, t)$. Moreover, $\phi(x, t; y)$ is a smooth function, and  $a(x, t; y)$ is a smooth function supported in a bounded open neighborhood of the origin. The bounds that we are interested in proving are of the form 
\begin{equation}\label{250708e1_2}
\|T^{(\phi)}_N f\|_{L^p(\R^n)}\lesim_{\phi, a, p, \epsilon} N^{-\frac{n}{p}+\epsilon} \|f\|_{L^p(\R^{n-1})},
\end{equation}
for every $\epsilon>0$ and every $N\ge 1$, and for a range of $p$ that is as large as possible. Here the implicit constant is allowed to depend on $\phi, a, p$ and $\epsilon.$\\

If we take 
\begin{equation}\label{250708e1_3}
\phi(x, t; y)= \inn{x}{y}+ t|y|^2,
\end{equation}
then we obtain the classical Fourier extension operator. \\

{\bf Fourier restriction conjecture.} Let $n\ge 3$. Let $\phi$ be given by \eqref{250708e1_3}. The estimate \eqref{250708e1_2} holds for every 
\begin{equation}\label{250708e1_4}
p\ge \frac{2n}{n-1}.
\end{equation}

In \cite{Hor73}, H\"ormander asked whether the estimate \eqref{250708e1_2} still holds for the same range \eqref{250708e1_4} when $\phi$ is a ``small" perturbation of \eqref{250708e1_3}. To make this precise, H\"ormander introduced the following non-degenerate condition.   

\begin{definition}[H\"ormander's condition for oscillatory integrals, \cite{Hor73}]\label{250726defi1_1}
We say that $\phi$ satisfies \underline{H\"ormander's condition} if  on the support of $a(x, t; y)$, it holds that 
\begin{enumerate}
\item[(H1)] $\rank \nabla_{x}\nabla_y \phi(x, t; y)=n-1;$ 
\item[(H2)] if we define 
\begin{equation}
G_0(\bfx; y):=\partial_{y_1} \nabla_{\bfx} \phi(\bfx; y)\wedge\dots\wedge\partial_{y_{n-1}} \nabla_{\bfx} \phi(\bfx; y),
\end{equation}
then 
\begin{equation}
\left.\operatorname{det} \nabla_{y}^2\left\langle\nabla_{\mathbf{x}} \phi(\mathbf{x} ; y), G_0\left(\mathbf{x} ; y_0\right)\right\rangle\right|_{y=y_0} \neq 0.
\end{equation}
\end{enumerate}
\end{definition}
H\"ormander \cite{Hor73} and Bourgain \cite{Bou91} observed that when studying H\"ormander's problem for phase functions $\phi$ satisfying Definition \ref{250726defi1_1}, we can always apply elementary changes of coordinates so that phase functions can be written in the form 
\begin{equation}\label{250726e1_7}
\phi(\mathbf{x} ; y)=x \cdot y+t\langle y, A y\rangle+O\left(|t||y|^3+|\mathbf{x}|^2|y|^2\right),
\end{equation}
where $A$ is an $(n-1)\times (n-1)$ non-degenerate matrix, and all $x, t, y$ take values near their origins. In this sense, we say that \eqref{250726e1_7} is a small perturbation of the phase function \eqref{250708e1_3} for the Fourier extension operator. The form \eqref{250726e1_7} is called a \underline{normal form} for $\phi$ at the origin. If the matrix $A$ is positive-definite, we say that the phase function $\phi$ is \underline{elliptic}. \\

In \cite{Hor73}, H\"ormander asked whether for every phase function $\phi$ satisfying H\"ormander's condition in Definition \ref{250726defi1_1}, the estimate 
\eqref{250708e1_2} still holds for the same range \eqref{250708e1_4}. \\

Let us very briefly discuss the history of H\"ormander's problem, and we refer to \cite{GHI19}, \cite{HI22}, \cite{GWZ24}, \cite{DGGZ24}, \cite{CGGHIW24} and \cite{nadjimzadah2025newcurvedkakeyaestimates} for more detailed discussions.

We restrict our discussion to the case $n=3$, which is already interesting enough. Stein \cite{Ste84} proved that \eqref{250708e1_2} holds for every phase function satisfying H\"ormander's condition in Definition \ref{250726defi1_1} and every $p\ge 4$ if we take $n=3$. 
In \cite{Bou91}, Bourgain gave a negative answer to H\"ormander's problem by showing that if we take $n=3$ and  
\begin{equation}\label{250801e1_8}
\phi(\bfx; y)= x\cdot y+ ty_1 y_2+ \frac12 t^2 y_1^2,
\end{equation}
where $y=(y_1, y_2)$, then the estimate \eqref{250708e1_2} cannot hold for any $p<4$. Indeed, regarding negative results, Bourgain  \cite{Bou91} proved much more. In order to state Bourgain's result, let us first introduce the notion of Bourgain's condition.

\begin{definition}[Bourgain's condition, \cite{Bou91}, \cite{GWZ24}]\label{230903definition1_4}
 Let $\phi$ be a phase function satisfying H\"ormander's conditions. We say that it satisfies Bourgain's condition at $(\bfx_0; y_0)$ if 
\begin{equation}\label{230719e1_9}
\left(\left(G_0 \cdot \nabla_{\mathbf{x}}\right)^2 \nabla_{y}^2 \phi\right)\left(\mathbf{x}_0 ; y_0\right) \text { is a multiple of }\left(\left(G_0 \cdot \nabla_{\mathbf{x}}\right) \nabla_{y}^2 \phi\right)\left(\mathbf{x}_0 ; y_0\right) \text {. }
\end{equation}
The constant here is allowed to depend on $\bfx_0$ and $y_0$. 
\end{definition}

Bourgain \cite{Bou91} proved that in the case $n=3$, if the phase function $\phi$ fails Bourgain's condition in Definition \ref{230903definition1_4} at least at one point, then the estimate \eqref{250708e1_2} already cannot hold for any 
\begin{equation}\label{250726e1_10}
p< 3+ \frac{1}{39}.
\end{equation}
Such a result was generalized to a wider range than \eqref{250726e1_10} and to every $n\ge 3$ in \cite{GWZ24}. Moreover,  in \cite{GWZ24} it is proven that if Bourgain's condition in Definition \ref{230903definition1_4} holds at every point, then the estimate \eqref{250708e1_2} holds for a wide range of $p$. We refer to \cite{GWZ24} for the precise statement. \\

So far we have finished our discussion on H\"ormander's problem, and next we will discuss Kakeya problems associated to H\"ormander's operators. \\

Given a phase function $\phi(\bfx; y)$ satisfying H\"ormander's condition in Definition \ref{250726defi1_1}, we pick $\epsilon_{\phi}>0$ to be a sufficiently small constant depending on $\phi$. The following several notions are taken from \cite{DGGZ24}, which were further taken from Wisewell \cite{Wis05} and Bourgain \cite{Bou91}.

\begin{definition}[Characteristic curves and curved tubes, \cite{Bou91}]
Let $\phi(\bfx; y)$ be a phase function satisfying H\"ormander's condition in Definition \ref{250726defi1_1}. 
For $y\in \B^{n-1}_{\epsilon_{\phi}}$, $\bfx\in \B^{n}_{\epsilon_{\phi}}$ and $0< \delta< \epsilon_{\phi}$, define
\begin{equation}
\begin{aligned}
& \Gamma^{(\phi)}_y(\bfx):=\left\{\bfx' \in \R^n\cap \B^n_{2\epsilon_{\phi}}: \nabla_y \phi(\bfx'; y)=\nabla_y \phi(\bfx; y)\right\}, \\
& T_y^{\delta, (\phi)}(\bfx):=\left\{\bfx' \in \mathbb{R}^n\cap \B^n_{2\epsilon_{\phi}}:\left|\nabla_y \phi(\bfx'; y)-\nabla_y \phi(\bfx; y)\right|<\delta\right\}.
\end{aligned}
\end{equation}
If $\bfx$ is of the form $(\omega, 0)$, that is, the last coordinate is $0$, then we often abbreviate $\Gamma^{(\phi)}_y(\bfx)$ to $\Gamma^{(\phi)}_y(\omega)$, and abbreviate $T_y^{\delta, (\phi)}(\bfx)$ to $T_y^{\delta, (\phi)}(\omega)$. If it is clear from the context which $\phi$ is involved, we often drop $(\phi)$ from the above notations and simply write $\Gamma_y(\omega)$ and $T_y^{\delta}(\omega)$. 
\end{definition}

\begin{definition}[$\phi$-Kakeya sets, \cite{Bou91}, \cite{Wis05}]
Let $\phi(\bfx; y)$ be a phase function satisfying H\"ormander's condition in Definition \ref{250726defi1_1}.  A set $E\subset \R^n$ with $\mathcal{L}^n(E)=0$ is called a $\phi$-Kakeya set if for all $y\in \B^{n-1}_{\epsilon_{\phi}}$ there exists an $\omega\in \B^{n-1}_{\epsilon_{\phi}}$ such that $\Gamma_y(\omega)\subset E$. 
\end{definition}

\begin{definition}[$\phi$-Kakeya maximal function, \cite{Bou91}] Let $\phi(\bfx; y)$ be a phase function satisfying H\"ormander's condition in Definition \ref{250726defi1_1}. For $y\in \B^{n-1}_{\epsilon_{\phi}}$ and $\delta>0$, define 
\begin{equation}
\mathcal{K}_\delta^{(\phi)} f(y):=\sup _{\omega \in \mathbb{B}_{\epsilon_\phi}^{n-1}} \frac{1}{\mathcal{L}^n\left(T_y^{\delta,(\phi)}(\omega)\right)} \int_{T_y^{\delta,(\phi)}(\omega)}|f| .
\end{equation}
We will write $\mathcal{K}_\delta$ instead of $\mathcal{K}_\delta^{(\phi)}$ if it is clear from the context which $\phi$ we are talking about. 
\end{definition}

If we take $\phi$ to be the phase function for the Fourier extension operator, that is, 
\begin{equation}\label{250726e1_13}
\phi(x, t; y)= \inn{x}{y}+ t|y|^2,
\end{equation}
then the $\phi$-Kakeya set becomes a standard Kakeya set (for line segments), and the $\phi$-Kakeya maximal function becomes a standard Kakeya maximal function (for line segments). \\

Regarding $\phi$-Kakeya sets and $\phi$-Kakeya maximal functions, we are mostly interested in estimating the Hausdorff and Minkowski dimensions of $\phi$-Kakeya sets, and proving bounds of the form 
\begin{equation}\label{250726e1_14}
\norm{
\mathcal{K}_{\delta} f
}_{L^p(
\B^{n-1}_{\epsilon_{\phi}}
)}
\lesim_{\epsilon, \phi, p}
\delta^{1-\frac{n}{p}-\epsilon} \|f\|_{L^p(\R^n)},
\end{equation}
for all $0<\delta< \epsilon_{\phi}$ and all $\epsilon>0$, and a range of $p$ that is as large as possible. Note that \eqref{250726e1_14} is trivial at $p=1$. \\

{\bf Kakeya conjecture and Kakeya maximal conjecture.} If we take $\phi$ to be \eqref{250726e1_13}, then the associated $\phi$-Kakeya sets have Hausdorff dimension $n$ and 
 the estimate \eqref{250726e1_14} holds for all $1\le p\le n$. \\

It is proven in \cite{Bou91} and \cite{GWZ24} that if $\phi$ fails Bourgain's condition in Definition \ref{230903definition1_4} at some point, then there exists $p_{\phi}< n$ such that \eqref{250726e1_14} fails for all $p_{\phi}< p\le n$. We refer to \cite{HRZ22}, \cite{GWZ24}, \cite{DGGZ24}, \cite{CGGHIW24} and \cite{nadjimzadah2025newcurvedkakeyaestimates} for more positive results.

\subsection{Finer characterizations for H\"ormander-type oscillatory integrals: New concepts}\label{250726subsection1_2}

To motivate our new conditions for H\"ormander-type phases, let us think about the following question. Previously we have seen that Bourgain's condition in Definition \ref{230903definition1_4} is a necessary condition for the $\phi$-Kakeya maximal estimate \eqref{250726e1_14} to hold for the widest possible range $1\le p\le n$. Is it possible that Bourgain's condition is also sufficient?

If we run the polynomial partitioning algorithm (for instance repeat the argument in \cite{HRZ22}) by combining the polynomial Wolff axiom proven in \cite{GWZ24} under Bourgain's condition, then in the case $n=3$ we obtain that \eqref{250726e1_14} holds for $p=7/3$ and as a consequence we obtain that every $\phi$-Kakeya has Hausdorff dimension at least $7/3$ under the assumption that $\phi$ satisfies Bourgain's condition.

However, Wolff \cite{Wol95} already proved that the Hausdorff dimension of the standard Kakeya set is at least $5/2$, by using his hairbrush argument. It is therefore very natural to ask whether one can run Wolff's hairbrush argument under Bourgain's condition. The following notion seems to be the minimal assumption one needs to run the hairbrush argument\footnote{Here we run the version of the hairbrush argument by Katz, see for instance \cite{Wis03}}, and we will see that it is strictly stronger than Bourgain's condition, see Theorem \ref{thm: main} below.

\begin{definition}[Katz-Wolff condition for H\"ormander phases]\label{250510defi1_3}
Let $\phi(\bfx; y)$ be a phase function satisfying H\"ormander's condition. 
For $y_1, y_2\in \B^{n-1}_{\epsilon_{\phi}}, y_1\neq y_2$, $\bfx_0=(x_0, t_0)\in \B^{n}_{\epsilon_{\phi}}$, and $t_1, t_2\neq t_0$, let $x_i\in \R^{n-1}$ be the unique point such that 
\begin{equation}
(x_i, t_i)\in \Gamma^{(\phi)}_{y_i}(\bfx_0), \ i=1, 2.
\end{equation}
Moreover, let $w(t_1, t_2)\in \B^{n-1}_{\epsilon_{\phi}}$ and $y(t_1, t_2)\in \B^{n-1}_{\epsilon_{\phi}}$ be the unique (if exist) solutions  satisfying that the characteristic curve 
\begin{equation}
\Gamma^{(\phi)}_{y(t_1, t_2)}(w(t_1, t_2))
\end{equation}
 passes through the two points $(x_1, t_1)$ and $(x_2, t_2)$.  We say that $\phi$ satisfies the Katz-Wolff condition if
 \begin{equation}
        \rank[\partial_{t_1} y(t_1, t_2), \partial_{t_2} y(t_1, t_2)]< 2, \ \ \forall t_1, t_2.
    \end{equation} 
\end{definition}

Definition \ref{250510defi1_3} requires that for two intersecting curves $\Gamma_{y_1}(\bfx_0)$ and $\Gamma_{y_2}(\bfx_0)$, the direction set for the characteristic curves intersecting both of these two curves, given by 
\begin{equation}
\{y(t_1, t_2): t_1, t_2\neq t_0\},
\end{equation}
is one-dimensional.

\begin{definition}[Totally geodesic conditions for H\"ormander phases]\label{250510defi1_4}
Under the same notation as in Definition \ref{250510defi1_3}, we say that $\phi$ satisfies the totally geodesic condition if for all choices of $\bfx_0$ and $y_1\neq y_2$, the union of curves 
\begin{equation}
\bigcup_{
t_1, t_2
} \Gamma^{(\phi)}_{y(t_1, t_2)}(w(t_1, t_2))
\end{equation}
forms a two-dimensional surface. 
\end{definition}

Definition \ref{250510defi1_4} requires that for two intersecting curves $\Gamma_{y_1}(\bfx_0)$ and $\Gamma_{y_2}(\bfx_0)$, all the characteristic curves intersecting these two curves form a two-dimensional surface.  \\

Wolff's original hairbrush argument \cite{Wol95} requires Definition \ref{250510defi1_4} and a non-concentration assumption. Nets Katz later gave an alternative argument which appeared in \cite{Wis05} that requires the condition given in Definition \ref{250510defi1_3}. It is trivial to check that both of these conditions hold for the standard Kakeya problem. However, we will see that for general $\phi$-Kakeya problems, whether Definitions \ref{250510defi1_3} and \ref{250510defi1_4} hold starts to be a very interesting problem, and we will see that Definition \ref{250510defi1_3} always implies Definition \ref{250510defi1_4}, but the reverse direction is not always true (see Theorem \ref{thm: main} below). \\

Let us remark on the formulation of Definition \ref{250510defi1_3}. The way in which this definition is formulated is not ``local" and may not be easy to check. This will be addressed as part of our main theorem (Theorem \ref{thm: main} below), where we will give several equivalent characterizations for Katz-Wolff conditions. In particular, we will give a characterization similar to how Bourgain's condition is formulated in Definition \ref{230903definition1_4}.

\subsection{General curved Kakeya problems and their Bourgain's conditions}\label{250929sub1_3}

In Subsection \ref{250708subsection1_1}, we stated $\phi$-Kakeya sets and $\phi$-Kakeya maximal estimates. To introduce these objects, one always needs to start with a phase function $\phi$. In the current paper, we will introduce a more general and natural way of stating curved Kakeya problems. \\

Let $n\ge 2$ and let 
\begin{equation}
\Phi:\R^{n-1}\times\R\times\R^{n-1}\rightarrow \R^n
\end{equation}
be a smooth map. We use $(\omega, t; \xi)$ to denote the variables of $\Phi$. We always write vectors in column forms, unless otherwise specified. 
	
	\begin{definition}[$\Phi$-Kakeya sets and $\Phi$-Kakeya maximal function]\label{260309defi1_8}
		We say that a Borel set $E\subset\R^n$ with $\mathcal{L}^n(E)=0$ is a $\Phi$-Kakeya set if there exists $\varepsilon_{\Phi}>0$ such that for every $\xi\in \B_{\varepsilon_{\Phi}}^{n-1}$, there exists $\omega\in \B_{\varepsilon_{\Phi}}^{n-1}$ satisfying 
		\begin{equation}
			\Phi(\omega, t; \xi)\in E\ \text{for every}\ |t|\le \varepsilon_{\Phi}.
		\end{equation}
We define $\Phi$-Kakeya maximal functions similarly to $\phi$-Kakeya maximal functions. 
	\end{definition}

If we take  
\begin{equation}\label{250709e1_15}
\Phi(\omega, t; \xi)= (\omega+t\xi, t),
\end{equation}
then we see the standard Kakeya problem. \\

Similarly to H\"ormander's condition in Definition \ref{250726defi1_1} for H\"ormander-type oscillatory integrals, we also introduce H\"ormander's condition for $\Phi$-Kakeya sets. 
	\begin{definition}[H\"ormander's condition for $\Phi$-Kakeya sets]\label{HorCond}
		Let $\Omega\subset \R^{n-1}\times \R\times \R^{n-1}$ be an open set. 
		Let $\Phi: \Omega\to \R^n$ be a smooth map. We say that $\Phi$ satisfies H\"ormander's condition at $(\omega, t; \xi)\in \Omega$ if both of the following two conditions are satisfied: 
		\begin{enumerate}
			\item[(1)] the following $n\times n$ matrix
			\begin{equation}\label{le1.1}
				\nabla_{(\omega, t)} \Phi(\omega, t; \xi) 
			\end{equation}
			is non-degenerate; 
			\item[(2)] the following $2n\times 2n$ matrix 
			\begin{equation}\label{le1.2}
				\begin{bmatrix}
					0 & \nabla_{(\omega, t)}\Phi & \nabla_{\xi}\Phi\\
					\partial_t \Phi & \nabla_{(\omega, t)}\partial_t \Phi & \nabla_{\xi}\partial_t\Phi
				\end{bmatrix}
			\end{equation}
			is non-degenerate at $(\omega, t; \xi)$. 
		\end{enumerate}
		We say that $\Phi$ satisfies H\"ormander's condition on $\Omega$ if it satisfies H\"ormander's condition at every point in $\Omega$. 
	\end{definition}

	 It is elementary to check that  $\Phi(\omega, t; \xi)= (\omega+t\xi, t)$ satisfies this H\"ormander's condition. \\

Let $\Phi$ satisfy H\"ormander's condition in Definition \ref{HorCond}. Since $\nabla_{(\omega, t)} \Phi(\omega, t; \xi)$ is non-degenerate, it is elementary to see that we can write 
	\begin{equation}\label{250727e1_25}
		\Phi(\omega, t; \xi)=\left(X(\omega, t; \xi),t\right)
	\end{equation}
	after a change of coordinates, where 
	\begin{equation}
	X: \R^{n-1}\times\R\times\R^{n-1}\rightarrow\R^{n-1}
	\end{equation}
	 is a smooth map. Then H\"ormander's condition in Definition \ref{HorCond} is equivalent to the following condition. \footnote{The equivalence is elementary to see, and we leave out the details. }

	\begin{definition}[H\"ormander's condition for $X$]\label{HorCondX}
		Let $\Phi(\omega, t; \xi)=\left(X(\omega, t; \xi), t\right)$. We say $X$ satisfies H\"ormander's condition at $(\omega, t; \xi)$ if both of the following two conditions are satisfied:
		\begin{enumerate}
			\item[(1)] the following $(n-1)\times (n-1)$ matrix
			\begin{equation}\label{le1.11}
				\nabla_{\omega} X(\omega, t; \xi) 
			\end{equation}
			is non-degenerate; 
			\item[(2)] the following $2(n-1)\times 2(n-1)$ matrix 
			\begin{equation}\label{le1.12}
				\begin{bmatrix}
					\nabla_\omega X & \nabla_{\xi}X\\
					\nabla_\omega\partial_t X & \nabla_{\xi}\partial_t X
				\end{bmatrix}
			\end{equation}
			is non-degenerate at $(\omega, t; \xi)$.
		\end{enumerate}
	\end{definition}

	 We now show that if a phase function $\phi$ satisfies H\"ormander's condition in Definition \ref{250726defi1_1}, then its $\phi$-Kakeya (maximal) problem can be written as a $\Phi$-Kakeya (maximal) problem for some map $\Phi$ satisfying H\"ormander's condition in Definition \ref{HorCond}. We do this exercise here to help us develop some intuition for Definition \ref{HorCond} and Definition \ref{HorCondX}. 
	 
	 Let $\phi(x, t; \xi)$ be a phase function satisfying H\"ormander's condition in Definition \ref{250726defi1_1}. Its characteristic curves are given by 
	 \begin{equation}
	\Phi(w, t; \xi)= (X(w, t; \xi), t)
	 \end{equation}
where $X(w, t; \xi)$ satisfies 
\begin{equation}\label{250726e1_30}
\nabla_{\xi} \phi(X(w, t; \xi), t; \xi)=w.
\end{equation}
We show that the map $X$ satisfies Definition \ref{HorCondX}.

To see \eqref{le1.11}, we differentiate both sides of \eqref{250726e1_30} in $w$ and obtain 
\begin{equation}
\nabla_{x} \nabla_{\xi} \phi(X(w, t; \xi), t; \xi) \nabla_{w} X(w, t; \xi)=I_{n-1}.
\end{equation}
That $\nabla_{w} X$ is non-degenerate follows from the condition (H1) in Definition \ref{250726defi1_1}. That \eqref{le1.12} holds can be proven similarly and we leave out the details.

 Moreover, it is very easy to find 
	 maps $\Phi$ such that $\Phi$-Kakeya sets cannot be realized as $\phi$-Kakeya sets for any phase function $\phi$, for instance
\begin{example}
	Let $\Phi(\omega, t; \xi)= (X(\omega, t; \xi), t)$ with 
	\begin{equation}
	X(\omega, t; \xi)=(\omega_1-t\xi_1-t^2\xi_1\xi_2-t^3\xi_2,\omega_2-t\xi_2-t^2\xi_1\xi_2-t^3\xi_1).
	\end{equation}
	\end{example}
The key missing symmetry here is that $\nabla^2_{\xi}\phi$ is a symmetric matrix, but no analogous conditions are needed for $\Phi$-Kakeya problems. Indeed, curved Kakeya problems that do not come from phase functions are ``generic" among all curved Kakeya problems. \\
	
	For every $X$ satisfying the H\"ormander's condition in Definition \ref{HorCondX}, we can transform it into a normal form (see Theorem \ref{250727theorem1_13} below).  Throughout the paper, we do not distinguish the normal form of $\Phi$ from the normal form of $X$. In other words, whenever we say the normal form of $\Phi$, we always mean that we first find  $X$ as above and then reduce $X$ to its normal form. 
	
	\begin{definition}[Normal forms for $\Phi$-Kakeya problems]\label{NormalForm}
		Let $\Phi(\omega, t; \xi)=\left(X(\omega, t; \xi),t\right)$ with $X$ satisfying the H\"ormander's condition in Definition \ref{HorCondX}. We say $\Phi$ is of a normal form, or $X$ is of a normal form, if
		\begin{equation}
			X(\omega, t; \xi)= \omega+ t(\xi+O(|\omega||\xi|))+ t^2 O(|\xi|)+t^3 O(|\xi|)+\cdots,
		\end{equation}
		where $O(|\xi|)$ and $O(|\omega||\xi|)$ are functions of $\omega,\xi$.
	\end{definition}

Analogously to the study of H\"ormander-type oscillatory integrals and related curved Kakeya problems, H\"ormander's condition in Definition \ref{HorCond} does not guarantee that $\Phi$-Kakeya sets have full Hausdorff dimensions. For this purpose, we introduce Bourgain's condition for $\Phi$-Kakeya sets.

%
	
	\begin{definition}[Bourgain's condition for normal forms of $\Phi$-Kakeya problems]\label{BourCond}
		Consider a normal form $\Phi(\omega, t; \xi)=\left(X(\omega, t; \xi),t\right)$. We say $\Phi$ (or $X$) satisfies Bourgain's condition at the origin, if
		\begin{equation}
		\nabla_{\xi}\partial^2_t X(\omega, t; \xi) \text{ is a multiple of } \nabla_{\xi}\partial_t X(\omega, t; \xi),
		\end{equation}
		at $(\omega, t; \xi)=(0,0;0)$. Similarly, we define Bourgain's condition at a general point $(\omega_0, t_0; \xi_0)$.
	\end{definition}

We conclude this subsection by stating a few results that are completely parallel to those for $\phi$-Kakeya (maximal) problems. 

\begin{theorem}\label{250727theorem1_13}
 Let $\Phi$ be a map satisfying H\"ormander's condition in Definition \ref{HorCond}. Let the map $X$ be given by \eqref{250727e1_25}. 
\begin{enumerate}
\item\label{thm1_13item1} If $\Phi$ fails Bourgain's condition in Definition \ref{BourCond} at some point, then there exists $p_{\Phi}< n$ such that the following $\Phi$-Kakeya maximal estimate fails for every $p_{\Phi}< p\le n$: 
\begin{equation}\label{250726e1_14zzz}
\norm{
\mathcal{K}^{(\Phi)}_{\delta} f
}_{L^p(
\B^{n-1}_{\epsilon_{\Phi}}
)}
\lesim_{\epsilon, \Phi, p}
\delta^{1-\frac{n}{p}-\epsilon} \|f\|_{L^p(\R^n)},
\end{equation}
for all $0<\delta< \epsilon_{\Phi}$ and all $\epsilon>0$.

\item\label{thm1_13item2} To prove $L^p$ bounds for the $\Phi$-Kakeya maximal function, it is equivalent to prove the same bounds for $\Phi$ of a normal form at the origin. In other words, one can apply changes of coordinates to turn $\Phi$ into a normal form at the origin.  

\item\label{thm1_13item3} The map $X$ satisfies Bourgain's condition in Definition \ref{BourCond} at a point $(w,t ; \xi)$ if and only if there exists $c=c(w, t; \xi)$ such that 
\begin{align}\label{25100e1_35ddd}
		 {\rm rank}\begin{bmatrix}
				\nabla_\omega X & \nabla_\xi X  & 0  \\
				\partial_t\nabla_\omega X & \partial_t\nabla_\xi X & \nabla_\omega X  \\
				\partial_t^2\nabla_\omega X & \partial_t^2\nabla_\xi X  
				& c \nabla_\omega X+ 2\ \partial_t\nabla_\omega X 
			\end{bmatrix} = 2(n-1),
		\end{align}
if and only if there exists $c=c(w, t; \xi)$ such that 
    \begin{align}\label{260615e1_37}
        \partial_t^2( (\nabla_\omega X)^{-1} \nabla_\xi X)(w,t;\xi) = c\  \partial_t( (\nabla_\omega X)^{-1} \nabla_\xi X)(w,t;\xi).
    \end{align}

\item\label{thm1_13item4} Let $\phi$ be a phase function satisfying H\"ormander's condition in Definition \ref{250726defi1_1}. Define $X$ via \eqref{250726e1_30}. 
If $\phi$ satisfies Bourgain's condition, then $X$ also satisfies Bourgain's condition.  

\item\label{thm1_13item5} 
If $\Phi$ satisfies Bourgain's condition in Definition \ref{BourCond} everywhere, then the $\Phi$-Kakeya problem satisfies polynomial Wolff axioms\footnote{
It is not our main focus to discuss polynomial Wolff axioms here, and we refer the definition to Theorem 1.2 in \cite{GWZ24}
}.
\end{enumerate}

\end{theorem}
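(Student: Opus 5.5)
Theorem \ref{250727theorem1_13} has five parts. Most reduce to known arguments for H\"ormander phases, so the plan is to transfer those arguments to the $\Phi$ setting. The key object is the matrix
\[
M(\omega,t;\xi):=(\nabla_\omega X)^{-1}\nabla_\xi X .
\]
To see why, differentiate $\Phi(\omega,t;\xi)=\Phi(\omega',t;\xi')$ along the curve. The infinitesimal displacement $\nabla_\omega X\,\delta\omega+\nabla_\xi X\,\delta\xi$ between two nearby curves vanishes exactly when $\delta\omega=-M\,\delta\xi$. So $M$ plays the role of $\nabla_y^2\phi$ in the oscillatory setting, and $\partial_t$ plays the role of $G_0\cdot\nabla_{\mathbf x}$.

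\textbf{Item \ref{thm1_13item2}.} I would apply three changes of variables:
\begin{enumerate}
\item a diffeomorphism of $\R^n$ preserving the last coordinate, which makes $X(\omega,0;\xi)=\omega$;
\item a reparametrization $\omega\mapsto\tilde\omega(\omega,\xi)$ of the position parameter;
\item a diffeomorphism $\xi\mapsto\tilde\xi(\xi)$ of the direction parameter.
\end{enumerate}
By condition (2) of Definition \ref{HorCondX}, $\partial_t\nabla_\xi X$ is invertible modulo $\nabla_\omega X$, so step 3 can make $\partial_tX(0,0;\xi)=\xi$. Further adjustments of $\omega$ push all remaining terms into the $O(|\xi|)$, $O(|\omega||\xi|)$ error classes of Definition \ref{NormalForm}. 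None of these maps changes Kakeya maximal bounds beyond constants: diffeomorphisms of $\R^n$ distort tubes by bounded factors, and reparametrizing $\xi$ has bounded Jacobian.

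\textbf{Item \ref{thm1_13item3}.}
\begin{itemize}
\item The equivalence with \eqref{260615e1_37} is linear algebra. Row-reduce the $3\times3$ block matrix, multiplying block rows by $(\nabla_\omega X)^{-1}$ and using $\partial_t M=(\nabla_\omega X)^{-1}\bigl(\partial_t\nabla_\xi X-\partial_t\nabla_\omega X\,M\bigr)$ and the analogous formula for $\partial_t^2M$. The rank is then $2(n-1)$ precisely when the third block row is a combination of the first two. This reduces to $\partial_t^2M=c\,\partial_tM$, and the $2\partial_t\nabla_\omega X$ entry is exactly what absorbs the cross terms from the Leibniz rule.
\item The equivalence with Definition \ref{BourCond} follows by evaluating at a normal form. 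There $\nabla_\omega X=I+O(t)$ and $M=t\nabla_\xi\partial_tX+O(t^2)$ at the origin. One checks that the condition \eqref{260615e1_37} transforms covariantly: under the normalizing changes $M\mapsto A M B+C$ with $A,B,C$ independent of $t$, and $c$ changes by an explicit term under reparametrization of $t$. Hence it is coordinate invariant, and at the normal-form origin it reads $\nabla_\xi\partial_t^2X=c'\nabla_\xi\partial_tX$.
\end{itemize}
I expect this covariance check, especially under reparametrizations mixing $t$, to be the main computational obstacle.

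\textbf{Item \ref{thm1_13item4}.} Differentiate \eqref{250726e1_30} in $\xi$. This gives
\[
\nabla_x\nabla_\xi\phi\,\nabla_\xi X=-\nabla^2_\xi\phi ,
\]
and hence $M=-\nabla^2_\xi\phi\bigl(X(\omega,t;\xi),t;\xi\bigr)$. Taking $\partial_t$ along the curve, where $\Gamma$ is tangent to $G_0$ up to a scalar factor, gives $\partial_tM=-\lambda\,(G_0\cdot\nabla_{\mathbf x})\nabla^2_\xi\phi$ with $\lambda\neq0$. The second derivative produces $(G_0\cdot\nabla_{\mathbf x})^2\nabla_\xi^2\phi$ plus a multiple of the first-order term. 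So \eqref{230719e1_9} implies \eqref{260615e1_37}, and item \ref{thm1_13item3} then gives Bourgain's condition for $X$.

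\textbf{Items \ref{thm1_13item1} and \ref{thm1_13item5}.} These transfer the arguments of \cite{Bou91,GWZ24}, since those arguments only use the curve family.
\begin{itemize}
\item For item \ref{thm1_13item1}: in normal form, failure of Bourgain's condition at the origin means $\partial_t^2M$ is not proportional to $\partial_tM$. Following \cite{GWZ24}, one builds a $\xi$-dependent choice $\omega(\xi)$ so that the tubes $T_\xi$ concentrate in a $\delta^{\alpha}$-neighborhood of a lower-dimensional algebraic variety. This gives a counterexample $f=\mathbbm 1_{\bigcup T}$ violating \eqref{250726e1_14zzz} for $p$ near $n$.
\item For item \ref{thm1_13item5}: Bourgain's condition everywhere lets one write the curves, after the normalization of item \ref{thm1_13item2}, in the form used in \cite{GWZ24}: $M=\rho(t)\,(\text{fixed matrix})+$ a constant. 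The polynomial Wolff axiom proof there then applies verbatim.
\end{itemize}
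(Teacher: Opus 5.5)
The paper gives no proof of this theorem. It calls the proof routine and ``completely parallel'' to the phase-function case. So your plan is the intended one: transfer \cite{Bou91,GWZ24} for items 1 and 5, and do items 2--4 by hand. Your treatment of items 3 and 4 through $M=(\nabla_\omega X)^{-1}\nabla_\xi X$ is correct.

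\textbf{Comparison for item 4.} The only related argument in the paper is Lemma \ref{251004lemma4_2}. It proves the \emph{converse} of item 4 by differentiating \eqref{251004e4_55} at the origin of a normal form. Your identity $M=-\nabla_\xi^2\phi$ along the characteristic curve, combined with $\dot\Gamma=\lambda G_0$, works at every point and gives both directions at once. This requires reading $(G_0\cdot\nabla_{\mathbf x})^2$ as the iterated derivative along the vector field $G_0(\cdot;\xi)$, which is the coordinate-invariant reading.

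\textbf{The covariance check in item 3 is easy.} The set $\{\delta\omega=-M\,\delta\xi\}$ is the intrinsic subspace of label variations whose curves pass through the given point to first order. Hence:
\begin{itemize}
\item a spatial diffeomorphism followed by renormalizing $t$ only reparametrizes $M$ along each curve;
\item direction-preserving relabelings act by $M\mapsto PMQ+R$ with $t$-independent $P,Q,R$.
\end{itemize}
Your row/column reduction indeed gives $\mathrm{rank}=2(n-1)+\mathrm{rank}(\partial_t^2M-c\,\partial_tM)$, so that equivalence is fine.

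\textbf{Correction in item 2.} A $t$-preserving diffeomorphism of $\R^n$ cannot make $X(\omega,0;\xi)=\omega$. That would force $X(\omega,0;\xi)$ to be independent of $\xi$ already. This normalization comes from the relabeling $\omega\mapsto X(\omega,0;\xi)$. The job of the spatial map $(x,t)\mapsto(X(\cdot,t;0)^{-1}(x),t)$ is to make $X(\omega,t;0)\equiv\omega$. It is this straightening, and not ``further adjustments of $\omega$'', that puts every $t^k$ coefficient into $O(|\xi|)$ and $\partial_tX(\omega,0;\xi)-\xi$ into $O(|\omega||\xi|)$ by Hadamard's lemma. Relabeling $\omega$ cannot change the shape of the $\xi=0$ curves.

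\textbf{Correction in item 5.} Integrating $\partial_t^2M=c\,\partial_tM$ along each curve gives
\[
M=C(\omega,\xi)+\rho(\omega,t,\xi)\,D(\omega,\xi).
\]
Here $\rho$, $C$ and $D$ all depend on the curve, not just on $t$; this is the $\Phi$-analogue of Proposition \ref{B.C.Arian}. Your claim that the \cite{GWZ24} argument then applies verbatim, without the symmetry of $\nabla_\xi^2\phi$, is asserted rather than checked, just as in the paper.
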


The proof for Theorem \ref{250727theorem1_13} is routine and tedious. As the paper is already long, we decide to leave out the proofs.

Readers familiar with the formulation of Bourgain's condition for phase function in \cite{GWZ24} will find the form \eqref{260615e1_37} very familiar as well, and very easy to accept. The reason that we provide another equivalent formulation \eqref{25100e1_35ddd} is that it is extremely convenient when applied to the study of Schr\"odinger equations with potentials, see Appendix  \ref{260615append_c}.

Below let us try to make the second item in Theorem \ref{250727theorem1_13} more precise. We make explicit the transformations we may apply to a family of curves to get the same Kakeya problem. Write $\mathcal C$ for a $2(n-1)$-parameter family of curves, and $\ell_{\xi, w}$ for a curve in the family. Write the parameter space as $\Sigma \times W$, and the space in which the curves live as $M$. Consider a map 
\begin{equation}
F =(F_{\Sigma \times W},F_{M}),
\end{equation}
factoring into local diffeomorphism on $\Sigma \times W$ and $M$ respectively. The map $F$ induces a new family of curves $\mathcal C' = F_M(\mathcal C)$ pointwise: 
  A curve $\ell_{\xi,w} \in \mathcal C$ is sent to 
  \begin{equation}
      \ell'_{\xi', w'} = F_M(\ell_{F^{-1}_{\Sigma\times W}(\xi', w')}) \in  F_M(\mathcal{C}).
  \end{equation}

\begin{definition}[Direction-equivalence]
    A direction-preserving transformation is a  transformation
    \begin{equation}
    F(\xi, w, \bfx) = (F_{\Sigma \times W}(\xi,w), F_M(\bfx))
    \end{equation}
     which in addition satisfies 
    \begin{align}
        F_{\Sigma \times W}(\xi, w) = (F_\Sigma(\xi), F_W(\xi, w)).
    \end{align}
    We say that $\mathcal C$ is direction-equivalent to $\mathcal C'$ if there exists a direction-preserving transformation. 
\end{definition}

The key Kakeya-hypothesis preserved under direction-preserving transformations is, of course, direction separatedness. The Katz-Wolff condition is preserved under direction-equivalence. \\

We should remark on another more restrictive notion of equivalence at the level of phase functions. 
Write $\ell_{\xi, w}$ for the characteristic curve 
\begin{equation}
\{\bfx : \nabla_\xi \phi(\bfx,\xi)=w\}.
\end{equation}
We may apply diffeomorphisms $\bfx' = G_1(\bfx)$ and $\xi' = G_2(\xi)$ separately to a phase function $\phi$ and obtain
\begin{equation}
\phi'(\bfx', \xi') = \phi(G_1^{-1}(\bfx'), G_2^{-1}(\xi')).
\end{equation}
 A quick calculation relates the characteristic curves $\ell'_{\xi',w'}$ of $\phi'$ to those of $\phi$: 
\begin{align}
    \ell'_{\xi',w'} = G_1(\ell_{\xi, \nabla G_2(\xi) w'}). 
\end{align}
We see that at the level of curves, the corresponding map $F$ is given by: 
\begin{align}
    F(\xi, w,\bfx) &=(F_{\Sigma \times W}(\xi,w), F_M(\bfx)) \\
    &= ((G_2(\xi), \nabla G_2(\xi)^{-1} w), G_1(\bfx)).
\end{align}
This is a direction-preserving transformation with extra restrictions.

\subsection{Finer characterizations for 
\texorpdfstring{$\Phi$}{}-Kakeya sets
}

In Subsection \ref{250726subsection1_2}, we introduced finer characterizations for H\"ormander-type phases. In the current subsection, we introduce parallel finer characterizations for the general curved Kakeya problems ($\Phi$-Kakeya problems) in Subsection \ref{250929sub1_3}, see Definition \ref{250510defi1_1} and Definition \ref{250801definition1_16} below. Moreover, we will also introduce a few new notions, motivated by various examples, and we will see eventually how all these notions fit together in our main theorem (see Theorem \ref{thm: main}). 

\begin{definition}[Katz-Wolff condition for curved Kakeya problems]\label{250510defi1_1}
    Consider the $\Phi$-Kakeya problem where
    \begin{equation}
        \Phi(w, t; \xi)= (X(w, t; \xi), t),
    \end{equation}
    and without loss of generality we assume that $X$ is in a normal form. 
    Fix $w_0$ and $\xi'\neq \xi''$, and consider two characteristic curves 
    \begin{equation}
        \{(X(w_0, t; \xi'), t): |t|\le \epsilon_{\Phi}\}, \ \ \{(X(w_0, t; \xi''), t): |t|\le \epsilon_{\Phi}\}.
    \end{equation}
    For given $t_1, t_2$, let $w(t_1, t_2)$ and $\xi(t_1, t_2)$ be the unique (if exist) solutions satisfying that the characteristic curve 
    \begin{equation}
        \{(X(w(t_1, t_2), t; \xi(t_1, t_2)), t): |t|\le \epsilon_{\Phi}\}
    \end{equation}
    passes through the two points
    \begin{equation}
        (X(w_0, t_1; \xi'), t_1), \ \ (X(w_0, t_2; \xi''), t_2).
    \end{equation}
    We say that $\Phi$ satisfies the  Katz-Wolff condition if for all choices of $w_0, \xi'\neq \xi''$, the image of $\xi(t_1,t_2)$ is a curve (one-dimensional instead of two-dimensional). Equivalently, 
\begin{equation}
        \rank[\partial_{t_1} \xi(t_1, t_2), \partial_{t_2} \xi(t_1, t_2)]< 2, \ \ \forall t_1, t_2.
    \end{equation}
\end{definition}

\begin{definition}[Totally geodesic condition for curved Kakeya problems]\label{250801definition1_16}
Under the same notation as in Definition \ref{250510defi1_1}, 
    we say that $\Phi$ satisfies
    the totally geodesic condition if for all choices of  $w_0$ and $\xi'\neq \xi''$, the set 
    \begin{equation}\label{eq: curve surface union}
        \bigcup_{t_1, t_2} \{(X(w(t_1, t_2), t; \xi(t_1, t_2)), t): |t|\le \epsilon_{\Phi}\}
    \end{equation}
    forms a two-dimensional surface (instead of a 3-dimensional region). Moreover, for a phase function $\phi$ satisfying H\"ormander's condition in Definition \ref{250726defi1_1}, we say that it satisfies the totally geodesic condition if its induced $\Phi$ (given by \eqref{250726e1_30}) satisfies the totally geodesic condition. 
\end{definition}

Recall Bourgain's example \eqref{250801e1_8} in \cite{Bou91}. We write it as a $\Phi$-Kakeya problem, with 
\begin{equation}\label{250930e1_46}
    \Phi(w, t; \xi)= (w_1+ t\xi_2+ t^2\xi_1, w_2+ t\xi_1, t),
\end{equation}
where $w=(w_1, w_2)$ and $\xi=(\xi_1, \xi_2)$. 
We apply a change of coordinate and see that $\Phi$ is direction-equivalent to 
\begin{equation}\label{260526e1_53}
    \widetilde{\Phi}(w, t; \xi)= 
    (w_1- tw_2+ t\xi_2, w_2+ t\xi_1, t).
\end{equation}
From this, it is elementary to see that both $\Phi$ and $\widetilde{\Phi}$ satisfy the totally geodesic condition in Definition \ref{250801definition1_16}. Moreover, every totally geodesic surface gives rise to a Kakeya set (see Subsection \ref{250801subsection2_4} below). 

\begin{definition}[Extra-worst compression condition]\label{250801defi1_17zz}
Take $n=3$.
    We say that $\Phi$ (or a phase function $\phi$) satisfies the extra-worst compression condition if $\Phi$ (or the induced map $\Phi$ given by \eqref{250726e1_30}) satisfies the totally geodesic condition in Definition \ref{250801definition1_16}, and every totally geodesic surface gives rise to a $\Phi$-Kakeya set. 
\end{definition}

The maps $\Phi$ satisfying Definition \ref{250801defi1_17zz} are in some sense ``extra-worst possible" maps, and there are plenty of low dimensional Kakeya sets associated to these $\Phi$. In comparison, in the following definition, we introduce ``worst possible" maps, meaning that we can find at least one low dimensional Kakeya set.

\begin{definition}[Worst compression condition]\label{260615defi1_19}
Take $n=3$ and we follow the notation in Definition \ref{250510defi1_1} and Definition \ref{250801definition1_16}. We say that $\Phi$ (or a phase function $\phi$) satisfies the worst compression condition if 
we can find $w_0$ and $\xi'\neq \xi''$ satisfying that 
\eqref{eq: curve surface union} forms a two-dimensional surface and 
\begin{equation}
        \rank[\partial_{t_1} \xi(t_1, t_2), \partial_{t_2} \xi(t_1, t_2)]= 2, 
    \end{equation}
    for some $t_1\neq t_2$. 
\end{definition}

If a map $\Phi$ satisfies the worst compression condition in Definition \ref{260615defi1_19}, it is immediate to see that one can find a $\Phi$-Kakeya set that is two dimensional. Moreover, we will show below in one of our main theorems (see Theorem \ref{260615theorem1_30}) that 
this is the only way for a two-dimensional $\Phi$-Kakeya set to occur, under the extra assumption that $\Phi$ is semi-algebraic; we will also prove the above result more quantitatively, meaning that if $\Phi$ fails the worst compression condition, then we can find $\varepsilon_{\Phi}>0$ such that every $\Phi$-Kakeya set has dimension $\ge 2+\varepsilon_{\Phi}$. \\

Maps $\Phi$ satisfying the extra-worst compression condition are extremely rigid; for instance, we will show in our main theorem (see Theorem \ref{thm: main} below) that $\Phi$ satisfies the extra-worst condition if and only if it satisfies the totally geodesic condition in Definition \ref{250801definition1_16} and fails Bourgain's condition. However, maps satisfying the worst compression condition are less rare, and it is very easy to find maps satisfying the worst compression condition but not the extra-worst compression condition. \\

In contrast to the above two definitions, the maps $\Phi$ satisfying the following definition are the ``best possible", in the sense that their associated Kakeya sets are expected to have full dimensions.

\begin{definition}[Bochner-Riesz-type]
 We say that the map $\Phi$ (or a phase function $\phi$) is of Bochner-Riesz-type if it (or the induced map $\Phi$ given by \eqref{250726e1_30}) is direction-equivalent to the characteristic curves of the phase function 
 \begin{equation}
 \phi(x, t; \xi) = |(x, t) - (\xi, \varphi(\xi))|,
 \end{equation}
  where $\varphi : \R^{n-1} \to \R$ is a smooth function. The curves are given by 
  \begin{equation}
  \ell_{\xi,w}(t) = (\xi, \varphi(\xi)) - t (w,1).
  \end{equation}
\end{definition}

The connections among the newly introduced definitions in this subsection will become clear once we state our main theorem (see Theorem \ref{thm: main} below).

\subsection{The geometry of paths and other geometry background}

We are interested in putting a geometric structure on $M = \R^n$ whose geodesics are exactly the curves induced by $\Phi$ (which we can call $\Phi$-curves)--this will give us access to geometric invariants that we can use to study our Kakeya set.  
Riemannian geometry is unfortunately too restrictive (see \cite{DGGZ24}) to encode the family of $\Phi$-curves for most $\Phi$, so we need to work instead with a more general structure. The structure we use is a \emph{spray space}. In this section we include the minimal definitions needed to state and motivate our results. See Appendix \ref{250711appendix_a} for further details. Most of the materials in this subsection are standard, and can found for instance in the book \cite{Shen01}. \\

First we write $M$ for a manifold and $TM$ for its tangent bundle, with projection 
\begin{equation}
\pi_{TM} : TM \to M.
\end{equation}
If $V$ is a vector space, we say $C \subset V \setminus 0$ is \emph{conic} if for all $\lambda > 0$, $\lambda C = C$ (these can be identified with subsets of $(V\setminus 0)/ \R_{> 0}$, or with an inner product chosen, subsets of $S^{n-1}$). A set
\begin{equation}
\mathcal U \subset TM \setminus 0
\end{equation}
 is a fiberwise conic set if for each $x \in M$, 
\begin{equation}
\mathcal U_x := \mathcal U \cap T_xM \subset T_xM \setminus 0
\end{equation}
is conic. 

\begin{definition}[Spray space]\label{260526defi1_19}
    A spray space is a triple $(M, \mathcal U, G)$ of a manifold $M$, an open fiberwise conic set $\mathcal U \subset TM \setminus 0$, and $\mathbf G \in \Gamma(T\mathcal U)$ expressed in the local coordinates $(x^i,y^i)$ in $\mathcal U$ as 
    \begin{align}
        \mathbf{G} = y^i \frac{\partial}{\partial x^i} - 2 G^i(x,y) \frac{\partial}{\partial y^i},
    \end{align}
    where $G^i(x,y)$ are local functions on $\mathcal U$ satisfying
    \begin{equation}\label{260526e1_60zz}
    G^i(x,\lambda y) = \lambda^2 G^i(x,y)
    \end{equation}
for every     $\lambda > 0$. \footnote{This is a modification of the definition of a spray space from \cite{Shen01}. All the definitions and theorems about spray spaces that we use are applied locally, so there is no issue in working on a fiberwise conic section instead of the entire slit tangent bundle $TM \setminus 0$.}
\end{definition}

As the introduction is already long, we will postpone the discussion of a geometric interpretation of spray spaces to Section \ref{260526section3}. For readers not familiar with spray spaces, it may be a good idea to take a look at the first half of Section \ref{260526section3} first, and this may help understand the contents of the forthcoming materials, say Proposition \ref{prop: Kakeya to spray space}. \\

In what follows, the objects in bold are coordinate invariant. 
The \emph{geodesics} of $\bf G$ are the projections to $M$ of the integral curves of $\bf G$. That is, the geodesics $\gamma$ satisfy the equation 
\begin{equation}\label{260526e1_61a}
\ddot c^i + 2 G^i(c,\dot c)=0.
\end{equation}
We refer to Section \ref{260526section3} (in particular, equation \eqref{250710ea_5}) for 
how to derive the equation \eqref{260526e1_61a} and for its geometric interpretation in local coordinates.  It is natural to work with coordinate invariant objects, since Kakeya estimates are essentially invariant with respect to diffeomorphism (of course not depending on scale parameters like $\delta$). 
The proposition below says that families of $\Phi$-curves and the geodesics of a spray space are essentially equivalent (after remembering the identifications of parameters).

\begin{proposition}\label{prop: Kakeya to spray space}
Let $n\ge 2$ and let $\Phi$ be a smooth map satisfying H\"ormander's condition as in Definition \ref{HorCond}. Let $\varepsilon_{\Phi}>0$ be a sufficiently small constant depending on $\Phi$. Then there is a spray space $(M, \mathcal U, \bG)$ and a map 
\begin{equation}
(W,\Xi) : \mathcal U \to \mathcal \B^{n-1}_{\varepsilon_{\Phi}} \times \B^{n-1}_{\varepsilon_{\Phi}},
\end{equation}
where we write
\begin{equation}
(W_x,\Xi_x) :\mathcal U_x \to \B^{n-1}_{\varepsilon_{\Phi}} \times \B^{n-1}_{\varepsilon_{\Phi}}
\end{equation}
with the following properties: 
The geodesic with tangent $v_x \in \mathcal U_x$ is the curve with parameters
        \begin{equation}
        (W_x(v_x),\Xi_x(v_x)).
        \end{equation}
         Conversely, every curve with parameters in $\B^{n-1}_{\varepsilon_{\Phi}} \times \B^{n-1}_{\varepsilon_{\Phi}}$ is a geodesic of the spray space.
\end{proposition}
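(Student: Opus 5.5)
We construct the spray directly from the curve family, after reducing to the graph form $\Phi(w,t;\xi)=(X(w,t;\xi),t)$ of \eqref{250727e1_25}. Take $M$ to be a small neighborhood of the origin in $\R^n$ with coordinates $\bfx=(x,t)$. The first step is to show that each point and tangent direction determine exactly one curve. Consider
\begin{equation*}
(w,t,\xi)\mapsto \bigl(X(w,t;\xi),\,t,\,\partial_t X(w,t;\xi)\bigr).
\end{equation*}
Its Jacobian in $(w,\xi)$ at fixed $t$ is exactly the matrix \eqref{le1.12}, which is non-degenerate by Definition \ref{HorCondX}(2). The inverse function theorem, applied on a neighborhood of size $\varepsilon_\Phi$, then gives smooth inverse maps
\begin{equation*}
(x,t,p)\mapsto \bigl(w(x,t,p),\,\xi(x,t,p)\bigr),
\end{equation*}
where $p$ ranges over a small neighborhood $P$ of the slope set $\{\partial_t X(w,t;\xi)\}$. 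Consequently, through each $(x,t)$ with slope $p\in P$ there passes a unique $\Phi$-curve.

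The second step defines the spray. Since each curve is a graph over $t$, it satisfies the second-order ODE
\begin{equation*}
\ddot x = F(x,t,p),\qquad F(x,t,p):=\partial_t^2X\bigl(w(x,t,p),t;\xi(x,t,p)\bigr),\quad p=\dot x.
\end{equation*}
Take the conic set $\mathcal U=\{(\bfx,y): y^n>0,\ y'/y^n\in P\}$, where $y=(y',y^n)$. This is open, fiberwise conic, and contained in $TM\setminus 0$. On $\mathcal U$ set
\begin{equation*}
G^n(\bfx,y)=0,\qquad G^i(\bfx,y)=-\tfrac12 (y^n)^2 F\bigl(\bfx,\,y'/y^n\bigr),\quad i\le n-1.
\end{equation*}
These functions are positively $2$-homogeneous, as required by \eqref{260526e1_60zz}.

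The third step checks that the geodesics of $\mathbf G$ are the $\Phi$-curves. A geodesic $c$ solves \eqref{260526e1_61a}. The equation $\ddot c^n=0$ forces $c^n(s)=as+b$ with $a>0$, so $t$ is an affine parameter. For the remaining components, $\ddot c'=(y^n)^2F=a^2F$, which is exactly $d^2x/dt^2=F$. Hence the projection of the geodesic with initial data $v_x=(y',y^n)$ is the unique $\Phi$-curve through $x$ with slope $y'/y^n$. We then define
\begin{equation*}
(W_x,\Xi_x)(v_x)=\bigl(w(x,\,y'/y^n),\ \xi(x,\,y'/y^n)\bigr).
\end{equation*}
This map is constant along rays, consistent with geodesics being considered as unparameterized curves.

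For the converse, let a curve have parameters $(w,\xi)$ in the small balls. At any of its points $\bfx=(X(w,t;\xi),t)$, its velocity $(\partial_tX,1)$ lies in $\mathcal U$. By local uniqueness for the ODE, the curve is then the geodesic with that tangent.

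The main obstacle is the uniform control of domains. We must choose $\varepsilon_\Phi$, $M$ and $P$ so that the inverse function theorem applies uniformly along each whole curve segment $|t|\le\varepsilon_\Phi$. We must also ensure that the slopes of all curves with parameters in $\B^{n-1}_{\varepsilon_\Phi}\times\B^{n-1}_{\varepsilon_\Phi}$ stay inside $P$, and that the inverse is globally injective on that parameter set, not merely locally. I would handle this by first passing to the normal form of Definition \ref{NormalForm}, using Theorem \ref{250727theorem1_13}(\ref{thm1_13item2}). In normal form the map $(w,\xi)\mapsto(X,\partial_tX)$ is a small perturbation of $(w+t\xi,\xi)$, which is injective. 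A quantitative inverse function theorem then gives injectivity on balls of a fixed size once $\varepsilon_\Phi$ is small.
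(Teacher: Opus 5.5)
Your proposal is correct and is essentially the paper's own argument (due to Douglas). The paper likewise takes the last coordinate as an affine parameter (so $G^n=0$), inverts $(w,\xi)\mapsto(X,\partial_tX)$ at fixed height using the non-degeneracy of \eqref{le1.12}, and defines the spray by substituting the inverse into $(y^n)^2\,\partial_t^2X$, which is automatically $2$-homogeneous; your treatment of uniform domains and injectivity is more careful than the paper's one-line remark, and the only adjustment worth making is to take $\mathcal U_{\bfx}$ to be the set of $y$ with $y^n>0$ whose inverse parameters lie in $\B^{n-1}_{\varepsilon_\Phi}\times\B^{n-1}_{\varepsilon_\Phi}$, rather than using a neighborhood $P$ of the slope set, so that $(W,\Xi)$ genuinely takes values in those balls.
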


Proposition \ref{prop: Kakeya to spray space} is stated in a coordinate-free form. When we prove this proposition in Section \ref{260526section3}, we will first write it in local coordinates, and readers who are not used to the language of spray spaces can read the formulation there directly.

\begin{example}[Flat spray]
     A spray $(M,\mathcal U, \bG)$ is flat if there are local coordinates on $M$ where $\bG = y^i \frac{\partial}{\partial x^i}$. The geodesics are the family of lines. 
\end{example}

When studying $\Phi$-Kakeya sets, we may always localize to a positive fraction of the curves and a smaller fraction of space (of course not depending on a scale parameter like $\delta$). In the language of spray spaces, this gives us the freedom to shrink $M$ and $\mathcal U$ to smaller open sets around our point of interest. For example, after fixing $(x_0,y_0) \in \mathcal U$ and a local trivialization of $TM$, we can find $U \subset M$ containing $x_0$ and an open cone $C \subset \R^n$ containing $y_0$ such that $U \times C \subset \mathcal U$. It will sometimes be useful to restrict to this product set. 

We of course are only interested in studying $\Phi$-Kakeya sets as point sets (we do not care how the curves are parameterized). This is captured by the following definition. 
\begin{definition}[Pointwise projectively related]
    Two sprays $\bG$ and $\tilde \bG$ defined on $(M, \mathcal U)$ are pointwise projectively related if for any geodesic $c(t)$ of $\bG$, there is an orientation-preserving reparameterization $t= t(s)$ such that $c(s) := c(t(s))$ is a geodesic of $\tilde \bG$, and vice versa. Equivalently, there is a scalar function $P$ on $\mathcal U$ which is positively $1$-homogeneous in $y$ such that 
    \begin{align}
        \tilde \bG = \bG - 2P \mathbf{Y},
    \end{align}
    where in coordinates $\mathbf Y = y^i \frac{\partial}{\partial y^i}$. 
\end{definition}
There are two natural projective invariants for $\bG$. The first is the \emph{Douglas tensor} 
\begin{equation}
\mathbf D \in \Gamma(\mathcal U, \hom((\pi^*T M)^{\otimes 3}, \pi^* TM)),
\end{equation}
 which detects if $\bG$ is \emph{affine} up to projective transformation (this is a non-Riemannian quantity). The other is the \emph{Weyl tensor}
 \begin{equation}
 \mathbf W \in \Gamma(\mathcal U, \mathrm{End}(\pi^* TM)),
 \end{equation}
  which detects curvature up to projective transformation. The tensor $\mathbf W$ is the projective trace-free part of the Riemann tensor 
  \begin{equation}
  \mathbf R \in \Gamma(\mathcal U, \mathrm{End}(\pi^* TM)).
  \end{equation}
   We will give formulas for all these objects in Appendix \ref{250711appendix_a} and their properties. These invariants let us detect whether a spray $\bG$ is (locally) projectively flat for $\dim M \geq 3$ (that is, there are coordinates on $M$ where $\bG$ is projectively related to the flat spray $y^i \frac{\partial}{\partial x^i}$).
\begin{definition}[Projectively flat]\label{def: locally projectively flat}
    A spray space $(M, \mathcal U, \bG)$ is (locally) projectively flat if
    any $(x_0,y_0) \in \mathcal U$ has a neighborhood $\mathcal U' \subset \mathcal U$ on which $\bG$ is projectively related to a flat spray. 
\end{definition}

Let us formulate Definition \ref{def: locally projectively flat} in the setting of $\Phi$-Kakeya sets. A $\Phi$-Kakeya problem is said to be \underline{projectively flat} if 
one can find a local diffeomorphism $F$ acting on $\R^n$ such that 
the image of 
\begin{equation}
\{\Phi(w, t; \xi): |t|\le \varepsilon_{\Phi}\}
\end{equation}
under the map $F$ is a line segment, for every $w, \xi$.

\begin{theorem}[Minor modification of \protect{\cite[Theorem 13.5.1]{Shen01}}]\label{thm: douglas weyl vanishing}
    Let $(M, \mathcal U, \bG)$ be a spray space with $\dim M \geq 3$. Suppose that  
    \begin{equation}
    \mathbf{D}\equiv 0, \ \  \mathbf{W} \equiv 0.
    \end{equation}
     Then $\bG$ is locally projectively flat.
\end{theorem}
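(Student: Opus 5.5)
Following Shen's proof of \cite[Theorem 13.5.1]{Shen01}, I plan to run the classical Douglas--Weyl argument locally on the fiberwise conic set $\mathcal U$. The homogeneity \eqref{260526e1_60zz} only involves rescalings $\lambda>0$, and all the relevant identities are local in $(x,y)$. So after fixing $(x_0,y_0)\in\mathcal U$ and shrinking to a product $U\times C\subset\mathcal U$ with $U$ a coordinate ball and $C$ an open cone, I expect every step of Shen's argument to carry over verbatim. The only requirement is that no step uses the whole slit tangent bundle. The plan has three steps.

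\emph{Step 1 ($\mathbf D\equiv 0$).} Vanishing of the Douglas tensor means $\partial_{y^j}\partial_{y^k}\partial_{y^l}\bigl(G^i-\tfrac1{n+1}N^m_m y^i\bigr)=0$, where $N^m_m=\partial_{y^m}G^m$. On the cone $C$ (connected, open) this gives that
\[
\hat G^i:=G^i-\tfrac{1}{n+1}\partial_{y^m}G^m\, y^i
\]
is a polynomial of degree $\le 2$ in $y$. By $2$-homogeneity under $\lambda>0$ it is a quadratic form $\tfrac12\Gamma^i_{jk}(x)y^jy^k$. Hence $\bG$ is pointwise projectively related to the affine spray $\hat\bG$ on $U\times C$. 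Moreover, $\hat\bG$ extends to all of $TU$, so from now on we may work with a genuine torsion-free affine connection $\Gamma$ on $U$.

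\emph{Step 2 ($\mathbf W\equiv 0$ for $\hat\bG$).} The Weyl tensor is a projective invariant, so $\mathbf W_{\hat\bG}=0$ on $U\times C$. Its components are polynomial in $y$ for an affine spray, so they vanish identically on $TU$. For a connection, the vanishing of the projective Weyl curvature in dimension $n\ge3$ implies vanishing of the Weyl projective curvature tensor of the connection. The Cotton-type obstruction is automatic here: the second Bianchi identity shows it follows from Weyl vanishing exactly when $n\ge 3$. I would cite the standard computation in \cite{Shen01} (or the classical Weyl theorem) rather than redo it. This is where $\dim M\ge3$ enters.

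\emph{Step 3 (flattening).} By Weyl's classical theorem, a torsion-free connection with vanishing projective curvature, with $n\ge3$, is projectively flat on a simply connected neighborhood. Concretely, one first replaces $\Gamma$ within its projective class by a connection with symmetric Ricci tensor (possible locally since the relevant closed $1$-form is exact on a ball). Then one solves the overdetermined linear system
\[
\nabla_j\nabla_k f=-\tfrac{1}{n-1}R_{jk}f
\]
for $n+1$ independent solutions $f^0,\dots,f^n$. Its integrability condition is precisely the vanishing of the projective curvature. The map $x\mapsto[f^0:\cdots:f^n]$ then sends geodesics to projective lines, and in an affine chart these are straight lines. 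This yields the coordinates in which $\bG$ is projectively related to $y^i\partial_{x^i}$ on a neighborhood $\mathcal U'$ of $(x_0,y_0)$.

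The main obstacle, and the only genuine modification of \cite{Shen01}, is Step 1 on a cone rather than the full fiber. One must check that third-derivative vanishing on a connected open cone still forces a quadratic form: $\hat G^i$ is quadratic plus linear plus constant on $C$, and the homogeneity kills the lower-order terms. Everything after that concerns the quadratic, globally defined affine spray and is classical.
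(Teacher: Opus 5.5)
Your proposal is correct and follows essentially the same route as the paper, which likewise uses $\mathbf D\equiv 0$ on a product $U\times C$ to replace $\bG$ by a pointwise projectively related quadratic (affine) spray that extends to all of $TU$, and then simply invokes \cite[Theorem 13.5.1]{Shen01}, whose content in the affine case is the Weyl/Cotton and projective-coordinates argument you sketch in Steps 2--3. Your version is, if anything, a bit more careful than the paper's one-line remark: you note that the Weyl tensor of the extended affine spray is polynomial in $y$ and hence vanishes on all of $TU$, which is what licenses applying a theorem stated on the full slit tangent bundle, and you correctly say ``projectively related'' where the paper says ``agrees with $\bG$''.
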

To see that this holds, one uses $\mathbf D = 0$ to pass to an affine spray defined globally, which agrees with $\bG$ on an open $\mathcal U' \subset \mathcal U$. Then apply \cite[Theorem 13.5.1]{Shen01}.\\

It was pointed out in \cite{gao2025curvedkakeyasetsnikodym} that 
the Kakeya problem on Riemannian manifolds of constant sectional curvature can be turned into the Kakeya problem on the Euclidean space. If we use the language of spray spaces, then  the above result in \cite{gao2025curvedkakeyasetsnikodym} can be summarized as: The Kakeya problem on Riemannian manifolds of constant sectional curvature is projectively flat. This is a special case of Theorem \ref{thm: douglas weyl vanishing}, as Douglas curvature already vanishes on a Riemannian manifold, and constant curvature is the same as vanishing Weyl curvature in the setting of Riemannian geometry. \\

Let us introduce the notion of a totally geodesic surface for a spray space. 
\begin{definition}\label{def: totally geodesic surface}
    Let $(M, \mathcal U, \bG)$ be a spray space. We say that a smooth surface $S \subset M$ is totally geodesic if for any $x \in S$ and geodesic $\gamma$ satisfying $\gamma(0) = x$ and
    \begin{equation}
    \dot \gamma(0) \in T_x S \cap \mathcal U_x,
    \end{equation}
     we have $\gamma(t) \in S$ for $t$ in a neighborhood of $0$. 
\end{definition}

We may now introduce a version of Definition \ref{250801definition1_16} adapted to spray spaces. 
\begin{definition}[Totally Geodesic condition: geometric version]\label{def: T.G. geometric}
    We say that $(M, \mathcal U,\mathbf G)$ has many totally geodesic surfaces if for each $x \in M$ and 2-plane $\Pi$ through the origin with $\Pi \cap \mathcal U_x \neq \emptyset$, there is a totally geodesic surface $S$ satisfying $T_x S = \Pi$.
\end{definition}

So far for a curved Kakeya problem, we have two totally geodesic conditions: Definition \ref{250801definition1_16} and Definition \ref{def: T.G. geometric}. Definition \ref{250801definition1_16} is stated for curved Kakeya problems directly. For Definition \ref{def: T.G. geometric}, we first apply Theorem \ref{prop: Kakeya to spray space} to our curved Kakeya problem to obtain a spray space; once we have a spray space, we can check Definition \ref{def: T.G. geometric}. 
Let us give a sketch that Definition \ref{250801definition1_16} and Definition \ref{def: T.G. geometric} are equivalent. 
\begin{proof}[Sketch of equivalence of Definition \ref{250801definition1_16} and Definition \ref{def: T.G. geometric}]
    Suppose that $X$ is as in Definition \ref{250801definition1_16}. Consider the surface $S(y_1,y_2,\mathbf x_0)$ formed by the two intersecting curves $\Gamma_{y_1}(\mathbf x_0)$ and $\Gamma_{y_2}(\mathbf x_0)$. Identify some interior point $x_{\mathrm{int}}(y_1,y_2,\mathbf x_0)$ and compute the tangent plane $P(y_1, y_2, \mathbf x_0)$ at that point. By the implicit function theorem, for any $x$ in a sufficiently small neighborhood and tangent plane $P$ intersecting a small enough open cone, there are $y_1,y_2,\mathbf x_0$ such that
    \begin{equation}
    x = x_{\mathrm{int}}(y_1,y_2, \mathbf x_0), \ \ P = P(y_1,y_2, \mathbf x_0).
    \end{equation}
     Then $S = S(y_1,y_2, \mathbf x_0)$ is a totally geodesic surface near  $x$ in the sense of Definition \eqref{def: T.G. geometric}. 
    Indeed, there is a 1-parameter family of curves in $S$ passing through any given point. So for any direction (in a small enough open cone) tangent to $S$, there is a curve in $S$ in that direction, and that curve is unique. 

    For the other direction, assume that $X$ is an Definition \ref{def: T.G. geometric}. Fix two intersecting curves $\Gamma_{y_1}(x_0)$ and $\Gamma_{y_2}(x_0)$ (with $y_1$ and $y_2$ close enough), and consider the plane $\Pi$ which contains their tangent vectors at $x_0$. By Definition \ref{def: T.G. geometric}, there is a totally geodesic surface $S$ with $T_{x_0} S = \Pi$. These curves lie in $S$ locally.
    Now fix points $x_1 \in \Gamma_{y_1}(x_0)$ and $x_2 \in \Gamma_{y_2}(x_0)$, both different from $x_0$. Consider the 1-parameter family of (unit) tangent vectors in $T_{x_1} S$. Since $S$ is totally geodesic, these curves all lie in $S$. By the implicit function theorem, one of these curves intersects $x_2$ (for $x_1, x_2$ appropriately chosen in open subsets of the curves $\Gamma_{y_1}(x_0)$ and $\Gamma_{y_2}(x_0)$). Passing over all $x_1, x_2$ in fixed open subsets of the curves $\Gamma_{y_1}(x_0)$ and $\Gamma_{y_2}(x_0)$, we get Definition \ref{250801definition1_16}. 
\end{proof}

\subsection{Statement of main theorems}

Before discussing the formal theorem statements in the next section, we give the picture at the level of curved Kakeya problems. 
We abbreviate the definitions and make some small comments for the picture:

\begin{itemize}
    \item \textbf{K.W.}: Katz-Wolff condition.
    \item \textbf{T.G.}: Totally geodesic condition. 
    \item \textbf{Flat}: (Direction-equivalent to) projectively flat. 
    \item \textbf{B. Cond.}: Bourgain condition. 
    \item \textbf{B.R.-type:} Bochner-Riesz-type. 
    \item \textbf{extra-worst-comp.} Extra-worst compression condition in Definition \ref{250801defi1_17zz}. 
\end{itemize}

\begin{figure}
    \centering
    \includegraphics[width=0.75\linewidth]{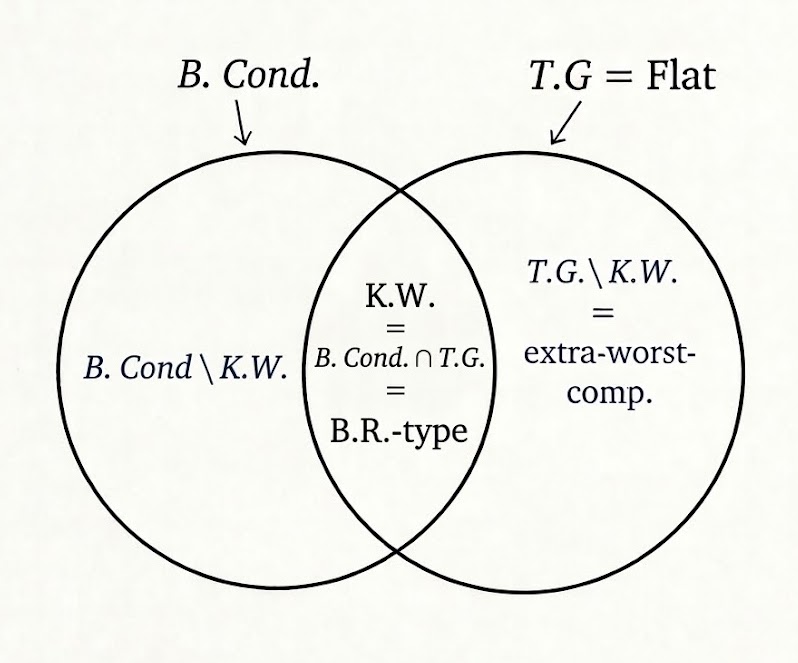}
    \caption{Main Venn Diagram of Conditions}
    \label{fig: venn diagram}
\end{figure}

Below we state the theorems that justify Figure \ref{fig: venn diagram}.

\begin{theorem}[Main Venn-diagram Theorem]\label{thm: main}
Let $\Phi$ be a map satisfying H\"ormander's condition in Definition \ref{HorCond} and take $n\geq 3$. 
    \begin{enumerate}
        \item\label{251003thm_item1} Take $n=3$. That $\Phi$ satisfies the totally geodesic condition (either Definition \ref{def: T.G. geometric} or Definition \ref{250801definition1_16}) is equivalent to that the induced  spray space (by $\Phi$ as in Proposition \ref{prop: Kakeya to spray space}) is projectively flat.\label{thm: main part 1}
        \item\label{251003thm_item2} If $\Phi$ satisfies the Katz-Wolff condition, then it also satisfies the totally geodesic condition.
        \label{thm: main part 2}
        \item\label{251003thm_item3} If $\Phi$ satisfies the Katz-Wolff condition, then it also satisfies Bourgain's condition. \label{thm: main part 3}
        \item\label{251003thm_item4} If $\Phi$ satisfies both the totally geodesic condition and Bourgain's condition, then it is of the Bochner-Riesz-type\footnote{We also refer to the beginning of Section \ref{251003secc6} for a more quantitative version of this statement.}, which further implies that it satisfies the Katz-Wolff condition.   \label{thm: main part 4}
        \item\label{251003thm_item5} Take $n=3$. If $\Phi$ satisfies the totally geodesic but fails Bourgain's condition, then it satisfies the extra-worst compression condition.\footnote{For dimensions $n\ge 4$, the worst compression will not just occur inside $(n-1)$-dimensional sub-manifolds, but sub-manifolds of even smaller dimensions, see examples for instance in \cite{GHI19}. Therefore when $n\ge 4$, one will need new definitions that replace Definition \ref{250801defi1_17zz}. We will leave this to future works.}  \label{thm: main part 5}
    \end{enumerate}
Moreover, if we let $\phi$ be a phase function satisfying H\"ormander's condition in Definition \ref{250726defi1_1}, then all the above statements hold true with $\phi$ in place of $\Phi$. 
\end{theorem}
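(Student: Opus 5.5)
We take the five items in order, working in the normal form of Definition \ref{NormalForm} and in the spray space given by Proposition \ref{prop: Kakeya to spray space}.

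\textbf{Item \ref{thm: main part 1} (T.G. $\Leftrightarrow$ projectively flat).} One direction is immediate. If the spray is projectively flat, there are coordinates in which the unparametrized geodesics are lines. The 2-planes then give totally geodesic surfaces through every point in every admissible tangent plane, which is Definition \ref{def: T.G. geometric}. For the converse we will use Theorem \ref{thm: douglas weyl vanishing}, so we need to show $\mathbf D\equiv 0$ and $\mathbf W\equiv 0$.

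To get these, let $S$ be a totally geodesic surface through $x$ with $T_xS=\Pi$. Every geodesic starting tangent to $\Pi$ stays in $S$. Write $S$ as a graph over $\Pi$ and differentiate the geodesic equation \eqref{260526e1_61a} along it. The second fundamental form of $S$ must vanish, and this forces $G^i(x,y)$, for $y\in\Pi$, to be tangent to $\Pi$ modulo $y$. Since $\Pi$ ranges over all 2-planes meeting $\mathcal U_x$, we get
\[
G^i(x,y)\,y^j-G^j(x,y)\,y^i \in \text{span of terms of the form } y^i A^j - y^j A^i ,
\]
that is, $G^i=P(x,y)y^i+\tfrac12\Gamma^i_{jk}(x)y^jy^k$ after projective normalization. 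This is the classical Cartan statement that the Douglas tensor vanishes.

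The third derivative of the same totally geodesic condition, equivalently the Gauss–Codazzi type integrability of $S$, gives that the Riemann tensor $\mathbf R_y$ preserves every 2-plane containing $y$. Hence $\mathbf R_y$ is a multiple of the identity modulo $y$, which is exactly $\mathbf W\equiv 0$. Here $\dim M=3$ is used so that the hypothesis covers every relevant plane.

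\textbf{Items \ref{thm: main part 2} and \ref{thm: main part 3}.} For item \ref{thm: main part 2}, Katz–Wolff says $\xi(t_1,t_2)$ traces a curve $\xi(s)$. The union \eqref{eq: curve surface union} is then parametrized by $(s,t)$ together with the fibre of $(t_1,t_2)\mapsto s$. Along that fibre the curve is fixed, because a curve is determined by its direction and one of its points near $x_0$. So the union is the image of two parameters and is a surface.

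For item \ref{thm: main part 3}, expand $\xi(t_1,t_2)$ to second order in $(t_1,t_2)$ near $t_0$ in normal form. The rank-one condition on $[\partial_{t_1}\xi,\partial_{t_2}\xi]$, imposed for all $\xi',\xi''$, yields at leading order that $\partial_t^2\nabla_\xi X$ is proportional to $\partial_t\nabla_\xi X$. This is the form \eqref{260615e1_37} of Theorem \ref{250727theorem1_13}.

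\textbf{Items \ref{thm: main part 4} and \ref{thm: main part 5}.} By item \ref{thm: main part 1}, after a diffeomorphism of $\R^3$ (and the analogous argument for general $n$, using that T.G. also gives $\mathbf D=\mathbf W=0$) the curves are lines $\ell_{\xi,w}$. The direction labeling then becomes a map from lines to $\xi$, that is, an $(n-1)$-parameter family of lines for each fixed $\xi$.

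Bourgain's condition, computed for lines, says that $\partial_t\bigl((\nabla_\omega X)^{-1}\nabla_\xi X\bigr)$ has constant direction in $t$. We will show this forces each fixed-$\xi$ family to be a family of lines through a common point $p(\xi)$, i.e.\ a cone of lines. Writing $p(\xi)=(\xi,\varphi(\xi))$ after reparametrizing $\xi$ by $F_\Sigma$ gives the Bochner–Riesz form. Katz–Wolff for that form is then a direct check: curves meeting two intersecting lines lie in their plane, and their vertices $p(\xi)$ lie on the curve where that plane meets the graph of $\varphi$.

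If instead Bourgain's condition fails and $n=3$, the same computation shows that the fixed-$\xi$ families are not cones. A two-parameter family of lines of this type in each totally geodesic plane then realizes all $\xi$, as in \eqref{260526e1_53}. This gives a Kakeya set inside each plane, which is the extra-worst compression condition. The phase-function version follows from Theorem \ref{250727theorem1_13}(\ref{thm1_13item4}) and the induced $\Phi$.

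\textbf{Main obstacle.} The hard step is the converse in item \ref{thm: main part 1}: extracting $\mathbf W\equiv0$, not merely $\mathbf D\equiv0$, from the existence of the surfaces. We will try first to do this through the Jacobi field equation restricted to $S$.
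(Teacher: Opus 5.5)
\textbf{Item 2 contains a step that fails.} You claim the curve is constant along a fibre of $(t_1,t_2)\mapsto\xi(t_1,t_2)$. As $t_1$ moves along a fibre, however, the curves pass through different points of $\ell'$, and a curve through two points of $\ell'$ is $\ell'$ itself. So a fibre carries a one-parameter family of distinct curves with the same direction. Already for lines, with $\ell'=\{(t\xi',t)\}$ and $\ell''=\{(t\xi'',t)\}$, one has $\xi(t_1,t_2)=(t_2\xi''-t_1\xi')/(t_2-t_1)$. Its fibres are $\{t_1/t_2=\mathrm{const}\}$, and each carries a family of parallel lines. Your union is therefore swept out by three parameters, and the dimension count proves nothing. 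What must be shown is that the surfaces obtained for different directions coincide.

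The missing idea is the paper's. Put $S=\bigcup_{x\in\ell_0}\Gamma_{\xi_1}(x)$ and let $\ell_2$ meet $\ell_0$ and $S$. Katz--Wolff makes the set $\Xi_{0,2}$ of directions of curves meeting both $\ell_0$ and $\ell_2$ one-dimensional. For each fixed $x_0\in\ell_0\setminus\ell_2$, the directions of curves through $x_0$ hitting $\ell_2$ already form a one-dimensional subset of $\Xi_{0,2}$, hence locally all of it. Since $\xi_1\in\Xi_{0,2}$, every $\Gamma_{\xi_1}(x_0)$ meets $\ell_2$, so $\ell_2\subset S$.

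\textbf{Elsewhere the crucial steps are asserted rather than derived.}

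In Item 1, total geodesy does not make $G(x,y)$ tangent to $\Pi$. Differentiating $f(\gamma)=0$ twice gives $\nabla f\cdot G(x,y)=\tfrac12\nabla^2 f(x)[y,y]$ for $y\in\Pi$. This is the Euclidean second fundamental form of $S$, generally nonzero. Only $\partial_v^3G(x,y)\in\mathrm{span}(y,v)$ survives, and your displayed condition is vacuous. Deducing $\mathbf D=0$ needs a real argument. The paper first straightens the geodesics through a point, so that $G'\equiv a(x,y)\,x$ modulo $y$, and then reads off $a=\nabla^2f[y,y]/(2\langle x,n\rangle)$. Your route to $\mathbf W=0$, by contrast, is correct and shorter than the paper's linear-algebra lemma. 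In adapted coordinates one checks $\mathbf R_y(T_xS)\subset T_xS$ for $y\in T_xS$. So $\mathbf R_y$ preserves every $2$-plane containing $y$, hence $\mathbf R_y=\lambda I+\tau\otimes y$, and such an $\mathbf R$ has $\mathbf W=0$.

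In Item 3, $\xi(t_1,t_2)$ is singular on the diagonal $t_1=t_2$, because the curves are graphs over $t$. There is no expansion at $(t_0,t_0)$. The paper instead fixes $t_2$, lets $t_1\to t_0$, sets $\eta=r\eta_0$, and differentiates in $r$ at $0$ and then in $t$.

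In Item 5, the content is solving $w\cdot\vec a=0$, $V(w,\xi)\cdot\vec a+c=0$ for $w=w(\xi)$ by the implicit function theorem. The Jacobian with rows $\nabla_w(V\cdot\vec a)$ and $\vec a$ is invertible for some $\vec a$ exactly when $\nabla_wV$ is not scalar. That is precisely the failure of Bourgain's condition for lines. ``Not cones'' plus an appeal to \eqref{260526e1_53} does not supply this.

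Your Item 4 outline matches the paper.

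Finally, the phase-function versions of Items 3 and 5 need the converse of Theorem \ref{250727theorem1_13}(\ref{thm1_13item4}): Bourgain for $X$ implies Bourgain for $\phi$. This is Lemma \ref{251004lemma4_2}, not the direction you cite.
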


Examples in Subsection \ref{260526subsec2_3} show that the Katz-Wolff condition is strictly stronger than Bourgain's condition. Moreover, recall the discussion around \eqref{250930e1_46} and \eqref{260526e1_53} that Bourgain's example \eqref{250801e1_8} satisfies the extra-worst compression condition. \\

As a consequence of 
Theorem \ref{thm: main}, we immediately obtain the following several results. 

The original definition for the totally geodesic condition in Definition \ref{250801definition1_16} is not a local condition, and may not be easy to check. As a consequence of item 1 in Theorem \ref{thm: main}, we see that to check the totally geodesic condition, it is equivalent to write down the spray space (Proposition \ref{prop: Kakeya to spray space}), and then to check that $W=D=0$.

The Katz-Wolff condition in Definition \ref{250510defi1_1} is not a local condition either. As a consequence of item 2 and item 3 in Theorem \ref{thm: main}, to check the Katz-Wolff condition, it is equivalent to check Bourgain's condition, and then to check totally geodesic condition. \\

Theorem \ref{thm: main} discusses, in particular, a ``best" possible behavior for curved Kakeya problems. These curved Kakeya problems satisfy the Katz-Wolff condition, which is equivalent to Bourgain's condition together with the totally geodesic condition, and is further equivalent to the Bochner-Riesz type condition. One can apply the result of Wang and Zahl \cite{wang2025volume} and obtain that for $n=3$, $\Phi$-Kakeya sets must have dimension 3 whenever $\Phi$ satisfies any of the above conditions. \\

The next theorem characterizes all $\Phi$ that admit a two-dimensional Kakeya set, under the extra assumptions that $n=3$ and $\Phi$ is semi-algebraic. 

\begin{theorem}\label{260615theorem1_30}
Assume $n=3$.
\begin{enumerate}
\item Assume the map $\Phi$ is semi-algebraic. Then $\Phi$ satisfies the worst compression condition in Definition \ref{260615defi1_19} if and only if 
there exists a $\Phi$-Kakeya set that is two dimensional. Whenever $\Phi$ fails the worst compression condition, we can find $\kappa_{\Phi}>0$ such that every $\Phi$-Kakeya set has dimension $\ge 2+ \kappa_{\Phi}.$

\item Assume that the phase function $\phi$ is semi-algebraic. Then $\phi$ fails the worst compression condition if and only if the following statement holds: For every $2< p< \infty$, there exists $\beta_{\phi}(p)< 1/p$ such that 
\begin{equation}\label{260616e1_76}
\norm{
\mathcal{K}^{(\phi)}_{\delta} f
}_{L^p(
\B^{n-1}_{\epsilon_{\phi}}
)}
\lesim_{\phi, p}
\delta^{-\beta_{\phi}(p)} \|f\|_{L^p(\R^n)},
\end{equation}
for every $\delta\in (0, \epsilon_{\phi})$. In particular, if $\phi$ fails the worst compression condition, then there exists $\kappa_{\phi}>0$ such that every $\phi$-Kakeya set has dimension $\ge 2+ \kappa_{\phi}$.
\end{enumerate}
\end{theorem}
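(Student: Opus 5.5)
The plan is to treat the two directions separately. The easy one is: worst compression implies a two-dimensional Kakeya set. The substantial one is: failure of worst compression implies dimension at least $2+\kappa_\Phi$. That bound automatically rules out dimension-$2$ Kakeya sets, so it also gives the converse of the equivalence.

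\textbf{Easy direction.} Suppose $w_0$, $\xi'\neq\xi''$ give a surface $S$ as in \eqref{eq: curve surface union} with $\rank[\partial_{t_1}\xi,\partial_{t_2}\xi]=2$ at some $(t_1,t_2)$. By the inverse function theorem, $(t_1,t_2)\mapsto \xi(t_1,t_2)$ covers an open ball of directions. Every curve indexed by this ball lies in $S$. After a direction-preserving rescaling (Theorem \ref{250727theorem1_13}, item \ref{thm1_13item2}) that recenters this ball, we may take it to be all of $\B^2_{\varepsilon_\Phi}$. The set $S$ is then a $\Phi$-Kakeya set of dimension $2$.

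\textbf{Hard direction, step 1: a semi-algebraic dichotomy.} I would argue by contradiction at scale $\delta$, working with the maximal function or its Hausdorff/Frostman analogue. The starting point is that, by \cite{Bou91} and \cite{GWZ24}-type arguments, the $L^2$-based bound (Córdoba) already gives dimension $\geq 2$. The key structural input will be a semi-algebraic dichotomy. Because $\Phi$ is semi-algebraic, for fixed $(w_0,\xi',\xi'')$ the map $(t_1,t_2)\mapsto\xi(t_1,t_2)$ and the union \eqref{eq: curve surface union} are semi-algebraic of bounded complexity. Failure of worst compression therefore means the following: whenever the union is two-dimensional, the image of $\xi$ is at most one-dimensional. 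By the uniform bounds on semi-algebraic families (cell decomposition, Tarski--Seidenberg), this non-degeneracy is quantitatively uniform. Concretely, for every algebraic surface $Z$ of bounded degree, the set of directions $\xi$ with a curve $\ell_{\xi,w}$ essentially contained in $N_\delta(Z)$ lies in $O(1)$ curves, up to $\delta^{c}$ losses. This is a polynomial-Wolff-type statement for surfaces, with a gain. I would derive it by showing that the set of curves lying in a surface forms a semi-algebraic family of dimension at most $3$, and that its projection to the $\xi$ variable has dimension at most $1$ off a low-dimensional exceptional set.

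\textbf{Hard direction, step 2: extracting $\kappa$.} Next I would run a quantitative argument. If a Kakeya set had dimension $2+o(1)$, i.e. if \eqref{260616e1_76} failed with $\beta(p)\to 1/p$, then near-extremizers of Córdoba's bilinear $L^2$ argument must exhibit tubes through a typical point whose directions are spread in a $2$-dimensional set. At the same time, the union of tubes must have volume $\approx\delta^{1-o(1)}$. A two-ends/hairbrush or polynomial partitioning step (Guth's partitioning with degree $D=\delta^{-\epsilon}$) then forces most tubes to cluster in $\delta$-neighbourhoods of bounded-degree algebraic surfaces. I would try this via a Guth--Katz/Zahl-type structure theorem for near-extremal configurations, quoting the "grains" or planiness results in semi-algebraic form if needed. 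The surface Wolff-type estimate from step 1 then contradicts the assumption that a $2$-parameter family of directions is concentrated there. Since everything has bounded complexity, the loss is a fixed power, which yields $\kappa_\Phi>0$.

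\textbf{Part 2 and the main obstacle.} For part 2, the phase-function version reduces to part 1 through the induced $\Phi$ from \eqref{250726e1_30}, which is semi-algebraic when $\phi$ is, up to using the implicit function theorem on semi-algebraic (Nash) functions. The forward implication repeats the quantitative argument at the $L^p$ level and interpolates with the trivial $p=\infty$ and Córdoba $p=2$ bounds for every $2<p<\infty$. The converse holds because a two-dimensional Kakeya set supported on $S$ gives $f=\mathbbm 1_{N_\delta(S)}$, which saturates $\beta(p)=1/p$. I expect the main obstacle to be step 2: turning "dimension close to $2$" into genuine concentration on bounded-degree surfaces with only $\delta^{-o(1)}$ losses. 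Semi-algebraicity is exactly what should make the compactness and uniformity arguments in that step work.
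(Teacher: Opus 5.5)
Your easy direction is fine. So are the reductions for Part 2: the induced $\Phi$ is semi-algebraic, a gain at one exponent interpolates against the $p=2$ and $p=\infty$ bounds, and the indicator of $\mathcal{N}_\delta(S)$ gives the converse. The gap is step 2, which carries the whole hard direction, and the mechanism you propose for it does not exist. There is no Guth--Katz/Zahl-type structure theorem (grains, planiness) for near-extremal configurations of \emph{curved} tubes; those results rely on the geometry of straight lines. So ``dimension close to $2$ forces most tubes into $\delta$-neighbourhoods of bounded-degree surfaces'' is an unproven claim, not a step. Taking $D=\delta^{-\epsilon}$ also clashes with your step 1, whose compactness constants are uniform only for bounded degree.

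The missing idea is that no structure theorem is needed. In the polynomial partitioning induction of \cite{guth2016restriction} and \cite{HRZ22}, run with the numerology of \cite{DGGZ24}, the cellular case gains by induction and the algebraic case is controlled by a polynomial Wolff count. So it suffices to show two things. First, at most $\kappa^{-(2-\delta_\Phi)}$ of the $\kappa$-separated directions carry a $\kappa$-tube tangent to $\mathcal{N}_\kappa(Z(P))$, with $\deg P\le D$ fixed. Second, this persists in balls of radius $b$ with $b^{O(1)}$ losses. Any such power gain over the trivial $\kappa^{-2}$ gives an improved $L^p$ bound and hence $\dim\ge 2+\kappa_\Phi$. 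This is exactly Proposition \ref{260622prop10_2} and the sketch after it.

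Step 1, where the real work then sits, is also not yet an argument. Tarski--Seidenberg and cell decomposition give uniformly bounded complexity, not a quantitative rate. \L ojasiewicz's inequality only says that a pair $(Z,\xi)$, for which some curve in direction $\xi$ lies in $\mathcal{N}_\delta(Z)$, is close to \emph{some} pair $(Z',\xi')$ with exact containment, and in general $Z'\neq Z$. You then need the slices of the $\eta$-neighbourhood of the whole incidence set to have measure $\lesssim\eta^{c}$, knowing only that the exact slices are lower-dimensional. That is the content of Lemma \ref{volsliceslem}, proved with \L ojasiewicz plus a tube-volume bound for semi-algebraic fibers. You also need a link between an arbitrary $Z(P)$ and Definition \ref{260615defi1_19} that is uniform in the localization scale.

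The paper gets both by working with model surfaces $S(\mathbf{x},L,\rho)$, which are cones of curves through a point with directions on a segment. It proves the dichotomy of Theorem \ref{Dichotomythm}: either tangency orders are uniformly bounded, or an open set of directions has curves inside some $S(\mathbf{x}_0,L_0,\rho_0)$, which is worst compression. It then reduces tangency to $Z(P)$ to these model surfaces by the double-counting argument in the proof of Proposition \ref{260622prop10_2}.

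Finally, the gain you can expect is only $\delta^{c}$ over the trivial $\delta^{-2}$: directions lie in a $\delta^{c}$-neighbourhood of $O(1)$ curves, with $c$ tied to the tangency order. A count near $\delta^{-1}$ is not what this gives. That weak gain is enough for the partitioning argument.
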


Note that the estimate \eqref{260616e1_76} with $p\ge 2$ and $\beta_{\phi}(p)=1/p$ holds for every phase function $\phi$, see for instance equation (6.15) in \cite{Bou91} and  Proposition 1.4 in \cite{CGGHIW24}. Moreover, \cite{Bou91} and  \cite{CGGHIW24} gave several sufficient conditions for \eqref{260616e1_76} to be true for every $p>2$ and some $\beta_{\phi}(p)< 1/p$; once such an estimate is proven (for some $p>2$), a standard argument shows that every $\phi$-Kakeya set has dimension $\ge 2+ \kappa_{\phi}$ for some $\kappa_{\phi}>0$.

We use the example 
\begin{equation}\label{260616e1_77}
\phi(x, t; y)=
\langle x, y\rangle+
\frac{t}{2}\left(y_1^2+y_2^2\right)+t^2\left(y_1 y_2+\frac{1}{6} y_1^4\right)+\frac{t^3}{2} y_1^2
\end{equation}
to show that the conditions in \cite{Bou91} and  \cite{CGGHIW24} are not necessary. 
Let $\mathcal{Z}_K(\phi)$ denote the set of all  $y\in \B^2$ for which there exist scalars $\mu_{i, j}, 1\le i, j\le 2$ such that 
\begin{equation}
\operatorname{det} \partial_{y y}^2 \phi(0, t ; y)=\sum_{1 \leq i, j \leq 2} \mu_{i, j} \partial_{y_i y_j}^2 \phi(0, t ; y) \quad \text { for all } t \in(-1,1) .
\end{equation}
Hypothesis I)  in \cite[Definition 2.2]{CGGHIW24} requires that 
\begin{equation}\label{260616e1_79}
\mathcal{L}^2(\mathcal{Z}_K(\phi))=0.
\end{equation}
Under this hypothesis, \cite{CGGHIW24} proved that \eqref{260616e1_76} holds for every $p>2$ and some $\beta_{\phi}(p)< 1/p$. However, it is elementary to check that the phase function \eqref{260616e1_77} fails \eqref{260616e1_79} and at the same time fails the worst compression condition as well, showing that \eqref{260616e1_79} is not a necessary condition. \\

Once the necessary and sufficient condition is established in Theorem \ref{260615theorem1_30}, one can follow the standard  argument in \cite[Section 5]{Bou91} (see also \cite[Proposition 2.10]{CGGHIW24}) and immediately obtain the following corollary.

\begin{corollary}\label{260617corollary1_31}
    Assume that $n=3$ and the phase function $\phi$ is elliptic and semi-algebraic. Then $\phi$ fails the worst compression condition if and only if there exists $\kappa_{\phi}>0$ such that the estimate \eqref{250708e1_2} holds, that is, 
    \begin{equation}
        \|T^{(\phi)}_N f\|_{L^p(\R^n)}\lesim_{\phi, a, p, \epsilon} N^{-\frac{n}{p}+\epsilon} \|f\|_{L^p(\R^{n-1})}
    \end{equation}
    holds, 
    for all 
$p\ge 10/3-\kappa_{\phi}$. 
\end{corollary}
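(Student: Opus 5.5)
The plan is to obtain Corollary \ref{260617corollary1_31} from Theorem \ref{260615theorem1_30}(2) using the standard Bourgain-type reduction from oscillatory integral bounds to Kakeya maximal bounds. The forward direction (failure of worst compression implies the oscillatory estimate) and the reverse direction (worst compression obstructs the estimate) are treated separately.

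\emph{Forward direction.} Assume $\phi$ is elliptic, semi-algebraic, and fails the worst compression condition.

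1. By Theorem \ref{260615theorem1_30}(2), for every $2<p<\infty$ there is $\beta_\phi(p)<1/p$ with
\[
\|\mathcal{K}^{(\phi)}_\delta f\|_{L^p}\lesssim \delta^{-\beta_\phi(p)}\|f\|_{L^p}.
\]
In particular this holds at $p=3$, with some $\beta_\phi(3)<1/3$. This is the gain over the trivial bound $\beta=1/p$.

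2. Use the argument of \cite[Section 5]{Bou91} in the form \cite[Proposition 2.10]{CGGHIW24}. For elliptic phases in $n=3$, it converts an $L^{3}$-type Kakeya maximal bound with exponent strictly better than the trivial one into an improvement of Stein's $p\ge 4$ bound. The mechanism is a wave-packet decomposition of $T^{(\phi)}_N f$ at scale $N^{-1/2}$ into curved tubes $T^\delta_y(\omega)$ with $\delta=N^{-1/2}$. One then runs the bilinear/Córdoba-type $L^4$ argument. In it, the $L^2$ norm of a sum of squared wave packets is controlled by the dual (Kakeya) maximal estimate, and ellipticity supplies the transversality needed for the square-function step.

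3. The output is the estimate \eqref{250708e1_2} for $p\ge p_0$ with $p_0<4$ explicit in terms of $\beta_\phi(3)$. Interpolating with the sharp elliptic result valid at $p$ near $10/3$ for all elliptic Hörmander phases (the Guth–Hickman–Iliopoulou $p>10/3$ bound for elliptic phases), and feeding the Kakeya gain into the polynomial-partitioning step where the $10/3$ threshold comes from the Kakeya input, should push the exponent to $10/3-\kappa_\phi$. I would follow \cite[Proposition 2.10]{CGGHIW24} verbatim: that proposition is stated precisely as ``Kakeya gain implies $p\ge 10/3-\kappa$'', with its hypothesis being the conclusion of Theorem \ref{260615theorem1_30}(2).

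\emph{Reverse direction.} Assume worst compression holds.

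1. Theorem \ref{260615theorem1_30}(2) gives a two-dimensional $\phi$-Kakeya configuration: a surface containing a two-parameter family of curves with rank-2 direction set.

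2. Build Bourgain's counterexample from it, as in \cite[Section 5]{Bou91}. Take $f$ to be a sum of modulated bumps whose wave packets concentrate in the $N^{-1/2}$-neighborhood of the surface. A direct computation then shows that \eqref{250708e1_2} fails for every $p<4$, and in particular near $10/3$.

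\emph{Main obstacle.} The delicate step is checking that the constants in the forward reduction depend only on $\phi$. The Kakeya estimate must also be applied at all sub-scales and to rescaled phases arising in the induction, so it has to hold uniformly over the class of phases generated by parabolic rescaling. I would address this through semi-algebraicity: rescalings of a semi-algebraic phase have uniformly bounded complexity. Failure of worst compression is an open, semi-algebraically definable condition, which should make $\beta_\phi(p)$ uniform across the rescaled family.
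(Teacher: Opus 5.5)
\textbf{The forward direction has a genuine gap: nothing in it actually beats $10/3$.}

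For elliptic phases in $\R^3$, the exponent $10/3$ comes out of the tangential case of the Guth--Hickman--Iliopoulou polynomial-partitioning argument. There one only has the trivial bound $\kappa^{-2}$ on the number of directions of $\kappa$-tubes tangent to $\mathcal{N}_\kappa(Z(P))$. Going below $10/3$ requires a gain in exactly that count, at the top scale and at every intermediate scale. This is what the paper proves and uses:
\begin{itemize}
\item Proposition~\ref{260622prop10_2}, derived from the tangency-order dichotomy (Theorem~\ref{Dichotomythm}) and Lemma~\ref{volsliceslem}, not from Theorem~\ref{260615theorem1_30};
\item its version in balls of radius $b$ with $b^{O(1)}$ loss.
\end{itemize}
These replace \cite[Theorem 4.6, Corollary 4.8, Proposition 4.9]{DGGZ24} by \eqref{260629e10_62}--\eqref{260629e10_63}. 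Rerunning the numerology of \cite{DGGZ24} then gives $\kappa_\phi=2(\delta'_\phi)^2/(9+9\delta'_\phi+6(\delta'_\phi)^2)$.

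The mechanism you describe cannot substitute for this. By your own account, the C\'ordoba/bilinear $L^4$ step outputs a range $p\ge p_0$ with $p_0<4$. Interpolating $[p_0,\infty)$ with $(10/3,\infty)$ produces no exponent below $10/3$. What remains, ``feeding the Kakeya gain into polynomial partitioning'', is the whole content of this direction. Reading \cite[Proposition 2.10]{CGGHIW24} as a ready-made statement ``Kakeya gain implies $p\ge 10/3-\kappa$'' is not supported by anything in the paper. The paper cites it alongside \cite[Section 5]{Bou91} as the standard argument, and its own sketch of this direction never uses the maximal estimate.

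To keep Theorem~\ref{260615theorem1_30}(2) as your input, you would need an extra lemma deducing the tangent-direction count from it at all scales. For example, you could test the maximal inequality on the indicator of a slightly fattened neighborhood of $Z(P)$, whose volume is $O_D(\kappa^{1-\delta})$ by \cite{wongkew1993volumes}, after checking that tangent tubes spend most of their length there.

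Your uniformity plan also fails as stated. Failing worst compression is not an open condition: the family $(w_1+t\xi_2+st^2\xi_1,\,w_2+t\xi_1,\,t)$ consists of lines at $s=0$, but is direction-equivalent to Bourgain's example \eqref{250930e2_27} for every $s\neq0$. Openness would not give a uniform $\beta_\phi$ anyway. The paper sidesteps rescaled phases by localizing Proposition~\ref{260622prop10_2} to small balls directly.

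\textbf{In the reverse direction the exponent is wrong.} Elliptic phases satisfy \eqref{250708e1_2} for every $p>10/3$ (Guth--Hickman--Iliopoulou). So worst compression cannot force failure for all $p<4$; Bourgain's $p<4$ example \eqref{250801e1_8} is not elliptic.

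What a $\phi$-Kakeya set contained in a surface actually gives comes from the randomized wave-packet argument with $\delta=N^{-1/2}$. An $L^p$ bound forces the $\delta$-neighborhood of such a set to have volume $\gtrsim\delta^{6-2p/(p-2)+O(\epsilon)}$. This contradicts the $O(\delta)$ volume of a neighborhood of a surface exactly when $p<10/3$. So you get failure for every $p<10/3$, which is all the corollary needs; with that correction this direction is fine.
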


\subsection{A coordinate invariant formulation of Bourgain's condition}

Let $M \subset \R^n$ and $\Sigma \subset \R^{n-1}$ be open sets, and $\phi : M \times \Sigma \to \R$ a phase function satisfying Bourgain's condition. Since Bourgain's condition is preserved under changes of coordinates on $M$ and $\Sigma$ separately, there ought to be a coordinate-free description of Bourgain's condition. The Appendix \ref{Tao's note} by Tao provides one such answer, which we summarize below. \footnote{Note that a different description of Bourgain's condition based on the geometry of the characteristic curves of $\phi$ was developed in \cite{nadjimzadah2026bourgainsconditionstickykakeya}.} 

By H\"ormander's first condition the map $d_\xi \phi : M \times \Sigma \to T^* \Sigma$ is a submersion, so $M \times \Sigma$ is locally a fiber bundle over $T^* \Sigma$. The projections of the fibers of $d_\xi \phi$ to $M$ are the characteristic curves of $\phi$. A choice of local trivialization $M \times \Sigma \approx T^* \Sigma \times \R$, $(x,\xi) \mapsto (\xi,p,c)$, amounts to parameterizing each characteristic curve as $c \mapsto x(\xi,p,c)$, and a different trivialization reparameterizes the curves. In Appendix \ref{Tao's note}, a special trivialization is chosen. A Legendre-type transformation is used to pass from $T^* \Sigma$ to $T \Sigma$, which helps motivate the objects in the main characterization below.

\begin{theorem}\label{B.C.Tao}
    If $\phi$ is a phase function satisfying Bourgain's condition, then there exist a connection $\nabla^*$ on $T\Sigma$ and two smooth functions $F^*, J^*\in C^\infty(T\Sigma)$ such that for all $X,Y\in \Gamma(T\Sigma)$, we have
    \begin{enumerate}
        \item[(1)] Curvature constraint: 
            \begin{align}\label{260604le1-56}
                [\nabla^*_X,\nabla^*_Y] - \nabla^*_{[X,Y]} + (V_X J^*)V_Y - (V_Y J^*)V_X = 0;
            \end{align} 
        \item[(2)] Compatibility constraint:
            \begin{align}\label{260604le1-57}
                [\nabla^*_X, V_Y] - [\nabla^*_Y, V_X]= V_{[X,Y]};
            \end{align}
        \item[(3)] $J^*$ Compatibility constraint: 
            \begin{align}\label{260604le1-58}
                \nabla^*_X V_Y J^* = \nabla^*_Y V_X J^*;
            \end{align}
        \item[(4)] Symmetry constraint:
            \begin{align}\label{260604le1-59}
                \nabla^*_X V_Y F^* = \nabla^*_Y V_X F^*,
            \end{align}
    \end{enumerate}
    where $V$ is a vertical lift map $V:\Gamma(T\Sigma)\to \Gamma_v(T(T\Sigma))$ and  $V_XV_YF^*$ is non-degenerate. 

    \smallskip

    Conversely, if we have $\nabla^*$, $F^*$ and $J^*$, with $V_XV_YF^*$ non-degenerate, satisfying \eqref{260604le1-56}, \eqref{260604le1-57}, \eqref{260604le1-58} and \eqref{260604le1-59}, then we can construct a phase function satisfying Bourgain's condition.
\end{theorem}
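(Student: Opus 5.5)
The plan is to start from Bourgain's condition in the form \eqref{260615e1_37} of Theorem \ref{250727theorem1_13}, written on the phase side. For a phase $\phi(x;\xi)$, Bourgain's condition says that along each characteristic curve the matrix $\nabla_\xi^2\phi$ satisfies a second-order scalar ODE. Concretely, there is a parameter $c$ on the curve and scalar functions $a,b$ with
\begin{equation*}
\partial_c^2 \nabla_\xi^2\phi = a\,\partial_c\nabla_\xi^2\phi + b\,\nabla_\xi^2\phi + (\text{terms from } \nabla_\xi\phi).
\end{equation*}
Reparameterizing $c$ kills $a$, leaving a distinguished (projective-type) parameter. We choose the trivialization $M\times\Sigma\approx T^*\Sigma\times\R$ so that $c$ is this distinguished parameter. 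This fixes the trivialization up to a small group of gauge transformations.

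Next we pass from $T^*\Sigma$ to $T\Sigma$ with a Legendre-type transform. Because the curves are parameterized by $(\xi,p)$ with $p=d_\xi\phi$, we first take the generating function $F^*(\xi,v)$ to be the fiberwise Legendre dual of the function on $T^*\Sigma$ obtained by differentiating $\phi$ in $c$ at $c=0$. Non-degeneracy of $V_XV_YF^*$ is then exactly Hörmander's second condition (H2), which makes the Legendre map a local diffeomorphism. We define $\nabla^*$ as the horizontal distribution along which $\nabla_\xi\phi$ is transported as $\xi$ moves, with the fiber coordinate held fixed. We define $J^*$ as the remaining coefficient $b$ in the ODE above, transported to $T\Sigma$.

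The four constraints are then integrability conditions, verified by computing in local coordinates $(\xi,v)$. The symmetry constraint \eqref{260604le1-59} expresses that $\nabla_\xi^2\phi$ is symmetric, i.e.\ that the mixed second derivatives of $\phi$ commute. The compatibility constraint \eqref{260604le1-57} says that $\nabla^*$ is torsion-free relative to the vertical lift; it comes from $\nabla_\xi\phi$ being an exact differential. The curvature constraint \eqref{260604le1-56} and the $J^*$ constraint \eqref{260604le1-58} come from differentiating the Bourgain ODE twice in $\xi$ and equating mixed partials. The terms $(V_XJ^*)V_Y-(V_YJ^*)V_X$ are exactly the contribution of $b$.

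For the converse, we read the constraints as Frobenius integrability conditions. Together, \eqref{260604le1-56} and \eqref{260604le1-57} say that the distribution spanned by the horizontal lifts and the $J^*$-corrected vertical fields is involutive. We integrate it to obtain local coordinates, then use \eqref{260604le1-58} and \eqref{260604le1-59} to solve for a potential $\phi$ via the Poincaré lemma. We then define the characteristic curves by solving the linear ODE in $c$ with coefficient $J^*$, so Bourgain's condition holds by construction.

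The main obstacle is the forward direction's normalization: we must show that the special trivialization can be chosen consistently for all $\xi$ at once, and that the resulting objects are genuinely tensorial, i.e.\ independent of the residual gauge freedom. I would handle this by computing how $F^*$, $\nabla^*$ and $J^*$ change under the residual reparameterizations $c\mapsto \alpha c+\beta$ and checking that the four constraints are preserved.
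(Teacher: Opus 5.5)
Your outline (trivialize $M\times\Sigma$ over $T^*\Sigma$ by the characteristic curves, normalize the curve parameter $c$, then pass from $T^*\Sigma$ to $T\Sigma$) has the right shape. However, your definitions of $J^*$ and $F^*$ are wrong, and the step that makes the passage to $T\Sigma$ possible is missing.

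Bourgain's condition has no zeroth-order term. Along a characteristic curve it reads $\partial_c^2\nabla_\xi^2\phi=a\,\partial_c\nabla_\xi^2\phi$. After reparametrizing $c$ it becomes $\partial_c^2\nabla_\xi^2\phi=0$, i.e.\ $\nabla_\xi^2\phi=A(\xi,p)+c\,B(\xi,p)$. So there is no coefficient $b$ to call $J^*$. In the paper, $J$ has a different origin. Holding $x\in M$ fixed defines a flat connection $\nabla_X=\nabla'_X+cE_X+\omega(X)\partial_c$ on $T^*\Sigma\times\mathbb{R}$, where $E_X=X^iB_{ij}\partial^{p_j}$ and $\omega(X)=\nabla_Xc$. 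Then $J$ is a potential for the $\partial_c$-component, $\omega(X)=E_XJ$. The term $(V_XJ^*)V_Y-(V_YJ^*)V_X$ in \eqref{260604le1-56} is the cross term $\omega(X)E_Y-\omega(Y)E_X$ of $[\nabla_X,\nabla_Y]$.

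Your $F^*$ already fails for $\phi=x\cdot\xi+\frac t2|\xi|^2$. Here $p=x+t\xi$, $c=t$ is Bourgain-normalized, and $\phi=p\cdot\xi-\frac c2|\xi|^2$. So $\partial_c\phi=-\frac12|\xi|^2$ (or $-\frac{\alpha}{2}|\xi|^2$ after $c=\alpha c'+\beta$) has degenerate fiber Hessian at $\xi=0$ and no Legendre dual, even though (H2) holds. The correct potential here is $F^*=\frac12|v|^2$.

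The missing idea is that the residual gauge $c\mapsto\alpha c+\beta$ must be \emph{spent} to make the vertical fields $E_X$ commute, not shown to be harmless. The constraints involve the commuting lifts $V_X$, so you need a bundle diffeomorphism $\psi\colon T\Sigma\to T^*\Sigma$ with $\psi^*E_X=V_X$. That forces $[E_X,E_Y]=0$, which fails for a general Bourgain-normalized $c$ (a $p$-dependent rescaling destroys it). So the four constraints hold only in a specially normalized gauge, and your proposed invariance check cannot succeed; since the theorem only asserts existence, none is needed.

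The paper gets this normalization from flatness $[\nabla_X,\nabla_Y]=\nabla_{[X,Y]}$:
\begin{enumerate}
\item The part not along $\partial_c$ shows that $\omega(X)E_Y-\omega(Y)E_X$ is quadratic in $c$. Since $\dim\Sigma\geq2$ and $B$ is non-degenerate, each $\omega(X)=\omega_0(X)+c\,\omega_1(X)+c^2\omega_2(X)$.
\item The top-order coefficients give $\omega_2(X)=E_Xu$. Rescaling $c$ by $e^{\pm u}$ removes $\omega_2$, and then the $c^2$ coefficient gives $[E_X,E_Y]=0$.
\item A translation $c\mapsto c+b$ removes $\omega_1$, after which $\omega(X)=E_XJ$.
\end{enumerate}
Only now can you set $\psi(\xi,X(\xi))=\exp(E_X)(\xi,\theta(\xi))$ for a $1$-form $\theta$. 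You then define $F^*$ by $l_X\circ\psi=V_XF^*$; its Legendre dual $F$ satisfies $\partial_p^2F=B^{-1}$. Finally, pull back $\nabla'$ (the $c$-free part of the normalized $\nabla$) and $J$ to $\nabla^*$ and $J^*$.

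In this picture, \eqref{260604le1-56} and \eqref{260604le1-57} are the $c^0$ and $c^1$ coefficients of flatness, and \eqref{260604le1-58} is its $\partial_c$-component. None of them come from differentiating a Bourgain ODE; only \eqref{260604le1-59} comes from equality of mixed partials.

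For the converse, push $\nabla^*,J^*$ forward by the $\psi$ determined by $F^*$. Then check that $\nabla'_X+cE_X+(E_XJ)\partial_c$ is flat and satisfies $\nabla_Xl_Y-\nabla_Yl_X=l_{[X,Y]}$. Frobenius and the Poincar\'e lemma then give $M$ and $\phi$. Bourgain's condition holds because $\partial_c\nabla_Xl_Y=E_Xl_Y$ is independent of $c$; no ODE in $c$ with coefficient $J^*$ is involved.
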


Theorem \ref{B.C.Tao} gives a convenient way to generate some interesting examples of phase functions satisfying Bourgain's condition, some of which are recorded in Subsection \ref{260526subsec2_3}. Theorem \ref{B.C.Tao} is a step toward systematically classifying the phase functions satisfying Bourgain's condition, but we do not see how to reduce the constraints much further at this stage.

The following reformulation of Bourgain's condition in \cite{nadjimzadah2026bourgainsconditionstickykakeya} is implicit in the proof of Theorem \ref{B.C.Tao}. In particular, the proof of Theorem \ref{B.C.Tao} essentially takes advantage of the symmetry of $\partial_{\xi^i} \partial_{\xi^j} \partial_{\xi^k} \phi$ after taking a further derivative of \eqref{eq: B.C.Arian}. 
\begin{proposition}[\cite{nadjimzadah2026bourgainsconditionstickykakeya}]\label{B.C.Arian}
    Let $\phi$ be a H\"ormander-type phase function. Then it satisfies Bourgain's condition if and only if there exist a smooth scalar-valued function $c: M\times\Sigma\to\R$ and two smooth symmetric matrix-valued $A,B: W\times \Sigma\to \rm{Sym}_{n-1}(\R)$, with $B$ non-degenerate, such that for all $(\bfx,\xi)\in M\times\Sigma$,
    \begin{align}\label{eq: B.C.Arian}
        \nabla_\xi^2\phi(\bfx,\xi)= A(\nabla_\xi\phi(\bfx,\xi),\xi) + c(\bfx,\xi)B(\nabla_\xi\phi(\bfx,\xi),\xi).
    \end{align}
\end{proposition}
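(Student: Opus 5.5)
We work directly with Bourgain's condition in the form \eqref{230719e1_9} and aim to integrate it along characteristic curves. Fix $\xi$. By H\"ormander's condition (H1), the map $\bfx\mapsto(\nabla_\xi\phi(\bfx,\xi),\tau)$ gives local coordinates on $M$, where $\tau$ is a parameter along the characteristic curves $\{\nabla_\xi\phi(\cdot,\xi)=w\}$. The vector field $G_0(\cdot;\xi)$ is tangent to these curves, because it annihilates $\nabla_\bfx\partial_{\xi_i}\phi$ for every $i$. So after reparametrizing we may take $G_0\cdot\nabla_\bfx=g\,\partial_\tau$ with $g\neq0$. Write $H(w,\xi,\tau):=\nabla^2_\xi\phi(\bfx(w,\xi,\tau),\xi)$. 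Then Bourgain's condition says $(g\partial_\tau)^2H=\lambda\,(g\partial_\tau)H$ for some scalar $\lambda$, and expanding gives $\partial_\tau^2H=\tilde\lambda\,\partial_\tau H$ for another scalar $\tilde\lambda(w,\xi,\tau)$.

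The key step is to integrate this matrix ODE along each curve. Set $K:=\partial_\tau H$. Then $\partial_\tau K=\tilde\lambda K$, so $K(\tau)=e^{\int_0^\tau\tilde\lambda}K(0)$, and the direction of $K$ is constant along the curve. Put $B(w,\xi):=K(w,\xi,0)$, which is symmetric. Integrating once more gives
\[
H(w,\xi,\tau)=H(w,\xi,0)+c\,B(w,\xi),\qquad c=\int_0^\tau e^{\int_0^s\tilde\lambda}\,ds .
\]
Set $A(w,\xi):=H(w,\xi,0)$. Evaluating at $w=\nabla_\xi\phi(\bfx,\xi)$ gives \eqref{eq: B.C.Arian}, with $c(\bfx,\xi)$ smooth and $A$ symmetric.

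For nondegeneracy of $B$, note that $B=\partial_\tau\nabla_\xi^2\phi$ is, up to the factor $g$, the matrix $\nabla_\xi^2\langle\nabla_\bfx\phi,G_0\rangle$ from condition (H2). Indeed, $G_0\cdot\nabla_\bfx\nabla^2_\xi\phi=\nabla^2_\xi\langle\nabla_\bfx\phi,G_0(\cdot;\xi_0)\rangle$ at $\xi=\xi_0$, since the $\xi$-derivatives of $G_0$ drop out only after the correct bookkeeping. This is the step I expect to be the main nuisance. To handle it, I would compare with \eqref{260615e1_37}, using Theorem \ref{250727theorem1_13}(3)--(4). Alternatively, I would differentiate the identity $\langle\nabla_\bfx\partial_{\xi_i}\phi,G_0\rangle\equiv0$ in $\xi$ to convert the $G_0$-derivatives. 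Either route identifies $B$ with the (H2) matrix, so $B$ is nondegenerate.

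For the converse, suppose \eqref{eq: B.C.Arian} holds. Along a characteristic curve $w$ and $\xi$ are fixed, so applying $G_0\cdot\nabla_\bfx$ gives $(G_0\cdot\nabla_\bfx)\nabla^2_\xi\phi=(G_0\cdot\nabla_\bfx c)\,B$. Applying it once more gives $(G_0\cdot\nabla_\bfx)^2\nabla^2_\xi\phi=((G_0\cdot\nabla_\bfx)^2c)\,B$. Both are multiples of the single matrix $B$. The first is nonzero by (H2), so the second is a multiple of the first, which is Bourgain's condition \eqref{230719e1_9}.
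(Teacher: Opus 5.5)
Your proof is correct and is essentially the argument implicit in the proof of Theorem~\ref{B.C.Tao}. Your coordinates $(w,\tau)$ are the trivialization $M\times\Sigma\cong T^*\Sigma\times\R$, and your $c=\int_0^\tau e^{\int_0^s\tilde\lambda}$ is exactly the integrating-factor reparametrization of the fiber coordinate that normalizes $\partial_c^2\nabla_X l_Y=0$, after which $A_{ij}((\xi,p),c)=A_{ij}(\xi,p)+c\,B_{ij}(\xi,p)$. The step you flagged as a nuisance is immediate, since in (H2) $G_0$ is frozen at $y_0$ and so $\nabla_y^2$ never acts on it. The same identification shows $\partial_\tau H\neq 0$ everywhere, which is what makes $\tilde\lambda$ uniquely defined and smooth.
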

We refer the readers to Appendix \ref{Tao's note} for the proof of Theorem \ref{B.C.Tao} and further details.

\subsection{Future directions}

As the current paper proposes several new concepts, we feel it is natural to propose several future directions to explore as well. \\

Let $\Phi$ be a map satisfying H\"ormander's condition as in Definition \ref{HorCond}. We define the Kakeya index of $\Phi$ to be 
\begin{equation}\label{260714e1_81}
    \inf\{\dim(E): E\text{ is a } \text{$\Phi$-Kakeya set}\}.
\end{equation}
Under the assumption that $\Phi$ is analytic and semi-algebraic, can we conclude  that the Kakeya index of $\Phi$ is an integer? In particular, when we are in dimension 3, is it the case that the Kakeya index of $\Phi$ is either $2$ or $3$? If we take into account Theorem \ref{260615theorem1_30}, then an equivalent formulation of the previous question is: If $\Phi$ fails the worst compression condition in Definition \ref{260615defi1_19}, can we conclude that $\Phi$-Kakeya sets always have dimension 3? 

Roughly speaking, the above question  asks whether or not the dimension of a $\Phi$-Kakeya set must be an integer. However, it is very easy to produce ``boring" curved Kakeya sets that have fraction dimensions. For instance, we work in dimension 3, and take Bourgain's example \eqref{250801e1_8}. This example admits a two dimensional curved Kakeya set, supported on a hyper-surface. We then make multiple copies of this curved Kakeya set by translating the above hyper-surface, and can produce a curved Kakeya set of an arbitrary fraction dimension between 2 and 3. This also explains the reason of taking inf in \eqref{260714e1_81}. \\





More generally, we propose a systematic study of $\Phi$-Kakeya maps, according to, for instance, the best possible maximal estimates they satisfy. This work is part of this program, and in particular sets up path geometry as a language to develop a finer classification of $\Phi$-Kakeya maps. The work of Bourgain \cite{Bou91} first established that curved Kakeya problems can behave in many different ways. Sogge \cite{Sog99} and Wisewell \cite{Wis03,Wis05} then pioneered their finer study. There has been more recent progress on classifying curved Kakeya problems by a variety of authors \cite{HI22,GWZ24,DGGZ24,CGGHIW24,nadjimzadah2025newcurvedkakeyaestimates,nadjimzadah2026bourgainsconditionstickykakeya}, but there is much work left. 


\section{Examples}

\subsection{
H\"ormander's condition for curved Kakeya problems
}

	\begin{example}[Parabolic Kakeya sets]\label{260626example2_1}
		A Borel set $E\subset \R^2$ with $\mathcal{L}^2(E)=0$ is called a parabolic Kakeya set if for every $u\in [1, 2]$, there exists $x\in [-1, 1]$ such that 
		\begin{equation}\label{le1.3}
			(x, 0)+ (t, u t^2)\in E, \ \forall t\in [0, 1].
		\end{equation}
	\end{example}
	
	 If we set $\Phi(x,t;u)= (x+t,(1+u)t^2)$, then parabolic Kakeya sets essentially become $\Phi$-Kakeya sets. One can check that 
	\begin{equation}
		{\rm det} \nabla_{(x, t)} \Phi(x, t; u)= 2(1+u)t,
	\end{equation}
	and 
	\begin{equation}
		{\rm det}	\begin{bmatrix}
			0 & \nabla_{(x, t)}\Phi & \nabla_{u}\Phi\\
			\partial_t \Phi & \nabla_{(x, t)}\partial_t \Phi & \nabla_{u}\partial_t\Phi
		\end{bmatrix}
	= 2(1+u)t^2.
	\end{equation}
From these, we see that $\Phi$ does satisfy H\"ormander's condition in Definition \ref{HorCond} if $t\neq 0$. The degeneracy at $t=0$ can be expected because for fixed $x$ and $u$, when we change $t$, the curve $(x+t,(1+u)t^2)$ always travels in the horizontal direction, and not any other different direction (compared with the traditional Kakeya problem).

If one is interested in studying a parabolic Kakeya maximal function, 
\begin{equation}\label{260309e2_4}
    \sup_{|x|\le 1} \int_0^1 
    |f(x+ t, u t^2)| dt,
\end{equation}
then we can apply a standard dyadic decomposition to $t$ with respect to $t=0$, and for each dyadic piece, after appropriate scaling we will see that it satisfies H\"ormander's condition in Definition \ref{HorCond}.

	\begin{remark}[Wolff's parabolic Kakeya sets]\label{Wolff_Parabolic}
		Wolff \cite{Wol97} studied another kind of parabolic Kakeya sets which superficially looks very similar to the one in Example \ref{260626example2_1} but is fundamentally different. 
		
		We say that a Borel set $E\subset \R^2$ with $\mathcal{L}^2(E)=0$ is called Wolff's parabolic Kakeya set if for every $u\in [1, 2]$ there exists $(x, y)\in \R^2$ such that 
		\begin{equation}\label{le1.4}
			(x, y)+ (\theta, u\theta^2)\in E, \ \forall \theta\in [0, 1].
		\end{equation}
		It requires Wolff's deep work \cite{Wol97} to show that Wolff's parabolic Kakeya sets must have Hausdorff dimension 2. In contrast, it is elementary to show that parabolic Kakeya sets in Example \ref{260626example2_1} have Hausdorff dimension 2. 
	\end{remark}

	\begin{example}
		[Nikodym sets]A Borel set $E\subset \R^d$ with $\mathcal{L}^d(E)=0$ is called a Nikodym set if for every $\xi\in \B^{d-1}_1$, there exists $x\in \B^{d-1}_1$ such that 
		\begin{equation}
			{\bf L}({(x, 0); (\xi, 1)})\subset E,
		\end{equation}
		where ${\bf L}({(x, 0); (\xi, 1)})$ denotes the line segment connecting $(x, 0)$ and $(\xi, 1)$.
	\end{example}
	
	 There are several definitions of Nikodym sets in the literature; the one we use here comes from Wisewell \cite{Wis05}. One can find easily that Nikodym sets can ``essentially" be realized as $\Phi$-Kakeya sets for some $\Phi$. For instance, one can take 
	\[
	\Phi(x, t; \xi)= 
	(x+ t(\xi-x), t).
	\] 
	One can check that
	\begin{equation}\label{260309eaa}
	{\rm det} \nabla_{(x, t)} \Phi(x, t; \xi)= (1-t)^{d-1},
	\end{equation}
	and 
	$$
	{\rm det}	\begin{bmatrix}
		0 & \nabla_{(x, t)}\Phi & \nabla_{\xi}\Phi\\
		\partial_t \Phi & \nabla_{(x, t)}\partial_t \Phi & \nabla_{\xi}\partial_t\Phi
	\end{bmatrix}
	= -1,
	$$	
	and therefore $\Phi$ satisfies H\"ormander's condition in Definition \ref{HorCond} when $t\neq 1$. 
	
	Let us also remark on what ``essentially" refers to. Because of the singularity in \eqref{260309eaa}, one would need to do a dyadic decomposition in $t$ with respect to $t=1$, similarly to \eqref{260309e2_4}. Each dyadic piece, after appropriate scaling, can be realized as a $\Phi$-Kakeya set. \\

This example is interesting  because we see that through the language of general curved Kakeya problems introduced in Subsection \ref{250929sub1_3}, we can ``unify" the study of the classical Kakeya problem (for line segments) and the classical Nikodym problem (for line segments). It is well-known in the literature that these two problems are equivalent after pseudo-conformal transformations, which exchange the roles of directions and positions. Using the language of general curved Kakeya problems, we can avoid such transformations. \\

\subsection{Examples satisfying Bourgain's condition but not Katz-Wolff condition}\label{260526subsec2_3}

By Theorem \ref{B.C.Tao}, we can construct a phase function $\phi$ satisfying Bourgain's condition from $\nabla^*$, $F^*$ and $J^*$. We assume $n=3$ and use $(\xi,v)$ as the local coordinates of $T\Sigma$. To show how to construct $\phi$, we start from 
\begin{align*}
     \nabla^*_{\partial_{\xi_1}} = \partial_{\xi_1},\  \nabla^*_{\partial_{\xi_2}} = \partial_{\xi_2}+ v^2_2\partial_{v_2},\  
     F^*(v;\xi)= |v|^2/2,\  J^*=0.
\end{align*}
It is a direct calculation to check that $\nabla^*$, $F^*$ and $J^*$ satisfy the equations in Theorem \ref{B.C.Tao}. By \eqref{260604leD-21}, the bundle diffeomorphism $\psi: T\Sigma\to T^*\Sigma$ is given by $\psi(\xi,v)= (\xi,v)$. So we can identify $T\Sigma$ with $T^*\Sigma$. By \eqref{260604leD-19} $\nabla^*$ is pushed forward to
\begin{align}
    \nabla_X=  X^i(\xi)\partial_{\xi_i}+ X^i(\xi)(A_{ij}(\xi,p)+ cB(\xi,p))\partial^{p_j},
\end{align}
where 
\begin{align}
    A=\begin{bmatrix}
        0 & 0\\
        0 & v_2^2
    \end{bmatrix},\quad
    B= I_{2}.
\end{align}
By Proposition \ref{B.C.Arian}, it further implies 
\begin{align}
     \partial_{\xi_i}\partial_{\xi_j}\phi(\bfx,\xi)= A_{ij}(\xi,\nabla_\xi\phi(\bfx,\xi))+ c(\bfx)B_{ij}(\xi,\nabla_\xi\phi(\bfx,\xi)).
\end{align}
The reason why $c$ is independent of $\xi$ is that we do not have $\partial_c$ in $\nabla_X$. If we take $c(\bfx)=t^2$, with $\bfx=(x,t)$, then we can construct $\phi$ as the \rm{tan}-example in \cite{nadjimzadah2026bourgainsconditionstickykakeya}, where it is shown that the family of characteristic curves of \rm{tan}-example is not direction-equivalent to a family of lines.

\begin{example}[The tan-example in \cite{nadjimzadah2026bourgainsconditionstickykakeya}]
    Fix $n\ge 3$ and write $\xi=(\xi',\xi_{n-1})$, $\bfx=(x',x_{n-1},t)$. The tan-example is given by 
    \begin{align}
        \phi(\bfx,\xi)=x'\cdot\xi'+\frac12 t^2|\xi'|^2+ \log(\sec (t\xi_{n-1}+ x_{n-1})).
    \end{align}
    It satisfies the Bourgain's condition in Proposition \ref{B.C.Arian}, in the sense that
    \begin{align}
        \nabla_\xi^2\phi(\bfx,\xi)= A(\xi,\nabla_\xi\phi(\bfx,\xi))+ t^2 B(\xi,\nabla_\xi\phi(\bfx,\xi)),
    \end{align}
    where 
    \begin{align}
    A=\begin{bmatrix}
        0 & 0\\
        0 & v_{n-1}^2
    \end{bmatrix},\quad
    B= I_{n-1}.
\end{align}
The induced curves are given by $(X(w,t;\xi),t)$, with
\begin{align}\label{260604le2-33}
    X(w,t;\xi)= \left(w_1-t^2\xi_1,\arctan\left(\frac{w_2}{t}\right)-t\xi_2\right).
\end{align}
\end{example}

Based on the tan-example, we can construct a family of phase functions $\phi_\epsilon$ satisfying Bourgain's condition, none of which satisfies Katz-Wolff condition. But if one takes the limit $\epsilon\to 0$, then $\phi_\epsilon$ converges to a Katz-Wolff phase.

\begin{example}
    For $\epsilon>0$, we consider a family of phase functions
    \begin{align}
        \phi_\epsilon(\bfx,\xi)=x'\cdot\xi'+\frac12 t^2|\xi'|^2+ \frac{1}{\epsilon^2}\log(\sec (\epsilon t\xi_{n-1}+ \epsilon x_{n-1})).
    \end{align}
    For each $\epsilon>0$, $\phi_\epsilon$ satisfies Bourgain's condition, since
    \begin{align}
        \nabla_\xi^2\phi_\epsilon(\bfx,\xi)= A_\epsilon(\xi,\nabla_\xi\phi_{\epsilon}(\bfx,\xi))+ t^2 B(\xi,\nabla_\xi\phi_{\epsilon}(\bfx,\xi)),
    \end{align}
    where 
    \begin{align}
    A_\epsilon=\begin{bmatrix}
        0 & 0\\
        0 & \epsilon^2 v_{n-1}^2
    \end{bmatrix},\quad
    B= I_{n-1}.
\end{align}
The induced curves are given by $(X_\epsilon(w,t;\xi),t)$, with
\begin{align}
    X_\epsilon(w,t;\xi)= \left(w_1-t^2\xi_1,\frac{1}{\epsilon}\arctan\left(\frac{\epsilon w_2}{t}\right)-t\xi_2\right).
\end{align}
Similarly to \eqref{260604le2-33}, $(X_\epsilon(w,t;\xi),t)$ is not direction-equivalent to a family of lines.\\

Now we take $\epsilon\to 0$. Then
\begin{align}
    X_\epsilon(w,t;\xi)\to X_0(w,t;\xi)= \left(w_1-t^2\xi_1,\frac{ w_2}{t}-t\xi_2\right)
\end{align}
For $t\sim 1$, we take a diffeomorphism $h(x_1,x_2,t)= (x_1,x_2 t,t^2)$. Then the family of curves $(X(w,t;\xi),t)$ is direction-equivalent to 
\begin{align}
    h(X(w,t;\xi),t)= (w_1-t^2\xi_1,w_2-t^2\xi_2,t^2),
\end{align}
a family of lines with directions $(\xi_1,\xi_2,1)$.
\end{example}

In the example above, we approximated a phase function satisfying the Katz-Wolff condition with a sequence of phase functions failing the Katz-Wolff condition within Bourgain's condition. It is therefore tempting to conjecture that the Katz‑Wolff condition generically fails within Bourgain's condition, i.e., the set of phase functions that violate the Katz‑Wolff condition is open and dense in the space of phase functions satisfying Bourgain's condition. We expect that a further development along the lines of Theorem \ref{B.C.Tao} will be useful in obtaining this structural information about Bourgain's condition.

\subsection{Bourgain's Kakeya-compression example is projectively flat and has extra-worst compression}\label{250801subsection2_4}

As mentioned above Definition \ref{250801defi1_17zz} that Bourgain's example \eqref{250930e1_46} is projectively flat (see \eqref{260526e1_53}) and satisfies the extra-worst compression condition. Here we explain more details.

\begin{example}
We take Bourgain's example as in \eqref{250930e1_46}: 
\begin{equation}\label{250930e2_27}
\Phi(w, t; \xi):=
(w_1+ t\xi_2+ t^2\xi_1, w_2+ t\xi_1, t).
\end{equation}
\end{example}

Bourgain's example admits a two-dimensional Kakeya set. Moreover, it is not difficult to show that in $\R^3$, the dimension of a curved Kakeya set is always $\ge 2$. In this sense we say that Bourgain's example allows certain ``worst" behavior. It is perhaps a bit surprising that Bourgain's example is projectively flat, which looks very much like certain ``very nice" property. Indeed, we will show now that Bourgain's example is worse than we thought, in the sense that every totally geodesic surface gives rise to a $\Phi$-Kakeya set. This explains why in Definition \ref{250801defi1_17zz} we introduced the notion ``extra-worst" compression condition. \\

To prove that Bourgain's example \eqref{250930e2_27} has extra-worst compression, it suffices to show that every plane in $\R^3$ gives rise to a $\widetilde{\Phi}$-Kakeya set, where $\widetilde{\Phi}$ is defined in \eqref{260526e1_53}. We can always write a plane $P$ in this form
\begin{equation}
    ay_1+ by_2+ cy_3 =d.
\end{equation}
If $P$ contains $\widetilde{\Phi}(w,t;\xi)$ for some $w$ and $\xi$, then $a\neq 0$ and 
\begin{equation}
    a (w_1 - tw_2 + t\xi_2)+ b (w_2+ t\xi_1)+ c t= d.
\end{equation}
Extracting the coefficients of $t$ implies
\begin{equation}\label{20251125e-2.46}
    \begin{cases}
    aw_1 + bw_2 =d,  \\
    a(-w_2+ \xi_2)+ b\xi_1+ c=0.
\end{cases}
\end{equation}
We solve this linear system and write $w$ as a function of $\xi$
\begin{equation}\label{20251125e-2.47}
    \begin{cases}
    w_1 = -\frac{b^2}{a^2}\xi_1- \frac{b}{a}\xi_2+ \frac{ad-bc}{a^2}, \\
    w_2 = \frac{b}{a}\xi_1+ \xi_2+ \frac{c}{a}.
\end{cases}
\end{equation}
As a result, we see that for any given $\xi=(\xi_1,\xi_2)$ we can find $w$ given by \eqref{20251125e-2.47}, such that
\begin{equation}
    \widetilde{\Phi}(w,t;\xi)\in P, \quad \forall t.
\end{equation}
This proves that $P$ is a $\widetilde{\Phi}$-Kakeya set, and hence Bourgain's example \eqref{250930e2_27} has extra-worst compression.

\subsection{Wisewell's parabolic Nikodym set is projectively equivalent to the standard Nikodym set}

In Wisewell's PhD thesis \cite{Wis03} (see also Wisewell \cite{Wis05}), one of the curved Kakeya/Nikodym problems she studied was the following parabolic Nikodym problem. 
\begin{example}\label{250930ex2_12}
Let $E\subset \R^3$ be a Borel set with $\mathcal{L}^3(E)=0$. If $E$ satisfies that for every $(w_1, w_2)$, there exists $(\xi_1, \xi_2)$ such that 
\begin{equation}\label{250930e2_29z}
(w_1+ t\xi_2+ t^2\xi_1, w_2+ t\xi_1, t)\in E, 
\end{equation}
for every $t\in [0, 1]$, then we say that $E$ is a Wisewell's parabolic Nikodym set. 
\end{example}

Note that curves in Wisewell's parabolic Nikodym set are exactly the same as in Bourgain's example \eqref{250930e2_27}. The only difference is that the roles of $(w_1, w_2)$ and $(\xi_1, \xi_2)$ are exchanged: In Bourgain's example, we study curved Kakeya sets, which means that we pick $(w_1, w_2)$ depending on $(\xi_1, \xi_2)$; in Wisewell's setting, we pick $(\xi_1, \xi_2)$ depending on $(w_1, w_2)$.

Recall from equation \ref{250930e2_27} that Bourgain's example admits two dimensional curved Kakeya sets. One question Wisewell asked in \cite{Wis03} was: How about the dimension of the Nikodym sets in Example \ref{250930ex2_12}? The crucial observation that Wisewell made was that Wolff's hairbrush argument\footnote{She indeed applied Katz's version of the hairbrush argument, see \cite{Wis03}.} in \cite{Wol95} can be applied in her curved setting, and she concluded that the Nikodym sets in Example \ref{250930ex2_12} have Hausdorff dimension $\ge 5/2$. In other words, for the same family of curves, its associated Kakeya problem may allow certain bad behavior, but its associated Nikodym problem may have certain ``best possible" properties. This philosophy played crucial roles in the work \cite{CGGHIW24} and its applications to the Pierce-Yung problem in \cite{BGH25}. \\

That Wisewell \cite{Wis03} observed that Wolff's hairbrush argument can be applied in her setting, if translated into our language, means that her Nikodym problem satisfies the Katz-Wolff condition. Now if we think of this fact from the perspectives of the projective geometry of paths, we will very quickly recover Wisewell's $5/2$-result: Recall from Theorem \ref{thm: main} that the Katz-Wolff condition implies the projective flat condition. In other words, we are able to find a change of coordinates to turn the curves in Example \ref{250930ex2_12} into lines. After realizing this, it is very easy to find the correct change of coordinates, and we just subtract the first entry in \eqref{250930e2_29z} by the product of the second and third entries. This turns the family of curves in \eqref{250930e2_29z} into
\begin{equation}\label{250930e2_29zz}
(w_1+ t\xi_2-t w_2, w_2+ t\xi_1, t). 
\end{equation}
Next, we apply the change of variables 
\begin{equation}
\xi_2\mapsto \xi_2+ w_2, \ \ \xi_1\mapsto \xi_1,
\end{equation}
which is allowed for Nikodym problems (but not for Kakeya problems), and we obtain the family of curves 
\begin{equation}
(w_1+ t\xi_2, w_2+ t\xi_1, t). 
\end{equation}
This is precisely the classical Nikodym problem. \footnote{It is also very helpful to check directly that Example \ref{250930ex2_12} satisfies both Bourgain's condition and the totally geodesic condition, and therefore also satisfies the Katz-Wolff condition. This was more or less how we realized that Wisewell's Nikodym problem was equivalent to the classical Nikodym problem. }

\subsection{Hilbert's fourth problem}

How Hilbert formulated his fourth problem was quite vague. The formulation we adopt here comes from projective geometry (see for instance \cite[Section 12.2]{Shen01}):  Characterize all metrics under which all the geodesics locally coincide with Euclidean lines. \\

There are many important examples that are solutions to Hilbert's fourth problem, including Hilbert's metric, Funk's metric, among others. Here let us pick Funk's metric to explain the connections to our current discussions.

Let $\Omega\subset \R^n$ be a bounded strongly convex domain with smooth boundary $\partial \Omega$. Take two points $p, q\in \Omega$ with $p\neq q$. We define the distance function (not symmetric)
\begin{equation}
d(p, q):= \ln \frac{|z-p|}{|z-q|},
\end{equation}
where $z$ is the intersection point of the ray 
\begin{equation}
p+ t(q-p), \ \ t\ge 0,
\end{equation}
with the boundary $\partial \Omega$. \\

This distance function comes from the Funk metric for the domain $\Omega$. Moreover, all the geodesics under this metric are Euclidean lines.  We refer to \cite[Section 4.3]{Shen01} for the proof and for more discussions on Funk metrics. \\

We consider the Carleson-Sj\"olin operator (see for instance \cite[Section 1.3]{DGGZ24}) for the Funk metric on the domain $\Omega$, given by 
\begin{equation}\label{251117e2_35}
\int_{\Omega} e^{i N d(p, q)}f(q) a(p, q) dq,
\end{equation}
where $N\in \R$, $a(\cdot, \cdot)$ is a smooth function supported in $\Omega\times \Omega$ and supported away from the diagonal $p=q$, and $dq$ is given by the Euclidean Lebesgue measure. We would like to view \eqref{251117e2_35} as a H\"ormander-type operator. To do so, we follow \cite{DGGZ24} and consider its reduced Carleson-Sj\"olin operator. More precisely, let $\Omega'\subset \Omega$ be a smooth hyper-surface and let $\Omega_0\subset \Omega$ be a region with 
\begin{equation}
d(\Omega', \Omega_0)>0.
\end{equation}
Consider 
\begin{equation}\label{251117e2_36}
\int_{\Omega'} e^{i N d(p, q')}f(q') a(p, q') dq',
\end{equation}
where $a(p, q')$ is a smooth function supported on $\Omega_0\times \Omega'$, and $dq'$ is given by (for instance) the Euclidean surface measure. \\

It is not difficult to show (for instance one can repeat the proof in \cite[Section 5.2]{DGGZ24}) that $d(p, q')$ satisfies H\"ormander's condition in Definition \ref{250726defi1_1}, that is, \eqref{251117e2_36} is a H\"ormander-type operator. Moreover, one can repeat the proof of Lemma 3.2 in \cite{DGGZ24} and see that all characteristic curves for the phase function $d(p, q')$ are straight. In other words, the curved Kakeya problem associated to the phase function $d(p, q')$ is exactly the standard Kakeya problem (for straight lines).

\section{Proof of Proposition \ref{prop: Kakeya to spray space}}\label{260526section3}

Let us begin by giving a more intuitive explanation for spray spaces.

 \begin{definition}[Conic systems of paths, Shen \cite{Shen01}]\label{250801defi1_17} Let $M$ be a manifold ($M=\R^n$ for us), and let $C_x\subset T_x M$ be an open cone of the tangent space $T_x M$ at $x\in M$.  
 Let $\mathcal{G}$ be a collection of $C^{\infty}$ parametrized curves $\sigma: (a, b)\to M$ with the following properties:
 \begin{enumerate}
 \item[(i)] (Existence) For every vector $y\in C_x\subset T_x M$ and every $t_{\circ}\in (a, b)$, there is a curve $\gamma: (a, b)\to M$ in $\mathcal{G}$ with 
 \begin{equation}
 \dot {\gamma}(t_{\circ})=y.
 \end{equation}
 \item[(ii)] (Uniqueness) For two arbitrary curves $\gamma$ and $\sigma$, if at some $t_0\in (a, b)$ and $t_1\in (a, b)$, it holds that 
 \begin{equation}
 \dot{\gamma}(t_0)=\dot{\sigma}(t_1),
 \end{equation}
 then 
 \begin{equation}
 \gamma(t_0+ t)= \sigma(t_1+ t),
 \end{equation}
 for all 
 \begin{equation}
 t\in (a-t_0, b-t_0)\cap (a-t_1, b-t_1).
 \end{equation}
 \item[(iii)] (Invariance) For every curve $\gamma: (a, b)\to M$ in $\mathcal{G}$ and every $t_{\circ}\in \R, \lambda>0$, the curve 
 \begin{equation}
 \widetilde{\gamma}(t):= 
 \gamma(\lambda t+ t_{\circ}),
 \end{equation}
 with 
 \begin{equation}
 \frac{a-t_{\circ}}{\lambda}< t< \frac{b-t_{\circ}}{\lambda}
 \end{equation}
 is still in $\mathcal{G}$. 
 \end{enumerate}

 \end{definition}

 Definition \ref{250801defi1_17} is a slight modification of the definition of the systems of paths (path spaces) in 
   \cite[Section 4.1]{Shen01}, in the sense that in Definition \ref{250801defi1_17} we only take a cone $C_x$ instead of taking the entire $T_x M$. \\

There is a one-to-one correspondence between spray spaces and conic systems of paths.

We first start with a spray space in Definition \ref{260526defi1_19}, and construct a conic system of paths. Let $U\subset \R^n$ be a small open region, and let $C\subset \R^n$ be an open cone. Let
\begin{equation}
G^i: U\times C\to \R
\end{equation}
be the smooth function given in Definition \ref{260526defi1_19}. Write 
\begin{equation}
x=(x^1, x^2, \dots, x^n).
\end{equation}
Consider the system of ODEs given by
\begin{equation}\label{250710ea_5}
\frac{d^2 x^i}{dt}+ 2 G^i(x, \frac{dx}{dt})=0, \ \ i=1, 2, \dots, n.
\end{equation}
Note that if 
\begin{equation}
x(t)= (x^1(t), \dots, x^n(t))
\end{equation}
is a solution to \eqref{250710ea_5}, then by the homogeneity assumption  \eqref{260526e1_60zz}, we know that 
\begin{equation}
\widetilde{x}(t):=x(\lambda t)
\end{equation}
is also a solution to  \eqref{250710ea_5} for every $\lambda>0$. This, combined with some elementary calculations, shows that the solution curves to \eqref{250710ea_5} with an appropriately chosen region of initial data satisfy the requirements of Definition \ref{250801defi1_17}.  \\

Next, we start with a conic system of paths, and try to construct a spray space. This is due to Douglas \cite{Dou27}.  Let us briefly sketch the proof here. Our strategy is to use the paths in Definition \ref{250801defi1_17}, write down the ODEs they satisfy and find the functions $G^i$ satisfying the homogeneity condition 
\begin{equation}\label{260527e3_12}
    G^i(x,\lambda y) = \lambda^2 G^i(x,y), \ \forall \lambda>0.
\end{equation}
Once these $G^i$ are found, we can define spray spaces. More precisely, let 
\begin{equation}
\bfG:=(
y^1, \dots, y^n, -2G^1(x, y), \dots, -2G^n(x, y)
)
\end{equation}
with 
\begin{equation}
y=(y^1, \dots, y^n),
\end{equation}
be a vector field on $U\times C$ where $U\subset \R^n$ is a small open region and $C\subset \R^n$ is an open cone. We often write 
\begin{equation}\label{250710ea_11}
\bfG= 
y^i \frac{\partial}{\partial x^i}- 2 G^i(x, y) \frac{\partial}{\partial y^i},
\end{equation}
where we use Einstein's summation convention. Note that $x(t)$ is a solution to \eqref{250710ea_5} if and only if its lift 
\begin{equation}
\widehat{x}(t):= \Big(x(t), \frac{dx}{dt}(t)\Big)
\end{equation}
is an integral curve of $\bfG$ in $U\times C$. This finishes defining the spray space, and explains the geodesic equation in \eqref{260526e1_61a} as well. \\

So far we have finished the discussion on the geometry of spray spaces. At the end of this section, we will prove Proposition \ref{prop: Kakeya to spray space}. The proof is actually part of the steps in \eqref{260527e3_12}--\eqref{250710ea_11}, and we take this opportunity to write down more details. Let us remark that the proof below is also due to Douglas \cite{Dou27}, and it is indeed exactly how Douglas \cite{Dou27} proved that every system of paths induces a spray space.

  We start with normal form for the curved Kakeya problem. Consider the map 
\begin{equation}\label{260527e3_17}
(X(w, t; \xi), t),
\end{equation}
for some map 
\begin{equation}
X: \R^{n-1}\times \R\times \R^{n-1}\to \R^{n-1}
\end{equation}
of the form 
\begin{equation}\label{250227e6_3}
X(w, t; \xi)= 
w+ t(\xi+ O(|w||\xi|))+ t^2 O(|\xi|)+ t^3 O(|\xi|)+\dots.
\end{equation}
Here both $O(|w||\xi|)$ and $O(|\xi|)$ depend on $w, \xi$. It is elementary to see that the collection of curves given by \eqref{260527e3_17} forms a conic system of paths. Our goal is to convert the above curves into the form 
\begin{equation}\label{250227e6_4}
\partial_t^2 \mathbf{x}= G(\mathbf{x}, \mathbf{p}), \ \mathbf{p}:= \partial_t \mathbf{x},
\end{equation}
where 
\begin{equation}
\bfx: \R^{n-1}\times \R\times \R^{n-1}\to \R^n,
\end{equation}
with appropriate initial data, and the function $G$  is homogeneous of degree two in $\bfp$. \\

Let $\alpha, \beta\in \R$ be two parameters that will be chosen later. Consider the new parametrization 
\begin{equation}\label{250227e6_12}
\mathbf{x}= (X(w, \alpha t+\beta; \xi), \alpha t+\beta).
\end{equation}
We differentiate \eqref{250227e6_12} twice, and obtain 
\begin{equation}\label{250227e6_13}
\begin{split}
& \mathbf{x}= (X(w, \alpha t+\beta; \xi), \alpha t+\beta),\\
& \mathbf{p}= \alpha\pnorm{(\partial_t X)(
w, \alpha t+\beta; \xi
), 1
}, \\
& \partial_t^2 \mathbf{x}= \alpha^2 \pnorm{(\partial^2_t X)(
w, \alpha t+\beta; \xi
), 0
}. 
\end{split}
\end{equation}
Consider the first two equations in \eqref{250227e6_13}. We would like to express $\alpha, \alpha t+\beta, w, \xi$ in terms of $\bfx$ and $\bfp$, and note that the number of ``variables" matches exactly. More precisely, we obtain 
\begin{equation}
\begin{split}
& x_n= \alpha t+\beta, \ p_n= \alpha;\\
& (x_1, \dots, x_{n-1})= X(w, x_n; \xi), \ (p_1, \dots, p_{n-1})= p_n (\partial_t X)(
w, x_n; \xi
).
\end{split}
\end{equation}
This is always solvable because of H\"ormander's condition for $X$ (from the normal form of $X$ we can also see easily that this has a unique solution). Once we express $\alpha, \alpha t+\beta, w, \xi$ using $\bfx$ and $\bfp$, we substitute into the last equation in \eqref{250227e6_13}, and obtain the function $G$ as desired in \eqref{250227e6_4}. That $G$ is homogeneous of degree two in $\bfp$ is immediate. This finishes the proof of the proposition. 

\section{Proof of Theorem \ref{thm: main}: The case of phase functions}

In this section we will prove Theorem \ref{thm: main} for the case where the curved Kakeya problem arise from a phase function. Item \ref{251003thm_item1} and Item \ref{251003thm_item2} do not involve phase functions directly, and we will prove them later for $\Phi$-Kakeya problems directly (see Section \ref{251003sec9} and Section \ref{251003sec8}). Moreover, for Item \ref{251003thm_item4}, if a phase function $\phi$ satisfies both the totally geodesic condition and Bourgain's condition, then we know (see Theorem \ref{250727theorem1_13}) that the induced $\Phi$-Kakeya problem also satisfies both the totally geodesic condition and Bourgain's condition. As a consequence, Item \ref{251003thm_item4} for the phase function $\phi$ follows from Item \ref{251003thm_item4} for the induced map $\Phi$, which will be proven in Section \ref{251003secc6} later. 

  In the rest of this section, we will prove Item \ref{251003thm_item3} and  Item \ref{251003thm_item5}.

\subsection{Katz-Wolff implying Bourgain}\label{25-Sec4.1}

Let $\phi$ be a phase function satisfying H\"ormander's condition. We assume that $\phi$ satisfies the Katz-Wolff condition, and our goal is to prove that it also satisfies Bourgain's condition. \\

Recall the Katz-Wolff condition. For all choices of $\bfx_0=(x_0, t_0)\in \B^{n}_{\epsilon_{\phi}}$ and $y_1\neq y_2$, and for all 
\begin{equation}
t_1, t_2\neq t_0,
\end{equation}
let $x_i\in \R^{n-1}$ be the unique point such that 
\begin{equation}
(x_i, t_i)\in \Gamma_{y_i}(\bfx_0), \ \ i=1, 2.
\end{equation}
Let $y(t_1, t_2)$ and $w(t_1, t_2)$ be the unique (if exist) solutions satisfying that the characteristic curve 
\begin{equation}
\Gamma_{y(t_1, t_2)}(w(t_1, t_2))
\end{equation}
passes through the two points $(x_1, t_1)$ and $(x_2, t_2)$. Then that $\phi$ satisfies the Katz-Wolff condition means that   
 \begin{equation}
        \rank[\partial_{t_1} y(t_1, t_2), \partial_{t_2} y(t_1, t_2)]<2, \ \ \forall t_1, t_2.
    \end{equation} 
In other words, the set 
\begin{equation}\label{250510e2_5}
\{
y(t_1, t_2): t_1, t_2\neq t_0
\}
\end{equation}
forms a curve. \\

We without loss of generality assume that $\phi$ is of a normal form at the origin: 
    \begin{align}\label{250512e_normal}
        \phi(x,t;y) = \langle x,y\rangle + (t/2)\langle Ay,y\rangle + O(|t| |y|^3 + |(x,t)|^2 |y|^2). 
    \end{align}
Moreover, in the above definition of the Katz-Wolff condition,  we take $y_1=0$, $y_2=z$ for some $z\neq 0$,
 and put $\bfx_0= (x_0,t_0)= (0,0)$, the origin in $\R^n$.
Note that under the above assumptions, we write the characteristic curves as $$
    \Gamma_{y_1}(\bfx_0)= \Gamma_{0}(0,0),\quad \Gamma_{y_2}(\bfx_0)= \Gamma_{z}(0,0).
$$ 
Let 
\begin{equation}
    y(z; t_1, t_2)
\end{equation}
be such that the characteristic curve $\Gamma_{y(z; t_1, t_2)}((0, t_1))$ intersects both the curve $\Gamma_{0}(0,0)$ at the height $t_1$, which is the vertical line, and the curve $\Gamma_{z}(0,0)$ at the height $t_2$. By the assumption that $\phi$ satisfies the Katz-Wolff condition, we know that the set
\begin{equation}
\{y(z; t_1, t_2): t_1, t_2\neq 0\}
\end{equation}
forms a curve, for all choices of $z\neq 0$.  In other words, 
    \begin{align}\label{eq: expr 1}
        \rank[\partial_{t_1} y(z; t_1, t_2), \partial_{t_2} y(z; t_1, t_2)] < 2,
    \end{align}
    for every $z\neq 0$, and every  $t_1, t_2\neq 0$. For convenience, we always write
    \begin{equation}
        \hat{y}=y(z; t_1, t_2).
    \end{equation}

Recall the definition of characteristic curves
\begin{equation}
    \Gamma_y(\bfx):=\left\{\bfx' \in \R^n\cap \B^n_{2\epsilon_{\phi}}: \nabla_y \phi(\bfx'; y)=\nabla_y \phi(\bfx; y)\right\}.
\end{equation}
For any $\tau_1,\tau_2$ and $y$, we define $X=X(y,\tau_1,\tau_2)$ by the following equation
\begin{equation}\label{eq: impl}
     \nabla_y \phi(X,\tau_2; y) = \nabla_y \phi(0,\tau_1; y).
\end{equation}
Note that $\{(X(y,\tau_1,\tau_2),\tau_2),\ |\tau_2|\le \epsilon_\phi\}$, is a parameterization of characteristic curve $\Gamma_y(0,\tau_1)$.
\begin{claim}\label{250512claim2_1}
    Under the above notation, if we fix $\tau_2\neq 0$, and pick $y$ and $\tau_1$ sufficiently close to $0$, then we have that 
    \begin{equation}
    \nabla_y X(y,\tau_1,\tau_2)
    \end{equation}
    is non-singular. 
    \end{claim}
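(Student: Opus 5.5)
Implicit differentiation of \eqref{eq: impl} in $y$ gives the derivative, and a Taylor expansion of the normal form \eqref{250512e_normal} near $y=0,\tau_1=0$ shows it is invertible. Differentiating both sides of \eqref{eq: impl} with respect to $y$ yields
\begin{equation*}
\nabla_x\nabla_y\phi(X,\tau_2;y)\,\nabla_y X = \nabla_y^2\phi(0,\tau_1;y)-\nabla_y^2\phi(X,\tau_2;y).
\end{equation*}
By (H1), the mixed Hessian $\nabla_x\nabla_y\phi$ is invertible near the origin; from the normal form it equals $I+O(|(x,t)|+|y|)$. So it suffices to show that the right-hand side, $\nabla_y^2\phi(0,\tau_1;y)-\nabla_y^2\phi(X,\tau_2;y)$, is nonsingular.

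To see this, first locate $X$. At $y=0$ and $\tau_1=0$, the normal form gives $\nabla_y\phi(x,t;0)=x+O(|(x,t)|^2)$, so the characteristic curve $\Gamma_0(0,0)$ is the vertical line and $X(0,0,\tau_2)=0$. By continuity, $X(y,\tau_1,\tau_2)$ stays close to $0$ when $y$ and $\tau_1$ are small. Next, expand the Hessian using \eqref{250512e_normal}:
\begin{equation*}
\nabla_y^2\phi(x,t;y)=tA+O(|t||y|+|(x,t)|^2).
\end{equation*}
The right-hand side of the implicit equation then becomes
\begin{equation*}
(\tau_1-\tau_2)A+O(|\tau_1||y|+|\tau_2||y|+\tau_1^2+\tau_2^2+|X|^2).
\end{equation*}
For fixed $\tau_2\neq0$ and $y,\tau_1\to0$, the main term $-\tau_2A$ is nonsingular because $A$ is non-degenerate. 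The error terms are $O(\tau_2^2)+o(1)$.

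The main obstacle is the $O(\tau_2^2)$ error, which is not small relative to $|\tau_2|$ unless $|\tau_2|$ is small. I would handle it by taking $\tau_2$ in the range $0<|\tau_2|\le\epsilon_\phi$, where $\epsilon_\phi$ is chosen small enough that $|\tau_2|^2\ll|\tau_2|\,\|A^{-1}\|^{-1}$. This is consistent with the standing assumption that all variables lie near the origin. If instead $\tau_2$ is genuinely fixed and not small, I would argue directly at $y=0,\tau_1=0$. There the right-hand side is $\nabla_y^2\phi(0,0;0)-\nabla_y^2\phi(0,\tau_2;0)$. Since $\nabla_y^2\phi(0,0;0)=0$ by the normal form, this is invertible precisely because of (H2) along the vertical line: $\partial_t\nabla_y^2\phi(0,t;0)$ stays nondegenerate, and the difference equals $-\int_0^{\tau_2}\partial_t\nabla^2_y\phi\,dt$. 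Openness of invertibility then extends the conclusion to $y,\tau_1$ near $0$.
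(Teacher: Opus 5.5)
Your main argument is correct and is the paper's proof. You differentiate \eqref{eq: impl} in $y$, use invertibility of $\nabla_x\nabla_y\phi$ to reduce to nonsingularity of $\nabla^2_y\phi(0,\tau_1;y)-\nabla^2_y\phi(X,\tau_2;y)$, and read this off the normal form as $(\tau_1-\tau_2)A+O(\cdots)$, using $X\to X(0,0,\tau_2)=0$. You also make explicit something the paper leaves implicit: $|\tau_2|\le\epsilon_\phi$ must be small so that the $O(\tau_2^2)$ error is dominated by $\tau_2A$.

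You should, however, delete the fallback for a ``genuinely fixed, not small'' $\tau_2$, because it is wrong. Nondegeneracy of $\partial_t\nabla_y^2\phi(0,t;0)$ for every $t$ does not make $\int_0^{\tau_2}\partial_t\nabla_y^2\phi(0,t;0)\,dt$ invertible when $A$ is indefinite. (It does work in the elliptic case, where the integrand stays definite.)

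Here is a concrete counterexample. Take $R(s)=\begin{pmatrix}\cos s&\sin s\\ \sin s&-\cos s\end{pmatrix}$, $M(t)=\int_0^t R(s)\,ds$, and $\phi(x,t;y)=\langle x,y\rangle+\tfrac12\langle M(t)y,y\rangle$.
\begin{itemize}
\item (H1) and (H2) hold everywhere, since the (H2) matrix is $R(t)$, which has determinant $-1$.
\item $\phi$ is in normal form with $A=R(0)$.
\item Yet $\nabla_yX(y,\tau_1,\tau_2)=M(\tau_1)-M(\tau_2)$, which vanishes at $\tau_1=0$, $\tau_2=2\pi$.
\end{itemize}
So the claim itself fails for large $\tau_2$, and the smallness of $\tau_2$ in your main argument is essential rather than a convenience.
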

\begin{proof}[Proof of Claim \ref{250512claim2_1}]
We differentiate both sides of \eqref{eq: impl} in the $y$ variable, and obtain 
\begin{equation}\label{251208e4-14}
\nabla_x\nabla_y \phi(X, \tau_2; y)
\nabla_y X
+ \nabla^2_y \phi(X, \tau_2; y)= \nabla^2_y \phi(0, \tau_1; y).
\end{equation}
It suffices to show that 
\begin{equation}\label{251003e10_19}
\nabla^2_y \phi(X, \tau_2; y)- \nabla^2_y \phi(0, \tau_1; y)
\end{equation}
is non-singular whenever $\tau_2\neq 0$ is fixed and $y, \tau_1$ are sufficiently small. When $y, \tau_1$ are sufficiently small, $X(y,\tau_1,\tau_2)$ will be as well. That \eqref{251003e10_19} is non-singular is immediate once we observe that $\phi$ is of a normal form at the origin. This finishes the proof of the claim. 
\end{proof}

By the same token, we can write
\begin{align}
    \Gamma_{\hat{y}}(0,t_1)=\{ (X(\hat{y},t_1,t),t):\ |t|\le \epsilon_{\phi}\},\\
    \Gamma_{z}(0,0)=\{ (X(z,0,t),t),\ |t|\le \epsilon_{\phi}\}.  \quad
\end{align}
Since $\Gamma_{\hat{y}}(0,t_1)$ intersects $\Gamma_{z}(0,0)$ at the height $t_2$, we see
\begin{equation}\label{eq: y hat impl}
    X(\hat{y},t_1,t_2)= X(z,0,t_2).
\end{equation}
 Differentiate \eqref{eq: y hat impl} to find $\partial_{t_1} \hat y$, $\partial_{t_2} \hat y$: 
    \begin{align}\label{250512e2_1516}
        \partial_{t_1} \hat y &= -\nabla_y X(\hat{y},t_1,t_2)^{-1} \partial_{\tau_1} X(\hat{y},t_1,t_2) \\
        \partial_{t_2} \hat y &= \nabla_y X(\hat{y},t_1,t_2)^{-1} [\partial_{\tau_2} X(z,0,t_2) - \partial_{\tau_2} X(\hat{y},t_1,t_2)].
    \end{align}
Note that the matrix $\nabla_y X(\hat{y},t_1,t_2)$ is non-singular by Claim \ref{250512claim2_1}, if we fix $t_2\neq 0$ and take $z,\ t_1$ sufficiently close to 0. Factoring it out and by \eqref{eq: expr 1}, we see\footnote{Claim \ref{250512claim2_1} requires that $y$ and $\theta'$ are sufficiently small, and later in \eqref{251003e10_25zz} we will see that this is indeed what we have. } that if we denote  
    \begin{align}
        U &:=\partial_{\tau_1} X(\hat y, t_1, t_2) \\
        V &:=\partial_{\tau_2} X(z, 0, t_2) - \partial_{\tau_2} X(\hat y, t_1, t_2),
    \end{align}
    then we must have 
    \begin{equation}\label{251003e10_22zz}
            \rank[U, V] < 2.
    \end{equation}
 To simplify notation, we will write \eqref{251003e10_22zz} as 
 \begin{equation}
 \det(U, V)=0,
 \end{equation}
 where 
 $\det(U, V)$ can be understood as the column vector whose entries are the determinants of all possible $2\times 2$ sub-matrices of the $(n-1)\times 2$ matrix $[U, V]$. \\

 Recall the product rule 
 \begin{equation}
 \det(U, V)' = \det(U, V') + \det(U', V).
 \end{equation}
We differentiate $\det(U, V)$ in $t_1$ and obtain 
\begin{equation}\label{250512e2_22z}
    0= \det(U, \partial_{t_1} V)+  \det(\partial_{t_1}U, V).
\end{equation}
We freeze $t_2$ and take the limit $t_1\to 0$. When taking this limit, we see that 
\begin{equation}\label{251003e10_25zz}
\lim_{t_1\to 0} y(z; t_1, t_2)= z,
\end{equation}
and therefore 
\begin{equation}
\lim_{t_1\to 0}V  = 0.
\end{equation}
When taking the limit $t_1\to 0$ on both sides of \eqref{250512e2_22z}, we obtain 
\begin{equation}\label{251208e4-29}
0=\lim_{t_1\to 0} \det(U, \partial_{t_1} V).
\end{equation}
 To compute the right hand side, we need to differentiate $V$: 
\begin{align}
    \partial_{t_1} V
    &=-\nabla_y \partial_{\tau_2} X(\hat{y}, t_1, t_2) \partial_{t_1} \hat{y}- \partial_{\tau_2}\partial_{\tau_1} X(\hat{y}, t_1, t_2)\notag\\
     &= \nabla_y \partial_{\tau_2} X(\hat{y}, t_1, t_2)  (\nabla_y X(\hat{y}, t_1, t_2))^{-1} \partial_{\tau_1} X(\hat{y}, t_1, t_2) - \partial_{\tau_2}\partial_{\tau_1} X(\hat{y}, t_1, t_2),
\end{align}
where we applied \eqref{250512e2_1516}. 
So it follows from \eqref{251208e4-29} that
\begin{align}\label{eq: expr 2}
    0= 
    \det\big(\partial_{\tau_1} X(z, 0, t_2),\ \nabla_y \partial_{\tau_2} X(z, 0, t_2)  
     (\nabla_y X(z, 0, t_2))^{-1} \partial_{\tau_1} X(z, 0, t_2)\notag\\ - \partial_{\tau_1} \partial_{\tau_2} X(z, 0, t_2)\big).
\end{align}
This says that there exists a scalar function $\lambda(z,t_2)$ such that
\begin{align}\label{251208e4-32}
    \nabla_y \partial_{\tau_2} X(z, 0, t_2)  
     (\nabla_y X(z, 0, t_2))^{-1}& \partial_{\tau_1} X(z, 0, t_2) - \partial_{\tau_1} \partial_{\tau_2} X(z, 0, t_2)\notag\\
     &= \lambda(z,t_2)\partial_{\tau_1} X(z, 0, t_2).
\end{align}

We implicitly differentiate \eqref{eq: impl} multiple times and use the fact that $\phi$ is of normal form. Then it is not hard to determine the relevant derivatives of $X$,
\begin{align*}
    \partial_{\tau_1} X(z,0,t_2)= A z+ O(|z|^2),\\
    \partial_{\tau_2} \partial_{\tau_1} X(z,0,t_2)= O(|z|^2),\\
    \nabla_y X(z,0,t_2)= t_2A+ O(|t_2|^2)+ O(|z|), \\
    \partial_{\tau_2}\nabla_y X(z,0,t_2)= A+ O(|t_2|)+ O(|z|).
\end{align*}
Fix a unit vector $z_0$. Let $r$ be a scalar and take $z=rz_0$. Then \eqref{251208e4-32} becomes
\begin{align}\label{251208e4-33}
    \nabla_y \partial_{\tau_2} X(rz_0, 0, t_2)  
     (\nabla_y X(rz_0, 0, t_2))^{-1}& \partial_{\tau_1} X(rz_0, 0, t_2) - \partial_{\tau_1} \partial_{\tau_2} X(rz_0, 0, t_2)\notag\\
     &= \lambda(rz_0,t_2)\partial_{\tau_1} X(rz_0, 0, t_2).
\end{align}
To make things cleaner, we denote that
\begin{align}
    & W(rz_0,t_2):=\text{ the left hand side of \eqref{251208e4-33}},   \\
    & Q(t_2):= \nabla_y \partial_{\tau_2} X(0,0,t_2)  
     (\nabla_y X(0,0,t_2))^{-1}.
\end{align}
Plug all these derivatives of $X$ into \eqref{251208e4-33}, then we can see that the limit of $\lambda(rz_0,t_2)$ exists, as $r\to 0$,
\begin{align}
    \lim_{r\to 0} |\lambda(rz_0,t_2)| &=  \lim_{r\to 0} \frac{|W(rz_0,t_2)|}{|\partial_{\tau_1} X(rz_0,0,t_2)|} \notag \\
     &= \lim_{r\to 0} \frac{|Q(t_2)rAz_0|}{|rAz_0|}= \frac{|Q(t_2)Az_0|}{|Az_0|}.
\end{align}
Denote this limit by $\lambda_0(t_2)$. We also need to see whether $|\partial_r\big(\lambda(rz_0,t_2)\big)|$ is bounded near 0. This is indeed the case, since
\begin{align}
    \lim_{r\to 0} \partial_r |\lambda(rz_0,t_2)| &= \lim_{r\to 0} \partial_r\frac{|W|}{|\partial_{\tau_1} X|} \notag \\
     &= \lim_{r\to 0} \frac{\partial_r|W||\partial_{\tau_1} X|-|W|\partial_r |\partial_{\tau_1} X|}{|\partial_{\tau_1} X|^2} \\
     &\leq \lim_{r\to 0} \frac{O(r^2)}{r^2|Az_0|^2},
\end{align}
where the last inequality follows from that 
\begin{align}
    \partial_r|W||\partial_{\tau_1} X|= |Q(t_2)Az_0||Az_0||r|+O(r^2), \\
    |W|\partial_r |\partial_{\tau_1} X|= |Q(t_2)Az_0||Az_0||r|+O(r^2).
\end{align}

If we take the limit $r\to 0$, then both sides of \eqref{251208e4-33} vanish. To deal with this, we differentiate \eqref{251208e4-33} in $r$ and next set $r=0$,
\begin{align}
    \nabla_y \partial_{\tau_2} X(0, 0, t_2)  
     (\nabla_y X(0, 0, t_2))^{-1} Az_0 = \lambda_0(t_2)Az_0.
\end{align}
Since $A$ is non-degenerate and $z_0$ is arbitrary, we see that
\begin{align}\label{251208e4-36}
    \nabla_y \partial_{\tau_2} X(0, 0, t_2)  
       = \lambda_0(t_2)\nabla_y X(0, 0, t_2).
\end{align}
If we set $y=0$, $\tau_1=0$ and $\tau_2=t_2$ in \eqref{251208e4-14}, then we will see
\begin{equation}
    \nabla_y X(0, 0, t_2)= -\nabla_y^2 \phi(0,t_2;0).
\end{equation}
Let $M(t) := \partial_y^2 \phi(0,t,0)$ and $t_2=t$. Then \eqref{251208e4-36} says
\begin{equation}
    \partial_t M(t)  
       = \lambda_0(t)M(t).
\end{equation}
Write 
\begin{equation}
M(t) = tA + t^2B + \cdots.
\end{equation}
 Finally take one more derivative in $t$ and set $t=0$ to get $B$ is a multiple of $A$. This finishes verifying Bourgain's condition.

\subsection{Totally geodesic and failing Bourgain's condition means extra-worst compression}

The goal of this subsection to show that if a phase function $\phi$ satisfies the totally geodesic condition, and fails Bourgain's condition in Definition \ref{230903definition1_4}, then $\phi$ satisfies the extra-worst compression condition. We will reduce the problem to the case of $\Phi$-Kakeya problems, which is Item \ref{251003thm_item5} for $\Phi$-Kakeya problems, and will be proven later in Section \ref{251003sec7}. \\

Let us be more precise. 
The characteristic curves of $\phi$ are given by 
\begin{equation}
\Phi(w, t; \xi)= (X(w, t; \xi), t)
\end{equation}
where $X(w, t; \xi)$ satisfies 
\begin{equation}\label{251004e4_55}
\nabla_{\xi} \phi(X(w, t; \xi), t; \xi)=w.
\end{equation}
Lemma \ref{251004lemma4_2} below says that if $\phi$ fails Bourgain's condition in Definition \ref{230903definition1_4}, then $\Phi$ also fails Bourgain's condition in Definition \ref{BourCond}, and therefore the problem is reduced to proving that if $\Phi$ satisfies the totally geodesic condition and fails Bourgain's condition in Definition \ref{BourCond}, then $\Phi$ satisfies the extra-worst compression condition, which is Item \ref{251003thm_item5} for $\Phi$-Kakeya problems, and will be proven later in Section \ref{251003sec7}.

\begin{lemma}\label{251004lemma4_2}
Let $\phi(x, t; \xi)$ be a phase function satisfying H\"ormander's condition. 
Let us assume that $X$ satisfies Bourgain's condition in Definition \ref{BourCond}. Then $\phi$ also satisfies Bourgain's condition, which is given by Definition \ref{230903definition1_4}. 
\end{lemma}

\begin{proof}[Proof of Lemma \ref{251004lemma4_2}]
We without loss of generality assume that $\phi$ is of a normal form at the origin, and our goal is to show that $\phi$ satisfies Bourgain's condition at the origin. By definition, that $X$ satisfies Bourgain's condition in Definition \ref{BourCond} means that 
\begin{equation}\label{251004e4_56}
\nabla_{\xi} \partial_t^2 X
\end{equation}
is a constant multiple of 
\begin{equation}\label{251004e4_57}
\nabla_{\xi} \partial_t X.
\end{equation}
Let us compute  \eqref{251004e4_56} and \eqref{251004e4_57}. By taking derivatives in $\xi$ on both sides of \eqref{251004e4_55}, we obtain 
\begin{equation}\label{251004e4_58}
\nabla^2_{\xi} \phi(X, t; \xi)+
\nabla_x 
 \nabla_{\xi} \phi(X, t; \xi) \nabla_{\xi} X= 0.
\end{equation}
We differentiate \eqref{251004e4_58} in $t$: 
\begin{multline}\label{251004e4_59}
\partial_t \nabla^2_{\xi} \phi(X, t; \xi)
+
\nabla_x \nabla^2_{\xi} \phi(X, t; \xi) \partial_t X
+
\partial_t \nabla_x 
 \nabla_{\xi} \phi(X, t; \xi) \nabla_{\xi} X
 +\\
 \nabla^2_x 
 \nabla_{\xi} \phi(X, t; \xi) \nabla_{\xi} X \partial_t X+
 \nabla_x 
 \nabla_{\xi} \phi(X, t; \xi) \partial_t \nabla_{\xi} X
 = 0.
\end{multline}
At the origin, we obtain 
\begin{equation}\label{251004e4_60}
\partial_t\nabla_{\xi}X= -\partial_t \nabla^2_{\xi} \phi.
\end{equation}
We differentiate \eqref{251004e4_59} in $t$, evaluate at the origin, and only collect the non-zero terms:
\begin{equation}
\partial^2_t \nabla^2_{\xi} \phi(X, t; \xi)+  \nabla_x 
 \nabla_{\xi} \phi(X, t; \xi) \partial^2_t \nabla_{\xi} X=0.
\end{equation}
This, together with \eqref{251004e4_60}, implies that 
\begin{equation}
\partial^2_t \nabla^2_{\xi} \phi
\end{equation}
is a constant multiple of 
\begin{equation}
\partial_t \nabla^2_{\xi} \phi
\end{equation}
at the origin, thus finishes the proof that $\phi$ satisfies Bourgain's condition at the origin. 
\end{proof}

\section{Proof of Theorem \ref{thm: main}, Part \ref{thm: main part 3}: Katz-Wolff  implies Bourgain}\label{251003secc5}

Let $\Phi$ be a map satisfying H\"ormander's condition. Without loss of generality, we write 
\begin{equation}
\Phi(w, t; \xi)= (X(w, t; \xi), t).
\end{equation}
In this section we will show that if $\Phi$ satisfies the Katz-Wolff condition, then it also satisfies Bourgain's condition. \\

We without loss of generality assume that $X$ is of a normal form at the origin, that is, 
\begin{equation}
X(w, t; \xi)=w+ t\pnorm{
\xi+ O(|w||\xi|)
}+ t^2 O(|\xi|)+ t^3 O(|\xi|)+\cdots.
\end{equation}
In the definition of the Katz-Wolff condition in Definition \ref{250510defi1_1}, we take two curves $\Gamma_1$ and $\Gamma_2$, both passing through the origin, one with frequency $\xi^\prime=0\in \R^{n-1}$ and the other with frequency $\xi^{\prime\prime}=\eta$ for some $\eta\neq 0$, i.e.,
\begin{align}
    \Gamma_1=\{(0,t):|t|\le\epsilon_\Phi\},\quad \Gamma_2=\{(X(0,t;\eta),t):|t|\le\epsilon_\Phi\}.
\end{align}
Katz-Wolff condition says that there exist $\hat{w}=w(\eta;t_1,t_2)$ and $\hat{\xi}=\xi(\eta;t_1,t_2)$ such that the curve
\begin{align}
    \Gamma=\{(X(\hat{w},t;\hat{\xi}),t):|t|\le\epsilon_\Phi\}
\end{align}
intersects $\Gamma_1$ at height $t_1$ and $\Gamma_2$ at height $t_2$, respectively, and we know that the set 
\begin{equation}
    \{ \xi(\eta; t_1, t_2): t_1, t_2\neq 0\}
\end{equation}
forms a curve, for all choices of $\eta\neq 0$. In other words, we know that 
\begin{equation}\label{251209e5-6}
\rank[
\partial_{t_1} \xi(\eta; t_1, t_2), \partial_{t_2} \xi(\eta; t_1, t_2)
]< 2
\end{equation}
for every $\eta\neq 0$, and every $t_1, t_2\neq 0$. To simplify future notation, we will write \eqref{251209e5-6} as 
\begin{equation}\label{251209e5-7}
\det\big(
\partial_{t_1} \xi(\eta; t_1, t_2), \partial_{t_2} \xi(\eta; t_1, t_2)
\big)=0,
\end{equation}
where the above determinant can be understood as the column vector whose entries are all the determinants of the $2\times 2$ sub-matrices of the $(n-1)\times 2$ matrix $[
\partial_{t_1} \xi(\eta; t_1, t_2), \partial_{t_2} \xi(\eta; t_1, t_2)
]$. We begin with two equations,
\begin{align}
    X(\hat{w},t_1;\hat{\xi})=0,\quad\quad\quad\ \  \label{251209e5-8} \\
    X(\hat{w},t_2;\hat{\xi})= X(0,t_2;\eta), \label{251209e5-9}
\end{align}
and repeat an argument similar to that in Section \ref{25-Sec4.1}.\\

We calculate $\partial_{t_1} \xi(\eta; t_1, t_2)$ and $\partial_{t_2} \xi(\eta; t_1, t_2)$. If we take $\partial_{t_1}$ on \eqref{251209e5-8} and \eqref{251209e5-9}, respectively, then we obtain
\begin{align}
    \nabla_w X(\hat{w},t_1;\hat{\xi})\partial_{t_1}\hat{w}+ \partial_t X(\hat{w},t_1;\hat{\xi})+ \nabla_\xi X(\hat{w},t_1;\hat{\xi})\partial_{t_1}\hat{\xi}=0, \label{251209e5-10}\\
    \nabla_w X(\hat{w},t_2;\hat{\xi})\partial_{t_1}\hat{w}+ \nabla_\xi X(\hat{w},t_2;\hat{\xi})\partial_{t_1}\hat{\xi}=0.\quad\quad\qquad\qquad\ \ \label{251209e5-11}
\end{align}
Solving \eqref{251209e5-10} and \eqref{251209e5-11} gives
\begin{align}\label{251209e5-12}
    \bigg(\nabla_\xi X(\hat{w},t_1;\hat{\xi})&- \nabla_w X(\hat{w},t_1;\hat{\xi}) \nabla_w X(\hat{w},t_2;\hat{\xi})^{-1} \nabla_\xi X(\hat{w},t_2;\hat{\xi})\bigg)\partial_{t_1}\hat{\xi} \notag\\
    &=-\partial_t X(\hat{w},t_1;\hat{\xi}).
\end{align}
If we take $\partial_{t_2}$ on \eqref{251209e5-8} and \eqref{251209e5-9}, respectively, then we obtain
\begin{align}
    \nabla_w X(\hat{w},t_1;\hat{\xi})\partial_{t_2}\hat{w}+ \nabla_\xi X(\hat{w},t_1;\hat{\xi})\partial_{t_2}\hat{\xi}=0, \quad\quad\qquad\qquad\ \ \label{251209e5-13}\\
    \nabla_w X(\hat{w},t_2;\hat{\xi})\partial_{t_2}\hat{w}+ \partial_t X(\hat{w},t_2;\hat{\xi})+ \nabla_\xi X(\hat{w},t_2;\hat{\xi})\partial_{t_2}\hat{\xi}=0. \label{251209e5-14}
\end{align}
Solving \eqref{251209e5-13} and \eqref{251209e5-14} gives
\begin{align}
    \bigg(\nabla_\xi X(\hat{w},t_1;\hat{\xi})&- \nabla_w X(\hat{w},t_1;\hat{\xi}) \nabla_w X(\hat{w},t_2;\hat{\xi})^{-1} \nabla_\xi X(\hat{w},t_2;\hat{\xi})\bigg)\partial_{t_2}\hat{\xi} \notag\\
    &= \nabla_w X(\hat{w},t_1;\hat{\xi}) \nabla_w X(\hat{w},t_2;\hat{\xi})^{-1}\partial_t X(\hat{w},t_2;\hat{\xi}).
\end{align}
By the following claim, \eqref{251209e5-7} actually says
\begin{equation}\label{251209e5-16}
    \det\big( \partial_t X(\hat{w},t_1;\hat{\xi}), \nabla_w X(\hat{w},t_1;\hat{\xi}) \nabla_w X(\hat{w},t_2;\hat{\xi})^{-1}\partial_t X(\hat{w},t_2;\hat{\xi}) \big)=0,
\end{equation}

\begin{claim}\label{260309claim5_1}
    Under the above notation, if we fix $t_2\neq 0$, and pick $\eta$ and $t_1$ sufficiently close to 0, then we have that
    \begin{align}\label{251209e5-17}
        \nabla_\xi X(\hat{w},t_1;\hat{\xi})- \nabla_w X(\hat{w},t_1;\hat{\xi}) \nabla_w X(\hat{w},t_2;\hat{\xi})^{-1} \nabla_\xi X(\hat{w},t_2;\hat{\xi})
    \end{align}
    is non-singular.
\end{claim}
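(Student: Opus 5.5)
The plan is to follow the strategy of Claim \ref{250512claim2_1}: reduce to evaluating the matrix \eqref{251209e5-17} in the limit $\eta\to 0$, $t_1\to 0$ with $t_2\neq 0$ fixed, compute it explicitly from the normal form, and check that it is invertible there. Continuity then gives non-singularity for all $\eta,t_1$ small enough.

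\textbf{Step 1: the limiting values of $\hat w$ and $\hat\xi$.} When $\eta=0$ the two curves $\Gamma_1,\Gamma_2$ coincide with the vertical line $\{(0,t)\}$. The unique curve through $(0,t_1)$ and $(0,t_2)$ is then $\Gamma_1$ itself, so $(\hat w,\hat\xi)=(0,0)$. The solution $(\hat w,\hat\xi)$ of \eqref{251209e5-8}--\eqref{251209e5-9} depends continuously on $(\eta,t_1)$; this uses the implicit function theorem and H\"ormander's condition, i.e.\ the fact that a curve through two points at distinct heights is locally unique. Hence $(\hat w,\hat\xi)\to(0,0)$ as $\eta\to0$. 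So it suffices to show that
\[
\nabla_\xi X(0,t_1;0)-\nabla_w X(0,t_1;0)\,\nabla_w X(0,t_2;0)^{-1}\nabla_\xi X(0,t_2;0)
\]
is non-singular for $t_1$ near $0$. By continuity in $t_1$, it is enough to do this at $t_1=0$.

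\textbf{Step 2: evaluate using the normal form.} Write
\[
X(w,t;\xi)=w+t\bigl(\xi+O(|w||\xi|)\bigr)+t^2O(|\xi|)+\cdots .
\]
At $w=0$ the term $t\,O(|w||\xi|)$ contributes nothing to $\nabla_\xi X$. Moreover $\nabla_w X(0,t;0)=I$, since every term except $w$ vanishes at $\xi=0$. Also $\nabla_\xi X(0,0;0)=0$, and
\[
\nabla_\xi X(0,t_2;0)=t_2 I+t_2^2 B_2+\cdots=:N(t_2).
\]
At $t_1=0$ the displayed matrix therefore equals $0-I\cdot I^{-1}N(t_2)=-N(t_2)$. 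This is non-singular for $0<|t_2|$ small, since $N(t_2)=t_2(I+O(t_2))$. For general small $t_1$ the matrix equals $N(t_1)-N(t_2)$ up to small errors, which is $(t_1-t_2)(I+O(|t_1|+|t_2|))$, still invertible for $t_1\neq t_2$.

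\textbf{Main obstacle: uniformity in the smallness.} The smallness of $t_2$ has to be compatible with the neighbourhood in which the normal form is valid, so that $\epsilon_\Phi$ is small enough for $I+O(t_2)$ to be invertible. The neighbourhood of $(\eta,t_1)$ may depend on the fixed $t_2$, and this is allowed by the statement. Care is also needed for the error terms: $\nabla_\xi X$ depends on $(\hat w,\hat\xi)$ through $O(|w||\xi|)$-type corrections, and these vanish in the limit. A cleaner way to organize this is to factor out $(t_1-t_2)$ and apply a mean-value argument to $\nabla_\xi X(0,\cdot;0)$.
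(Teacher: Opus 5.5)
Your proposal is correct and is essentially the paper's argument: both use the normal form to show that, once $(\hat w,\hat\xi)$ is near $0$, the matrix is a small perturbation of $(t_1-t_2)I$. The paper factors out $\nabla_w X(\hat w,t_1;\hat\xi)$ and applies $\nabla_w X^{-1}\nabla_\xi X=tI+O(|t||w|)+O(|t|^2)$ directly at $(\hat w,\hat\xi)$, whereas you evaluate exactly at the limit configuration $(\hat w,\hat\xi)=(0,0)$ and perturb by continuity. One small ordering point: the implicit-function-theorem input in your Step 1 is exactly the invertibility you verify in Step 2 (by a Schur complement, the Jacobian of $(w,\xi)\mapsto (X(w,0;\xi),X(w,t_2;\xi))$ at $(0,0)$ is invertible iff $N(t_2)$ is), so it is cleanest to do Step 2 first. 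This is an ordering issue, not a circularity.
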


\begin{proof}[Proof of Claim \ref{260309claim5_1}]
    By the normal form of $X$, we see that
    \begin{align}
        \nabla_w X(w,t;\xi)= I+O(|t||\xi|)+O(|t|^2),\ \\
        \nabla_\xi X(w,t;\xi)= tI+ O(|t||w|)+O(|t|^2),
    \end{align}
    which further implies 
    \begin{align}\label{251209e5-20}
        \nabla_w X(w,t;\xi)^{-1}\nabla_\xi X(w,t;\xi)= tI+O(|t||w|)+O(|t|^2).
    \end{align}
    To see \eqref{251209e5-17} is non-singular, it suffices to show
    \begin{equation}\label{251209e5-21}
        \nabla_w X(\hat{w},t_1;\hat{\xi})^{-1}\nabla_\xi X(\hat{w},t_1;\hat{\xi})-  \nabla_w X(\hat{w},t_2;\hat{\xi})^{-1} \nabla_\xi X(\hat{w},t_2;\hat{\xi})
    \end{equation}
    is non-singular. Note that $\hat{w}\to 0$ and $\hat{\xi}\to \eta$, as $t_1\to 0$. If we set $t_1$ and $\eta$ sufficiently close to 0, then  $\hat{w}$ and $\hat{\xi}$ are sufficiently close to 0. It hence follows from \eqref{251209e5-20} that \eqref{251209e5-21} behaves like $(t_1-t_2)I$, which is non-singular since $t_1\neq t_2$.    
    
\end{proof}

Let's continue with \eqref{251209e5-16}, which implies that there exists a scalar function $\lambda(\eta;t_1,t_2)$ such that
\begin{equation}\label{251209e5-22}
    \nabla_w X(\hat{w},t_2;\hat{\xi})^{-1}\partial_t X(\hat{w},t_2;\hat{\xi})= \lambda(\eta;t_1,t_2) \nabla_w X(\hat{w},t_1;\hat{\xi})^{-1} \partial_t X(\hat{w},t_1;\hat{\xi}).
\end{equation}
First we take $t_1\to 0$ on both sides of \eqref{251209e5-22}. Then by the normal form of $X$, we have that
\begin{align}
    \nabla_w X(\hat{w},t_1;\hat{\xi})\to I,\quad \partial_t X(\hat{w},t_1;\hat{\xi})\to \eta,\quad\quad \label{251209e5-23} \\
    \nabla_w X(\hat{w},t_2;\hat{\xi})^{-1}\partial_t X(\hat{w},t_2;\hat{\xi})\to \eta+ O(|t_2||\eta|). \label{251209e5-24}
\end{align}
These also ensure that we can take the limit of $\lambda(\eta;t_1,t_2)$ as $t_1\to 0$.
So we have
\begin{align}\label{251209e5-25}
    \nabla_w X(0,t_2;\eta)^{-1}\partial_t X(0,t_2;\eta)= \lambda(\eta;0,t_2)\eta.
\end{align}
Next we write $\eta_0$ be a unit vector and let $0< r\le 1$ be a scalar. Taking $\eta=r\eta_0$, we see that \eqref{251209e5-25} becomes
\begin{align}\label{251209e5-26}
    \nabla_w X(0,t_2;r\eta_0)^{-1}\partial_t X(0,t_2;r\eta_0)= \lambda(r\eta_0;0,t_2)r\eta_0.
\end{align}
Before we take $\partial_r$ on both sides of \eqref{251209e5-26} and take $r\to 0$, we need to check the existence of $\lim_{r\to 0}\lambda$ and $\lim_{r\to 0}\partial_r \lambda$. This again follows from the normal form of X,
\begin{equation}
    \nabla_w X(0,t_2;r\eta_0)^{-1}\partial_t X(0,t_2;r\eta_0)= r(I+O(|t_2||\eta_0|))(\eta_0+O(|t_2||\eta_0|)),
\end{equation}
which says that $\lambda(r\eta_0;0,t_2)\eta_0$ is a smooth function of $r$. So, it is time to take $\partial_r$ on both sides of \eqref{251209e5-26} and take $r\to 0$. Then we obtain
\begin{align}
    \partial_t \nabla_\xi X(0,t_2;0)\eta_0= \lambda(0;0,t_2)\eta_0.
\end{align}
Set $t=t_2$. Since $\eta_0$ can be chosen arbitrarily, we have
\begin{align}
    \partial_t \nabla_\xi X(0,t;0)= \lambda(0;0,t)I.
\end{align}
Finally, we take $\partial_{t}$ on both sides and take $t\to 0$, then we see that
\begin{align}
    \partial^2_t \nabla_\xi X(0,0;0)= c\,I= c\,\partial_t \nabla_\xi X(0,0;0).
\end{align}
This proves that $X$ satisfies Bourgain's condition at 0.

\section{Proof of Theorem \ref{thm: main}, part \ref{thm: main part 4}: Totally geodesic and Bourgain imply B.R.-type}\label{251003secc6}

In this section we will show that if a $\Phi$-Kakeya problem satisfies both the totally geodesic condition and Bourgain's condition, then it is of the Bochner-Riesz-type. \\

Let us state a quantitative version of our result. Let $V: \R^{n-1}\to \R$ be an analytic function near the origin. If for some $n\le p< \infty$ it holds that 
\begin{equation}
\Norm{
\sup_{\omega\in \B^{n-1}_{\varepsilon}}
\int_{t\simeq 1} |
f(\eta+ t\omega, t- V(\eta))
| dt
}_{L^p(\B^{n-1}_{\varepsilon})}\lesim_{V, p, \epsilon_0}
\delta^{-\epsilon_0} \|f\|_p,
\end{equation}
for every $\epsilon_0$ and $\delta\in (0, 1)$, and every function $f$ that is locally constant at the scale $\delta$, then it also holds that 
\begin{equation}
\|\mathcal{K}^{(\Phi)}_{\delta}f\|_{L^p(\B^{n-1}_{\varepsilon})} \lesim \delta^{-\epsilon_0} \|f\|_p
\end{equation}
for the same $p$, where $\mathcal{K}^{(\Phi)}_{\delta}$ is the $\Phi$-Kakeya maximal operator, with $\Phi$ satisfying  the Katz-Wolff condition. \\

By Theorem \ref{thm: main} Part \ref{thm: main part 1}, if $\Phi$ satisfies the totally geodesic condition, then we can write  
\begin{equation}
\Phi(w, t; \xi)= (X(w, t; \xi), t),
\end{equation}
with 
\begin{equation}
X(w, t; \xi)= (
w_1+ t V_1(w; \xi), \dots,  w_{n-1}+ t V_{n-1}(w; \xi)
).
\end{equation}
H\"ormander's condition means that 
\begin{equation}
\left.\nabla_w X(w, t ; \xi)\right|_{(w, t)=0 ; \xi=0}
\end{equation}
has rank $(n-1)$, 
and 
\begin{equation}
\left[\begin{array}{cc}
\nabla_w X(w, t ; \xi) & \nabla_{\xi} X(w, t ; \xi) \\
\partial_t \nabla_w X(w, t ; \xi) & \partial_t \nabla_{\xi} X(w, t ; \xi)
\end{array}\right]
\end{equation}
at $w=\xi=0, t=0$ has rank $2(n-1)$. Bourgain's condition implies that there exists a scalar function $c(w, t; \xi)$ such that 
\begin{equation}
\partial_t \nabla_w X(w, t; \xi)= c(w, t; \xi) \nabla_{w}X(w, t; \xi).
\end{equation}
Note that this means 
\begin{equation}\label{250521e3_5}
[\partial_{w_i} V_j(w; \xi)]_{1\le i, j\le n-1}= c(w, t; \xi) (I_{n-1}+ t [\partial_{w_i} V_j(w; \xi)]_{1\le i, j\le n-1}).
\end{equation}
\begin{claim}\label{250521claim3_1}
We have that 
\begin{equation}
\partial_{w_i} V_j(w; \xi)=0
\end{equation}
for all $w, \xi$, and all $i\neq j$. 
\end{claim}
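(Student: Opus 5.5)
The plan is to exploit the fact that in \eqref{250521e3_5} the left-hand side $DV(w;\xi):=[\partial_{w_i}V_j(w;\xi)]$ does not depend on $t$, while the right-hand side is $c(w,t;\xi)(I_{n-1}+t\,DV(w;\xi))$. Fix $(w,\xi)$ and regard \eqref{250521e3_5} as an identity in $t$ on a small interval around $0$. Setting $t=0$ gives
\begin{equation*}
DV(w;\xi)=c(w,0;\xi)\,I_{n-1},
\end{equation*}
so $DV(w;\xi)$ is a scalar multiple of the identity. Its off-diagonal entries are then zero, which is exactly the statement of Claim \ref{250521claim3_1}.

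The only point needing care is justifying the evaluation at $t=0$. Bourgain's condition, as established under the standing hypotheses, holds at every point $(w,t;\xi)$ of the neighborhood, including $t=0$. Near the origin $\nabla_wX=I_{n-1}+t\,DV$ is invertible, so the multiplier $c(w,t;\xi)$ is well defined. It is also analytic, since one can solve for it by taking a trace:
\begin{equation*}
c(w,t;\xi)=\frac{\operatorname{tr}DV(w;\xi)}{\operatorname{tr}(I_{n-1}+t\,DV(w;\xi))}.
\end{equation*}

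Plugging this expression back into \eqref{250521e3_5} at any $t$ gives a consistency check. If $DV=\mu I_{n-1}$, then the right-hand side is $c(1+t\mu)I_{n-1}$ with $c=\mu/(1+t\mu)$, which reproduces $\mu I_{n-1}$ for every $t$. So there is no contradiction, and we obtain the additional information that $DV(w;\xi)=\mu(w;\xi)I_{n-1}$ for a scalar function $\mu$.

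I do not expect a genuine obstacle here. The only subtlety is making sure the form $X=w+tV(w;\xi)$ supplied by Part \ref{thm: main part 1} is compatible with the normalization in which Bourgain's condition is stated. That is, the change of coordinates that makes the curves straight must preserve the equivalent formulation \eqref{260615e1_37}, and hence the identity $\partial_t\nabla_wX=c\,\nabla_wX$ used above. If that identity has been derived only at the origin, I would first redo the derivation at a general point, using the coordinate invariance in Theorem \ref{250727theorem1_13}, item \ref{thm1_13item3}. I would then carry out the $t=0$ evaluation pointwise in $(w,\xi)$.
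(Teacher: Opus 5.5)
Your proof is correct and is essentially the paper's argument: the paper reads off the off-diagonal entries of \eqref{250521e3_5}, namely $\partial_{w_i}V_j(w;\xi)=c(w,t;\xi)\,t\,\partial_{w_i}V_j(w;\xi)$, and then sets $t=0$. You instead evaluate the whole matrix identity at $t=0$, which gives the same conclusion and also yields the equality of the diagonal entries \eqref{251003ee6_10kk} in the same step.
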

\begin{proof}[Proof of Claim \ref{250521claim3_1}] 
Let $i\neq j$. It follows from \eqref{250521e3_5} that for all $w,t$ and $\xi$,
\begin{equation}\label{251125e6-10}
    \partial_{w_i} V_j(w; \xi)= c(w, t; \xi)t\ \partial_{w_i} V_j(w; \xi).
\end{equation}
If we set $t=0$ in \eqref{251125e6-10}, then we will see that Claim \ref{250521claim3_1} holds.
\end{proof}

At this point, \eqref{250521e3_5} can be written as 
\begin{align}\label{250521e3_8}
&
\begin{bmatrix}
\partial_{w_1} V_1(w; \xi) &  & \\
& \dots & \\
&  & \partial_{w_{n-1}} V_{n-1}(w; \xi)
\end{bmatrix}
\\
&
=c(w, t; \xi) 
\begin{bmatrix}
1+
t\partial_{w_1} V_1(w; \xi) & & \\
& \dots & \\
& & 1+t\partial_{w_{n-1}} V_{n-1}(w; \xi)
\end{bmatrix},
\end{align}
where all the non-diagonal entries are zero. 
This implies 
\begin{equation}\label{251003ee6_10kk}
\partial_{w_1} V_1(w; \xi)=\dots= \partial_{w_{n-1}} V_{n-1}(w; \xi),
\end{equation}
for all $w$ and $\xi$. 
Claim \ref{250521claim3_1} and \eqref{251003ee6_10kk} together 
 further imply that 
\begin{equation}
V_i(w; \xi)= V_{i}(\xi)+ w_i V(\xi)
\end{equation}
for every $i=1, \dots, n-1$ and for some scalar functions $V_i(\xi), V(\xi)$. 
So far we have obtained that  
\begin{equation}
X(w, t; \xi)= 
w+ t (V_1(\xi), \dots, V_{n-1}(\xi))+ 
tw V(\xi). 
\end{equation}
We apply a nonlinear change of variables in $\xi$ and obtain 
\begin{equation}
X(w, t; \xi)= 
w+ t\xi+ twV(\xi),
\end{equation}
for some potentially different $V(\xi)$. \\

Recall the definition of Kakeya maximal functions
	\begin{align}
		\mathcal{K}_\delta f(\xi)= \sup_\omega \frac{1}{\delta^{n-1}}\int_{T_{\xi}^{\delta}(\omega)} |f(x,t)| \,\text{d}x\,\text{d}t.
	\end{align}
	Here the supremum is taken over $\omega\in \mathbb{B}^{(n-1)}_\varepsilon$. We without loss of generality assume that  $f$ is locally constant at the scale $\delta$. Under this assumption, we have that 
	\begin{align}
		\mathcal{K}_\delta f(\xi)\simeq \sup_\omega \int_0^\varepsilon |f(\omega+t\xi+t\omega V(\xi),t)| \,\text{d}t.
	\end{align}
	Split the integral into
	\begin{align}\label{251003e6_15kk}
		\int_0^\delta+ \sum_{\delta\le 2^{-j}\varepsilon\le \varepsilon}\int_{t\simeq 2^{-j}\varepsilon}
	\end{align}
	The first part can be easily controlled:  
	\begin{align}\label{eee1}
		\left\|   \sup_\omega \int_0^\delta |f(\omega+t\xi+t\omega V(\xi),t)| \,\text{d}t   \right\|_{L^p(\mathbb{B}_\varepsilon^{n-1})}  \lesssim \|f\|_p,
	\end{align}
	whenever $p\ge n$. 
	This is because via locally constant property we have
	\begin{align*}
		& \sup_\omega \int_0^\delta |f(\omega+t\xi+t\omega V(\xi),t)| \,\text{d}t
		\simeq \delta \sup_\omega |f(\omega, 0)|  \\
		&\lesssim \delta^{1-\frac{n}{p}} \sup_\omega  \left( \int_{\mathbb{B}^n_\delta(\omega,0)} |f(x,t)|^p \,\text{d}x\,\text{d}t\right)^{1/p} \le \delta^{1-\frac{n}{p}} \|f\|_p.
	\end{align*}
This finishes the proof of \eqref{eee1}. \\

	Next we deal with the remaining terms in \eqref{251003e6_15kk}.  By rescaling, we obtain 
	\begin{align}
		& \sup_\omega \int_{t\simeq 2^{-j}\varepsilon} |f(\omega+ t\xi+ t\omega V(\xi),t)| \,\text{d}t \\
		&\leq 2^{-j}\sup_\omega \int_{t\simeq 1} |f_j(2^j\omega+ t\xi+ t\omega V(\xi), t)| \,\text{d}t  \\
		&= 2^{-j}\sup_{|\omega|\le 2^j\varepsilon} \int_{t\simeq 1} |f_j(\omega+ t\xi+ t\omega 2^{-j} V(\xi), t)| \,\text{d}t, \label{251003e6_19kk}
	\end{align}
	where
\begin{equation}
f_j(\cdot):= f(2^{-j}\cdot).
\end{equation}
Note that the function $f_j$ is locally constant at the scale $2^j \delta$. \\

	 Let $\{\mathbb{B}^{n-1}_\varepsilon(\omega_k)\}_k$ be a family of finitely overlapping balls in $\mathbb{R}^{n-1}$, with the union of these balls covering $\mathbb{B}^{n-1}_{2^j\varepsilon}$; we cover this ball because in \eqref{251003e6_19kk} the supremum is taken over all $|\omega|\le 2^j\varepsilon$.  Let 
    \begin{align}
        f_{j,k}(x,t)= f_j(x,t) \chi_{\mathbb{B}^{n-1}_{C\varepsilon}(\omega_k)}(x),
    \end{align}
    where $C$ is a sufficiently large constant and $\chi_{\mathbb{B}^{n-1}_{C\varepsilon}(\omega_k)}$ is an $L^{\infty}$-normalized bump function adapted to the ball $\mathbb{B}^{n-1}_{C\varepsilon}(\omega_k)$. Under the above notation, we obtain that 
    \begin{align}
        \sup_{|\omega|\le 2^j\varepsilon} \int_{t\simeq 1} |f_j(\omega+ t\xi+ t\omega 2^{-j} V(\xi), t)| \,\text{d}t 
    \end{align}
    can be dominated by
	\begin{align}\label{251003e6_23kk}
		 \left( \sum_k \left( \sup_{\omega\in \mathbb{B}^{n-1}_\varepsilon(\omega_k)} \int_{t\simeq 1} |f_{j,k}(\omega+ t\xi+ t\omega 2^{-j} V(\xi), t)| \,\text{d}t \right)^p  \right)^{\frac1p},
	\end{align}
	where we use the fact that if $\omega\in \mathbb{B}^{n-1}_\varepsilon(\omega_k)$, then
    \begin{align}
        \omega+ t\xi+ t\omega 2^{-j} V(\xi)\in \mathbb{B}^{n-1}_{C\varepsilon}(\omega_k),
    \end{align}
provided that $C$ is chosen to be sufficiently large. 
	For each $k$, we apply the change of variables
    \begin{align}
        \omega\mapsto \omega+ \omega_k,
    \end{align}
 and obtain 
 \begin{equation}\label{251003e6_26kk}
 \sup_{\omega\in \mathbb{B}^{n-1}_\varepsilon} \int_{t\simeq 1} |f_{j,k}(\omega+\omega_k+ t\xi+ t(\omega+ \omega_k) 2^{-j} V(\xi), t)| \,\text{d}t
 \end{equation}
Next we apply the change of variable   
   \begin{equation}
   \xi+ 2^{-j}\omega_k V(\xi)\mapsto \eta
   \end{equation}
and write \eqref{251003e6_26kk} as 
     \begin{equation}\label{251003e6_28kk}
 \sup_{\omega\in \mathbb{B}^{n-1}_\varepsilon} \int_{t\simeq 1} |\widetilde{f}_{j,k}(\omega+ t\eta
 + t\omega \widetilde{V}(\eta), t)| \,\text{d}t,
 \end{equation}
 for some new function $\widetilde{V}$, and 
 \begin{equation}
 \widetilde{f}_{j, k}(\cdot, \cdot):= f_{j, k}(\cdot+ \omega_k, \cdot).
 \end{equation}
We apply the change of variable 
\begin{equation}
t\mapsto 1/t,
\end{equation}
denote 
\begin{equation}
g_{j, k}(x, t)=\widetilde{f}_{j, k}(x/t, 1/t),
\end{equation}
and write \eqref{251003e6_28kk} as 
\begin{align}
& 
\sup_{\omega\in \B^{n-1}_{\varepsilon}}\int_{t\simeq 1} |
g_{j, k}(\eta+ \omega\widetilde{V}(\eta)+ t\omega, t)
|d t\\
& \lesim \sup_{\omega\in \B^{n-1}_{\varepsilon}}\int_{t\simeq 1} |
g_{j, k}(\eta+ t\omega, t-\widetilde{V}(\eta))
|d t.
\end{align}
Note that the function $g_{j,k}$ is locally constant at the scale $2^j\delta$. If we know already that for some $n\le p<\infty$ we have
	\begin{align}\label{eee3}
		\left\| \sup_{\omega\in \mathbb{B}^{n-1}_\varepsilon} \int_{t\simeq 1} |g_{j,k}(\eta+ t\omega, t -\widetilde{V}(\eta))| \,\text{d}t\right\|_{L^p(\mathbb{B}^{n-1}_\varepsilon)}\lesssim (2^j\delta)^{-\epsilon_0}\|g_{j,k}\|_p,
	\end{align}
for every $\epsilon_0>0$, 
	then we can control that
	\begin{align}\label{eee4}
		& \left\| \sup_\omega \int_{t\simeq 2^{-j}\varepsilon} |f(\omega+ t\xi+ t\omega V(\xi),t)| \,\text{d}t\right\|_{L^p(\mathbb{B}^{n-1}_\varepsilon)} \\
		&\lesssim (2^j\delta)^{-\epsilon_0}2^{-j}\left( \sum_k  \|f_{j,k}\|_p^p  \right)^{\frac1p} \notag\lesssim (2^j\delta)^{-\epsilon_0}2^{-j}\|f_j\|_p \notag\lesssim (2^j\delta)^{-\epsilon_0}2^{-j(1-\frac{n}{p})}\|f\|_p.
	\end{align}
	Combining \eqref{eee1} and \eqref{eee4}, we obtain that
	\begin{align}
		\|\mathcal{K}_\delta f\|_{L^p(\mathbb{B}^{n-1}_\varepsilon)}\lesssim \delta^{-\epsilon_0}\|f\|_p,
	\end{align}
	holds for $p\ge n$, provided that \eqref{eee3} also holds for this $p$.

\section{Proof of Theorem \ref{thm: main} part \ref{thm: main part 5}: Totally geodesic and failing Bourgain's condition means extra-worst compression}\label{251003sec7}

Consider the $\Phi$-Kakeya problem, and we assume that $\Phi$ satisfies the totally geodesic condition but fails Bourgain's condition. 
By Theorem \ref{thm: main} Part \ref{thm: main part 1}, we can write 
\begin{equation}
\Phi(w, t; \xi)=(X(w, t; \xi), t),
\end{equation}
with 
\begin{align}
    X(w,t;\xi) = w + tV(w,\xi). 
\end{align}
By Theorem \ref{250727theorem1_13}, the failure of Bourgain's condition implies $\partial_t \nabla_w X(w,t;\xi)$ is not a multiple of $\nabla_w X(w,t;\xi)$. This means that $\partial_w V(w,\xi)$ is not a multiple of the identity matrix $I$.  

We will show that there exists an open set of hyper-planes passing through the origin, each of which gives rise to a Kakeya set. By replacing $X(w,t; \xi)$ with a small translation of $X(w, t; \xi)$, we get the same conclusion for hyper-planes through nearby points. 
Parameterize the normal direction of a hyper-plane by $(\vec a, c)$. We seek $w = w(\xi)$ satisfying 
\begin{align}\label{eq: pf of T.G. + fail Bourg -> worst}
    (w + tV(w,\xi), t) \cdot  (\vec a, c)= 0.
\end{align}
On extracting the coefficients in $t$,
\begin{align}
    w \cdot \vec{a} = 0 \\
     V(w,\xi) \cdot \vec{a} + c=0
\end{align}
Define the auxiliary function 
\begin{align}
    F(\vec a,c, w,\xi) = (V(w,\xi) \cdot \vec a + c, w \cdot \vec a).
\end{align}
We compute 
\begin{align}
    \nabla_w F(\vec a, 0,0,0) = (\nabla_w V(0)\cdot \vec a,\vec a).
\end{align}
We claim that for some $\vec a_0$, it holds that\footnote{Here the assumption  $n=3$ occurs. } 
\begin{equation}
\det \nabla_w F(\vec a_0, 0,0,0) \neq 0.
\end{equation}
 If this failed for every $\vec a$ then $\partial_w V(0)$ is a multiple of the identity, which we already showed is not the case. Thus we can apply the implicit function theorem to find $w_{\vec a,c}(\xi)$ which solves
\begin{align}
    F(\vec a,c, w_{\vec a, c}(\xi), \xi) = F(\vec a_0, 0,0,0),
\end{align}
for $(\vec a, c, \xi)$ in a small open set around $(\vec a_0, 0,0)$.
This means that
\begin{equation}
w_{\vec a,c}(\xi) \cdot \vec a = 0,
\end{equation}
 and
 \begin{equation}
 V(w_{\vec a,c}(\xi),\xi) \cdot \vec{a} + c = V(0) \cdot \vec a_0.
 \end{equation}
  Replacing $c$ by $c + V(0)\cdot \vec a_0$, we see that \eqref{eq: pf of T.G. + fail Bourg -> worst} is satisfied for $\xi$ in an open set, and for an open set of normal directions $n = (\vec a, c)$. This finishes the proof of the extra-worst compression phenomenon.

\section{Proof of Theorem \ref{thm: main} Part \ref{thm: main part 2}: Katz-Wolff implies totally geodesic.}\label{251003sec8}

We use $\Gamma_{\xi}(\bfx)$ to denote a curve in the $\Phi$-Kakeya problem passing through $\bfx\in \R^n$, with direction $\xi$. Take two intersecting curves 
\begin{equation}
\ell_0:= \Gamma_{\xi_0}(\bar{\bfx}), \ \ \ell_1:= \Gamma_{\xi_1}(\bar{\bfx}).
\end{equation}
We generate a surface $S$ by sweeping out a curve in $\xi_1$ along $\ell_0$:
\begin{equation}
S:= \bigcup_{\bfx\in \ell_0} \Gamma_{\xi_1}(\bfx).
\end{equation}
Clearly $S$ contains $\ell_0$ and $\ell_1$. That $S$ is totally geodesic (see Definition \ref{250801definition1_16}) follows immediately from the following lemma.

\begin{claim}\label{claim: in KW implies flat}
    If a curve $\ell_2$ from the $\Phi$-Kakeya problem intersects $\ell_0$ and another point in $S$, then $\ell_2 \subset S$. 
\end{claim}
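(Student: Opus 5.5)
We plan to use the Katz--Wolff condition applied to the pair $\ell_0,\ell_1$ (or rather to $\ell_0$ and a suitable curve $\Gamma_{\xi_1}(\bfx)$ in $S$), and to show that the one-dimensional set of directions produced by Katz--Wolff is forced to be the set of directions already realized inside $S$. Concretely, fix $\bfx\in\ell_0$ and consider the curve $\Gamma_{\xi_1}(\bfx)\subset S$. Suppose $\ell_2$ meets $\ell_0$ at a point $\bfp$ and meets $S$ at a point $\bfq$; by construction of $S$, $\bfq$ lies on $\Gamma_{\xi_1}(\bfx)$ for some $\bfx\in\ell_0$. So $\ell_2$ is a curve meeting both of the two intersecting curves $\ell_0$ and $\Gamma_{\xi_1}(\bfx)$ (which meet at $\bfx$), and Definition \ref{250510defi1_1} applies to this pair with $w_0$ corresponding to $\bfx$ and $\xi'=\xi_0$, $\xi''=\xi_1$.

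Next we identify the one-dimensional direction curve. For this pair, the family of curves meeting both $\ell_0$ (at height $t_1$) and $\Gamma_{\xi_1}(\bfx)$ (at height $t_2$) is parametrized by $(t_1,t_2)$, with directions $\xi(t_1,t_2)$. Two obvious subfamilies lie in it: taking $t_1$ equal to the height of $\bfx$ gives curves through $\bfx$, and letting $t_2\to$ height of $\bfx$ degenerates; more usefully, the curves $\Gamma_{\xi_1}(\bfx')$ for $\bfx'\in\ell_0$ near $\bfx$ do \emph{not} meet $\Gamma_{\xi_1}(\bfx)$ generically, so instead we use the limit $t_1\to t_{\bfx}$: the curves through $\bfx$ hitting $\Gamma_{\xi_1}(\bfx)$ all have direction $\xi_1$ (by uniqueness, they are $\Gamma_{\xi_1}(\bfx)$ itself), and the curves hitting $\ell_0$ near $\bfx$ and $\Gamma_{\xi_1}(\bfx)$ near $\bfx$ have directions tending to $\xi$ values interpolating between $\xi_0$ and $\xi_1$. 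By Katz--Wolff the image of $(t_1,t_2)\mapsto \xi(t_1,t_2)$ is a single curve $\sigma$ in direction space (analytic, by the standing analyticity assumption), so $\xi(\ell_2)\in\sigma$, and $\sigma$ contains $\xi_1$.

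The key step is then to show that every curve with direction in $\sigma$ meeting $\ell_0$ lies in $S$; equivalently, that $\sigma$ degenerates to the single point $\xi_1$ on the relevant component, or that $S$ is swept by all such curves. The cleanest route I would try: vary $\bfx$ along $\ell_0$. For every $\bfx\in\ell_0$ the corresponding direction curve $\sigma_{\bfx}$ contains $\xi_1$ and, by a dimension count ($\ell_0$ one parameter, $(t_1,t_2)$ two parameters, union of all curves meeting $\ell_0$ and $S$ is a 3-parameter family of curves of which only a 2-parameter family can have directions in a fixed curve), Katz--Wolff forces $\sigma_{\bfx}$ to be independent of $\bfx$, hence a fixed curve $\sigma$ through $\xi_0,\xi_1$ (using $\ell_0$ itself, which meets everything at the intersection point, the limit curves show $\xi_0\in\sigma$ too). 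Then the set $\bigcup_{\bfx\in\ell_0,\ \xi\in\sigma}\Gamma_\xi(\bfx)$ is at most two-dimensional and contains $S$; both are surfaces containing $\ell_0$ and $\ell_1$, so by analyticity and a tangent-plane comparison at $\bar\bfx$ they coincide locally, giving $\ell_2\subset S$.

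The main obstacle is this independence step: making rigorous that $\sigma_{\bfx}$ does not depend on $\bfx$. I would attempt it by applying Katz--Wolff to pairs $(\ell_0,\Gamma_{\xi_1}(\bfx))$ and $(\ell_0,\Gamma_{\xi_1}(\bfx'))$ simultaneously: a curve meeting $\ell_0$, $\Gamma_{\xi_1}(\bfx)$ and $\Gamma_{\xi_1}(\bfx')$ exists in a one-parameter family by dimension count, so $\sigma_{\bfx}\cap\sigma_{\bfx'}$ is infinite, and analyticity of these curves then forces $\sigma_{\bfx}=\sigma_{\bfx'}$. Checking that this triple-intersecting family is genuinely nondegenerate (uses H\"ormander's condition, Definition \ref{HorCond}) is where I expect the real work.
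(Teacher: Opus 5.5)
There is a genuine gap, and it sits in the step you yourself flag. The ``dimension count'' that is supposed to make $\sigma_{\bfx}$ independent of $\bfx$ is not an argument. The triple-intersection idea does not work as stated either. The obvious curves meeting $\ell_0$, $\Gamma_{\xi_1}(\bfx)$ and $\Gamma_{\xi_1}(\bfx')$ are degenerate:
\begin{itemize}
\item curves through $\bfx'$ hitting $\Gamma_{\xi_1}(\bfx)$ meet $\ell_0$ and $\Gamma_{\xi_1}(\bfx')$ only at their common point $\bfx'$;
\item likewise, curves through $\bfx$ hitting $\Gamma_{\xi_1}(\bfx')$ meet $\ell_0$ and $\Gamma_{\xi_1}(\bfx)$ only at $\bfx$.
\end{itemize}
Such curves are excluded from the Katz--Wolff family of that pair, so their directions are not constrained to lie in $\sigma_{\bfx'}$ (respectively $\sigma_{\bfx}$). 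The one-parameter family your count predicts may consist only of these. A nondegenerate family meeting all three curves at distinct points exists in the flat case only because everything lies in one plane, which is the conclusion you want. Similarly, $\bigcup_{\bfx\in\ell_0,\,\xi\in\sigma}\Gamma_\xi(\bfx)$ is a union over a two-parameter family of curves and is generically three-dimensional. Asserting it is a surface is essentially the totally geodesic property being proved.

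The missing idea is to apply Katz--Wolff to the pair $(\ell_0,\ell_2)$ itself, rather than to $(\ell_0,\Gamma_{\xi_1}(\bfx))$. This is legitimate because $\ell_2$ meets $\ell_0$. Let $\Xi_{0,2}$ be the one-dimensional set of directions of curves meeting both. The argument then runs as follows.
\begin{itemize}
\item Fix $\bfx_0\in\ell_0\setminus\ell_2$. The directions of curves through $\bfx_0$ that hit $\ell_2$ form a one-dimensional subset of $\Xi_{0,2}$, by H\"ormander's condition. Hence they exhaust $\Xi_{0,2}$ locally, using analyticity.
\item The translate $\Gamma_{\xi_1}(\bfx')\subset S$ through your point $\bfq$ meets both $\ell_0$ and $\ell_2$, so $\xi_1\in\Xi_{0,2}$.
\item Therefore $\Gamma_{\xi_1}(\bfx_0)$ meets $\ell_2$ for every $\bfx_0\in\ell_0$ near $\bfx'$.
\item As $\bfx_0$ moves, these intersection points sweep out $\ell_2$, so $\ell_2\subset\bigcup_{\bfx_0}\Gamma_{\xi_1}(\bfx_0)=S$.
\end{itemize}
This is the paper's argument. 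It needs no comparison of Katz--Wolff curves attached to different base points, and no dimension count for a union of curves.
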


\begin{figure}
    \centering
    \includegraphics[width=0.75\linewidth]{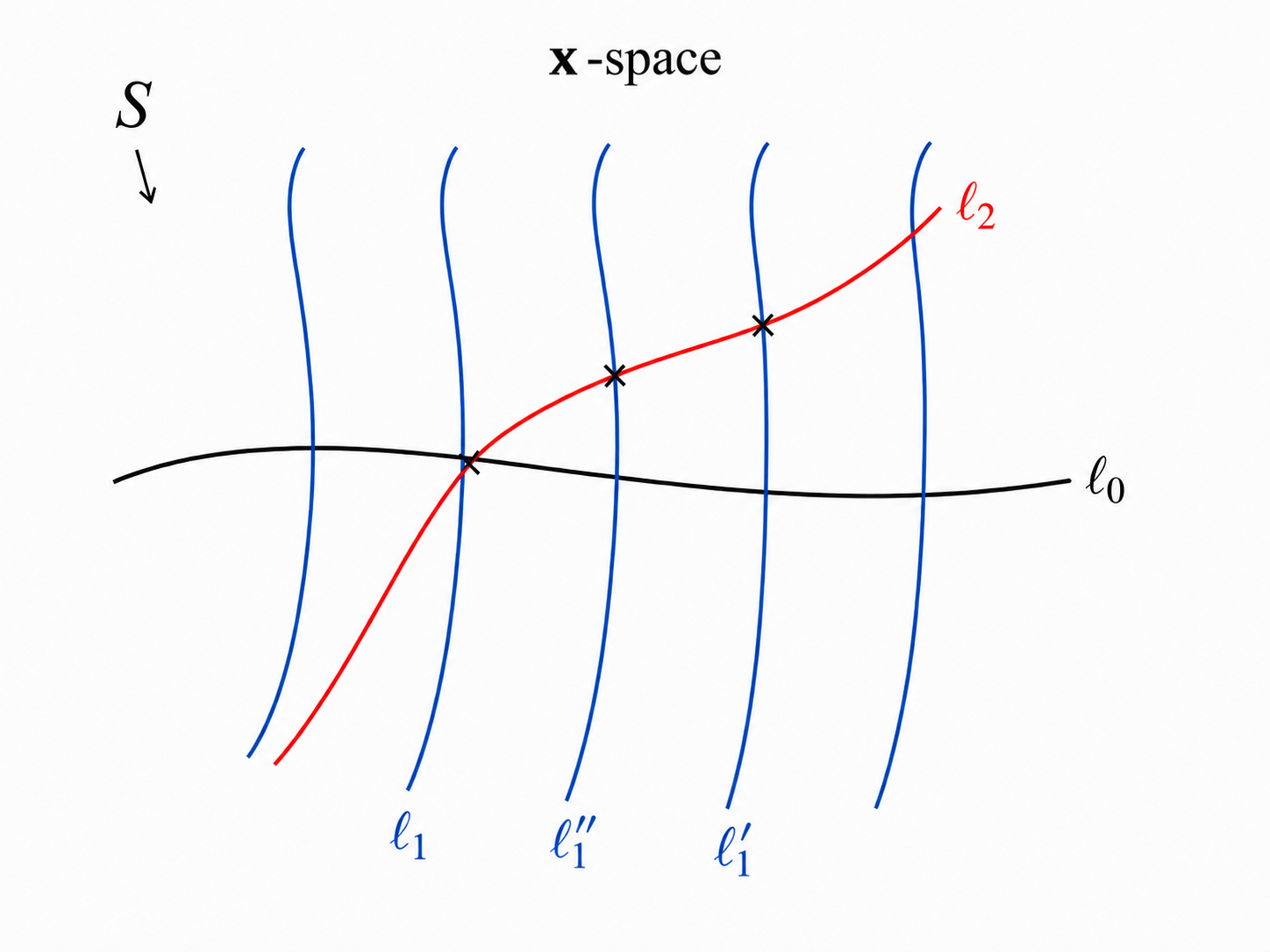}
    \caption{Picture of K.W. implies T.G.}
    \label{fig: KW implies TG}
\end{figure}

\begin{proof}[Proof of Claim \ref{claim: in KW implies flat}]
    If $\ell_2$ hits another point, then it hits one of the ``translates'' $\ell_1'$ of $\ell_1$. We will use the Katz-Wolff assumption to show every point of $\ell_2$ is hit by a translate $\ell_1''$ of $\ell_1$.
    
    We apply the Katz-Wolff condition to the two curves $\ell_0$ and $\ell_2$: If we use $\Xi_{0, 2}$ to denote the collection of all the direction parameters of curves intersecting both $\ell_0$ and $\ell_2$, that is, 
    \begin{equation}
    \Xi_{0, 2}:=\{\xi: \Gamma_{\xi}(\bfx) \text{ intersects both } \ell_0  \text{ and } \ell_2 \text{ for some } \bfx\},
    \end{equation}
 then the Katz-Wolff condition says that $\Xi_{0, 2}$ is one-dimensional. Moreover, it is elementary to see that if we fix $\bfx_0\in \ell_0, \ \bfx_0\notin\ell_2$, then the collection of the direction parameters 
 \begin{equation}
 \Xi_{0, 2}(\bfx_0):= \{\xi: \Gamma_{\xi}(\bfx_0) \text{ intersects } \ell_2\}
 \end{equation}
 coincides with $\Xi_{0, 2}$. \\
 
 Recall that $\ell'_1$ intersects both $\ell_0$ and $\ell_2$, and therefore 
 \begin{equation}
 \xi_1\in \Xi_{0, 2}.
 \end{equation}
 This further implies that 
 \begin{equation}
  \xi_1\in \Xi_{0, 2}(\bfx_0),
 \end{equation}
 whenever $\bfx_0\in \ell_0, \ \bfx_0\notin \ell_2$. In other words, we know that $\Gamma_{\xi_1}(\bfx_0)$ intersects $\ell_2$. This finishes the proof that $\ell_2\subset S$. 
\end{proof}

\section{Proof of Theorem \ref{thm: main} :  Part 1: Totally geodesic implies projectively flat}\label{251003sec9}

We will prove that a spray space $(M, \mathcal U, \bG)$ satisfying Definition \ref{def: T.G. geometric} is locally projectively flat. By Theorem \ref{thm: douglas weyl vanishing}, matters reduce to proving that $\mathbf D = 0$ and $\mathbf W = 0$. We prove each of these separately. 

\subsection{Showing that \texorpdfstring{$\mathbf D=0$}{}}

Fix $(x_0,y_0) \in \mathcal U$. For convenience, pass to a smaller open neighborhood $M' \times C \subset \mathcal U$ of $(x_0,y_0)$, where $M' \subset M$ is open and $C$ is an open cone. The next lemma lets us straighten the geodesics through a point. This is similar to the geodesic coordinates in Riemannian geometry, except for the technicality that $C$ is not necessarily $\R^3$.

\begin{lemma}[Straightening Lemma]\label{lem: straightening}
    Consider a spray $G : M \times C \to \R^n$ and fix $(x_0,y_0) \in M \times C$. There exist open neighborhoods $U \ni x_0$ and $V \ni y_0$, with $0 \notin V$, a point $p_0 \notin U$, and a diffeomorphism $F : V \to U$ such that 
\begin{enumerate}
        \item $F(y_0)=x_0$ and $dF_{y_0}(y_0) \in \R_{>0}y_0$,
        \item $F$ maps radial line segments in $V$ to $G$-geodesic segments. More precisely, for each $y$ in a sufficiently small open cone containing $y_0$, the curve 
        \begin{align}
            t \mapsto F(ty)
        \end{align}
        is a geodesic whenever $ty \in V$, and it extends to a geodesic passing through $p_0$ at $t=0$. 
    \end{enumerate}
    Thus $F^{-1}$ straightens the geodesics through $p_0$ into radial line segments. 
\end{lemma}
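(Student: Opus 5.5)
The plan is to build $F$ from a geodesic exponential-type map based at an auxiliary point $p_0$ placed just behind $x_0$ along the geodesic through $(x_0,y_0)$. Let $c(s)$ be the $\bG$-geodesic with $c(0)=x_0$, $\dot c(0)=y_0$, defined for $|s|<2s_0$ with $s_0>0$ small. Set $p_0:=c(-s_0)$ and $v_0:=\dot c(-s_0)$, so that $v_0\in C$ at $p_0$ after shrinking. The starting point $p_0$ lies at positive distance from $x_0$, so choosing $U$ small ensures $p_0\notin U$.

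The first step defines the map. For $v$ in a small open conic neighborhood $C'$ of $v_0$ and $\tau>0$ small, let $\gamma_v(\tau)$ be the geodesic with $\gamma_v(0)=p_0$ and $\dot\gamma_v(0)=v$. Put $E(v):=\gamma_v(1)$; by the $2$-homogeneity \eqref{260526e1_60zz}, $\gamma_{\lambda v}(\tau)=\gamma_v(\lambda\tau)$, so $E(\tau v)=\gamma_v(\tau)$. Thus radial segments are sent to geodesics through $p_0$, which at $\tau=0$ is the limit point $p_0$. Note that $E$ is only defined on a cone away from the origin, and may not be smooth at $0$ because $G^i$ need not be quadratic. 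This is the reason for $0\notin V$. We have $E(s_0 v_0)=c(0)=x_0$.

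The second step is the main obstacle: showing $dE$ is invertible at $s_0 v_0$. Here $\partial_v E$ is governed by Jacobi fields along $\gamma_{v_0}$ vanishing at $\tau=0$ with initial derivative $w$, so $dE_{s_0v_0}(w)=J_w(1)/s_0$-scaled. For $s_0$ small this is $w+O(s_0)$ times $s_0$ after rescaling, since the spray equation gives $J_w(\tau)=\tau w+O(\tau^2)$ uniformly in a compact neighborhood. This is the usual statement that there are no conjugate points at short range. To make it uniform, I would rescale: consider $v$ near $v_0$ and evaluate at time $s_0$, and use smooth dependence of ODE solutions on initial data in the region $M'\times C$ where $G$ is smooth. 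The radial direction is clean: $dE(v_0)=\dot\gamma_{v_0}(s_0)/1$, which is parallel to $\dot c(0)=y_0$. The inverse function theorem then gives neighborhoods $V\ni s_0 v_0$ and $U\ni x_0$ on which $E$ is a diffeomorphism.

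The last step normalizes the base point. To get $F(y_0)=x_0$, precompose with the linear isomorphism $A$ of $\R^n$ that maps $y_0\mapsto s_0v_0$; shrinking $s_0$ keeps $A$ close to a scalar. Linear maps send radial segments to radial segments, so $F:=E\circ A$ maps radial segments through directions near $y_0$ to geodesics through $p_0$. We also get $dF_{y_0}(y_0)=dE(s_0v_0)=\partial_\tau\gamma_{v_0}(\tau)|_{\tau=s_0}\in\R_{>0}\,y_0$. Finally, shrink $V$ to avoid $0$ and to lie in the cone of admissible directions, so that $ty\in V$ implies the geodesic condition. Item (2) then holds because $t\mapsto F(ty)=\gamma_{Ay}(t)$, which extends back to $p_0$ at $t=0$.
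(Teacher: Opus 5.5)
Your proposal is correct and is essentially the paper's proof: the paper also takes $p_0=\gamma(-\tau;x_0,y_0)$ and defines $F(y)=\gamma(\tau;p_0,A_\tau y)$, which is exactly your $E\circ A$ with $A=\tau A_\tau$. It gets invertibility from the short-time expansion $F(y)=p_0+\tau A_\tau y+O(\tau^2)$, the same estimate your Jacobi-field argument gives. (One harmless slip: $dF_{y_0}(y_0)=dE_{s_0v_0}(s_0v_0)=s_0\,\dot\gamma_{v_0}(s_0)=s_0y_0$, not $y_0$, which is still a positive multiple of $y_0$.)
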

\begin{proof}[Proof of Lemma \ref{lem: straightening}]
    By basic ODE theory, the initial value problem 
    \begin{align}
        \ddot \gamma(t) + 2 G(\gamma(t),\dot \gamma(t)) = 0, \ \gamma(0) = x, \dot \gamma(0)=y
    \end{align}
    has a unique solution and depends smoothly on $(t,x,y)$ for $(x,y)$ near $(x_0,y_0)$ and $t$ sufficiently small. Choose $0 < \tau \ll 1$ and define 
    \begin{align}
        p_0:=\gamma(-\tau; x_0,y_0),\eta_0:=\dot \gamma(-\tau;x_0,y_0).
    \end{align}
    By uniqueness, $\gamma(s;p_0,\eta_0) = \gamma(s-\tau;x_0,y_0)$, and thus $\gamma(\tau;p_0,\eta_0)=x_0$, $\dot \gamma(\tau;p_0,\eta_0)=y_0$. Since $\eta_0=y_0+O(\tau)$, we may choose an invertible linear map $A_\tau$ such that $A_\tau y_0 = \eta_0$, $A_\tau = I + O(\tau)$. Define 
    \begin{align}
        F(y):=\gamma(\tau;p_0,A_\tau y)
     \end{align}
     for $y$ in a neighborhood of $y_0$. We then have $F(y_0) =\gamma(\tau;p_0,\eta_0)=x_0$.
     Taylor expansion of the geodesic equation in $y$ near $y_0$ gives 
     \begin{align}
         F(y) = p_0 + \tau A_\tau y +O(\tau^2).
     \end{align}
     Hence 
     \begin{align}
         dF_{y_0} = \tau A_\tau + O(\tau^2) = \tau I + O(\tau^2).
     \end{align}
     Thus $dF_{y_0}$ is invertible for $\tau$ sufficiently small. The inverse function theorem then gives neighborhoods $V \ni y_0$ and $U \ni x_0$ such that $F : V \to U$ is a diffeomorphism. By the 2-homogeneity of the spray, 
     \begin{align}
         \gamma(s;x,\lambda v) = \gamma(\lambda s;x,v), \lambda > 0.
     \end{align}
     Thus whenever $ty\in V$,
     \begin{align}
         F(ty) = \gamma(t\tau; p_0, A_\tau y).
     \end{align}
     Hence $t \mapsto F(ty)$ is a parameterized geodesic, which passes through $p_0$ at $t=0$. Finally 
     \begin{align}
         dF_{y_0}(y_0) &= \frac{d}{dt}\mid_{t=1} \gamma(t\tau; p_0, \eta_0) \\
         &=\tau \dot \gamma(\tau; p_0,\eta_0) = \tau y_0 \in \R_{>0} y_0.
     \end{align}
     Moreover, $p_0 = x_0 - \tau y_0 + O(\tau^2)$, so $p_0 \neq x_0$ for small $\tau$. We can shrink $U$ and $V$ so that $p_0 \notin U$ and $0 \notin V$. This proves the lemma.
    \end{proof}

    We pass to the coordinates provided by Lemma \ref{lem: straightening}. After shrinking the coordinate neighborhoods, $V = B(y_0,\epsilon)$, $0 \notin B(y_0,\epsilon)$. Choose an open cone $C_0$ containing $y_0$ such that $B(y_0,\epsilon) \times C_0$ lies in the domain of the transformed spray 
    \begin{align}
        \mathbf{G}' :=(F^{-1})_*\mathbf{G}.
    \end{align}
    We have $d(F^{-1})_{x_0}(y_0) \in \R_{>0} y_0 \subset C_0$ since $C_0$ is conic. By  Lemma \ref{lem: straightening}, every radial segment $t \mapsto ty$ contained in $B(y_0,\epsilon)$, with $y \in C_0$, is a geodesic of $\mathbf{G}'$. Thus 
    \begin{equation}
    G'(ty,y) = 0.
    \end{equation}
     Since the geodesic surface with a given tangent plane is unique, all the planes passing through $0$ and intersecting $B(y_0, \epsilon)$ are geodesic surfaces. Any geodesic $\gamma$ is tangent to the plane through $0$ with normal $\gamma(0) \times \dot \gamma(0)$, and is thus contained in that plane. So 
     \begin{equation}
     \det (\gamma, \dot \gamma, \ddot \gamma) = 0.
     \end{equation}
      Taking $x = \gamma(0) \in B(y_0,\epsilon)$ and $y = \dot \gamma(0) \in C_0$, we get 
\begin{equation}
\det(x,y,G'(x,y)) = 0.
\end{equation}
    When $x$ and $y$ are linearly independent, there are scalars $a(x,y)$ and $b(x,y)$ such that 
    \begin{align}
        G'(x,y) = a(x,y)x + b(x,y)y. 
    \end{align}
    It is straightforward to see that $a$ and $b$ are uniquely determined smooth functions of $(x,y)$ when $x,y$ are independent, $a$ is 2-homogeneous in $y$, and $b$ is $1$-homogeneous in $y$. Below we study the spray 
    \begin{equation}
    G_1(x,y) = a(x,y)x
    \end{equation}
     defined on
\begin{equation}
\{(x,y) : x \in M, y \in C_0 \setminus \mathrm{Span}(x)\}.
\end{equation}
    Since $G_1$ is projectively equivalent to $G'$ where they are both defined, they have the same (unparameterized) geodesics. Thus $G_1$ also satisfies the totally geodesic condition (Definition \ref{def: T.G. geometric}). We will now show that $a(x,y)$ is quadratic in $y$.

    Fix $x \in M$ and a plane $n^\perp$ intersecting $C_0$ with $x \notin n^\perp$. Then there is a totally geodesic surface $S$ with $T_x S = n^\perp$. We can write 
    \begin{equation}
    S = \{x : f(x) = 0\}
    \end{equation}
     with $|\nabla f| = 1$ (the signed distance function to $S$ works, for example). As a consequence, 
     \begin{equation}
     \nabla^2 f \cdot\nabla f = 0.
     \end{equation}
      Any geodesic $\gamma$ with $\gamma(0) = x$ and $\dot \gamma(0) = y \in n^\perp \cap C_0$ is completely contained in $S$: $f(\gamma) = 0$. Differentiating this equation twice,
    \begin{align}
        \nabla f \cdot \dot \gamma &= 0 \\
        \nabla f \cdot \ddot \gamma + \nabla^2 f[\dot \gamma, \dot \gamma] &= 0.
    \end{align}
    Thus 
    \begin{align} \label{eq: a and f expr}
        a(x,y) = \frac{ \nabla^2 f[y,y]}{2\langle x,n\rangle}.
    \end{align}
    Fix $v \in n^\perp$ and consider the function expression $g(t) = a(x, y+tv)$. Then \eqref{eq: a and f expr} applies to give 
    \begin{align}
        g(t) &= \frac{ \nabla^2 f[y+tv,y+tv]}{2\langle x,n\rangle}.
    \end{align}
    This expression is quadratic in $t$, so differentiating three times,
    \begin{align}
        \nabla_y^3a(x,y)[v,v,v] = g'''(0) = 0.
    \end{align}
    We have proven that for any $n \notin x^\perp$ and $y,v \in n^\perp$, we have
    \begin{equation}
    \nabla_y^3 a(x,y)[v,v,v] = 0.
    \end{equation}
     This easily shows that 
     \begin{equation}
     \nabla_y^3 a(x,y) = 0
     \end{equation}
      for $y \in C_0 \setminus \mathrm{Span}(x)$. Indeed, fix such a $y$ and for each $v$ independent of $x$ and $y$, choose $n = y \times v$. Then $n \notin x^\perp$, so we find 
      \begin{equation}
      \nabla_y^3 a(x,y)[v,v,v]=0.
      \end{equation}
       This is now a polynomial in $v$ which vanishes on an open set, so each coefficient must vanish. Therefore 
       \begin{equation}
       \nabla_y^3 a(x,y) = 0.
       \end{equation}
        Thus the Douglas curvature $\mathbf D'$ of $\bG'$ vanishes on 
        \begin{equation}
        \{(x,y) : x \in B(y_0,\epsilon), y\in C_0 \setminus \mathrm{Span(x)}\}.
        \end{equation}
        This set is dense in $B(y_0,\epsilon) \times C_0$, so continuity gives 
        \begin{align}
            {\mathbf D}' \equiv 0 \text{ on } B(y_0, \epsilon) \times C_0.
        \end{align}
        In particular, ${\mathbf D}'_{(y_0, d(F^{-1})_{x_0}(y_0))}=0$, and $(y_0, d(F^{-1})_{x_0}(y_0))$ is the image of $(x_0,y_0)$ under the tangent lift of $F^{-1}$. Since the Douglas tensor is invariant under coordinate changes, it follows that $\mathbf{D}_{(x_0,y_0)} = 0$.

    \subsection{Showing that \texorpdfstring{$\mathbf W = 0$}{}}

    Since $\mathbf D = 0$, $\mathbf G$ is pointwise projectively related to an affine spray $\tilde \bG$. Let $\nabla$ be the affine connection associated to $\tilde \bG$. Since $\mathbf W$ is a projective invariant, it is enough to prove that 
    \begin{equation}
    W^\nabla \equiv 0.
    \end{equation}
     To do this, we will take advantage of more of the totally geodesic surfaces. We need two basic results to do this. 
    The first is about the behavior of the Weyl curvature tensor on a totally geodesic surface.
    \begin{lemma}[Totally geodesic surfaces and Weyl curvature]\label{eq: TG surface and weyl curvature}
        Let $S \subset M$ be a totally geodesic surface (with admissible directions in $\mathcal U$) for a connection $\nabla$ on $M$. Let $\Pi = T_xS$. Then 
        \begin{equation}
        W_x^\nabla(X,Y)Y \in \Pi
        \end{equation}
         for $X,Y \in \Pi$.
    \end{lemma}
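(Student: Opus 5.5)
The plan is to show that the curvature of $\nabla$ maps $\Pi$ into itself on vectors tangent to $S$, and then to pass to the Weyl tensor, which is built algebraically from the Riemann curvature and its Ricci trace. The first step is the standard fact that a surface which is totally geodesic for a torsion-free affine connection is autoparallel. The hypothesis only concerns geodesics with initial direction in $\mathcal U_x\cap T_xS$, an open cone in $\Pi$.

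\textbf{Step 1: autoparallel.} For $Y\in \Pi\cap\mathcal U_x$, the geodesic $\gamma$ with $\dot\gamma(0)=Y$ stays in $S$. Hence $\nabla_{\dot\gamma}\dot\gamma=0$ and $\dot\gamma(t)\in T_{\gamma(t)}S$, so for any extension $\tilde Y$ tangent to $S$ the second fundamental form satisfies $\mathrm{II}(Y,Y)=(\nabla_Y\tilde Y)^{\perp}=0$. Here $\perp$ means the component in $T_xM/\Pi$, and no metric is needed. Since $\mathrm{II}$ is symmetric (torsion-free) and bilinear, it vanishes on an open cone, hence on all of $\Pi$ by polarization. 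Therefore $\nabla_X Y$ is tangent to $S$ whenever $X,Y$ are tangent vector fields on $S$.

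\textbf{Step 2: curvature preserves $\Pi$.} Take $X,Y,Z$ to be vector fields tangent to $S$. By Step 1, $\nabla_X\nabla_Y Z$, $\nabla_Y\nabla_X Z$ and $\nabla_{[X,Y]}Z$ are all tangent to $S$, since $[X,Y]$ is tangent to $S$. Hence $R(X,Y)Z\in\Pi$ for all $X,Y,Z\in\Pi$. In particular, $R(X,Y)Y\in\Pi$, and the Riemann tensor of the spray satisfies $R_Y(X)=R(X,Y)Y\in\Pi$.

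\textbf{Step 3: from $R$ to $W$.} In dimension $n$, the Weyl tensor of the spray is
\begin{equation}
W_Y(X)=R_Y(X)-\frac{1}{n-1}\Big(\mathrm{Ric}(Y)X - \tfrac{1}{n+1}\,\partial_{Y^k}(\mathrm{Ric})\,X^k\, Y\Big)+\text{(terms proportional to } Y),
\end{equation}
possibly up to the exact normalization given in Appendix \ref{250711appendix_a}. Every correction term is a scalar multiple of $X$ or of $Y$. For $X,Y\in\Pi$, each such term therefore lies in $\Pi$, and $W_x^\nabla(X,Y)Y\in\Pi$.

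The main obstacle is Step 3. The plan relies on the appendix's formula for $\mathbf W$ in terms of $\mathbf R$ and Ricci having every correction term lie in $\mathrm{Span}(X,Y)$. Some conventions include terms with the $y$-derivative of Ricci contracted against $X$. Those terms are still proportional to $Y$, but I would check this against the appendix formula, and if needed verify it in coordinates adapted to $\Pi$.
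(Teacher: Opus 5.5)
Your proposal is correct and follows the paper's proof step for step. The paper shows autoparallelism by writing $S=\{x^3=0\}$ locally and deducing $\Gamma^3_{jk}=0$ on $S$ for $j,k\in\{1,2\}$ from geodesics with initial velocities in an open cone, which is equivalent to your vanishing second fundamental form. It then gets $R^\nabla(X,Y)Y\in\Pi$ from \eqref{eq: affine riemann tensor formula}. Finally it observes that the Weyl correction terms are multiples of $X$ and $Y$.

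The only difference is in your Step 3. The paper works directly with the $(1,3)$-tensor formula \eqref{eq: affine weyl tensor formula}; setting $Z=Y$ there leaves only the correction terms $P(X,Y)Y$ and $P(Y,Y)X$. This settles the obstacle you flagged without going through the spray-level $\mathbf W$. It also avoids the identification $\mathbf W_{(x,y)}(X)=W^\nabla_x(y,X)y$ that your version implicitly relies on.
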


    \begin{proof}[Proof of Lemma \ref{eq: TG surface and weyl curvature}]
        Choose coordinates $(x^1,x^2,x^3)$ on $M$ so that locally $S = \{x^3 =0\}$. Let 
        \begin{equation}
        v = (v^1,v^2,0) \in T_u S \cap \mathcal U_u.
        \end{equation}
         Let $\gamma$ be the geodesic with 
         \begin{equation}
         (\gamma(0),\dot \gamma(0)) = (x, v).
         \end{equation}
          Since $S$ is totally geodesic with admissible directions in $\mathcal U$, $\gamma(t) \in S$ for small $t$. In particular, $\gamma^3(t) \equiv 0$. Plugging this into the geodesic equation at $t = 0$, 
        \begin{align}
            \Gamma_{jk}^3(x) v^j v^k = 0.
        \end{align}
        This holds for all $x = (x^1,x^2,0)\in S$ and on an open set $(v^1,v^2)$, so 
        \begin{equation}
        \Gamma_{jk}^3(x^1,x^2,0) \equiv 0
        \end{equation}
         for $j,k \in \{1,2\}$. By a quick calculation in local coordinates, this implies that for all $X,Y \in \Gamma(TS)$, $\nabla_X Y \in \Gamma(TS)$.
        Since this holds for all $X,Y \in \Gamma(TS)$, we have by the formula \eqref{eq: affine riemann tensor formula} that 
        \begin{equation}
        R^\nabla(X,Y)Y \in \Gamma(T S)
        \end{equation}
         for $X,Y \in \Gamma(TS)$. By the formula \eqref{eq: affine weyl tensor formula}, we get 
         \begin{equation}
         W^\nabla(X,Y)Y \in \Gamma(T S).
         \end{equation}
          Finally viewing $X,Y \in T_xS$, we have 
          \begin{equation}
          W_x^\nabla(X,Y)Y \in T_x S.
          \end{equation}
          This finishes the proof of the lemma. 
    \end{proof}

    We also need the following linear algebra lemma.
    \begin{lemma}[Linear algebra lemma]\label{lem: lin alg lemma}
    Let $T$ be a $(1,3)$-tensor on 3-dimensional vector space $V$ satisfying the following properties: 
    \begin{enumerate}
        \item (Skew symmetry) $T(X,Y,Z) = -T(Y,X,Z)$ 
        \item (Bianchi identity) $T(X,Y,Z) + T(Y,Z,X)+T(Z,X,Y) = 0$ 
        \item (Trace-free) $\mathrm{trace}(X \mapsto T(X,Y,Z)) = 0$.
        \item (Plane property) $T(X,Y,Y) \in \mathrm{Span}(X,Y)$ for all $X,Y \in V$.
    \end{enumerate}
    Then $T \equiv 0$. 
    \end{lemma}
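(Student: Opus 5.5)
The plan is to extract strong pointwise information from the plane property together with the trace-free condition, and then use skew symmetry and the Bianchi identity to show that this information determines $T$ and forces it to vanish.

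\emph{Step 1: the operator $L_Y$.} Fix $Y\neq 0$ and consider the linear map $L_Y:V\to V$, $L_Y(X):=T(X,Y,Y)$. By skew symmetry, $L_Y(Y)=T(Y,Y,Y)=0$. By the plane property, $L_Y(X)\in\mathrm{Span}(X,Y)$ for every $X$. Hence the induced map $\bar L_Y$ on the $2$-dimensional quotient $V/\mathrm{Span}(Y)$ has every vector as an eigenvector. It is therefore a scalar $\mu(Y)\,\mathrm{Id}$, and we may write
\[
L_Y=\mu(Y)\,\mathrm{Id}+Y\otimes\theta_Y
\]
for some covector $\theta_Y$. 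The condition $L_YY=0$ gives $\theta_Y(Y)=-\mu(Y)$. The trace-free hypothesis gives
\[
0=\mathrm{tr}\,L_Y=3\mu(Y)+\theta_Y(Y)=2\mu(Y),
\]
so $\mu\equiv 0$. We conclude that
\[
T(X,Y,Y)=\theta_Y(X)\,Y \qquad\text{and}\qquad \theta_Y(Y)=0 .
\]
The left side is a polynomial of degree $2$ in $Y$. So, comparing a coordinate of $Y$ that is nonzero on an open set, $\theta_Y(X)=q(X,Y)$ is a polynomial that is linear in $X$ and has degree $0$ in $Y$. This is because a polynomial of degree $2$ in $Y$ divided by a linear coordinate must be a polynomial of degree $\le 1$; homogeneity and a divisibility argument then show it is in fact constant in $Y$. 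Since $T(X,Y,Y)$ is homogeneous of degree $2$ in $Y$, $\theta_Y$ must be homogeneous of degree $1$ in $Y$, so write $\theta_Y(X)=b(X,Y)$ with $b$ bilinear. Now $T(X,Y,Y)=b(X,Y)Y$ is quadratic in $Y$, but $b(X,Y)Y$ is cubic unless $b\equiv0$. The divisibility/degree bookkeeping here is what I will check carefully; the expected outcome is $T(X,Y,Y)\equiv0$.

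\emph{Step 2: recovering $T$ from its diagonal.} Once $T(X,Y,Y)=0$ for all $X,Y$, polarization in $Y$ gives
\[
T(X,Y,Z)+T(X,Z,Y)=0 .
\]
Combined with skew symmetry in the first two slots, $T$ is then alternating in all three slots. The Bianchi identity then reads $3T(X,Y,Z)=0$, so $T\equiv 0$.

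\emph{Main obstacle.} The step I expect to be delicate is the end of Step 1: turning ``$L_Y$ is a rank-one map with image in $\mathrm{Span}(Y)$'' into $T(X,Y,Y)=0$. This must be done without assuming that $\theta_Y$ depends polynomially on $Y$. If the degree argument above is shakier than it looks, the fallback is a direct component computation in a basis $e_1,e_2,e_3$. I would impose $T(X,Y,Y)\in\mathrm{Span}(Y)$ for $Y=e_i$ and $Y=e_i+se_j$, and compare coefficients of powers of $s$. This yields linear constraints on the components $T(e_a,e_i,e_j)$. Together with the equations of Step 2, these constraints will force all components to vanish.
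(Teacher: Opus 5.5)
\textbf{There is a genuine gap at the end of your Step 1.}

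Your reduction to $T(X,Y,Y)=\theta_Y(X)\,Y$ with $\theta_Y(Y)=0$, via the scalar action on $V/\mathrm{Span}(Y)$ and the trace, is correct. It is a cleaner route to the paper's Step 1. Your Step 2 is exactly the paper's Step 3.

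The problem is the passage from there to $T(X,Y,Y)\equiv 0$. The degree bookkeeping is wrong. The correct divisibility argument ($Y^i$ divides the $i$-th component of $T(X,Y,Y)$, because $T^iY^j=T^jY^i$) gives $\theta_Y(X)=b(X,Y)$ with $b$ bilinear. Then $b(X,Y)\,Y$ is quadratic in $Y$, not cubic, so there is no contradiction.

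More seriously, no argument of this kind can work. Step 1 uses only skew symmetry, trace-freeness and the plane property, and these do not force $T(X,Y,Y)=0$. Fix $W\neq 0$ and set $T(X,Y,Z)=\det(X,Y,W)\,Z+\det(X,Y,Z)\,W$. This tensor has the following properties:
\begin{itemize}
\item it is skew in $X,Y$;
\item its trace in $X$ is $\det(Z,Y,W)+\det(W,Y,Z)=0$;
\item it satisfies $T(X,Y,Y)=\det(X,Y,W)\,Y\in\mathrm{Span}(Y)$;
\item it is not zero;
\item it fails only the Bianchi identity, since its cyclic sum is $4\det(X,Y,Z)\,W$.
\end{itemize}
Your fallback does not repair this. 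The ``equations of Step 2'' presuppose $T(X,Y,Y)=0$, and the Bianchi identity never enters your Step 1.

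The missing idea is to use the Bianchi identity, together with trace-freeness a second time, to kill $b$. This is what the paper's Step 2 does in components with the coefficients $r,s,t$. In your notation it is short:
\begin{itemize}
\item $\theta_Y(Y)=0$ gives $b(Y,Y)=0$, so $b$ is antisymmetric.
\item Polarizing, $S(X,Y,Z):=T(X,Y,Z)+T(X,Z,Y)=b(X,Y)Z+b(X,Z)Y$.
\item Skew symmetry plus Bianchi give $3T(X,Y,Z)=S(X,Y,Z)-S(Y,X,Z)=2b(X,Y)Z+b(X,Z)Y-b(Y,Z)X$.
\item Taking the trace in $X$ gives $2b(Z,Y)+b(Y,Z)-3b(Y,Z)=-4b(Y,Z)$, which must vanish.
\end{itemize}
So $b=0$, and the displayed formula gives $T=0$ directly, without needing your Step 2.
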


    \begin{proof}[Proof of Lemma \ref{lem: lin alg lemma}]

    For a basis $X,Y,Z \in V$, write $X^*, Y^*, Z^* \in V^*$ for the dual basis. \\
    
    \noindent \textbf{Step 1: $T(X,Y,Y) \| Y$ and $T(Y,X,X) \| X$}\\
    
    The second claim follows from the first by swapping $X$ and $Y$. We focus on the first claim. 
    By the plane property, write 
    \begin{equation}
    T(X,Y,Y) = aX + bY.
    \end{equation}
    The goal is to show $a = 0$. By the trace-free condition, 
    \begin{align} \label{eq: 1 alg lemma}
        0 &= \mathrm{trace}(U \mapsto T(U,Y,Y)) \\
        &=X^*T(X,Y,Y) + Y^*T(Y,Y,Y) + Z^*T(Z,Y,Y) \\
        &= a + 0 + Z^*T(Z,Y,Y),
    \end{align}
    where we used skew-symmetry for the second summand. Apply the plane property to the independent vectors $X + \epsilon Z, Y$ to get 
    \begin{align}\label{eq: 2 alg lemma}
        T(X+ \epsilon Z,Y,Y) = \lambda(\epsilon) (X + \epsilon Z) + \mu(\epsilon) Y. 
    \end{align}
    Taking $\epsilon = 0$ and applying $X^*$ in \eqref{eq: 2 alg lemma} gives 
    \begin{align}
        a = \lambda(0). 
    \end{align}
    Taking a derivative in $\epsilon$ at $0$ and applying $Z^*$ in \eqref{eq: 2 alg lemma} gives 
    \begin{align}
        Z^*T(Z,Y,Y) = \lambda(0) = a.
    \end{align}
    Plugging this into \eqref{eq: 1 alg lemma}, we find $a = 0$.\\

\noindent \textbf{Step 2: $T(Y,X,X), T(X,Y,Y) = 0$}\\

Write $T(Y,X,X) = aX$ and $T(X,Y,Y) = bY$. We will show $a,b=0$. Define the coefficients 
\begin{align}
    r &= Z^* T(X,Y,Z) \\
    s &= Z^*T(Y,Z,X) \\
    t &= Z^*T(Z,X,Y).
\end{align}
By the plane-property, 
\begin{align}\label{eq: 3 alg lemma}
    T(X, Y+\epsilon Z, Y +\epsilon Z) = \lambda(\epsilon) (Y + \epsilon Z) + \mu(\epsilon) X,
    \end{align}
    which implies $r-t=b$. 
    Applying the plane property again (of course for a possibly different choice of $\lambda, \mu$), 
    \begin{align}
        T(Y, X + \epsilon Z, X + \epsilon Z) = \lambda(\epsilon) (X+ \epsilon Z) + \mu(\epsilon)Y.
    \end{align}
    This implies $s-r = a$. The Bianchi identity gives $r + s + t =0$. We will also need the following two applications of the trace-free condition: 
    \begin{align}
        0 &= \mathrm{trace}(U \mapsto T(U,X,Y)) \\
        &= -b + t \\
        0 &= \mathrm{trace}(U \mapsto T(U,Y,X)) \\
        &= -a - s.
    \end{align}
    In total, all the equations are: 
    \begin{align}
        r - t &= b \\
        s-r &= a \\
        r + s + t &= 0 \\
        b &= t \\
        a &= -s.
    \end{align}
    These equations easily imply $a = b = 0$. \\

    \noindent \textbf{Step 3: $T \equiv 0$}\\

    We now use $T(X,Y,Y) = 0$ for all $X,Y \in V$ to conclude that $T \equiv 0$. By using
    \begin{equation}
    T(X, Y+ Z, Y+Z) =0,
    \end{equation}
     expanding, and using 
     \begin{equation}
     T(X,Y,Y) = T(X,Z,Z) = 0,
     \end{equation}
      we get  
    \begin{align}
        T(X,Y,Z) &= - T(X,Z,Y). 
    \end{align}
    Thus $T$ is skew-symmetric in the second two entries. Combined with the skew-symmetry in the first two entries and the Bianchi identity,
    \begin{align}
        0 &= T(X,Y,Z) + T(Y,Z,X) + T(Z,X,Y) \\
        &= 3T(X,Y,Z),
    \end{align}
    and hence $T \equiv 0$. 
\end{proof}

    We are now ready to prove $W^\nabla \equiv 0$. Fix $x \in M$. We want to apply Lemma \ref{lem: lin alg lemma} to 
    \begin{equation}
    (X,Y,Z) \mapsto W_x^\nabla(X,Y)Z.
    \end{equation}
     We already know that $W_x^\nabla$ satisfies items 1, 2, 3 from Lemma \ref{lem: lin alg lemma} (see Section \ref{subsec: appendix affine}). We show that item 4, the plane property, holds. Observe that it suffices to prove 
     \begin{equation}
     W_x^\nabla(X,Y)Y \in \mathrm{span}(X,Y)
     \end{equation}
      for all $X,Y$ in an open subset of $\R^3$. Indeed the condition
      \begin{equation}
      W_x^\nabla(X,Y) Y \in \mathrm{Span}(X,Y)
      \end{equation}
       is equivalent to 
       \begin{equation}
       P(X,Y) = 0
       \end{equation}
        for the polynomial map 
       \begin{equation}
       P(X,Y) = X \wedge Y \wedge W_x^\nabla(X,Y)Y,
       \end{equation}
         and a polynomial that vanishes on an open set must vanish everywhere. Fix $X,Y \in \mathcal U_x$. By the totally geodesic condition, Definition \ref{def: T.G. geometric}, there is a totally geodesic surface $S$ with 
         \begin{equation}
         T_x S = \mathrm{Span}(X,Y).
         \end{equation}
          By Lemma \ref{eq: TG surface and weyl curvature}, 
          \begin{equation}
          W_x^\nabla(X,Y)Y \in \mathrm{Span}(X,Y).
          \end{equation}
           Since $\mathcal U_x$ is open, we conclude 
           \begin{equation}
           W_x^\nabla(X,Y)Y \in \mathrm{Span}(X,Y)
           \end{equation}
            for all $X,Y \in \R^3$. Thus we can apply Lemma \ref{lem: lin alg lemma} to conclude $W^\nabla_x \equiv 0$, finishing the proof.

\section{Proof of Theorem \ref{260615theorem1_30}}

\subsection{A polynomial Wolff axiom}
For a map 
\begin{equation}
\Phi: \R^2\times \R\times \R^2\to \R^3
\end{equation}
that is analytic and semi-algebraic, we first write it as 
\begin{equation}
\Phi(w, t; \xi)= (X(w, t; \xi), t).
\end{equation}
Let $\delta_{\Phi}>0$ be a small constant that will be chosen later, depending only on $\Phi$.
Without loss of generality, we assume that $X$ is in a normal form. 
Let $\epsilon_{\Phi}>0$ be a sufficiently small constant depending only on $\Phi$. Let $0< \kappa< \epsilon_{\Phi}$. Take a $\kappa$-net of the direction space $\B^{(2)}_{\epsilon_{\Phi}}$, and write it as $\{\xi_{\iota}\}_{\iota}$; take a $\kappa$-net of the physical space $\B^{(2)}_{\epsilon_{\Phi}}$, and write it as $\{w_{\iota'}\}_{\iota'}$. We use 
\begin{equation}\label{260619e10_3}
\{
(X(w_{\iota'}, t; \xi_{\iota})+
\kappa\cdot \B^{(2)}_{\epsilon_{\Phi}}
, t): |t|\le \epsilon_{\Phi}
\}
\end{equation}
to denote a $\kappa$-tube pointing in the direction $\xi_{\iota}$, and 
\begin{equation}
\{(X(w_{\iota'}, t; \xi_{\iota}), t): |t|\le \epsilon_{\Phi}\}
\end{equation}
is called the central curve of the tube.  We use $\T_{\xi_{\iota}}$ to denote the collection of all $\kappa$-tubes of the form \eqref{260619e10_3} pointing in the direction $\xi_{\iota}$, and 
\begin{equation}
\T:=\cup_{\iota} \T_{\xi_{\iota}}.
\end{equation}
For a constant $C\ge 1$ and for $T\in \T$, we use $C T$ to denote 
\begin{equation}
\{
(X(w_{\iota'}, t; \xi_{\iota})+
C \kappa\cdot \B^{(2)}_{\epsilon_{\Phi}}
, t): |t|\le \epsilon_{\Phi}
\}.
\end{equation}
When applying the polynomial partitioning in \cite{guth2016restriction}, we will pick a non-zero polynomial $P$ of a certain degree $D$ defined on $\R^3$; without loss of generality we also assume that $P$ is a product of non-singular polynomials.  Let $Z(P)$ be the zero set of the polynomial $P$. Define 
\begin{equation}
W:= \mathcal{N}_{\kappa} (Z(P)),
\end{equation}
the $\kappa$-neighborhood of $Z(P)$.

\begin{definition}
For a tube $T\in \T$, we say that $T$ is tangent to $W$ if 
\begin{equation}
T\cap W\neq \emptyset, 
\end{equation}
and for every non-singular point $z\in Z(P)$ lying in $10 T$, it holds that 
\begin{equation}
\mathrm{Angle}(v_{z'}(T), T_z Z(P))\le \kappa^{1-\delta_{\Phi}},
\end{equation}
where $T_z Z(P)$ denotes the tangent plane,  $z'$ denotes the point on the central curve of $T$ that is closest to $z$, and $v_{z'}(T)$ denotes the tangent direction of the central curve of $T$ at $z'$. We use $\T_{\mathrm{tang}}$ to denote the collection of all tubes tangent to $W$. 
\end{definition}

The main ingredient we need to prove Theorem \ref{260615theorem1_30} is as follows. 

\begin{proposition}\label{260622prop10_2}
Under the above notation, and under the assumption that $\Phi$ fails the worst compression condition, we have 
\begin{equation}\label{260629e10_10}
\#
\{
\xi_{\iota}: \T_{\mathrm{tang}}\cap \T_{\xi_{\iota}}\neq \emptyset 
\}
\lesim_{D} 
\kappa^{-(2-\delta_{\Phi})},
\end{equation}
where $\delta_{\Phi}>0$ is a small constant depending only on $\Phi$. 
\end{proposition}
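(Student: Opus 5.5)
The estimate \eqref{260629e10_10} is a polynomial Wolff axiom with a power gain. I would prove it by contradiction, combining semi-algebraic compactness with the definition of the worst compression condition.

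\textbf{Step 1: reduce to a fixed irreducible surface.} Since $P$ is a product of at most $D$ non-singular factors, it suffices to treat a single irreducible component $Z$ of $Z(P)$, of degree $\le D$, at the cost of a factor $D$. For each direction $\xi_\iota$ with a tangent tube, pick one such tube $T_\iota$. Its central curve $\gamma_\iota$ then stays within $O(\kappa)$ of $Z$, with angle at most $\kappa^{1-\delta_\Phi}$ to $Z$ along its length. Because $\Phi$ is analytic and semi-algebraic, the curves $\gamma_{w,\xi}$ form a semi-algebraic family of bounded complexity. Consider the set
\[
E_\kappa(Z)=\{(w,\xi): \gamma_{w,\xi}\subset \mathcal N_{C\kappa^{1-\delta_\Phi}}(Z)\}.
\]
It is semi-algebraic, with complexity bounded in terms of $D$ and $\Phi$ uniformly in $Z$. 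Its projection $\Xi_\kappa(Z)$ to direction space contains a $\kappa$-neighborhood of every $\xi_\iota$ counted. So it suffices to show
\[
\mathcal L^2(\Xi_\kappa(Z))\lesim_D \kappa^{c}
\]
for some $c>\delta_\Phi$.

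\textbf{Step 2: the exact statement.} The exact version says that for any algebraic surface $Z$ of degree $\le D$, the set $\Xi_0(Z)$ of directions $\xi$ such that some curve $\gamma_{w,\xi}$ lies in $Z$ has empty interior. This is where failure of the worst compression condition enters. Suppose $\Xi_0(Z)$ had interior. Then a two-parameter family of curves with independent directions lies in $Z$, and $Z$ is a surface. Through a generic point $x_0\in Z$ there pass two such curves $\ell',\ell''$ in the family with distinct directions. Dimension counting in the semi-algebraic family shows that curves in $Z$ meeting $\ell'$ and $\ell''$ at heights $t_1,t_2$ sweep out $Z$ locally, so \eqref{eq: curve surface union} is two-dimensional. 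Their directions $\xi(t_1,t_2)$ fill an open set, because otherwise $Z$ would carry only a one-parameter family of directions, contradicting that $\Xi_0(Z)$ has interior. This gives $\rank[\partial_{t_1}\xi,\partial_{t_2}\xi]=2$, which is exactly the worst compression condition of Definition \ref{260615defi1_19}, a contradiction. Hence $\Xi_0(Z)$ is a semi-algebraic set of dimension $\le 1$ and complexity $O_D(1)$.

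\textbf{Step 3: quantitative upgrade (the main obstacle).} Passing from "dimension $\le1$" to the bound $|\Xi_\kappa(Z)|\lesim \kappa^{c}$, uniformly over all $Z$ of degree $\le D$, is the step I expect to be hardest. The approach I would try first is a Łojasiewicz-type inequality applied to the compact semi-algebraic parameter family $\{(Z,w,\xi)\}$, where $Z$ ranges over a compact set of normalized coefficient vectors. Let $d(Z,w,\xi)$ be the Hausdorff distance from the curve $\gamma_{w,\xi}$ to $Z$; this function is semi-algebraic. Łojasiewicz then gives
\[
\mathrm{dist}\big((w,\xi),E_0(Z)\big)\lesim d^{\,\alpha}
\]
uniformly in $Z$, with $\alpha=\alpha(D,\Phi)>0$. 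Therefore $\Xi_\kappa(Z)$ is contained in the $\kappa^{\alpha(1-\delta_\Phi)}$-neighborhood of the one-dimensional set $\Xi_0(Z)$, whose length is $O_D(1)$. Its area is then $\lesim\kappa^{\alpha(1-\delta_\Phi)}$, and the number of $\kappa$-separated directions in it is $\lesim \kappa^{-2+\alpha(1-\delta_\Phi)}$.

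There is a subtlety with tangency that only holds on part of the curve, or near singular points of $Z$. I would handle it by restricting the definition of $E_\kappa$ to $t$ in a fixed proportion of the interval, and by using the angle condition at nonsingular points to ensure the curve really follows $Z$ rather than crossing it. Finally, choose $\delta_\Phi<\alpha/2$ so that $\alpha(1-\delta_\Phi)\ge\delta_\Phi$, which gives \eqref{260629e10_10}.
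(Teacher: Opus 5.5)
Your route differs from the paper's. You compare tubes directly with the algebraic surface $Z$ and quantify an exact statement (no surface of degree $\le D$ contains curves of the family with an open set of directions) by \L ojasiewicz over the space of all such surfaces. The paper applies semi-algebraic compactness only to the $P$-independent surfaces $S(\bfx,L,\rho)$ swept out by curves through a point, via the tangency-order dichotomy, Theorem \ref{Dichotomythm}. It then runs a hairbrush-type count: Wongkew's bound for $\mathcal N_{\tilde\kappa}(Z(P))$, Cauchy--Schwarz on triples, a point $\bfx$ where the angle condition forces many tubes onto some $S(\bfx,L,\rho)$, and $\mathrm{Ord}\le k_0$ to limit the directions in each $\rho$-ball.

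Your Step 2 is plausible; analyticity upgrades ``connecting curves lie in $Z$ for an open set of $(t_1,t_2)$'' to the whole union lying in $Z$. Step 3, however, has a genuine gap. The bound $\mathrm{dist}((w,\xi),E_0(Z))\lesssim d^{\alpha}$ uniformly in $Z$ does not follow from Lemma \ref{lem: loja}, because $(Z,w,\xi)\mapsto \mathrm{dist}((w,\xi),E_0(Z))$ is not continuous: $E_0(Z)$ can shrink or vanish when $Z$ is perturbed. The same goes for $d$, since real zero sets do not vary continuously with the coefficients. In fact the uniform bound is false. For straight lines, which fail the worst compression condition, $Z_s=\{x_1=s(x_2^2+t^2)\}$ contains no line of the family, yet every line $(0,t\xi_2,t)$ lies within $O(s)$ of it. Applied correctly to the joint incidence set $\{(Z,w,\xi):\gamma_{w,\xi}\subset Z\}$, \L ojasiewicz only says that $\xi$ is close to $\Xi_0(Z')$ for some $Z'$ close to $Z$. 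A union of one-dimensional sets over nearby $Z'$ has no a priori small area. The missing ingredient is a uniform slice estimate $|\{\xi:\mathrm{dist}((Z,\xi),H)<\eta\}|\lesssim \eta^{c}$ for $H=\{(Z,\xi):\xi\in\Xi_0(Z)\}$, whose slices have dimension $\le 1$. This is essentially what the proof of Lemma \ref{volsliceslem} provides: \L ojasiewicz for the maximum of the distance over a ball rules out large balls in the slice, and then one bounds the volume of boundary neighbourhoods.

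Even with that repair, you prove a weaker statement. Your exponent is $\alpha(D,\Phi)$, and the slice exponent also depends on the complexity, hence on $D$. So you get $\delta=\delta(\Phi,D)$, whereas the proposition requires $\delta_\Phi$ to depend only on $\Phi$, with $D$ entering only the implicit constant. This is not cosmetic. Downstream the gain feeds into $\kappa_\phi$ while the partitioning degree is allowed to depend on $\epsilon$, so a gain that degenerates with $D$ would not give a fixed improvement. The paper gets $D$-independence because $P$ never enters a compactness argument. The order bound $k_0$, and hence $c$ and $\delta_\Phi$, come from the family $S(\bfx,L,\rho)$ alone. $P$ is used only through Wongkew's volume bound and its tangent plane at the single point $\bfx$. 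That is where the angle condition, which your argument essentially does not use, does its work.
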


\subsection{Definition of tangency order}

For a given point $\bfx=(x_1, x_2, t)\in \B^{(3)}_{\epsilon_{\Phi}}$, and for $\xi\in \B^{(2)}_{\epsilon_{\Phi}}$, we can find $w=w_{\bfx, \xi}\in \B^{(2)}_{2\epsilon_{\Phi}}$ such that 
\begin{equation}
\bfx= \Phi(w, t; \xi).
\end{equation}
We use $\gamma_{\bfx, \xi}$ to denote the curve 
\begin{equation}
\{\Phi(
w, t; \xi
):
|t|\le \epsilon_{\Phi}
\}.
\end{equation}
If $\bfx$ is taken to be $(w, 0)$, then we simplify write $\gamma_{w, \xi}$ instead of $\gamma_{\bfx, \xi}$, and by our convention that $\Phi$ is in a normal form, the curve $\gamma_{w, \xi}$ passes through $(w, 0)$. 
For an interval $I\subset [-2\epsilon_{\Phi}, 2\epsilon_{\Phi}]$, we denote 
\begin{equation}
\gamma_{\bfx, \xi}(I):=
\{\Phi(
w, t; \xi
):
t\in I
\};
\end{equation}
if $I$ is just one point, say $s$, then we write 
\begin{equation}
\gamma_{\bfx, \xi}(s)= \Phi(w, s; \xi).
\end{equation}
For an affine line segment $L\subset \B^2_{\epsilon_{\Phi}}$ of length $2\epsilon_{\Phi}$, and a positive real $\epsilon_{\Phi}>\rho>0$, 
we define a surface patch 
\begin{equation}
S(\bfx, L, \rho):= 
\bigcup_{
\xi\in L
}
\gamma_{\bfx, \xi}([t+ \rho, t+2\rho]),
\end{equation}
where $\bfx=(x_1, x_2, t)$. \\

For $\bfx=(x_1, x_2, t)\in \B^{(3)}_{\epsilon_{\Phi}}$, an affine line segment $L$ as above, and a positive integer $k$,  we say that the tangency order at $(\bfx, L)$ is $\le k$, written as 
\begin{equation}
\mathrm{Ord}(\bfx, L)\le k,
\end{equation}
if there exists a small constant $c_{\bfx, L}>0$ such that  the following statement holds for all $0< \rho< c_{\bfx, L}$: Consider the set $\Omega=\Omega_{\bfx, L, \rho}$ consisting of all the $\xi\in \B^{(2)}_{\epsilon_{\Phi}}$ for which there exists $w$ such that 
\begin{equation}\label{260621e10_17}
\anorm{
\set{
s\in (t+\rho, t+ 2\rho):
\mathrm{dist}(
\gamma_{w, \xi}(s), S(\bfx, L, \rho)
)\le \rho^k
}
}\ge \rho^2,
\end{equation}
it holds that 
\begin{equation}\label{260621e10_18}
|\Omega\cap B_{\rho}|\le \rho^{2+\frac{1}{k}},
\end{equation}
for every ball $B_{\rho}\subset \B^{(2)}_{\epsilon_{\Phi}}$ of radius $\rho$. \\

It is easy to see that if $\mathrm{Ord}(\bfx, L)\le k$, then 
\begin{equation}
\mathrm{Ord}(\bfx, L)\le k', \ \forall k'\ge k.
\end{equation}
The term $\rho^2$ on the right hand side of \eqref{260621e10_17} can be replaced by $\rho^{1+c}$ for $c\in (0, 1)$ as well. Intuitively, when $\mathrm{Ord}(\bfx, L)$ is high, it means that we have heavy Kakeya compression in the spirit of Bourgain \cite{Bou91} around the surface consisting of the curves $\gamma_{\bfx, \xi}, \xi\in L$.

\subsection{A dichotomy theorem for tangency orders}

We make the convention that 
\begin{equation}
\mathrm{Ord}(\bfx, L)=\infty
\end{equation}
if $\mathrm{Ord}(\bfx, L)\le k$ fails for every $k$. We continue to adopt the notation $\bfx=(x_1, x_2, t)$ unless otherwise stated.

\begin{lemma}\label{volsliceslem}
    Suppose $H \subset (0, 2) \times \R^m\times \R^n$ is a nonempty bounded and relatively closed semi-algebraic set of complexity $O(1)$. Label the points in $H$ using triples $(\rho, \bfa, \bfb)$ and define the slices $H_{\rho} = \{(\bfa, \bfb): (\rho, \bfa, \bfb) \in H\}$ and $H_{\rho, \bfa} = \{\bfb: (\rho, \bfa, \bfb) \in H\}$.

    Suppose that for every $\rho$, $H_{\rho}$ is compact. Suppose further that each $H_{\rho, \bfa}$ has dimension $<n$. Then there exists $C, M, \alpha, \beta>0$ (depending only on $H$) such that\footnote{
    One may be attempting to replace $H$ by $H_{\rho, \bfa}$ in \eqref{eq: distance lower bound}, and this will make the lemma easier to prove. However, this distance function is not as strong as the one in \eqref{eq: distance lower bound}, and cannot guarantee the openness of $\cup_{\bfa} Bad_{\rho, \bfa}$ (this openness will be used in the compactness argument below \eqref{260626e10_25}). 
    }
    \begin{equation}\label{eq: distance lower bound}
        \text{dist} ((\rho, \bfa, \bfb), H)\geq C \rho^{\alpha}
    \end{equation}
    for all $\bfb$ except those in a set $Bad_{\rho, \bfa}$ of Lebesgue measure $\leq M \rho^{\beta}$, uniformly for every $(\rho, \bfa)$ with $0<\rho \leq 1$. Moreover,
    \begin{equation}
        \bigcup_{\bfa} \pnorm{
        \{\bfa\}\times Bad_{\rho, \bfa}
        }
    \end{equation}
    is open. 
\end{lemma}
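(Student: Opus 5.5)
The plan is to get the exceptional set directly from the Łojasiewicz inequality, and to get its measure bound from the dimension hypothesis on the slices together with a uniform tube bound for semi-algebraic families.

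\textbf{Step 1: Łojasiewicz for the distance function.} Define the semi-algebraic function $d(\rho,\bfa,\bfb)=\mathrm{dist}((\rho,\bfa,\bfb),H)$. It is continuous (indeed $1$-Lipschitz), and on the closure of $(0,2)\times\R^m\times\R^n$ intersected with a large ball it vanishes only on $\overline H$. We want to compare $d$ with the distance from $\bfb$ to the slice. Because $H_\rho$ is compact and $H$ is relatively closed, the sets $\{\bfb: d(\rho,\bfa,\bfb)\le s\}$ decrease to $H_{\rho,\bfa}$ as $s\to0$. The key quantitative input is the semi-algebraic Łojasiewicz inequality applied to the two functions
\[
f(\rho,\bfa,\bfb)=\rho\,\mathrm{dist}(\bfb,H_{\rho,\bfa})\qquad\text{and}\qquad d,
\]
both on a compact semi-algebraic set obtained by including the boundary point $\rho=0$. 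This should give
\[
d\ \ge\ c\,\bigl(\rho\,\mathrm{dist}(\bfb,H_{\rho,\bfa})\bigr)^{N}
\]
for some $c,N$ depending only on $H$. The factor $\rho$ handles degeneration as $\rho\to0$, where $H_\rho$ need not converge nicely. I expect this to be the main obstacle. The trouble is that $\mathrm{dist}(\bfb,H_{\rho,\bfa})$ is only lower semicontinuous in $(\rho,\bfa)$, so the zero sets may not match up: $d$ can vanish where the slice distance does not, at limit points coming from neighbouring $\bfa$. I would first try to avoid this by working directly with sublevel sets of $d$, rather than with slice distances.

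\textbf{Step 2: the bad set is a small neighbourhood of the slice.} Set
\[
Bad_{\rho,\bfa}=\{\bfb:\ d(\rho,\bfa,\bfb)<C\rho^\alpha\}.
\]
This is open in the joint variables $(\bfa,\bfb)$ because $d$ is continuous, which gives the openness of $\bigcup_{\bfa}\{\bfa\}\times Bad_{\rho,\bfa}$ for free. It remains to bound $|Bad_{\rho,\bfa}|$. Consider the semi-algebraic family of sets
\[
E_{s,\rho,\bfa}=\{\bfb: d(\rho,\bfa,\bfb)<s\}.
\]
Its complexity is $O(1)$ by Tarski--Seidenberg.

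\textbf{Step 3: measure bound by compactness.} As $s\to0$, the set $E_{s,\rho,\bfa}$ shrinks to the $\bfb$-slice of $\overline H$ at $(\rho,\bfa)$, which equals $H_{\rho,\bfa}$ by relative closedness, and this has dimension $<n$, hence measure zero. The function
\[
m(s,\rho,\bfa)=\mathcal L^n(E_{s,\rho,\bfa})
\]
is bounded and definable in an o-minimal expansion (volumes of semi-algebraic families are definable in $\R_{\exp}$, or one can use the semi-algebraic structure of the set $\{(s,\rho,\bfa):m\ge\eta\}$). For each fixed $(\rho,\bfa)$ we have $m\to0$ as $s\to0$.

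\textbf{Step 4: uniformity.} We need uniformity in $(\rho,\bfa)$ to produce a power bound. Here I would use the definable choice function
\[
s^*(\rho,\bfa,\eta)=\sup\{s: m(s,\rho,\bfa)\le\eta\},
\]
which is positive everywhere. By compactness of the $\bfa$-range (the slices $H_\rho$ are compact, and for $\bfa$ away from the projection of $H_\rho$ there is nothing to prove once $s<\mathrm{dist}$), together with the o-minimal growth dichotomy in the single variable $\rho$, we get
\[
\inf_{\bfa}s^*(\rho,\bfa,\rho^{\beta})\ \ge\ C\rho^{\alpha}.
\]
This is the same difficulty as in Step 1, namely uniformity over $\bfa$. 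I would handle it by arguing by contradiction with a semi-algebraic curve selection along sequences $\bfa_j$, using again that $\overline H\cap\{\rho\}\times\{\bfa\}\times\R^n=\{\rho\}\times\{\bfa\}\times H_{\rho,\bfa}$ has empty interior. Finally, $d(\rho,\bfa,\bfb)\ge C\rho^\alpha$ outside a set of measure $\le M\rho^\beta$, which is the claim.
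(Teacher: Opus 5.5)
Your Step 2 matches the paper exactly: $Bad_{\rho,a}=\{b: d(\rho,a,b)<C\rho^\alpha\}$, with openness of the union following from continuity of $d$. The gap is the quantitative core of the lemma. You need a bound on $|\{b: d(\rho,a,b)<\eta\}|$ that is uniform in $a$ and polynomial in $\rho,\eta$, and none of your steps delivers it.

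Step 1 cannot be rescued. The function $\rho\,\mathrm{dist}(b,H_{\rho,a})$ is discontinuous in $a$, so \L{}ojasiewicz does not apply, and the inequality you want is in fact false. Take $n=2$, $m=1$, $H=\{(\rho,a,b): a\in[0,1],\ b_1\in[0,1],\ b_2=0,\ ab_1=0\}$. The point $(\rho,\epsilon,(1/2,0))$ has $d\le\epsilon$ but slice distance $1/2$. Note that the zero sets do match: $d=0$ with $\rho>0$ forces $b\in H_{\rho,a}$ by relative closedness. The obstruction is continuity, not the zero sets.

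Steps 3--4 then lean on o-minimality, which does not close the gap:
\begin{itemize}
\item The volume $m(s,\rho,a)$ and its level sets are in general not semi-algebraic.
\item $m$ need not even be definable in $\mathbb{R}_{\exp}$. A semi-algebraic family of disc sectors has area $\tfrac12\arctan t$, which would make a restricted sine definable, contradicting Bianconi. It is only definable in $\mathbb{R}_{\mathrm{an},\exp}$.
\item That structure defines $\exp$, so the growth dichotomy gives you nothing. Positivity of $\rho\mapsto\inf_a s^*(\rho,a,\rho^\beta)$ does not imply a lower bound $C\rho^\alpha$; compare $e^{-1/\rho}$. Nothing in your argument excludes behaviour like $m\approx 1/\log(1/s)$, which would give $s^*\approx e^{-1/\eta}$.
\item Excluding this would need the fine structure of volumes of semi-algebraic families, plus a separate uniformity argument in $a$. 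The curve selection you propose for the latter cannot be run on a condition phrased in terms of Lebesgue measure.
\end{itemize}

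The missing idea is to turn ``small measure'' into a semi-algebraic statement before invoking \L{}ojasiewicz, and the paper does this with balls. Set $\Delta(\rho,a,c,r)=\max_{|u|\le1}d(\rho,a,c+ru)$ on a compact box that includes $\rho=0$. Then:
\begin{itemize}
\item $\Delta$ is continuous and semi-algebraic.
\item $\Delta<\eta$ if and only if $\overline{B(c,r)}\subset T_{\rho,a,\eta}:=\{b:d(\rho,a,b)<\eta\}$.
\item $\Delta=0$ forces $\rho r=0$. Otherwise relative closedness would put a whole ball inside $H_{\rho,a}$, contradicting $\dim H_{\rho,a}<n$.
\end{itemize}
\L{}ojasiewicz with $f=\Delta$ and $g=\rho r$ gives $(\rho r)^N\le C\Delta$. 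Hence $T_{\rho,a,\eta}$ contains no ball of radius larger than $\tau=C_*\rho^{-1}\eta^{1/N}$, and so lies in $N_{2\tau}(\partial T_{\rho,a,\eta})$.

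The boundaries $\partial T_{\rho,a,\eta}$ form a bounded semi-algebraic family of dimension $\le n-1$. A uniform tube bound $|N_t(\partial T_{\rho,a,\eta})|\le C_0t$ therefore holds, giving $|T_{\rho,a,\eta}|\lesssim\rho^{-1}\eta^{1/N}$ uniformly in $a$. Choosing $\eta=\rho^{2N}$ yields $|Bad_{\rho,a}|\lesssim\rho$. This stays entirely within semi-algebraic geometry and gives the uniformity in $a$ for free, which is exactly what your Steps 3--4 leave open.
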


Lemma \ref{volsliceslem} will be used for a compactness argument and we postpone its proof to Subsection \ref{Proofofvolslices}. For now let us take it for granted and apply it to prove the following theorem:

\begin{theorem}[Dichotomy theorem for tangency orders]\label{Dichotomythm}
Under the assumption that the map $\Phi$ is analytic and semi-algebraic, 
    exactly one of the two alternatives holds: 
    \begin{enumerate}
        \item[(i)]  $\mathrm{Ord}(\bfx, L)$  is uniformly bounded for all $\bfx\in \B^{(3)}_{\epsilon_{\Phi}}$ and all $L$, and the smallness threshold $c_{\bfx, L}$ in the definition of tangency orders can be taken to be uniform; 
        \item[(ii)] 
        There is some $(\bfx_0, L_0, \rho_0)$ with $\bfx_0 = (x_{10}, x_{20}, t_0)$ and an open set of directions in $\B^{(2)}_{\epsilon_{\Phi}}$, such that for each direction $\xi$ in this set there is a curve $\gamma_{w, \xi}$ for some $w$ satisfying that  $\gamma_{w, \xi}([t_0+\rho_0, t_0+2\rho_0])$ lies entirely in the surface $S_{\bfx_0, L_0, \rho_0}$.
    \end{enumerate}
\end{theorem}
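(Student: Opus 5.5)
The plan is to encode the tangency-order data as a semi-algebraic family, so that Lemma \ref{volsliceslem} applies, and then run a compactness argument. Let
\[
S^\rho := S(\bfx, L, \rho).
\]
Consider the set $H$ of tuples $(\rho, \bfa, \bfb)$ with $\bfa = (\bfx, L)$ and $\bfb = \xi$, for which there exist a parameter $w$ and an interval of times of length comparable to $\rho$ in $[t+\rho, t+2\rho]$ with
\[
\gamma_{w,\xi}(s) \in S^\rho \quad \text{for every such } s.
\]
This is the exact-containment version of \eqref{260621e10_17}. After rescaling $s = t + \rho\sigma$ with $\sigma \in [1,2]$, the set $H$ is semi-algebraic of bounded complexity, since $\Phi$ is semi-algebraic and the quantifiers over $w$, $s$, and $\xi' \in L$ can be eliminated by Tarski--Seidenberg. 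It is bounded, and after taking closures in the compact parameter ranges, $H_\rho$ is compact.

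The dichotomy is decided by whether some slice $H_{\rho,\bfa}$ has full dimension $2$ in $\xi$.

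\textbf{Case (ii).} Suppose some slice $H_{\rho_0, \bfa_0}$ has dimension $2$. Being semi-algebraic, it contains an open set of directions $\xi$. For each such $\xi$ there is a curve $\gamma_{w,\xi}$ lying in $S_{\bfx_0, L_0, \rho_0}$ on an interval. Analyticity of $\Phi$, and hence of the curves and of the surface patch, together with unique continuation along the curve, upgrades this to containment of all of $\gamma_{w,\xi}([t_0+\rho_0, t_0+2\rho_0])$. One must be careful here: analytic continuation keeps the curve in the analytic continuation of the surface, and the surface patch is defined over exactly this time range, so the conclusion holds after possibly shrinking $\rho_0$. This gives alternative (ii).

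\textbf{Case (i).} Otherwise every slice has dimension $<2$, and Lemma \ref{volsliceslem} yields constants $C, \alpha, \beta, M$ with
\[
\text{dist}\big((\rho,\bfa,\xi), H\big) \ge C\rho^{\alpha}
\]
outside a set $Bad_{\rho,\bfa}$ of measure $\le M\rho^\beta$. Next I show that $\rho^k$-near-containment of a $\rho^2$-portion forces proximity to $H$. Suppose $\xi$ satisfies \eqref{260621e10_17} with $k$ large. Apply a quantitative Łojasiewicz inequality to the semi-algebraic defining functions, which is uniform over $\bfa$ by compactness and bounded complexity. It shows that some point $(\rho, \bfa, \xi')$ of $H$ lies within $\rho^{k/N}$ of $(\rho,\bfa,\xi)$, with $N$ depending only on the complexity. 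Choose $k$ with
\[
\frac{k}{N} > \alpha + 1.
\]
Then every $\xi \in \Omega_{\bfx,L,\rho}$ lies in $Bad_{\rho,\bfa}$, so $|\Omega| \le M\rho^\beta$. To obtain the local bound \eqref{260621e10_18} on each ball $B_\rho$, I rescale the $\xi$-variable by $\rho$ around the ball's center and include the center as part of $\bfa$. The measure bound then becomes $\le M\rho^{2+\beta}$, and after enlarging $k$ so that $1/k < \beta$ this gives \eqref{260621e10_18}. The thresholds are uniform because all constants come from Lemma \ref{volsliceslem} and Łojasiewicz, which depend only on $H$.

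\textbf{Exclusivity.} The two alternatives are mutually exclusive. In case (ii), the open set of $\xi$ satisfies \eqref{260621e10_17} for every $k$ at $(\bfx_0, L_0)$ and all small $\rho$, which contradicts a uniform bound.

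\textbf{Main obstacle.} I expect the hardest step to be the Łojasiewicz transfer from near-containment to distance from $H$ with uniform exponents. I would first try to apply the quantitative semi-algebraic Łojasiewicz inequality to the function
\[
F(\rho,\bfa,\xi) = \inf_{w,\,I} \sup_{s \in I} \text{dist}\big(\gamma_{w,\xi}(s), S^\rho\big),
\]
which is semi-algebraic and vanishes exactly on $H$. A secondary delicacy is that the Lebesgue-measure condition ``$\ge\rho^2$'' in \eqref{260621e10_17} must be reconciled with the interval used in $H$. For this I would use the fact that a semi-algebraic set of times of measure $\ge\rho^2$ contains an interval of length $\gtrsim \rho^2$, since its number of components is bounded, and define $H$ with intervals of relative length $\rho$ after rescaling.
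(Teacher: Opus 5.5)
Your architecture matches the paper's. Both proofs use a bounded-complexity semialgebraic family of containment sets and a dichotomy on the dimension of the direction slices. Both then apply Lemma \ref{volsliceslem} after rescaling directions as $\bfy+\rho\bfz$ with $\bfy$ absorbed into the parameter, which is how the paper also gets the $\rho^{2+\beta}$ bound on each $\rho$-ball.

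The real difference is the quantitative step from \eqref{260621e10_17} to $Bad$. The paper builds full-window containment into $G_\rho(\bfx,L,\bfy)$. It then defines $r(\rho)$ as a minimum over the compact complement of $\bigcup_{\bfa}\{\bfa\}\times Bad_{\rho,\bfa}$, which is what the openness clause of the lemma is for. It gets $r(\rho)\ge K\rho^\nu$ from the growth of one-variable semialgebraic functions, and finishes by counting exits from the bounded-complexity $\frac{K}{2}\rho^\nu$-neighbourhood.

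You instead apply \L ojasiewicz directly to the near-containment function $F$ against $\mathrm{dist}(\cdot,H)$, with $H$ defined by exact containment on a subinterval so that $F^{-1}(0)=H$ for $\rho>0$. This is a legitimate and more direct route, with two adjustments. First, $F$ vanishes identically at $\rho=0$ while $\mathrm{dist}(\cdot,H)$ does not, so the pair must be $g=\rho\cdot\mathrm{dist}(\cdot,H)$, as in the proof of Lemma \ref{volsliceslem}. This gives $\mathrm{dist}\lesssim\rho^{k/N-1}$ rather than $\rho^{k/N}$, and your requirement $k/N>\alpha+1$ absorbs exactly that loss. Second, the component bound for $\{s:\mathrm{dist}(\gamma(s),S)\le\rho^k\}$ must be uniform in $k$. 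Otherwise the interval length built into $H$, and hence $\alpha,\beta,N$, would depend on $k$, which is circular. Uniformity holds if you treat the threshold as a free parameter of a single semialgebraic family.

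Defining $H$ by subinterval containment moves the cost into Case (ii), and your remedy there does not work. The patch $S(\bfx_0,L_0,\rho_0)$ itself changes with $\rho_0$, so shrinking $\rho_0$ does not preserve anything. Analytic continuation only places $\gamma_{w,\xi}$ in the analytic extension $\bigcup_{\xi'}\gamma_{\bfx_0,\xi'}$ over the whole line through $L_0$, not in the patch. Within the window the curve can still leave through the boundary generators $\gamma_{\bfx_0,\xi'}$ with $\xi'$ an endpoint of $L_0$.

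The paper's placement makes its Case 2 immediate. However, it needs the same promotion in Case 1: to get $r(\rho)>0$ it must go from exact containment on a segment to $\bfz\in G_\rho$, which the footnote attributes to analyticity. So an argument is needed in either arrangement. One way out is to state (ii) with subinterval containment. For the later derivation of the worst compression condition this would likely suffice, since that argument works in the analytic extension of the patch anyway.

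Your exclusivity argument also fails as written. Alternative (ii) supplies one scale $\rho_0$ and an open set of unspecified size. That neither holds for all small $\rho$ nor need violate \eqref{260621e10_18} at $\rho_0$. The paper only proves that at least one alternative holds (Case 1 gives (i), Case 2 gives (ii)), and that is all Proposition \ref{260622prop10_2} uses.
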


\begin{proof}[Proof of Theorem \ref{Dichotomythm}]
    For every $\rho\in (0, \epsilon_{\Phi})$, every $\bfx\in \B^{(3)}_{\epsilon_{\Phi}}$, every line segment $L$ as in the definition of tangency orders and every ``coarse-scale direction'' $\bfy\in [-2, 2]^2$, define a set
    \begin{multline}
        G_{\rho}(\bfx, L, \bfy) = \{\bfz \in [-2, 2]^2: \exists  w \text{ such that }\\
         \gamma_{w, \bfy+ \rho \bfz}([t+\rho, t+2\rho])\subset S(\bfx, L, \rho)\}.
    \end{multline}
Each $G_{\rho}(\bfx, L, \bfy)$ is closed by definition (with a standard limiting argument). By the effective quantifier elimination (the Tarski-Seidenberg Theorem, see for instance \cite{basu2006algorithms}), we see  each $G_{\rho}(\bfx, L, \bfy)$ is a semialgebraic set of complexity $O(1)$ (uniformly bounded). 

We have two alternatives: Either (Case 1)  the dimension of every $G_{\rho}(\bfx, L, \bfy)$ at every $(\bfz, \rho)$  is $\leq 1$ (i.e. not full dimension), where we will see alternative (i) occurs, or (Case 2) some $G_{\rho}(\bfx, L, \bfy)$ has dimension $2$ (i.e. full dimension), where we will see alternative (ii) occurs. 

Suppose first that we are in Case $1$. We would like to use Lemma \ref{volsliceslem}. 
Define
\begin{equation}
    G_{\rho} = \{(\bfx, L, \bfy, \bfz): \bfz \in G_{\rho} (\bfx, L, \bfy)\}
\end{equation}
for $\rho\in (0, \epsilon_{\Phi})$. These are compact semialgebraic sets of complexity $O(1)$ by definition.  We can invoke Lemma \ref{volsliceslem} (relatively closedness follows immediately from continuity) for $\bfa = (\bfx, L, \bfy)$ and $\bfb=\bfz$ and find $C, M, \alpha, \beta$ satisfying the conclusion of that Lemma.

Fix once and for all a number $\sigma\in (0, 1)$; for example, one may take $\sigma=1/10.$ For a surface patch $S=S(\bfx, L, \rho)$, an interval $I$, and a curve $\gamma$, denote 
\begin{equation}\label{260626e10_25}
    d_S(\gamma|_I):=\sup_{s\in I} \dist(\gamma(s), S(\bfx, L, \rho)).  
\end{equation}
For $0<\rho< \epsilon_{\Phi}$, consider \begin{equation}
    r(\rho) = \min_{(\bfx, L, \bfy, \bfz): \bfz \notin Bad_{\rho, \bfx, L, \bfy}} \min_{w} \min_{I\subset [t+\rho, t+2\rho]: \ell(I)=\rho^{2+\sigma}} d_S(\gamma_{w, \bfy+ \rho\bfz}|_I),
\end{equation}
where $S=S(\bfx, L, \rho)$, and $I$ is an interval of length $\rho^{2+\sigma}$. Note that this is the minimum of a positive\footnote{The positivity follows from the definition of $G_{\rho}$: If $\bfz\notin Bad_{\rho, \bfx, L, \bfy}$, then $\bfz \notin G_{\rho}(\bfx, L, \bfy)$; here analyticity is crucial. } continuous function on a compact set, and therefore $r(\rho)>0, \forall \rho>0$. Moreover, by definition and effective quantifier elimination (see \cite{basu2006algorithms}), $r(\rho)$ is a semialgebraic function of complexity $O(1)$. Since $r(\rho) > 0$ for $\rho > 0$, the function 
\begin{align}
    h(s) := \frac{1}{r(1/s)}, s > \epsilon_\Phi^{-1}
\end{align}
is semialgebraic. By a standard estimate for the growth of semialgebraic functions of one variable \cite[Proposition 2.11]{Coste2002SemialgebraicGeometry}, there exists $M \geq \epsilon_\Phi^{-1}$, an integer $\nu >0$, and $K \geq 1$ such that $h(s) \leq K^{-1}s^\nu$ for $s > M$. That is, $r(\rho) \geq K\rho^\nu$ for $\rho < 1/M$. By shrinking $\epsilon_\Phi$ (note that $M$ only depended on $\Phi$), we can assume 
\begin{equation}
    r(\rho)\geq K\rho^{\nu}
\end{equation}
for $\rho \in (0,\epsilon_\Phi)$. 
So far we have proven that uniformly for every $(\bfx, L, \rho), 0<\rho< \epsilon_{\Phi}$, as long as $\bfz \notin Bad_{\rho, \bfx, L, \bfy}$, every segment of length $\rho^{2+\sigma}$ in every curve $\gamma$ in direction $\bfy+ \rho \bfz$ has  distance $\geq K\rho^{\nu}$ against $S(\bfx, L, \rho)$, where distance is given by \eqref{260626e10_25}. Moreover, 
the set $Bad_{\rho, \bfx, L, \bfy}$ has measure $\leq M\rho^{\beta}$ in $\bfz$, and this will give rise to the condition \eqref{260621e10_18} (recall we parametrize directions as $\bfy+ \rho \bfz$ and the extra $\rho^2$ in \eqref{260621e10_18} comes from Jacobian). 
As a consequence we know that no such segment of length $\rho^{2+\sigma}$ can lie entirely in the $\frac{K}{2}\rho^{\nu}$-neighborhood of $S(\bfx, L, \rho)$. The latter has complexity $O(1)$, so if we divide $\gamma$ into disjoint segments of length $\rho^{2+\sigma}$, only $O(1)$ many of those can intersect this neighborhood (since each segment must exit this neighborhood). This means that all $s$ satisfying 
\begin{equation}
    \text{dist} (\gamma(s), S(\bfx, L, \rho))\leq \frac{K}{2}\rho^{\nu}
\end{equation}
is always contained in $O(1)$ many intervals of length $\rho^{2+\sigma}$. Comparing this with the definition of tangency orders, we see $\mathrm{Ord} (\bfx, L)$ is uniformly bounded for every $(\bfx, L)$. Hence alternative (i) occurs.

Finally, let us suppose we are in Case 2. This just means that for some $\rho_0$ and some $(\bfx_0, L_0)$ with $\bfx_0 = (x_{10}, y_{10}, t_0)$, there is a full ball of directions such that for every direction in that ball, there is a trajectory $\gamma$ such that $\gamma([t_0+\rho_0, t_0+2\rho_0])$ lies entirely in the surface $S(\bfx_0, L_0, \rho_0)$. This by definition means alternative (ii) occurs.

\end{proof}

\subsection{Proof of Proposition \ref{260622prop10_2}}

We use Theorem \ref{Dichotomythm}. First, we point out that alternative (ii) implies the worst compression condition.  To see this, we just need to take two different directions $\xi', \xi''$ from $L_0$, and consider the curves passing through $\bfx_0$ and pointing in the directions $\xi', \xi''$, respectively. The rank condition required by Definition \ref{260615defi1_19} (worst compression condition) holds because of the openness of directions in alternative (ii).\\

Thus, to prove Proposition \ref{260622prop10_2}, it suffices to assume that we are in alternative (i) of Theorem \ref{Dichotomythm}. We assume this for the rest of the proof, and assume 
\begin{equation}
    \mathrm{Ord}(\bfx, L)\leq k_0,\  \forall \bfx, \ \forall L.
\end{equation}
We will also suppress the dependence of all constants on $k_0$, or on $c_{\bfx, L}$, or the degree $D$ of the polynomial $P$. Note that $c_{\bfx, L}$ is uniform for all $\bfx$ and $L$.

For the sake of convenience, the proof below is presented as proof by contradiction. But we remark that everything is explicit enough that it is not hard to write down a straightforward proof following the one here. We assume 
\begin{equation}
\#
\{
\xi_{\iota}: \T_{\mathrm{tang}}\cap \T_{\xi_{\iota}}\neq \emptyset 
\}
\gtrsim 
\kappa^{-(2-\delta_{\Phi})}
\end{equation}
and will derive a contradiction for some fixed $\delta_{\Phi}>0$ and all small enough $\kappa$. 

Our proof will rely on the assumption in the definition of $\T_{\mathrm{tang}}$ that $T \in \T_{\mathrm{tang}}$ is tangent to $W$ with an angle $O(\kappa^{1-\delta_{\Phi}})$. Because we only have this tangency, it is convenient to work with scale \[\tilde{\kappa} = \kappa^{1-\delta_{\Phi}}\] and work with $\tilde{\kappa}$-tubes.  We take a $\tilde{\kappa}$-net of ($\tilde{\kappa}$-scale) directions and write the collection as $\tilde{\Xi} = \{\tilde{\xi}_j\}_{j}$. By the counter-assumption that
\begin{equation}
    \#
\{
\xi_{\iota}: \T_{\mathrm{tang}}\cap \T_{\xi_{\iota}}\neq \emptyset 
\}
\gtrsim 
\kappa^{-(2-\delta_{\Phi})},
\end{equation}
we can find a collection $\tilde{\Xi}_{\text{tang}}$ of ($\tilde{\kappa}$-scale) directions $\tilde{\xi}_j$ of cardinality \[|\tilde{\Xi}_{\text{tang}}|\gtrsim \kappa^{-(2-3\delta_{\Phi})},\] and can choose a $\tilde{\kappa}$-tube (that we will call $\tilde{T}_{\tilde{\xi}_j}$ henceforth) in each direction $\tilde{\xi}_j \in \tilde{\Xi}_{\text{tang}}$ that contains a $\kappa$-scale tube in  $\T_{\text{tang}}\cap \T_{\xi_{\iota}}$ with 
\begin{equation}
    |\tilde{\xi}_j - \xi_{\iota}| = O(\tilde{\kappa}).
\end{equation}
We also take a collection $\B_{\tilde{W}}$ of $\tilde{\kappa}$-balls that form a bounded-overlapping cover of $\tilde{W} = \mathcal{N}_{\tilde{\kappa}} (Z(P))$. If some $\tilde{T}_{\tilde{\xi}_j}$ and $B \in \B_{\tilde{W}}$ satisfy $\tilde{T}_{\tilde{\xi}_j}\cap B \neq \emptyset$, we say $\tilde{T}_{\tilde{\xi}_j}$ \emph{passes through} $B$. For a ball $B \in \B_{\tilde{W}}$, we define $t(B)$ to be the $t$-coordinate of its center. We discretize the $t$-coordinates, and align the balls $\{B\}$ so that there are only $O(\tilde{\kappa}^{-1})$ many different $t(B)$'s (define $Y$ to be this set of $t(B)$).  We use $\B_{\tilde{W}, t_0}$ to denote the set of all balls $B\in \B_{\tilde{W}}$ with $t$-coordinate $t_0$.

By Wongkew's theorem in \cite{wongkew1993volumes}, 
\begin{equation}\label{upperboundoftildeB}
    |\B_{\tilde{W}}|\lesssim \tilde{\kappa}^{-2}.
\end{equation}
Hence for $\gtrsim \tilde{\kappa}^{-1}$ different $t$-coordinate choices in $t \in Y$, there are $\lesssim \tilde{\kappa}^{-1}$ many $B \in \B_{\tilde{W}}$ with $t(B) = t$. We call these \emph{good} $t$. We will work with a special scale $\rho = \tilde{\kappa}^{c}$, where $0<c< \frac{1}{k_0}$ (this ensures $\tilde{\kappa} < \rho^{k_0}$) is a constant depending on $k_0$ to be specified later. Its role will become relevant when we use the finite order assumptions \eqref{260621e10_17} and \eqref{260621e10_18}. In terms of the dependence of parameters, the reader should imagine that we are first handed $k_0$, then we choose $c$ to be small depending on $k_0$, then we choose $\delta_{\Phi}$ to be even much smaller depending on $k_0$ and $c$. With this $\rho$, we now find and fix a subset $Y_{\text{good}}\subset Y$ contained in a time window of length $10\rho$, such that there are $\gtrsim \frac{\rho}{\tilde{\kappa}}$ many good $t \in Y_{\text{good}}$. Our discussion below will only be applied to $B$ in this time window $t \in Y_{\text{good}}$.

Each $\tilde{T}_{\tilde{\xi}_j}$ passes through a ball for every $t$, and in particular for every good $t$, in $Y_{\text{good}}$. By Cauchy-Schwarz and \eqref{upperboundoftildeB}, we see the number of triples $(\tilde{T}_{\tilde{\xi}_{j_1}}, \tilde{T}_{\tilde{\xi}_{j_2}}, B)$ with good $t(B) \in Y_{\text{good}}$  and satisfying that both $\tilde{T}_{\tilde{\xi}_{j_1}}$ and $\tilde{T}_{\tilde{\xi}_{j_2}}$ pass through $B$ is
\begin{equation}
    \gtrsim \frac{\rho}{\tilde{\kappa}}\cdot \tilde{\kappa}^{-1}\cdot(\tilde{\kappa}\cdot \kappa^{-(2-3\delta_{\Phi})})^2\sim \rho \kappa^{-(4-6\delta_{\Phi})}.
\end{equation}
By double counting, we can then find 
 \begin{equation}
     \bfx = (x_1, x_2, t(\bfx)) \in \mathcal{N}_{\tilde{\kappa}}(Z(P)),
 \end{equation}
 where $t(\bfx)$ denotes the $t$-coordinate of $\bfx$, 
 such that among the above triples, we can find at least $\rho\tilde{\kappa}\cdot \kappa^{-(4-6\delta_{\Phi})}$ many, such that all $\tilde{T}_{\tilde{\xi}_{j_1}}$ in those selected triples contain $\bfx$ and that 
 \begin{equation}
     t(B) \in [t(\bfx)+\rho, t(\bfx)+2\rho]
 \end{equation}
 for all $B$ in the selected triples.

Now we look only at these selected triples with $\tilde{T}_{\tilde{\xi}_{j_1}}$ containing $\bfx$. We can use the tangency assumption. Pick any point $\bfx' \in Z(P)$ that is $O(\tilde{\kappa})$-close to $\bfx$, and set $\Pi$ to be the tangent plane to $\bfx'$ at $Z(P)$. Then the central curve of each $\tilde{T}_{\tilde{\xi}_{j_1}}$ through $\bfx$ (up to an $O(\tilde{\kappa})$-translation) must be almost tangential to $\Pi$ (up to error $O(\tilde{\kappa})$). Thus each  $\tilde{T}_{\tilde{\xi}_{j_1}}$ for the above selected tuples must be $\tilde{\kappa}$-close to the surface $S(\bfx, L, \rho)$ for some line $L$ of directions (corresponding to the tangential condition to $\Pi$) in the range $t \in [t(\bfx)+\rho, t(\bfx)+2\rho]$. By the above lower bound of selected triples, we deduce that $S(\bfx, L, \rho)$ is $\tilde{\kappa}$-close to 
\begin{equation}
    \gtrsim \rho\tilde{\kappa}^2\cdot \kappa^{-(4-6\delta_{\Phi})}
\end{equation}
many $\tilde{T}_{\tilde{\xi}_{j_2}}$, each at 
\begin{equation}
    \gtrsim \rho\tilde{\kappa}^3\cdot \kappa^{-(4-6\delta_{\Phi})}
\end{equation}
many different $B$'s (all in $\B_{\tilde{W}}$).

Recall $\tilde{\kappa} = \kappa^{1-\delta_{\Phi}}$. Thus when $\delta_{\Phi}$ is very small (depending on $k_0$ and $c$), the above numbers $\rho\tilde{\kappa}^2\cdot \kappa^{-(4-6\delta_{\Phi})}$ and $\rho\tilde{\kappa}^3\cdot \kappa^{-(4-6\delta_{\Phi})}$ become close to $\rho\kappa^{-2}$ (if $\rho=1$, this means we have almost all directions $\tilde{\xi}_j$) and $\frac{\rho}{\kappa}$ (this means we almost have all balls on the tube $\tilde{T}_{\tilde{\xi}_{j_2}}$ in time $[t(\bfx)+\rho, t(\bfx)+2\rho]$ lying on $\mathcal{N}_{\tilde{\kappa}} (S(\bfx, L, \rho))$), respectively.

Now recall that we are in alternative (i) and have $\mathrm{Ord}(\bfx, L)\le k_0$, with precise meaning in \eqref{260621e10_17} and \eqref{260621e10_18}, and we use it to get a contradiction. We remember $c$ is very small depending on $k_0$, and then $\delta_{\Phi}$ is chosen to be very small depending on $k_0$ and the choice of $c$. For this choice (when $\delta_{\Phi}$ is very small), every direction of every $\tilde{T}_{\tilde{\xi}_{j_2}}$ will fall in $\Omega$ described in \eqref{260621e10_17}. Thus the number of directions $\tilde{\xi}_{j_2}$ in every $\rho$-ball
 is $\lesssim \rho^{2+\frac{1}{k_0}}\tilde{\kappa}^{-2}$.

In our last step of analysis, we will show that not all possible $\rho$-balls (in the space of directions) can contain the direction $\tilde{\xi}_{j_2}$. In fact, note that the tube $\tilde{T}_{\tilde{\xi}_{j_2}}$ intersects $\mathcal{N}_{\tilde{\kappa}} (S(\bfx, L, \rho))$ at 
\begin{equation}
    \gtrsim \rho\tilde{\kappa}^3\cdot \kappa^{-(4-6\delta_{\Phi})} \approx \rho \kappa^{-1}
\end{equation}
many different $B$'s (we use $\approx$ to hide factors of the form $\kappa^{O(\delta_{\Phi})}$). This implies the tangent direction $\tilde{\xi}_{j_2}$ (up to some diffeomorphism depending on $\Phi, \bfx$) is $\lessapprox \frac{\kappa}{\rho}$-close to some tangent plane of some point on $S(\bfx, L, \rho)$. But $S(\bfx, L, \rho)$ is at scale $\rho$ and its tangent plane is constant up to scale $\rho$. We take $c< \frac{1}{2}$ so that $\frac{\kappa}{\rho} < \rho$ and this ensures that all $\tilde{\xi}_{j_2}$ are contained in $\lessapprox \rho^{-1}$ many $\rho$-balls (dictated by $L$). The total number of $\tilde{\xi}_{j_2}$ is then bounded by 
\begin{equation}
    O(\rho^{1+\frac{1}{k_0}}\kappa^{-2-O(\delta_{\Phi})}).
\end{equation}
We choose $\delta_{\Phi}$ very small and this will be smaller than the actual total number $\rho\tilde{\kappa}^2\cdot \kappa^{-(4-6\delta_{\Phi})}$, causing a contradiction.
 
Finally, we comment that the above analysis is robust: The argument above can be adapted to an analysis in a ball of radius $b>\rho$ instead of the unit ball, yielding a similar conclusion with $b^{O(1)}$-loss.

\subsection{Proof of Lemma \ref{volsliceslem}}\label{Proofofvolslices}

We will need the semi-algebraic \L ojasiewicz inequality in the proof, which we state below. 

\begin{lemma}[\L ojasiewicz inequality \protect{(\cite[Corollary 2.6.7]{realalggeombook})}]\label{lem: loja}
    Let $A$ be a closed and bounded semialgebraic set and $f,g : A \to \R$ continuous semialgebraic functions such that $f^{-1}(0) \subset g^{-1}(0)$. Then there exist an integer $N > 0$ and a constant $c \in \R$, such that $|g|^N \leq c|f|$ on $A$. 
\end{lemma}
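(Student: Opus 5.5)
We would prove the inequality by reducing it to the growth of a single semialgebraic function of one variable, the same device used in the proof of Theorem \ref{Dichotomythm}. Since $A$ is closed and bounded and $g$ is continuous, $g$ is bounded on $A$; set $T:=\max_A |g|$. If $T=0$ there is nothing to prove, so assume $T>0$. For $0<\tau\le T$ we introduce the sublevel-complement
\begin{equation*}
A_\tau:=\{x\in A: |g(x)|\ge \tau\},
\end{equation*}
which is a nonempty compact semialgebraic set. We then define
\begin{equation*}
\varphi(\tau):=\min_{x\in A_\tau}|f(x)|.
\end{equation*}

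\textbf{Basic properties of $\varphi$.} The hypothesis $f^{-1}(0)\subset g^{-1}(0)$ says that $f$ has no zero on $A_\tau$. Since $|f|$ is continuous on the compact set $A_\tau$, this gives $\varphi(\tau)>0$ for every $\tau\in(0,T]$. Moreover, $\varphi$ is nondecreasing in $\tau$, because $A_\tau$ shrinks as $\tau$ grows. Finally, $\varphi$ is a semialgebraic function of $\tau$. Indeed, its graph is described by a first-order formula in $(\tau,s)$: one asserts the existence of $x\in A$ with $|g(x)|\ge\tau$ and $|f(x)|=s$, and that every $x\in A$ with $|g(x)|\ge\tau$ satisfies $|f(x)|\ge s$. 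Hence the graph is semialgebraic by Tarski--Seidenberg quantifier elimination \cite{basu2006algorithms}.

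\textbf{Polynomial lower bound near $0$ (the main step).} We need that $\varphi$ cannot decay faster than a power of $\tau$ as $\tau\to0^+$. To see this, consider $h(s):=1/\varphi(1/s)$ for $s\ge 1/T$. It is a positive semialgebraic function of one variable. By the growth estimate \cite[Proposition 2.11]{Coste2002SemialgebraicGeometry}, there exist $s_0$, an integer $N>0$ and $K\ge1$ such that $h(s)\le K s^N$ for $s>s_0$. Equivalently, $\varphi(\tau)\ge K^{-1}\tau^N$ for $0<\tau<\tau_0:=1/s_0$. On the remaining range $[\tau_0,T]$, monotonicity gives $\varphi(\tau)\ge\varphi(\tau_0)>0$, while $\tau^N\le T^N$ there. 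Combining the two ranges, we obtain a single constant $c'>0$ with
\begin{equation*}
\varphi(\tau)\ge c'\tau^N\quad\text{for all }\tau\in(0,T].
\end{equation*}

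\textbf{Conclusion.} Let $x\in A$. If $g(x)=0$, the inequality $|g(x)|^N\le c|f(x)|$ is trivial. Otherwise, put $\tau=|g(x)|\in(0,T]$; then $x\in A_\tau$, so
\begin{equation*}
|f(x)|\ge\varphi(|g(x)|)\ge c'|g(x)|^N.
\end{equation*}
Taking $c=1/c'$ finishes the proof.

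The only nontrivial input is the one-variable growth lemma. If one prefers to avoid quoting it, it follows from the fact that a semialgebraic function of one variable admits a Puiseux expansion $a\tau^{p/q}+\cdots$ near $0$; since $\varphi>0$, the leading coefficient is nonzero, which yields the power lower bound.
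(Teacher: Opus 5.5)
Your proof is correct and complete, given the one-variable growth bound. The paper does not prove this lemma itself; it quotes it from the cited real algebraic geometry text. What you wrote is essentially the standard textbook proof of that result: minimize $|f|$ where $|g|$ is bounded away from zero, then apply the growth estimate at infinity to $1/\varphi(1/s)$, which is the same tool the paper uses in the proof of Theorem \ref{Dichotomythm}. Your use of superlevel sets $\{|g|\ge\tau\}$ is a convenient choice: it makes $\varphi$ monotone and defined on all of $(0,T]$, so extending the power bound from small $\tau$ to the whole range is immediate.
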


Along with the semi-algebraic \L ojasiewicz inequality Lemma \ref{lem: loja}, we will need the following volume estimate for semialgebraic fibers. Write $N_t(E) = \{x \in \R^n : \dist(x,E) < t \}$. 


\begin{lemma}[Volume estimate for semialgebraic fibers]\label{lem: semialg fiber est}
    Let $P$ be a semialgebraic parameter set and let $\Gamma \subset P \times \R^n$ be semialgebraic. Write $\Gamma_p := \{x : (p,x) \in \Gamma\}$. Suppose that all the fibers $\Gamma_p$ are contained in a fixed compact box $Q \subset \R^n$ and that $\dim \Gamma_p \leq n-1$ for all $p \in P$. Then there is a constant $C_\Gamma >0$ such that 
    \begin{align}
        |N_t(\Gamma_p)| \leq C_\Gamma t, \text{ for all $p \in P$, $0 < t \leq 1$}.
    \end{align}
\end{lemma}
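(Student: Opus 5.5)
The plan is to reduce the lemma to Wongkew's theorem \cite{wongkew1993volumes}, already used above. That theorem says that for a nonzero polynomial $F$ on $\R^n$ of degree at most $D$, and a fixed box $Q$, we have $|N_t(Z(F)\cap Q)| \le C(n,D,Q)\, t$ for $0<t\le 1$. The constant depends only on $n$, $D$ and the size of $Q$. So it suffices to show that every fiber $\Gamma_p$ lies in the zero set of a nonzero polynomial $F_p$ in $x$ whose degree is bounded independently of $p$.

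First, fix a semialgebraic description of $\Gamma$. It is a finite Boolean combination of sign conditions $\operatorname{sign} g_i(p,x) = \sigma_i$, $i=1,\dots,m$, with polynomials $g_i$ on $P\times\R^n$ of degree at most $D_0$. Refining, $\Gamma$ is a finite union of sets of the form
\begin{equation*}
\{(p,x) : \operatorname{sign} g_i(p,x) = \sigma_i \text{ for all } i\},\qquad \sigma\in\{-1,0,1\}^m .
\end{equation*}
Hence each fiber $\Gamma_p$ is a union of realizations $C_{p,\sigma}\subset\R^n$ of full sign vectors for the polynomials $g_i(p,\cdot)$.

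Second, fix $p$ and let $I_p$ be the set of indices $i$ for which $g_i(p,\cdot)$ is not the zero polynomial in $x$. Consider a nonempty cell $C_{p,\sigma}\subset\Gamma_p$ in which $\sigma_i\ne 0$ for every $i\in I_p$. For $i\notin I_p$ the condition $g_i(p,x)=0$ holds identically, so it imposes nothing. The cell is therefore defined only by strict inequalities, hence it is open and nonempty. That contradicts $\dim\Gamma_p\le n-1$. So every nonempty cell contained in $\Gamma_p$ has $\sigma_i=0$ for some $i\in I_p$. Consequently
\begin{equation*}
\Gamma_p \subset Z(F_p)\cap Q,\qquad F_p(x):=\prod_{i\in I_p} g_i(p,x),
\end{equation*}
where $F_p\not\equiv 0$ and $\deg F_p\le mD_0$. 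If $I_p$ is empty, the argument shows $\Gamma_p$ is empty and there is nothing to prove.

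Third, apply Wongkew's theorem to $F_p$ with $D = mD_0$ and the fixed box $Q$ (enlarged by $1$ if needed, since $t\le 1$). This gives $|N_t(\Gamma_p)|\le |N_t(Z(F_p)\cap Q)|\le C_\Gamma t$ with $C_\Gamma$ independent of $p$.

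The only step needing care is the second one, the claim that a cell avoiding every non-identically-zero equation is open. It has to be done with the full sign-vector refinement so that each cell is an exact intersection of sign conditions. The uniformity in $p$ then comes for free, because the degree bound $mD_0$ does not depend on $p$.
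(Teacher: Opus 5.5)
Your proof is correct, but it follows a different route from the paper.

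The paper derives the lemma from a general tame-geometry volume bound, Lemma \ref{lem: tame geometry lemma} (i.e.\ \cite[Theorem 5.9]{tameGeometry}). That lemma gives $|N_\eta(A\cap B_r)|\le C_A(r^{\ell}\eta^{n-\ell}+\eta^n)$ for a semialgebraic $A$ of dimension $\ell<n$. The paper applies it to each fiber with $\ell_p=\dim\Gamma_p\le n-1$ and $Q\subset B_R$. Uniformity in $p$ comes from two facts: $C_A$ depends only on the complexity (the ``diagram'') of $A$, and the fibers of $\Gamma$ have uniformly bounded complexity.

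You instead prove a structural statement. Once $\Gamma$ is written as a union of full sign-condition cells for $g_1,\dots,g_m$, any fiber of dimension $<n$ lies in $Z(F_p)$, where $F_p=\prod_{i\in I_p}g_i(p,\cdot)\not\equiv 0$ and $\deg F_p\le mD_0$. You then invoke Wongkew's theorem, which the paper already uses in the proof of Proposition \ref{260622prop10_2}.

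Your route is more elementary, and it makes the uniformity in $p$ transparent: it is just the $p$-independent degree bound. There is no need to track how a tame-geometry constant depends on the fiber's description. Your enlargement of $Q$ by $1$ correctly handles the passage from $N_t(Z(F_p)\cap Q)$ to a neighborhood of $Z(F_p)$ intersected with a fixed box.

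What you give up is the sharper dimension-sensitive bound $t^{\,n-\ell_p}$ that the tame-geometry estimate provides. The lemma only needs the codimension-one bound $O(t)$, so nothing is lost here.
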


Lemma \ref{lem: semialg fiber est} is a straightforward consequence of the following standard result, which we state in the special case of semi-algebraic sets. 

\begin{lemma}[\protect{\cite[Theorem 5.9]{tameGeometry}}]\label{lem: tame geometry lemma}
    Let $A$ be a semi-algebraic set of dimension $\ell < n$. Then for any ball $B_r^n \subset \R^n$ and for any $\eta > 0$, 
    \begin{align}
        |N_\eta(A \cap B_r^n)| \leq C_A (r^\ell \eta^{n-\ell} + \eta^n),
    \end{align}
    where $C_A$ is a constant that depends only on the complexity of $A$ \footnote{In \cite{tameGeometry}, $C_A$ depends on the \emph{diagram} of the semi-algebraic set $A$, but the diagram is easily bounded in terms of the complexity.}
\end{lemma}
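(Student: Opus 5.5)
Since this lemma is quoted from \cite[Theorem 5.9]{tameGeometry}, we only sketch a proof. The plan is to reduce the volume bound to a covering-number bound. It suffices to show that $A\cap B_r^n$ can be covered by at most $C_A\bigl((r/\eta)^\ell+1\bigr)$ balls of radius $C\eta$. Indeed, the $\eta$-neighborhood of $A\cap B_r^n$ is then covered by the concentric balls of radius $(C+1)\eta$. This gives $|N_\eta(A\cap B_r^n)|\lesssim \eta^n\bigl((r/\eta)^\ell+1\bigr)= r^\ell\eta^{n-\ell}+\eta^n$. We may assume $\eta\le r$, since otherwise the trivial bound of one ball already suffices.

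For the covering bound we would use a Lipschitz (``$L$-regular'') cell decomposition of semi-algebraic sets, in the style of Kurdyka and Paw{\l}ucki. It writes $A$ as a finite union of pieces $A_1,\dots,A_m$. Each piece, after an orthogonal change of coordinates chosen from a finite list, is the graph of a map $f_i:D_i\to\R^{n-\ell_i}$ with $D_i\subset\R^{\ell_i}$ and $\ell_i\le\ell$. Each $f_i$ is Lipschitz with constant $L$ depending only on $n$, and $m$ is bounded in terms of the complexity of $A$. For one such piece we argue as follows.
\begin{itemize}
\item The projection of $A_i\cap B_r^n$ to $\R^{\ell_i}$ lies in an $\ell_i$-ball of radius $r$. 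This ball is covered by $\lesssim (r/\eta+1)^{\ell_i}$ cubes of side $\eta$.
\item Over each such cube $Q$, the Lipschitz bound puts the graph of $f_i|_{Q\cap D_i}$ inside a ball of radius $(1+L)\sqrt{n}\,\eta$.
\end{itemize}
Since $\ell_i\le \ell$ and $\eta\le r$, summing over the $m$ pieces gives the covering bound.

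The main obstacle is the decomposition step. It requires two things: that the pieces are genuinely graphs over the base coordinates, which rules out steep pieces, and that the number of pieces is controlled uniformly by the complexity. That uniformity is what makes $C_A$ depend only on the complexity, and ultimately gives the uniformity in the parameter $p$ needed for Lemma~\ref{lem: semialg fiber est}.

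If uniform Lipschitz decomposition proves awkward, we would instead use a Crofton-type count. Fix a grid of $\eta$-cubes. A cube meeting $A\cap B_r^n$ is, after a small random translation and for one of finitely many coordinate directions, met by a generic affine $(n-\ell)$-plane through it. Such a plane intersects the $\ell$-dimensional set $A$ in at most $C_A$ points, by uniform finiteness of zero-dimensional fibers (Milnor--Thom). Averaging over the roughly $(r/\eta)^\ell$ parallel translates of the plane then bounds the number of cubes by $C_A\bigl((r/\eta)^\ell+1\bigr)$. The delicate point in this route is handling the parts of $A$ that are nearly tangent to the chosen planes. We would deal with them by partitioning $A$ according to which coordinate projection is nondegenerate, a partition that is again semi-algebraic of bounded complexity.
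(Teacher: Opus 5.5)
Your main route is sound, but it is not the argument behind the citation. The paper proves nothing here and imports the bound from \cite[Theorem 5.9]{tameGeometry}. That framework gets covering and tube-volume bounds from Vitushkin-type multidimensional variations: an integral-geometric inequality bounds the $\eta$-covering number by $\sum_i V_i\,\eta^{-i}$, where $V_i$ is the average number of connected components of sections by affine $(n-i)$-planes. Milnor--Thom type component bounds then give $V_i(A\cap B_r^n)\le Cr^i$ for $i\le\ell$, and $V_i=0$ for $i>\ell$. You instead put all the geometry into a Lipschitz ($L$-regular) cell decomposition whose constant depends only on $n$. Granted that decomposition, your covering count, the reduction to $\eta\le r$, and the absorption of $(r/\eta+1)^{\ell_i}$ into $2^\ell(r/\eta)^\ell$ are all fine. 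The cost is that uniformity in the complexity, which is exactly what Lemma~\ref{lem: semialg fiber est} needs, now rests entirely on bounding the number of cells. The standard statements (Kurdyka, Kurdyka--Parusi\'nski, Paw{\l}ucki) are for a single definable set. You would therefore need to cite a version for definable families, or run a compactness argument over the finitely many semialgebraic families of bounded complexity. The variation method, by contrast, reads the dependence on complexity directly off explicit component counts.

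Your fallback Crofton count, which is closer to the cited method, does not work as sketched. Take the strip $A=[-r/4,r/4]\times[-\eta^2/2,\eta^2/2]\times\{0\}\subset B_r^3$ with $\eta\le r\le 1$. Here $\ell=2$ and about $r/\eta$ grid cubes meet $A$. Yet for each such cube $Q$, the lines in any fixed direction meeting $A\cap 3Q$ form a set of measure $O(\eta^3)\ll\eta^2$. This happens even though the projection of $A$ to the $(x_1,x_2)$-plane is perfectly nondegenerate, so the problem is not near-tangency. Cubes where $A$ is thinner than $\eta$ (near its frontier) have to be counted by induction on dimension; that is the role of the $V_i$ with $i<\ell$. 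This produces terms $(r/\eta)^i$ with $i<\ell$, not just the $+1$. These terms are harmless for $\eta\le r$, but the step is missing from your sketch.
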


\begin{proof}[Lemma \ref{lem: tame geometry lemma} implies Lemma \ref{lem: semialg fiber est}]
    The fibers $\Gamma_p$ are all semi-algebraic of complexity bounded in terms of $\Gamma$, so we can choose a constant $C$ in Lemma \ref{lem: tame geometry lemma} uniform in $p \in P$. 
    Choose $R$ large enough that $Q \subset B_R$. For a nonempty fiber, put $\ell_p = \dim \Gamma_p$. Applying Lemma \ref{lem: tame geometry lemma} to $\Gamma_p$, we get 
    \begin{align}
        |N_t(\Gamma_p)| &\leq C(R^{\ell_p}t^{n-\ell_p} + t^n) \\
        &\leq C(\max(1,R^{n-1}) + 1)t
    \end{align}
    for $p \in P$ and $0 < t \leq 1$. Taking $C_{\Gamma} = C(\max(1,R^{n-1}) + 1)$ finishes the proof. 
\end{proof}

We now have the tools to prove Lemma \ref{volsliceslem}. We give a brief sketch of the idea first. For fixed $(\rho,a,\eta)$, let $T_{\rho,a,\eta} = \{b : \dist((\rho,a,b),H) < \eta\}$, The hypothesis $\dim H_{\rho,a} < n$ says that $H_{\rho,a}$ contains no $n$-dimensional ball. We use Lemma \ref{lem: loja} to make this quantitative: $T_{\rho,a,\eta}$ cannot contain a ball of radius larger than $\sim\rho^{-1} \eta^{1/N}$. It follows that $T_{\rho,a,\eta}$ lies within a $O(\rho^{-1} \eta^{1/N})$ neighborhood of its boundary. Then we use Lemma \ref{lem: semialg fiber est} to bound the volume of the neighborhood of that boundary. 

\begin{proof}[Proof of Lemma \ref{volsliceslem}]
    Since $H$ is bounded, there are compact balls $A \subset \R^m$ and $B \subset \R^n$ of radii at least $100$ so that $H \subset (0,2) \times (1/10)A \times (1/10)B$. 
    Define $d(\rho,a,b) := \dist((\rho,a,b),H)$. 
    Then $d$ is continuous and semi-algebraic. 
    To see why $d$ is semi-algebraic, notice that the graph of $d$ can be described by a first-order formula expressing that some point of $\overline H$ occurs at distance $d$ and that no point of $\overline H$ is closer; Tarski-Seidenberg then applies.
    For $0 < \rho \leq 1$, $a \in A$, and $0 < \eta \leq 1$, set 
    \begin{align}\label{eq: sets Trhoaeta}
        T_{\rho,a,\eta} := \{b \in \R^n : d(\rho,a,b) < \eta\}.
    \end{align}
    These sets form a semi-algebraic family of open subsets of $\R^n$, and $T_{\rho,a,\eta} \subset B$.

    \medskip

    \noindent \textbf{Step 1: no large balls in $T_{\rho,a,\eta}$}

    We claim that there are $N \geq 1$ and $C_* > 0$ such that 
    \begin{align}
        \overline{B(c,r)} \subset T_{\rho,a,\eta} \implies r \leq C_* \rho^{-1} \eta^{1/N}.\label{eq: no large balls}
    \end{align}
    Here $B(c, r)$ is a ball of radius $r$ centered at $c$. 
    The next definition encodes that an entire ball is close to $H$. Choose $R > \mathrm{diam}(B)$ and, on the compact set $K := [0, 1] \times A \times B \times [0,R]$, define 
    \begin{align}
        \Delta(\rho,a,c,r) := \max_{|u| \leq 1} d(\rho,a,c+ru) = \max_{b' \in \overline{B(c,r)}} d(\rho,a,b').
    \end{align}
    We have in particular, 
    \begin{align}
        \Delta(\rho,a,c,r) = 0 &\iff d(\rho,a,b') = 0 \text{ for every } b' \in \overline{B(c,r)},\\
        \Delta(\rho,a,c,r) < \eta &\iff \overline{B(c,r)} \subset T_{\rho,a,\eta}. \label{eq: Delta < eta inclusion}
    \end{align}
    The maximum exists  by compactness. That $\Delta$ is continuous is a straightforward consequence of the uniform continuity of $(\rho,a,c,r,u) \mapsto d(\rho,a,c+ru)$ over $K \times \{u : |u| \leq 1\}$. The function $\Delta$ is semi-algebraic because its graph is described by the first-order conditions that the claimed value is attained for some $|u| \leq 1$ and is an upper bound for all $|u| \leq 1$ (and Tarski-Seidenberg). 

    We next identify the zero-set of $\Delta$. If $\rho, r>0$ and $\Delta(\rho,a,c,r)=0$, then $d(\rho,a,b') =0$ for every $b' \in \overline{B(c,r)}$. Hence $(\rho,a,b') \in \overline H$ for every such $b'$. Since $0 < \rho < 2$ and $H$ is relatively closed on $(0,2) \times \R^m \times \R^n$, we have $(\rho,a,b') \in H$. Therefore $\overline{B(c,r)} \subset H_{\rho,a}$. This is impossible because $r > 0$ while $\dim H_{\rho,a} < n$. We have proved 
    \begin{align}
        \Delta(\rho,a,c,r) = 0 \implies \rho r = 0 \text{ for $(\rho,a,c,r) \in K$}.
    \end{align}
    We may now apply Lemma \ref{lem: loja} on $K$ with $f = \Delta$ and $g(\rho,a,c,r) = \rho r$. We obtain $N \geq 1$ and $C_L > 0$ such that 
    \begin{align}
        (\rho r)^N \leq C_L \Delta(\rho,a,c,r) \text{ for $(\rho,a,c,r) \in K$}.\label{eq: Loj application}
    \end{align}
    Now suppose $\overline{B(c,r)} \subset T_{\rho,a,\eta}$. Then  $c \in B$ and $r \leq \mathrm{diam}(B) < R$. By \eqref{eq: Delta < eta inclusion}, $\Delta(\rho,a,c,r) < \eta$. Hence \eqref{eq: Loj application} gives 
    \begin{align}
        r < C_{L}^{1/N} \rho^{-1} \eta^{1/N}.
    \end{align}
    This proves \eqref{eq: no large balls} with $C_* := C_L^{1/N}$.

    \medskip

    \noindent \textbf{Step 2: $T_{\rho,a,\eta}$ is close to its boundary}

    Fix $(\rho,a,\eta)$ and abbreviate $T = T_{\rho,a,\eta}$.
    For $b \in T$, let $\delta(b) := \dist(b, \R^n \setminus T)$. Because $T$ is open and bounded, 
    \begin{align}
        \delta(b) = \dist(b,\partial T) > 0. 
    \end{align}
    For every $0 < s < \delta(b)$, $\overline{B(b,s)} \subset T$. Applying the estimate from Step 1 and then letting $s \uparrow \delta(b)$ gives 
    \begin{align}
        \dist(b,\partial T) \leq C_* \rho^{-1} \eta^{1/N}.
    \end{align}
    Thus taking $\tau :=C_* \rho^{-1} \eta^{1/N}$, we have 
    \begin{align}
        T \subset N_{2\tau}(\partial T). \label{eq: near boundary}
    \end{align}

    \noindent \textbf{Step 3: neighborhood of boundary has small volume}
    Define the semi-algebraic set 
    \begin{align}
        \Gamma :=\{(\rho,a,\eta,b) : 0 < \rho \leq 1, a \in A, 0 < \eta \leq 1, b \in \partial T_{\rho,a,\eta}\}.
    \end{align}
    The fibers are $\Gamma_{\rho,a,\eta} = \partial T_{\rho,a,\eta}$. Each fiber has dimension at most $n-1$, since the boundary of an open set contains no interior points. Lemma \ref{lem: semialg fiber est} therefore gives $C_0 > 0$ such that 
    \begin{align}
        |N_t(\partial T_{\rho,a,\eta})| \leq C_0 t \text{ for $0 < \rho \leq 1, a \in A, 0 < \eta \leq 1, 0 < t \leq 1$}. \label{eq: boundary estimate}
    \end{align}
    If $2\tau \leq 1$, then \eqref{eq: near boundary} and \eqref{eq: boundary estimate} give $|T_{\rho,a,\eta}| \leq 2C_0 \tau$. If $2\tau > 1$, then $T_{\rho,a,\eta} \subset B$ gives $|T_{\rho,a,\eta}| \leq |B| \leq 2|B| \tau$. Hence there is $C_1$ such that 
    \begin{align}
        |T_{\rho,a,\eta}| \leq C_1 \rho^{-1} \eta^{1/N} \text{ for $0 < \rho \leq 1, a \in A, 0 < \eta \leq 1$}. \label{eq: T vol estimate}
    \end{align}
    Finally choose $C=1$, $\alpha = 2N$, $\beta = 1$, and put 
    $\mathrm{Bad}_{\rho,a} :=T_{\rho,a,\rho^{2N}}$. By \eqref{eq: T vol estimate}, 
    \begin{align}
        |\mathrm{Bad}_{\rho,a}| \leq C_1 \rho^{-1} (\rho^{2N})^{1/N} = C_1 \rho.
    \end{align}
    Thus we can take $M = C_1$. The distance lower bound \eqref{eq: distance lower bound} holds on the complement of $\mathrm{Bad}_{\rho,a}$. Finally, $(a,b) \mapsto d(\rho,a,b)$ is continuous for fixed $\rho$, so
    \begin{align}
        \bigcup_a\pnorm{
        \{a\}\times \mathrm{Bad}_{\rho,a}
        }  &= \{(a,b) \in A \times B : d(\rho,a,b) < \rho^{2N}\} \\
        &= \{(a,b) \in \R^m \times \R^n : d(\rho,a,b) < \rho^{2N}\} 
    \end{align}
    is relatively open and contains $H_\rho$. 
\end{proof}

\subsection{Proof of Theorem \ref{260615theorem1_30}}

Now we are ready to prove 
Theorem \ref{260615theorem1_30}, by applying Proposition \ref{260622prop10_2} and its small ``perturbation" (see the end of the proof of Proposition \ref{260622prop10_2}). The argument is almost standard, and we will give a sketch by following the framework of Hickman and Rogers \cite{hickman2019improved}, Hickman, Rogers and Zhang \cite{HRZ22} and Guth, Hickman and Iliopoulou \cite{GHI19}, which further generalize the framework of Guth \cite{guth2016restriction}. 
It seems to us that the best way to sketch the proof of Theorem \ref{260615theorem1_30} is to sketch the proof of Corollary \ref{260617corollary1_31}, by following  the numerology in \cite{DGGZ24}. Once this is done, it will be very clear how the proof of Theorem \ref{260615theorem1_30} goes.

Let us only point out the modifications to the numerology in \cite{DGGZ24}.
We will replace $\delta_{\Phi}$ in Proposition \ref{260622prop10_2} by $\delta_{\phi}$, a small positive constant depending on the phase function $\phi$.  
Theorem 4.6 of \cite{DGGZ24} is about the polynomial Wolff axiom at the largest scale, and it is now replaced by Proposition \ref{260622prop10_2}. The difference is that in the setting of \cite[Theorem 4.6]{DGGZ24}, the left hand side of \eqref{260629e10_10} is essentially bounded by $\kappa^{-1}$, which is the best possible scenario one can expect,  while in the current setting we only know the bound $\kappa^{-(2-\delta_{\phi})}$ for some $\delta_{\phi}>0$ depending on $\phi$. \footnote{Note that $\kappa^{-2}$ is the trivial bound.} As a consequence, Corollary 4.8 in \cite{DGGZ24} needs to be replaced by
\begin{equation}\label{260629e10_62}
    \left\|f_{\iota, S_2}^*\right\|_2^2 \lesssim_{\phi} r_2^{-\delta'_{\phi}}\|f\|_{\infty}^2,
\end{equation}
where $\delta'_{\phi}>0$ is another small constant that depends only on $\phi$. Note that \eqref{260629e10_62} with $\delta'_{\phi}=0$ is trivial. 
 It is exactly the gain $r_2^{-\delta'_{\phi}}$ over the trivial bound  that allows us to obtain the gain $\kappa_{\phi}$  in Corollary \ref{260617corollary1_31} over the universal exponent $10/3$. 

Let us be slightly more precise. 
In the previous paragraph, we only talked about the polynomial Wolff axiom at the largest scale. The framework of Hickman and Rogers \cite{hickman2019improved} and Hickman, Rogers and Zhang \cite{HRZ22} requires that we obtain a non-trivial polynomial Wolff axiom at other scales as well. Imagine that we are discussing polynomial Wolff axioms at the scale $1/N_2$, where $N_2\ge 1$ occurred in \cite[page 992]{DGGZ24}. At this new scale, Proposition 4.9 in \cite{DGGZ24} needs to be replaced by 
\begin{equation}\label{260629e10_63}
    \left\|f_{\iota, S_2}^*\right\|_2^2 \lesssim_{\phi} \min \left\{\left(\frac{N}{r_2}\right)^{\frac{1}{\delta'_{\phi}}} r_2^{-\delta'_{\phi}},\left(r_2^{-1 / 2}+\frac{r_2}{N}\right)\right\}\left(r_2\right)^{\delta_0}\|f\|_{\infty}^2,
\end{equation}
where $\delta_0$ in \cite[Proposition 4.9]{DGGZ24} is an extremely small constant that will eventually be sent to zero. On the right hand side of \eqref{260629e10_63}, there are two terms under the minimum. The first term there is analogous to \eqref{260629e10_62}, and  comes from modifying the proof of Proposition \ref{260622prop10_2} (see the end of the proof of Proposition \ref{260622prop10_2}). The second term under the minimum in \eqref{260629e10_63}  has nothing to do with the geometry of $\phi$ and the proof in \cite[Proposition 4.9]{DGGZ24} works for all $\phi$. 

With \eqref{260629e10_62} and \eqref{260629e10_63}, we can now repeat the entire calculation in \cite{DGGZ24}, and obtain that $\kappa_{\phi}$ in Corollary \ref{260617corollary1_31} is given by 
\begin{equation}
    \frac{
    2(\delta'_{\phi})^2
    }{
    9+ 9 \delta'_{\phi}+ 6 (\delta'_{\phi})^2
    }>0.
\end{equation}
This finishes the sketch of the proof of Corollary \ref{260617corollary1_31}.

\appendix

\section{Geometry of paths}\label{250711appendix_a}

We recall some standard material on sprays and their invariants. Our conventions mostly follow \cite{Shen01}, though in our setup we work on an open fiberwise conic subset of $TM \setminus 0$ instead of all of $TM \setminus 0$. Since the definitions and results we use work locally on $TM \setminus 0$, this does not cause any complications. For the more specific material on affine sprays, we also follow \cite[Chapter 1.3]{affDiffGeoKatsumi}

\subsection{Riemann, Weyl, and Douglas curvatures}

 The Riemann tensor $\mathbf R$ on a spray space $(M, \mathcal U, \bG)$ is an element of 
 \begin{equation}
 \Gamma(\mathcal U, \mathrm{End}(\pi^* TM))
 \end{equation}
  (some other books might call this the Jacobi endomorphism). What that means is for each $(x,y) \in \mathcal U$, $R_{(x,y)}$ is a linear map on $T_x M$, and these maps vary smoothly in $(x,y)$. 
This is an intrinsic object which can be realized in local coordinates as follows. To do this, we first define the \emph{Christoffel symbols}
\begin{align}
    \Gamma_{jk}^i = \frac{\partial^2 G^i}{\partial y^j \partial y^k}
\end{align}
on $\mathcal U$. Then 
\begin{align}
    \mathbf{R}_{(x,y)}v = R_k^i(x,y)v^k \frac{\partial}{\partial x^i} |_x,   
\end{align}
where $R_k^i(x,y)$ are the local functions on $\mathcal U$ given by 
\begin{align}
    R_k^i = (\frac{\partial \Gamma_{jl}^i}{\partial x^k} - \frac{\partial \Gamma^i_{jk}}{\partial x^l} + \Gamma_{ks}^i \Gamma_{jl}^s - \Gamma_{jk}^s \Gamma_{ls}^i) y^j y^l.
\end{align}
It satisfies $\mathbf{R}_{(x,y)}(y) =0$. \\

The Weyl tensor 
\begin{equation}
\mathbf{W} \in \Gamma(\mathcal U, \mathrm{End}(\pi^* TM))
\end{equation}
 is another intrinsic object which has a local coordinate realization as follows. Define 
\begin{align}
    A_k^i = R_k^i - \frac{1}{n-1} R_m^m \delta_k^i
\end{align}
and 
\begin{align}
    W_k^i(x,y) = A_k^i - \frac{1}{n+1} \frac{\partial A_k^m}{\partial y^m} y^i.
\end{align}
Then 
\begin{equation}
\mathbf W_{(x,y)}(u) = W_k^i(y) u^k \frac{\partial}{\partial x^i}|_x.
\end{equation}
 We have $\mathbf W_{(x,y)}(y) = 0, \mathrm{trace}(\mathbf W_{(x,y)}) = 0$. 
Define the \emph{local projective spray} associated with $\bG$ by
\begin{equation}
\Pi = y^i \frac{\partial}{\partial x^i} - 2\Pi^i(y) \frac{\partial}{\partial y^i},
\end{equation}
 where
\begin{align}
    \Pi^i = G^i - \frac{1}{n+1} \frac{\partial G^m}{\partial y^m}y^i.
\end{align}
The Douglas curvature 
\begin{equation}
\mathbf D \in \Gamma(\mathcal U, \mathrm{Hom}((\pi^* TM)^{\otimes 3}, \pi^* TM))
\end{equation}
 is given in local coordinates by 
 \begin{equation}
 \mathbf D_{(x,y)}(u,v,w) = D^i_{jkl}u^j v^k w^l \frac{\partial}{\partial x^i}|_x
 \end{equation}
  where 
\begin{align}
    D_{jkl}^i = \frac{\partial^3 \Pi^i}{\partial y^j \partial y^k \partial y^l}.
\end{align}
One has $\mathbf D = 0$ when $\mathbf G$ is projectively related to an affine spray, and we may further assume the affine spray has symmetric Ricci tensor by passing to the associated local projective spray. As we will see below an affine spray is associated to an affine connection, and affine connections with symmetric Ricci tensor are said to be \emph{locally equiaffine} in \cite[Chapter 1.3]{affDiffGeoKatsumi}.
Below we discuss these affine sprays in more detail. 

\subsection{Special case of affine sprays}\label{subsec: appendix affine}

The \emph{Berwald tensor} 
\begin{equation}
\mathbf B \in \Gamma(\mathcal U, \mathrm{Hom}((\pi^* TM)^{\otimes 3}, \pi^* TM))
\end{equation}
 is given in local coordinates by 
 \begin{equation}
 \mathbf B_{(x,y)}(u,v,w) = B_{jkl}^i u^j v^k w^l \frac{\partial}{\partial x^i}|_x
 \end{equation}
  where 
\begin{align}
    B^i_{jkl} = \frac{\partial^3 G^i}{\partial y^j \partial y^k \partial y^l}.
\end{align}
A spray $\bG$ is affine if $\mathbf B = 0$. In this case, the Christoffel symbols 
\begin{equation}
\Gamma_{jk}^i(x,y) = \frac{\partial^2 G^i}{\partial y^j \partial y^k}
\end{equation}
 are independent of $y$ and 
\begin{align}
    G^i(y) = \frac{1}{2} \Gamma^i_{jk}(x) y^j y^k. 
\end{align}
A priori this holds only on $\mathcal U$, but it of course naturally extends the spray to $(M, TM)$. 
These Christoffel symbols then uniquely define a torsion-free affine connection 
\begin{equation}
\nabla : \Gamma(TM) \times \Gamma(TM) \to \Gamma(TM)
\end{equation}
 with the same geodesics as the spray $\bG$. In local coordinates, 
\begin{align}
    \nabla_X Y = X^j (\frac{\partial Y^i}{\partial x^j} + \Gamma_{jk}^i Y^k) \frac{\partial}{\partial x^i}
\end{align}
In local coordinates, the geodesic equation $\nabla_{\dot \gamma} \dot \gamma = 0$ is 
\begin{align}
    \ddot \gamma^i + \Gamma_{jk}^i \dot \gamma^j \dot \gamma^k = 0.
\end{align}
One can now view the Riemann curvature of $\nabla$ as the $(1,3)$-tensor
\begin{align}\label{eq: affine riemann tensor formula}
    R^\nabla(X,Y)Z = ([\nabla_X, \nabla_Y] - \nabla_{[X,Y]})Z. 
\end{align}
If $\mathbf R$ is the Riemann curvature of $\bG$, we have 
\begin{align}
    \mathbf R_{(x,y)}(X) = R_x^\nabla(y,X)y.  
\end{align}
The Riemann curvature $R^\nabla$ satisfies the following properties: 
\begin{itemize}
    \item (Skew-symmetry) $R^\nabla(Y,X) = -R^\nabla(X,Y)$ 
    \item (First Bianchi identity) $R^\nabla(X,Y)Z + R^\nabla(Z,X)Y + R^\nabla(Y,Z)X = 0$
\end{itemize}
Similarly the Weyl tensor of $\nabla$ can be viewed as the $(1,3)$-tensor 
\begin{align}\label{eq: affine weyl tensor formula}
    W^\nabla(X,Y)Z = R^\nabla(X,Y)Z - P(X,Z)Y + P(Y,Z)X, 
\end{align}
where\footnote{This is the definition of Weyl tensor in \cite[Page 17]{affDiffGeoKatsumi}, which assumes $\nabla$ is equiaffine. There is a similar but longer formula for the Weyl tensor when $\nabla$ is affine but not equiaffine, but this is not necessary.}
\begin{equation}
P(X,Y) = \frac{1}{n-1}\mathrm{trace}(Z \mapsto R^\nabla(Z,X)Y).
\end{equation}
That is, $W^\nabla$ is the projective trace-free part of $R^\nabla$. This satisfies skew symmetry and the first Bianchi identity, and in addition the trace-free condition
\begin{equation}
\mathrm{trace}(Z \mapsto W^\nabla(Z,X)Y) = 0.
\end{equation}
 Similarly to the Riemann tensor, we have \begin{equation}
 \mathbf W_{(x,y)}(X) = W_x^\nabla(y,X)y.
 \end{equation}
This finishes our discussion on affine sprays.

	\section{Schr\"odinger equation with potentials}\label{260615append_c}

	Let $x\in\R^n$ and $t\in\R$. We consider the Schr\"odinger equation
	\begin{align}\label{Schro}
		\begin{cases}
			i \partial_t u = -\frac12 \Delta u +V(x)u,   \\
			u(x,0)= u_0(x),  
		\end{cases}
	\end{align}
	with a potential $V(x)$ satisfying the following growth conditions at infinity:
	\begin{align}
            |V(x)|\le C(1+|x|^2),\qquad \\
		|\partial_x^\alpha V(x)|\le C_\alpha,\quad 2\le |\alpha|\le C_n.
	\end{align}
	It is well known that
	\begin{equation}
	L: u\to -\frac12\Delta u+ V(x)u,
	\end{equation}
	 defined on $C_0^\infty(\R^n)$, is essentially self-adjoint in $L^2(\R^n)$, and that the equation has a unique solution given by 
    \begin{align}
        u(x,t)= e^{-itH}u_0(x),     
    \end{align}
    where $H$ is the unique self-adjoint extension of $L$. Fujiwara \cite[(14)]{F80} proved that the distribution kernel $E(x,t;y)$ of $e^{-itH}$ has the following structure, at least for $t\in (0,\delta)$ with a sufficiently small $\delta$. \\
    
    Let $(x(k,s;y),p(k,s;y))$ be the solution to the Hamiltonian system corresponding to \eqref{Schro},
	\begin{align}\label{Newton}
		\begin{cases}
			\dot{x}(t)= p(t),\quad \dot{p}(t)= -\nabla_x V(x(t)), & t\in (0,\delta),  \\
			x(0)= y,\quad p(0)= k.
		\end{cases}
	\end{align}
	The mapping $k\mapsto x(k,t;y)$ is a global diffeomorphism for every $y\in\R^n$ and every $t$. Therefore, for given $x$ and $t$, we can uniquely select $k= k(x,t;y)$ such that the solution of \eqref{Newton} satisfies $x(k,t;y)= x$. With this solution, we define
	\begin{align}
		S(x;t,y)= \int_0^t \bigg[\frac12 \dot{x}(s)^2- V(x(s))\bigg] \text{d}s.
 	\end{align}
	The distribution kernel can then be written in the form
	\begin{align}
		E(x,t;y)= \frac{1}{(2\pi it)^{\frac{n}{2}}} e^{iS(x;t,y)} a(t,x,y).
	\end{align}

    \medskip
	
	Our aim is to determine the potentials $V(x)$ for which the phase function $S(x,t;y)$ satisfies Bourgain's condition. In \cite[Proposition 1.3]{F80}, it was shown that
	\begin{align}
		\nabla_x S(x,t;y)= p(k(x,t;y),t;y),\quad
		\nabla_y S(x,t;y)= -k(x,t;y).
	\end{align}
	Let us analyze the characteristic curves of $S$. Consider
	\begin{align}\label{251211e00}
		\nabla_y S(x,t;y)= -w.
	\end{align}
	This implies $k(x,t;y)=w$. Since $k(x,t;y)$ refers to the $k$ forcing $x(k,t;y)= x$, we have $x(w,t;y)= x$. Substituting this back into \eqref{251211e00} yields
	\begin{align}
		\nabla_y S(x(w,t;y),t;y)= -w.
	\end{align}
    Thus, the characteristic curves are parametrized as $(x(w,t;y),t)$.
	
	\begin{theorem}\label{SchroBour}
        Assume that $n\geq 2$ and the normalization $V(0) = \nabla V(0) = 0$. 
		Let $x(k,t;y)$ solve the equation \eqref{Newton}. Then $x(k,t;y)$ satisfies Bourgain's condition in Definition \ref{BourCond} if and only if 
		\begin{align}
			V(x)= c\, |x|^2,
		\end{align}
        for some constant $c$.
	\end{theorem}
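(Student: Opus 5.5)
The plan is to use the characterization of Bourgain's condition from Theorem \ref{250727theorem1_13}, item \ref{thm1_13item3}. By that item, Bourgain's condition at a point is equivalent to
\begin{equation*}
\partial_t^2 M = c\,\partial_t M,\qquad M:=(\nabla_k x)^{-1}\nabla_y x,
\end{equation*}
where $x=x(k,t;y)$ plays the role of $X(w,t;\xi)$, with $k=w$ and $y=\xi$. We then compute $M$ through Jacobi fields of the Hamiltonian flow \eqref{Newton}.

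\textbf{Step 1 (Jacobi fields).} Fix a trajectory $x(t)=x(k,t;y)$ and put $Q(t):=\nabla^2V(x(t))$, which is symmetric. Set $A:=\nabla_k x$ and $B:=\nabla_y x$. Differentiating \eqref{Newton} shows that both solve $J''=-QJ$, with initial data
\begin{equation*}
A(0)=0,\quad A'(0)=I,\qquad B(0)=I,\quad B'(0)=0.
\end{equation*}
Since $Q$ is symmetric, the symplectic Wronskians are conserved. Hence $A^TB'-A'^TB\equiv -I$ and $A^TA'-A'^TA\equiv 0$. The second identity says that $S:=A'A^{-1}$ is symmetric wherever $A$ is invertible. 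This holds for $t\in(0,\delta)$, because $A=tI+O(t^2)$.

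\textbf{Step 2 (formula for $M'$).} Differentiating gives $M'=A^{-1}(B'-A'A^{-1}B)$. By the two Wronskian identities, $B'-A'A^{-1}B=-A^{-T}$. Therefore
\begin{equation*}
M'=-(A^TA)^{-1}.
\end{equation*}
So $M''=cM'$ holds if and only if $(A^TA)'=-c\,A^TA$. Writing $(A^TA)'=A^T(S+S^T)A=2A^TSA$, this is equivalent to $S=\mu(t)I$ for a scalar $\mu$, that is, $A'=\mu A$. Then
\begin{equation*}
A''=(\mu'+\mu^2)A=-QA,
\end{equation*}
and invertibility of $A$ gives $\nabla^2V(x(t))=-(\mu'+\mu^2)(t)\,I$.

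\textbf{Step 3 (conclusion).} Bourgain's condition is required at every $(k,t;y)$ with $t\in(0,\delta)$. Every point of $\R^n$ lies on some trajectory at such a time, for instance $x(k,t;y)$ with $y$ equal to the target point and $t$ small. Hence $\nabla^2V(x)=h(x)I$ for all $x$. Because $n\ge2$, for $i\neq j$ we compute
\begin{equation*}
\partial_j h=\partial_j\partial_i\partial_iV=\partial_i(\partial_i\partial_jV)=0.
\end{equation*}
So $h$ is constant, and $V$ is a quadratic $c|x|^2$ plus an affine part. The affine part is removed by the normalization $V(0)=\nabla V(0)=0$.

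\textbf{Converse.} If $V=c|x|^2$, then $Q=2cI$ is constant. Consequently $A=f(t)I$ with $f''=-2cf$, so $A'=(f'/f)A$, and the computation in Step 2 run backwards gives $M''=cM'$ at every point.

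The step I expect to be most delicate is Step 2: getting the Wronskian identities and the symmetry of $A'A^{-1}$ right, since the scalar-multiple condition must be turned into the statement $S=\mu I$. A secondary concern is the convention in Theorem \ref{250727theorem1_13}. Bourgain's condition there is stated for normal forms, so I would check that item \ref{thm1_13item3} applies pointwise without renormalizing. Points with $t$ near $0$ are harmless, since they are handled by continuity.
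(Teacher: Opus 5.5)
Your argument is correct, and its skeleton matches the paper's. You reduce Bourgain's condition to $\partial_t\nabla_k x=\mu\,\nabla_k x$ (the paper's \eqref{251210eC-19}), differentiate once more, and compare with the Jacobi equation $\partial_t^2\nabla_k x=-\nabla^2V\,\nabla_k x$.

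The two routes differ in how that reduction is obtained. The paper uses the rank formulation \eqref{25100e1_35ddd} rather than \eqref{260615e1_37}. Adding $\nabla^2V$ times the first block row to the third kills $\partial_t^2\nabla_k x$ and $\partial_t^2\nabla_y x$. The upper-left $2n\times 2n$ block is non-degenerate by H\"ormander's condition, which the paper checks via small-$t$ asymptotics. So rank $2n$ forces $c\,\nabla_k x+2\,\partial_t\nabla_k x=0$. That route is shorter and never uses the symmetry of $\nabla^2V$; it only needs $\nabla_k x$ and $\nabla_y x$ to solve the same linear equation.

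You instead use the symplectic structure (conserved Wronskians) to get the closed identity $\partial_t M=-(A^TA)^{-1}$, and this gives a little more. Since $\det\begin{bmatrix}A&B\\A'&B'\end{bmatrix}=\det A\,\det(A\,\partial_t M)$, your identity yields H\"ormander's condition (2) wherever $A$ is invertible, with no asymptotic expansion. Your Step~3 also spells out what the paper leaves implicit: where $n\ge 2$ enters (the third-derivative argument making $h$ constant) and how the normalization removes the affine part.

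One wording fix in Step~3. The trajectory $x(k,t;y)$ with $y$ equal to the target point does not pass through that point for $t>0$. Instead use that $k\mapsto x(k,t;y)$ is onto, or let $t\to0$ and use continuity of $\nabla^2V$.
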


    \begin{proof}
        Integrating the differential equation \eqref{Newton} with respect to $t$, we obtain an integral equation:
	\begin{align}\label{NewtonInt}
		\begin{cases}
			x(t)= y+ \int_0^t p(s)\,\text{d}s, \\
			p(t)= k- \int_0^t \nabla_x V(x(s)) \,\text{d}s.
		\end{cases}
	\end{align}
	   To check whether $x(k,t;y)$ satisfies Bourgain's condition,  we need to examine the rank of the matrix:
	\begin{align}
		\begin{bmatrix}
			\nabla_k x & \nabla_y x  & 0  \\
			\partial_t\nabla_k x & \partial_t\nabla_y x & \nabla_k x  \\
			\partial_t^2\nabla_k x & \partial_t^2\nabla_y x  
			& c \nabla_k x+ 2\ \partial_t\nabla_k x 
		\end{bmatrix}.
	\end{align}
	Note that $\partial_t^2 x(k,t;y)= -\nabla_x V(x(k,t;y))$. Taking $\nabla_k$ and $\nabla_y$ implies 
	\begin{align}
		\partial^2_t\nabla_k x(k,t;y)= -\nabla^2_x V(x(k,t;y))\nabla_k x(k,t;y), \label{251210eC-14}\\	
		\partial^2_t\nabla_y x(k,t;y)= -\nabla^2_x V(x(k,t;y))\nabla_y x(k,t;y),
	\end{align}
	respectively. After row transformations, the matrix becomes
	\begin{align}
		\begin{bmatrix}
			\nabla_k x & \nabla_y x  & 0  \\
			\partial_t\nabla_k x & \partial_t\nabla_y x & \nabla_k x  \\
			0 & 0  
			& c \nabla_k x+ 2\ \partial_t\nabla_k x 
		\end{bmatrix}.
	\end{align}
	From \eqref{NewtonInt}, a direct calculation shows the following asymptotic expansion for small $t$:
	\begin{align}
		\nabla_k x=O(t),\ \nabla_y x= I_n+O(t),\ \partial_t\nabla_k x= I_n+ O(t),\ \partial_t\nabla_y x= O(t).
	\end{align}
	Thus, for small $t$, $x(k,t;y)$ satisfies H\"ormander's condition \eqref{le1.12}, i.\,e., 
	\begin{align}
		\begin{bmatrix}
			\nabla_k x & \nabla_y x   \\
			\partial_t\nabla_k x & \partial_t\nabla_y x
		\end{bmatrix}
	\end{align}
	is non-degenerate. It then follows from Item \ref{thm1_13item3} in Theorem \ref{250727theorem1_13} that $x$ satisfies Bourgain's condition if and only if there exists a scalar
	$c=c(k,t;y)$ such that
	\begin{align}\label{251210eC-19}
		\partial_t\nabla_k x = c\nabla_k x.
	\end{align}
    If the potential $V$ has the form $c|x|^2$, then $x(k,t;y)$ can be explicitly solved, and a direct calculation confirms that \eqref{251210eC-19} is satisfied. Conversely, taking $\partial_t$ on both sides of \eqref{251210eC-19} yields
	\begin{align}
		\partial_t^2 \nabla_k x= (\partial_t c+c^2)  \nabla_k x.
	\end{align}
	Comparing this result with \eqref{251210eC-14}, we conclude that $\nabla^2_x V(x(k,t;y))$ is a multiple of the identity matrix $I_n$. Since $k$, $y$ and $t$  can be chosen arbitrarily,  we see that $V$ must be of form $c\, |x|^2$. This finishes the proof.
    
    \end{proof}

	\section{An appendix by Terence Tao: Theorem \ref{B.C.Tao}}\label{Tao's note}

\subsection{Preliminaries in differential geometry}

All manifolds in this section are assumed to be smooth, real, finite-dimensional, and without boundary.  All functions, forms, vector fields, etc., will also be assumed to be smooth.  If $M$ is a smooth manifold, $C^\infty(M)$ will denote the smooth scalar functions $f: M \to \R$ on $M$, which is a commutative real algebra; $TM$ and $T^*M$ denote the tangent and cotangent bundles, and $\Gamma(TM)$ denotes the space of vector fields of $M$.  Algebraically, one can view $\Gamma(TM)$ as the space of \emph{derivations} $X \colon C^\infty(M) \to C^\infty(M)$, that is to say linear maps that obey the Leibniz rule
$$ X(fg) = f X(g) + g X(f)$$
for all $f,g \in C^\infty(M)$.  In particular $X1=0$ and hence $Xc=0$ for any constant $c \in \R$.  The space $\Gamma(TM)$ is both a $C^\infty(M)$-module and a Lie algebra with the usual commutator bracket $[X,Y] := XY - YX$.\\

We will be working on manifolds in a local fashion.  What this means in practice is that every smooth manifold $M$ that we work with implicitly comes with an origin $0_M$, and we reserve the right to restrict the manifold $M$ to a sufficiently small open neighbourhood of $0_M$ whenever we wish; we refer to this operation as ``working locally''.  Thus, for instance, working locally, every $d$-dimensional $M$ can be identified with an open simply connected subset of $\R^d$, and in particular can be given a set of coordinate functions $x^1,\dots,x^d \in C^\infty(M)$ and partial derivatives $\partial_{x^1},\dots,\partial_{x^d} \in \Gamma(TM)$ obeying the relations
$$ [\partial_{x^i}, \partial_{x^j}] = 0; \quad \partial_{x^i} x^j = \delta_i^j$$
for $i,j=1,\dots,d$, where $\delta$ denotes the Kronecker delta.  It is a standard fact that (working locally) $\Gamma(TM)$ is a free module with $\partial_{x_1},\dots,\partial_{x_d}$ as a basis, or in other words every $X \in \Gamma(TM)$ can be uniquely represented in the form
$$ X = X^i \partial_{x^i}$$
for some $X^i \in C^\infty(M)$ for $i=1,\dots,d$, where we adopt the usual summation conventions.  Indeed one can take $X^i = X(x^i)$ for $i=1,\dots,d$.  At any point $x_0 \in M$ with coordinates $x^i_0$, and any $f \in C^\infty(M)$, one has the Taylor expansion
$$ f(x) = f(x_0) + (x^i - x^i_0) \partial_{x^i} f(x_0) + (x^i - x^i_0) (x^j - x^j_0) f_{ij}(x)$$
for some $f_{ij} \in C^\infty(M)$, so from several applications of the Leibniz rule one concludes that
$$ Xf(x_0) = X^i(x_0) \partial_{x^i} f(x_0)$$
as claimed. In coordinates, the Lie bracket $[X,Y]$ of two vector fields $X = X^i \partial_{x^i}$ and $Y = Y^i \partial_{x^i}$ can be computed as
$$ [X,Y] = (X^j \partial_{x^j} Y^i - Y^j \partial_{x^j} X^i) \partial_{x^i}.$$

\smallskip

A \emph{fiber bundle} is a pair of manifolds $E, M$ of dimensions $d+m$, $d$ respectively for some $d,m \geq 0$, linked by a smooth surjective map $\pi: E \to M$ which is locally trivial.  What this means is that, working locally, one can view $M$ as an open subset of $\R^d$, and $E$ as an open subset of $\R^{d+m}$, such that $\pi$ is the coordinate map $\pi(x,y) := x$; we refer to such a coordinatization as a \emph{(local) trivialization} of the bundle.
By abuse of notation, we will sometimes refer to the projection map $\pi: E \to M$ as the bundle, with $M$ referred to as the \emph{base space} and $E$ the \emph{total space}; the $m$-dimensional spaces $\pi^{-1}(\{x\})$ are known as \emph{fibers}.  From the inverse function theorem, we see that a smooth map $\pi: E \to M$ locally gives rise to a fiber bundle if and only if it is a submersion, that is to say its derivative (which in coordinates is an $d \times (d+m)$ matrix) is of full rank $d$. A \emph{bundle map} $T: E \to E'$ between two fiber bundles
$\pi: E \to M$, $\pi': E' \to M$ over the same base $M$ is a smooth map such that $\pi' \circ T = \pi$, that is to say it maps fibers to fibers.\\

A fiber bundle induces an embedding $C^\infty(M) \hookleftarrow C^\infty(E)$ that maps $f$ to $f \circ \pi$; by abuse of notation we will view this embedding as an inclusion, thus we will view functions in $C^\infty(M)$ as also being functions on $C^\infty(E)$.  A vector field $Z \in \Gamma(TE)$ is said to be \emph{vertical} if it annihilates $C^\infty(M)$; the space of such vector fields is a $C^\infty(M)$-submodule of $\Gamma(TE)$ and will be denoted $\Gamma_v(TE)$.  If we introduce local coordinates $x^i$, $i=1,\dots,d$ on $M$ and $x^i, y^\alpha$, $i=1,\dots,d$, $\alpha=1,\dots,m$ on $E$ corresponding to a local trivialization, then an arbitrary vector field in $TE$ takes the form
$$ Z = X^i \partial_{x^i} + Y^\alpha \partial_{y^\alpha}$$
for some $X^i, Y^\alpha \in C^\infty(E)$ (where Roman indices range over $1,\dots,d$ and Greek indices range over $1,\dots,m$), and $Z$ is vertical if and only if the $X_i$ all vanish.\\

An \emph{Ehresmann connection}, or \emph{connection} for short, on this fiber bundle is a $C^\infty(M)$-linear map $\nabla \colon \Gamma(TM) \to \Gamma(TE)$ that maps each vector field $X$ on $M$ to a vector field $\nabla_X$ on $E$ which obeys the compatibility condition
$$ \nabla_X f = X f $$
for all $f \in C^\infty(M)$.  In local coordinates, the connection takes the form
$$ \nabla_X = X^i \partial_{x^i} + X^j \Gamma^\alpha_j \partial_{y^\alpha}$$
for some \emph{Christoffel symbols} $\Gamma^\alpha_j \in C^\infty(TE)$, which are given by the formula
$$ \Gamma^\alpha_j = \nabla_{\partial_{x^j}} y^\alpha.$$
If $\nabla,\nabla'$ are two connections, then $\nabla'-\nabla$ is a $C^\infty(M)$-linear map from $\Gamma(TM)$ to $\Gamma_v(TE)$.\\

A connection $\nabla$ is \emph{flat} if we have the identity
\begin{equation}\label{flat}
    [\nabla_X, \nabla_Y] = \nabla_{[X,Y]}
\end{equation}
for all $X,Y \in \Gamma (TM)$.  For instance, every local trivialization gives rise to a flat connection $\nabla^0_X$ given by setting all the Christoffel symbols $\Gamma^\alpha_j$ to zero:
$$ \nabla^0_X = X^i \partial_{x^i}.$$
Conversely, every flat connection (locally) arises from a local trivialization in this fashion; this is a well-known theorem of Frobenius (the flat connection condition is equivalent to the vector fields $\nabla_X$ being integrable).  In terms of Christoffel symbols, the flatness condition can be expressed as
$$ \partial_{x^i} \Gamma^\alpha_j - \partial_{x^j} \Gamma^\alpha_i + \Gamma^\beta_i \partial_{y^\beta} \Gamma^\alpha_j - \Gamma^\beta_j \partial_{y^\beta} \Gamma^\alpha_i = 0.$$

\smallskip

Two important bundles over a manifold $M$ are the \emph{tangent bundle} $TM$ and the \emph{cotangent bundle}.  In local coordinates $x^i$ for $M$, a point in the tangent bundle $TM$ takes the form
$$ ( (x^i)_{i=1}^d, v^i \partial_{x^i} ) $$
for some real numbers $x^i, v^i$, and a point in the cotangent bundle $T^* M$ takes the form
$$ ( (x^i)_{i=1}^d, p_i dx^i ) $$
for some real numbers $x^i, p_i$.  These bundles project to $M$ in the obvious fashion. A vector field $X = X^i \partial_{x^i}$ can be viewed as a map $X : M \to TM$ from a point $x$ to a tangent vector $X(x) = X^i(x) \partial_{x^i}$; this is a bundle map over $M$, viewing $M$ as a trivial bundle over itself.  The vector field $X$ can also be viewed as an element $l_X$ of $C^\infty(T^* M)$ by the formula
$$ l_X( (x, p_i dx^i) ) = X^i(x) p_i.$$
Note that the map $l: X \mapsto l_X$ is linear over $C^\infty(M)$.\\

Every scalar function $f \in C^\infty(M)$ gives rise to a \emph{differential} $df: M \to T^* M$, defined in local coordinates by
$$ df(x) = (x, \partial_{x^i} f(x) dx^i),$$
or via duality as
\begin{equation}\label{lxdf}
    l_X(df) = Xf
\end{equation}
for all $X \in \Gamma(TM)$. This is a bundle map over $M$.

\subsection{Bourgain's condition}

Now let $M$ and $\Sigma$ be $d$ and $d-1$-dimensional manifolds for some $d \geq 3$, and let $\phi \in C^\infty(M \times \Sigma)$ be a ``phase function'' $(x,\xi) \mapsto \phi(x,\xi)$.  For each $x \in M$, we can view $\xi \mapsto \phi(x,\xi)$ as an element of $C^\infty(\Sigma)$; the differential $\xi \mapsto d_\xi \phi(x,\xi)$ of this map is then a bundle map from $\Sigma$ to $T^* \Sigma$ over $\Sigma$.  We can thus view $d_\xi \phi$ as a bundle map (over $\Sigma$) from $M \times \Sigma$ to $T^* \Sigma$.  In local coordinates,
$$ d_\xi \phi(x,\xi) = (\xi, \partial_{\xi^i} \phi(x,\xi) d\xi^i)$$
(where now Roman indices range in $1,\dots,d-1$).
We say that $\phi$ obeys axiom (H1) if $d_\xi \phi: M \times \Sigma \to T^* \Sigma$ is a submersion near the origin.  In coordinates, this amounts to requiring that the $d \times (d-1)$ matrix $\partial_{x^\alpha} \partial_{\xi^i} \phi(x,\xi)$ has full rank $d-1$ at the origin (and hence in a sufficiently small neighbourhood).  We now assume (H1) henceforth; thus $M \times \Sigma$ is now locally a fiber bundle over $T^* \Sigma$, and thus can be locally trivialized as $T^*\Sigma \times \R$.  In particular, we now have an identification of $M \times \Sigma$ with $T^* \Sigma \times \R$ as fiber bundles over $\Sigma$.  The trivial connection $\nabla^0$ of $M \times \Sigma$ (viewed as a $\Sigma$-bundle) then gives rise to a flat connection $\nabla$ of $T^* \Sigma \times \R$ (also viewed as a $\Sigma$-bundle).  The function $\phi$ can now also be viewed as an element of $C^\infty(T^* \Sigma \times \R)$.  Meanwhile, every vector field $X$ in $\Gamma(T\Sigma)$ gives rise to a function $l_X$ in $C^\infty(T^*\Sigma)$, which by abuse of notation we can also view as an element of $C^\infty(T^* \Sigma \times \R)$.  From \eqref{lxdf} and unfolding all the definitions we see that
\begin{equation}\label{nx}
    \nabla_X \phi = l_X
\end{equation}
for all $X \in \Gamma(T\Sigma)$.  From the flatness \eqref{flat} of $\nabla$, we then have the \emph{torsion-free condition}
\begin{equation}\label{lxy}
     \nabla_X l_Y - \nabla_Y l_X = l_{[X,Y]}
\end{equation}
for all $X,Y \in \Gamma(T\Sigma)$.  In local coordinates, this is just Clairaut's theorem
$$ \partial_{\xi^i} \partial_{\xi^j} \phi = \partial_{\xi^j} \partial_{\xi^i} \phi.$$

\smallskip

Conversely, suppose one has a flat connection $\nabla$ on $T^* \Sigma \times \R$ (viewed as a $\Sigma$-bundle) obeying \eqref{lxy} for all $X,Y \in \Gamma(T\Sigma)$.  Then we claim that locally there is a $d$-dimensional manifold $M$ and a function $\phi: C^\infty(M \times \Sigma)$ that generates $\nabla$ by the above procedure.  Indeed, since $\nabla$ is flat, it can be locally identified with the trivial connection $\nabla^0$ in a trivial bundle $\R^d \times \Sigma$.  If we write the identification of $\R^d \times \Sigma$ with $T^* \Sigma \times \R$ in local coordinates as
$$(x,\xi) \mapsto ((\xi, p_i(x,\xi) d\xi^i), c(x,\xi))$$
then by the chain rule the connection $\nabla$ is given by
\begin{align*}
    \nabla_X F((\xi, p_i(x,\xi) d\xi^i), c(x,\xi)) &= X^j(\xi) \partial_{\xi^j} F((\xi, p_i(x,\xi) d\xi^i), c(x,\xi)) \\
&\quad + (X^j(\xi) \partial_{\xi^j} p_i(x,\xi)) \partial_{p_i} F((\xi, p_i(x,\xi) d\xi^i), c(x,\xi)) \\
&\quad + (X^j(\xi) \partial_{\xi^j} c(x,\xi)) \partial_c F((\xi, p_i(x,\xi) d\xi^i), c(x,\xi))
\end{align*}
for any $F \in C^\infty(T^* \Sigma \times \R)$.
The condition \eqref{lxy} then becomes the curl-free condition
$$ \partial_{\xi^j} p_i(x,\xi) = \partial_{\xi^i} p_j(x,\xi).$$
Since we can locally work in a simply connected domain, we can integrate this condition and write
$$ p_i(x,\xi) = \partial_{\xi^i} \phi(x,\xi)$$
for some $\phi \in C^\infty(M \times \Sigma)$.  We thus see that the projection of $\R^d \times \Sigma \equiv T^* \Sigma \times \R$ to $T^*\Sigma$ is given by $d_\xi \phi$, which thus has full rank $d-1$, so that (H1) holds.   The identification $\R^d \times \Sigma \equiv T^* \Sigma \times \R$ is now a trivialization of the bundle $d_\xi \phi: M \times \Sigma \to T^* \Sigma$, and the claim follows.\\

In view of this equivalence, we now expect many conditions on the phase $\phi$ to be expressible in terms of conditions on flat connections $\nabla$ on $T^* \Sigma \times \R$ that obey the torsion-free condition \eqref{lxy}; in particular, we expect that the manifold $M$ no longer plays an explicit role. Note though that we have the freedom to change the trivialization of $T^* \Sigma \times \R$ (as a bundle over $T^*\Sigma$).  In coordinates $((\xi,p),c)$, this amounts to making a change of variables
$$ ((\xi,p),c) \mapsto ((\xi,p),c'((\xi,p),c))$$
where $c \mapsto c'((\xi,p),c)$ is a local diffeomorphism for $(\xi,p) \in T^*\Sigma$ sufficiently close to the origin.\\

Once one fixes a trivialization $T^*\Sigma \times \R$ over $T^* \Sigma$, we get a vertical vector field $\partial_c$ corresponding to the coordinate function $c: ((\xi,p),c) \mapsto c$.  For any $X \in \Gamma(T\Sigma)$, the function $l_X$ does not depend on $c$, hence
$$ \partial_c l_X = 0$$
and hence by \eqref{nx}
$$ \partial_c \nabla_X \phi = 0.$$
As a consequence of this, \eqref{flat}, and the Leibniz rule, the expression
$$\partial_c \nabla_X \nabla_Y \phi$$
is a symmetric $C^\infty(\Sigma)$-bilinear form in $X,Y$.  We say that condition (H2) holds if this form is non-degenerate, that is to say for every $((\xi,p),c) \in T^*\Sigma \times \R$ near the origin, and any $X \in \Gamma(T\Sigma)$ that does not vanish at $\xi$, there exists $Y \in \Gamma(T\Sigma)$ such that $\partial_c \nabla_X \nabla_Y \phi$ does not vanish at $((\xi,p),c)$.  In local coordinates, this is equivalent to the $d-1 \times d-1$ matrix
$$\partial_{\xi_i} \partial_{\xi_j} (G_0(x,\xi)^\alpha \partial_{x^\alpha} \phi(x,\xi))$$
having non-zero determinant, where $G_0(x,\xi)$ is the wedge product of the vectors $\partial_{\xi_i}\nabla_x \phi(x,\xi)$ for $i=1,\dots,d-1$.  Thanks to \eqref{nx}, we can express the condition (H2) purely in terms of the connection $\nabla$ as the requirement that the bilinear form
$$ X, Y \mapsto \partial_c \nabla_X l_Y = \partial_c \nabla_Y l_X$$
is non-degenerate.\\

We now say that the \emph{Bourgain condition} holds if the matrix
$$\partial_{\xi_i} \partial_{\xi_j} (G_0(x,\xi)^\alpha \partial_{x^\alpha})^2 \phi(x,\xi)$$
is a scalar multiple of
$$\partial_{\xi_i} \partial_{\xi_j} (G_0(x,\xi)^\alpha \partial_{x^\alpha}) \phi(x,\xi)$$
for all $x,\xi$ close to the origin.  By the above discussion, this is (locally) equivalent to having a relation of the form
$$ \partial_c^2 \nabla_X l_Y = a \partial_c \nabla_X l_Y$$
for all $X,Y \in \Gamma(T\Sigma)$ and some $a \in C^\infty(T^*\Sigma \times \R)$.  By the fundamental theorem of calculus, we may write $a = \partial_c u$ for some $u \in C^\infty(T^*\Sigma \times \R)$, and then we can write the Bourgain condition in integrating factor form as
$$ \partial_c (e^{-u} \partial_c \nabla_X l_Y) = 0,$$
thus $e^{-u} \partial_c \nabla_X l_Y$ is independent of the $c$ variable.  By integrating $e^{-u}$, we may reparameterize the trivialization $T^*\Sigma \times \R$ by replacing $c$ with a reparameterized coordinate $c'$ for which $\partial_{c'} = e^{-u} \partial_c$.  Thus, without loss of generality, we may assume $u=0$, and normalize the Bourgain condition as
$$ \partial_c^2 \nabla_X l_Y = 0.$$
In particular, the form $\partial_c \nabla_X l_Y$ is independent of $c$, and can thus be written as $B(X,Y)$ for some symmetric non-degenerate $C^\infty(\Sigma)$-bilinear form $B: \Gamma(T\Sigma) \times \Gamma(T\Sigma) \to C^\infty(T^*\Sigma)$:
\begin{equation}\label{bxy}
B(X,Y) = \partial_c \nabla_X l_Y = \partial_c \nabla_Y l_X.
\end{equation}
With this normalization, we still have the freedom to make \emph{affine} reparameterizations of the trivialization $T^*\Sigma \times \R$ by replacing $c$ with $ac + b$ for some $a,b \in C^\infty(T^*\Sigma)$ with $a$ non-vanishing, which has the effect of replacing $B$ by $a^{-1} B$.\\

We write the flat connection $\nabla$ in coordinates $((\xi,p),c)$ as
$$ \nabla_X = X^i(\xi) \partial_{\xi^i} + X^i(\xi) A_{ij}((\xi,p),c) \partial^{p_j} + X^i(\xi) \omega_i((\xi,p),c) \partial_c$$
for some $A_{ij}, \omega_i \in C^\infty(T^*\Sigma \times \R)$, which can be determined by the formulae
$$ X^i A_{ij} = \nabla_X p_j$$
and
$$ X^i \omega_i = \nabla_X c.$$
Then
$$ \partial_c \nabla_X l_Y((\xi,p),c) = X^i(\xi) Y^j(\xi) \partial_c A_{ij}((\xi,p),c).$$
Meanwhile, $B$ can be expressed in coordinates as
$$ B(X,Y)(\xi,p) = X^i(\xi) Y^j(\xi) B_{ij}(\xi,p)$$
where the $B_{ij} \in C^\infty(T^*\Sigma)$ form a symmetric non-degenerate matrix.  From \eqref{bxy} we thus have
$$ \partial_c A_{ij}((\xi,p),c) =B_{ij}(\xi,p)$$
and thus we may write
$$ A_{ij}((\xi,p),c) = A_{ij}(\xi,p) + c B_{ij}(\xi,p)$$
for some $A_{ij} \in C^\infty(T^*\Sigma)$.  The torsion-free condition \eqref{lxy} then becomes a symmetry condition on $A$:
$$ A_{ij} = A_{ji}.$$
If we now introduce the reduced connection $\nabla'$ on $T^* \Sigma$ by the formula
$$ \nabla'_X = X^i(\xi) \partial_{\xi^i} + X^i(\xi) A_{ij}(\xi,p) \partial^{p_j}$$
(which by abuse of notation we identify with a connection on $T^* \Sigma \times \R$ in the obvious fashion), then we have the torsion-free condition
\begin{equation}\label{torsionfree}
\nabla'_X l_Y = \nabla'_Y l_X
\end{equation}
for all $X,Y \in \Gamma(T\Sigma)$, as well as the relation
\begin{equation}\label{nablax}
    \nabla_X = \nabla'_X + c E_X + \omega(X) \partial_c
\end{equation}
where $E_X \in \Gamma_v(T(T^* \Sigma))$ denotes the vertical vector field
$$ E_X := X^i B_{ij} \partial^{p_j}$$
and $\omega(X)$ denotes the scalar
$$ \omega(X) := X^i \omega_i.$$
Both of these expressions depend in a $C^\infty(\Sigma)$-linear fashion on $X$. Also from the symmetry of $B$ we have
\begin{equation}\label{E-torsionfree}
E_X l_Y = E_Y l_X.
\end{equation}

\smallskip

We assume $X=\partial_{\xi_k}$ and $Y=\partial_{\xi_i}$ hereafter and then we can use the linearity for general $X$ and $Y$. The flatness condition \eqref{flat} now expands to 
\begin{align*}
    &[\nabla'_X,\nabla'_Y] - \nabla'_{[X,Y]} + c ([\nabla'_X,E_Y] - [\nabla'_Y, E_X]) + c^2 [E_X,E_Y] \\
    &\quad + (\omega(X) E_Y - \omega(Y) E_X) + (\nabla_X \omega(Y) - \nabla_Y \omega(X)) \partial_c = 0.
\end{align*}
Extracting the $\partial_c$ coefficient we thus see that
\begin{equation}\label{nabla-torsion}
    \nabla_X \omega(Y) = \nabla_Y \omega(X)
\end{equation}
and
\begin{multline}\label{nab-com}
    [\nabla'_X,\nabla'_Y] - \nabla'_{[X,Y]} + c ([\nabla'_X,E_Y] - [\nabla'_Y, E_X])\\
    + c^2 [E_X,E_Y] + (\omega(X) E_Y - \omega(Y) E_X) = 0.
\end{multline}
The vector fields $[\nabla'_X,\nabla'_Y] - \nabla'_{[X,Y]}$ and $[\nabla'_X,E_Y] - [\nabla'_Y, E_X]$ are $c$-invariant.  Taking two Lie derivatives in $\partial_c$ (or equivalently, two Lie brackets with $\partial_c$) we conclude that
\begin{equation}\label{2xy}
 2 [E_X,E_Y] + {\mathcal L}_{\partial_c}^2 (\omega(X) E_Y - \omega(Y) E_X) = 0.
\end{equation}

\smallskip

From \eqref{nab-com} we see that the expression $\omega(X) E_Y - \omega(Y) E_X$ is quadratic in $c$ for any fixed $X,Y$ (and also fixing the element $(\xi,p)$ of $T^* \Sigma$).  If $d \geq 3$, then for any $X$ we can always choose $E_Y$ to be independent of $E_X$ at a given point $(\xi,p)$, since the non-degeneracy of $B$ means that the $E_Y$ span the vertical tangent space.  Thus we see that each $\omega(X)$ is separately quadratic in $c$.  That is to say, we have an expansion of the form
$$ \omega(X) = \omega_0(X) + c \omega_1(X) + c^2 \omega_2(X)$$
for some $\omega_0(X), \omega_1(X), \omega_2(X) \in C^\infty(T^* \Sigma)$.
We can eliminate $\omega_2$ and then $\omega_1$ as follows.  From \eqref{nabla-torsion}, \eqref{nablax} we have
\begin{equation}\label{naba-2}
\begin{split}
    &    (\nabla'_X + c E_X + (\omega_0(X) + c \omega_1(X) + c^2 \omega_2(X)) \partial_c) (\omega_0(Y) + c \omega_1(Y) + c^2 \omega_2(Y))\\
&\quad     =
(\nabla'_Y + c E_Y + (\omega_0(Y) + c \omega_1(Y) + c^2 \omega(Y)) \partial_c) (\omega_0(X) + c \omega_1(X) + c^2 \omega_2(X));
\end{split}
\end{equation}
extracting the $c^3$ coefficient and canceling terms we conclude that
$$ E_X \omega_2(Y) = E_Y \omega_2(X).$$
Similarly, from extracting the $c^2$ coefficient from \eqref{nab-com} we have
\begin{equation}\label{xye-2}
    [E_X,E_Y] = \omega_2(X) E_Y - \omega_2(Y) E_X.
\end{equation}
If we write in coordinates
$$ \omega_2(X) = X^i B_{ij}  \omega_2^j$$
for some $\omega_2^j \in C^\infty(T^* \Sigma)$, we thus have
$$ B_{kl} \partial^{p_l} (B_{ij}  \omega_2^j) = B_{il} \partial^{p_l} (B_{kj}  \omega_2^j)$$
while from \eqref{xye-2} we have
$$ B_{kl} \partial^{p_l} B_{ij} - B_{il} \partial^{p_l} B_{kj} = B_{km} \omega_2^m B_{ij} - B_{im} \omega_2^m B_{mj} $$
so from the Leibniz rule and some canceling and relabeling
$$ B_{kl} B_{ij}  (\partial^{p_l} \omega_2^j - \partial^{p_j} \omega_2^l)$$
and thus by the nondegeneracy of $B$
$$ \partial^{p_l}  \omega_2^j = \partial^{p_j}  \omega_2^l$$
and hence we may locally integrate
$$ \omega_2^i = \partial^{p_i} u$$
for some $u \in C^\infty(T^* \Sigma)$.
If we change variables $c \mapsto e^u c$, then $B$ maps to $e^{-u} B$ and hence $E_X$ maps to $e^{-u} E_X$, and from the product rule we then see that \eqref{xye-2} simplifies to
\begin{equation}\label{exey}
    [E_X,E_Y] = 0
\end{equation}
and $\omega_2$ now vanishes. Thus we may normalize so that \eqref{exey} holds and $\omega_2=0$.  By doing so, we no longer have the freedom to perform arbitrary affine reparameterizations $c \mapsto ac+b$ of $c$, but we can still perform translational reparameterizations $c \mapsto c+b$ for $b \in C^\infty(T^* \Sigma)$; these do not affect $B$ or $E$, but replace $\omega(X)$ with $\omega(X) - \nabla_X b = \omega(X) - \nabla'_X b - c E_X b$.\\

We can now eliminate $\omega_1$.  The equation \eqref{naba-2} has now simplified to
\begin{align}\label{naba}
    (\nabla'_X + c E_X + 
    & (\omega_0(X) + c \omega_1(X)) \partial_c) (\omega_0(Y) + c \omega_1(Y))  \\ 
    &=(\nabla'_Y + c E_Y + (\omega_0(Y) + c \omega_1(Y)) \partial_c) (\omega_0(X) + c \omega_1(X)),\notag
\end{align}
and thus on extracting the $c^2$ coefficient
$$ E_X \omega_1(Y) = E_Y \omega_1(X).$$
As before this can be integrated to give
$$ \omega_1(X) = E_X b$$
for some $b \in C^\infty(T^* \Sigma)$.  Performing the reparameterization $c \mapsto c+b$ as defined above, we may thus normalize $\omega_1$ to be zero.  Thus $\omega = \omega_0$ is now independent of $c$, and \eqref{naba} simplifies further to
$$ (\nabla'_X + c E_X) \omega(Y) =
(\nabla'_Y + c E_Y) \omega(X).$$
Extracting the $c$ coefficient we obtain
$$ E_X \omega(Y) = E_Y \omega(X)$$
so once again we may integrate to obtain
$$ \omega(X) = E_X J$$
for some $J \in C^\infty(T^*\Sigma)$.  The condition \eqref{naba} now simplifies further to
\begin{equation}\label{naba-0}
\nabla'_X E_Y J = \nabla'_Y E_X J.
\end{equation}
From this and \eqref{exey}, the condition \eqref{nab-com} has simplified to
$$ [\nabla'_X,\nabla'_Y] - \nabla'_{[X,Y]} + c ([\nabla'_X,E_Y] - [\nabla'_Y, E_X]) + ((E_X J) E_Y - (E_Y J) E_X) = 0$$
and so on comparing coefficients in $c$ we obtain the curvature condition
\begin{equation}\label{curv}
    [\nabla'_X,\nabla'_Y] - \nabla'_{[X,Y]} + ((E_X J) E_Y - (E_Y J) E_X) = 0
\end{equation}
and the symmetry condition
\begin{equation}\label{symm}
    [\nabla'_X,E_Y] = [\nabla'_Y, E_X].
\end{equation}

\smallskip

Conversely, suppose that we have a connection $\nabla'$ on $T^* \Sigma$, a function $J \in C^\infty(T^* \Sigma)$, and a $C^\infty(\Sigma)$-linear map $X \mapsto E_X$ from $\Gamma(T\Sigma)$ to $\Gamma_v(T(T^*\Sigma))$ which is non-degenerate and obeys the conditions \eqref{curv}, \eqref{symm}, \eqref{exey}, \eqref{torsionfree}, \eqref{E-torsionfree}, \eqref{naba-0}.  Then, by reversing the above arguments, we see that the connection $\nabla$ defined by
\begin{align}\label{260604leD-19}
    \nabla_X := \nabla'_X + c E_X + (E_X J) \partial_c,
\end{align} 
is a flat connection obeying \eqref{lxy} as well as (H2) and the Bourgain condition with $B$ given by the formula
$$ B(X,Y) = E_X l_Y = E_Y l_X.$$

\smallskip

The fact \eqref{exey} that the $E_X$ commute with each other allows one to find a coordinate system for the bundle $T^*\Sigma$ in which the $E_X$ are coordinate vector fields.  Indeed, by exponentiating, we get (local) bundle diffeomorphisms $\exp(E_X): T^*\Sigma \to T^*\Sigma$ for each $X \in \Gamma(T\Sigma)$, with the value of $\exp(E_X)(\xi,p)$ depending only on $X$ through its value $X(\xi)$ at $\xi$.  If we arbitrarily pick a smooth section $\omega_0 \in \Gamma(T^*\Sigma)$ (i.e., a $1$-form), we can then produce (locally) a bundle diffeomorphism $\psi : T \Sigma \to T^* \Sigma$ by the formula
$$ \psi( (\xi, X(\xi)) ) = \exp(E_X)(\xi,\omega_0(\xi))$$
for any $\xi \in \Sigma$ and vector field $X \in \Gamma(T\Sigma)$.  Note that every vector field $X \in \Gamma(T\Sigma)$ canonically induces a vertical vector field $V_X \in \Gamma_v(T(T\Sigma))$, which can be uniquely defined by the formula
$$ V_X \omega = \omega(X)$$
for all $1$-forms $\omega \in \Gamma(T^*\Sigma)$, where we identify such one-forms with linear functionals $(\xi,v) \mapsto \omega(\xi)(v)$ on $T\Sigma$; using local coordinates $(x,v)$ for $T\Sigma$ we have
$$ V_X = X^i(x) \partial_{v^i}.$$
In particular, the $V_X$ commute with each other:
\begin{equation}\label{vx-comm}
[V_X, V_Y] = 0.
\end{equation}
By chasing through all the definitions, we see that $V_X$ is the pullback of $E_X$ by $\psi$: $\psi^* E_X = V_X$.  In particular, \eqref{E-torsionfree} is equivalent to
\begin{equation}\label{vxvy}
    V_X (l_Y \circ \psi) = V_Y (l_X \circ \psi).
\end{equation}
Using local coordinates $(\xi,v)$ for the tangent bundle $T\Sigma$, this relation can be written as
$$ \partial_{v^i} \psi_j = \partial_{v^j} \psi_i.$$
We can then locally integrate this as
\begin{align}\label{260604leD-21}
    \psi_j = \partial_{v^j} F^*
\end{align} 
or in coordinate free notation
\begin{equation}\label{lxf}
     l_X \circ \psi = V_X F^*
\end{equation}
for all $X \in \Gamma(T\Sigma)$ and some $F^* \in C^\infty(T\Sigma)$.  As $\psi$ is a diffeomorphism, $F^*$ must then have non-degenerate vertical Hessian, in that the symmetric bilinear form $(X,Y) \mapsto V_Y V_X F^*$ is non-degenerate.  (One could similarly integrate the inverse of $\psi$ to obtain the Legendre transform $F$ of $F^*$, as in previous sections, but we will not need to do so here.)\\

Using $\psi$, we can pull back the connection $\nabla'$ on $T^* \Sigma$ to a connection $\nabla^*$ on $T\Sigma$, and similarly pull back the function $J \in C^\infty(T^* \Sigma)$ to a function $J^* \in C^\infty(T\Sigma)$.  The relations \eqref{curv}, \eqref{symm}, \eqref{naba-0} then pull back to
\begin{equation}\label{curv-pull}
    [\nabla^*_X,\nabla^*_Y] - \nabla^*_{[X,Y]} + ((V_X J^*) V_Y - (V_Y J^*) V_X) = 0,
\end{equation}
\begin{equation}\label{symm-pull}
    [\nabla^*_X,V_Y] = [\nabla^*_Y, V_X],
\end{equation}
and
\begin{equation}\label{naba-pull}
    \nabla^*_X V_Y J^* = \nabla^*_Y V_X J^*.
\end{equation}
respectively.  The relation \eqref{torsionfree} similarly pulls back to
$$
\nabla^*_X (l_Y \circ \psi) = \nabla^*_Y (l_X \circ \psi)$$
and hence by \eqref{lxf}
\begin{equation}\label{torsionfree-pull}
\nabla^*_X V_Y F^* = \nabla^*_Y V_X F^*.
\end{equation}
Thus, intriguingly, $F^*$ obeys the same equation \eqref{naba-pull} as $J^*$!\\

Conversely, suppose one has a connection $\nabla^*$ on $T \Sigma$, and functions $J^*, F^* \in C^\infty(T\Sigma)$ obeying \eqref{curv-pull}, \eqref{symm-pull}, \eqref{naba-pull}, \eqref{torsionfree-pull} with $F^*$ (locally) having non-degenerate vertical Hessian.  The non-degeneracy then allows us to construct a local bundle diffeomorphism $\psi: T\Sigma \to T^*\Sigma$ using \eqref{lxf}, and pushing forward by that diffeomorphism (locally) gives a connection $\nabla'$ on $T^*\Sigma$ and a function $J \in C^\infty(T^*\Sigma)$.  The relations \eqref{curv-pull}, \eqref{symm-pull}, \eqref{naba-pull}, \eqref{torsionfree-pull} then push forward to \eqref{curv}, \eqref{symm}, \eqref{naba-0}, \eqref{torsionfree} respectively, and \eqref{vx-comm} similarly pushes forward to \eqref{exey}.  From \eqref{lxf}, \eqref{vx-comm} we have \eqref{vxvy}, which pushes forward to \eqref{E-torsionfree}.  Thus, by previous discussion we have constructed a solution to the Bourgain condition.\\

To summarize, we have described solutions to the Bourgain condition in terms of just three objects $\nabla^*, J^*, F^*$ defined on the tangent bundle $T\Sigma$, obeying four relations \eqref{curv-pull}, \eqref{symm-pull}, \eqref{naba-pull}, \eqref{torsionfree-pull} and a non-degeneracy hypothesis. \\

\noindent {\bf Acknowledgment.}
S. G. is partly supported by the Nankai Zhide Foundation, NSFC Grant No. 12426204, and the New Cornerstone Science Foundation.
The authors would like to thank Terry Tao for helpful discussions throughout the project, and in particular for suggesting the main idea of the proof of Theorem \ref{thm: main} Part \ref{thm: main part 2}. The proofs of Lemma \ref{lem: lin alg lemma} and Lemma \ref{volsliceslem} were developed with the assistance of ChatGPT Pro 5.5, and were  independently checked by the authors, who take full
responsibility for their correctness. ChatGPT Pro 5.5 was also used for minor editing and proofreading throughout the paper.

\bibliographystyle{alpha}
\bibliography{bibliography}

\end{document}